\makeatother \hypersetup{colorlinks=true}
\newtheorem{theorem}{Theorem}[section]
\newtheorem{lemma}[theorem]{Lemma}
\numberwithin{equation}{section}
\newtheorem*{acknowledgements}{Acknowledgements}
\newtheorem{corollary}[theorem]{Corollary}
\newtheorem{proposition}[theorem]{Proposition}
\theoremstyle{condition}
\theoremstyle{remark}
\newtheorem{remark}[theorem]{\bf{Remark}}
\newtheorem{example}[theorem]{\bf{Example}}
\newtheorem{definition}[theorem]{\bf{Definition}}
\newtheorem{notation}[theorem]{\bf{Notation}}
\numberwithin{equation}{section}
\begin{document}
\title{On reductions of families of crystalline Galois representations}
\author{Gerasimos Dousmanis}
\address{Makis Dousmanis, Universit\'{e} de Lyon, UMPA ENS Lyon, 46 all\'{e}%
e d'Italie, 69007 Lyon, France}
\email{\href{mailto:makis.dousmanis@umpa.ens-lyon.fr}{%
makis.dousmanis@umpa.ens-lyon.fr}}
\maketitle

\begin{abstract}
Let $K_{f}$ be the finite unramified extension of $\mathbb{Q}_{p}$ of degree 
$f$ and $E$ any finite large enough coefficient field containing $K_{f}.$ We
construct analytic families of \'{e}tale $(\varphi ,\Gamma )$-modules which
give rise to families of crystalline $E$-representations of the absolute
Galois group $\mathrm{G}_{K_{f}}$ of $K_{f}.$ For any irreducible effective
two-dimensional crystalline $E$-representation of $G_{K_{f}}$ with labeled
Hodge-Tate weights $\{0,-k_{i}\}_{\tau _{i}}$ induced from a crystalline
character of $G_{K_{2f}},$ we construct an infinite family of crystalline $E$%
-representations of $G_{K_{f}}$ of the same Hodge-Tate type which contains
it. As an application, we compute the semisimplified $\func{mod}$ $p$
reductions of the members of each such family.
\end{abstract}

\tableofcontents

\section{Introduction}

Let $p\ $be a prime number and $\bar{\mathbb{Q}}_{p}$ a fixed algebraic
closure of $%
\mathbb{Q}
_{p}.$ Let $N$ be a positive integer and $g=\tsum\limits_{n\geq 1}a_{n}q^{n}$
a newform of weight $k\geq 2$ over $\Gamma _{1}(N)$ with character $\psi .$
The complex coefficients $a_{n}$ are algebraic over $%
\mathbb{Q}
$ and may be viewed as elements of $\bar{\mathbb{Q}}_{p}$ after fixing
embeddings\ $\bar{\mathbb{Q}}\rightarrow 
\mathbb{C}
\ $and $\bar{\mathbb{Q}}\rightarrow \bar{\mathbb{Q}}_{p}.$ By work of
Eichler-Shimura when $k=2$ and Deligne when $k>2,$ there exists a continuous
irreducible two-dimensional $p$-adic representation $\rho _{g}:G_{%
\mathbb{Q}
}\longrightarrow \mathrm{GL}_{2}(\bar{\mathbb{Q}}_{p})$ attached to $g.$ If$%
\ l\nmid pN,$ then $\rho _{g}$ is unramified at $l$ and $\det (X-\rho _{g}(%
\mathrm{Frob}_{l}))=X^{2}-a_{l}X+\psi \left( l\right) l^{k-1},$ where $%
\mathrm{Frob}_{l}$ is any choice of an arithmetic Frobenius at $l.$ The
contraction of the maximal ideal of the ring of integers of $\bar{\mathbb{Q}}%
_{p}$ via an embedding $\bar{\mathbb{Q}}\rightarrow \bar{\mathbb{Q}}_{p}$
gives rise to the choice of a place of $\bar{\mathbb{Q}}$ above $p,$ and the
decomposition group $D_{p}$ at this place is isomorphic to the local Galois
group $G_{%
\mathbb{Q}
_{p}}\ $via the same embedding. The local representation%
\begin{equation*}
\rho _{g,p}:G_{%
\mathbb{Q}
_{p}}\longrightarrow \mathrm{GL}_{2}(\bar{\mathbb{Q}}_{p}),
\end{equation*}%
obtained by restricting $\rho _{g}$ to $D_{p},$ is de Rham with Hodge-Tate
weights $\{0,$ $k-1\}$ (\cite{TS99}). If $p\nmid N$ the representation $\rho
_{g,p}$\ is crystalline and the characteristic polynomial of Frobenius of
the weakly admissible filtered $\varphi $-module$\ \mathbb{D}_{k,a_{p}}:=%
\mathbb{D}_{\mathrm{cris}}\left( \rho _{g,p}\right) $ attached to $\rho
_{g,p}$ by Fontaine is $X^{2}-a_{p}X+\psi \left( p\right) p^{k-1}\ $(\cite%
{FA89} and \cite{SC90}). The roots of Frobenius are distinct if $k=2$ and
conjecturally distinct if $k\geq 3$ (see \cite{CE98}). In this case, weak
admissibility imposes a unique up to isomorphism choice of the filtration of 
$\mathbb{D}_{k,a_{p}},$ and the isomorphism class of the crystalline
representation $\rho _{g,p}$ is completely determined by the characteristic
polynomial of Frobenius of $\mathbb{D}_{k,a_{p}}.$ The mod $p$ reduction $%
\bar{\rho}_{g,p}:G_{%
\mathbb{Q}
_{p}}\longrightarrow \mathrm{GL}_{2}(\bar{\mathbb{F}}_{p})$ of the local
representation $\rho _{g,p}\ $is well defined up to semisimplification and
plays a role in the proof of Serre's modularity conjecture, now a theorem of
Khare and Wintenberger \cite{KW09a},$\ $\cite{KW09b} which states that any
irreducible continuous odd Galois representation $\rho :G_{%
\mathbb{Q}
}\longrightarrow \mathrm{GL}_{2}(\bar{\mathbb{F}}_{p})$ is similar to a
representation of the form $\bar{\rho}_{g}$ for a certain newform $g\ $which
should occur in level $N(\rho ),$ an integer prime-to-$p$, and weight $%
\kappa (\rho )\geq 2,$ which Serre explicitly defined in \cite{SE 87}. If $%
\rho _{g,p}$ is crystalline, the semisimplified mod $p$ reduction $\bar{\rho}%
_{g,p}$ has been given concrete descriptions in certain cases by work of
Berger-Li-Zhu \cite{BLZ04} combined with work of Breuil\ \cite{BRE03}, which
extended previous results of Deligne, Fontaine, Serre and Edixhoven, and
more recently by Buzzard-Gee \cite{BG09} using the $p$-adic Langlands
correspondence for $\mathrm{GL}_{2}\left( 
\mathbb{Q}
_{p}\right) .$ For a more detailed account and the shape of these
reductions, the reader can see \cite[\S 5.2]{BE10}.

Recall that (up to unramified twist) all irreducible two-dimensional
crystalline representations of $G_{%
\mathbb{Q}
_{p}}$ with fixed Hodge-Tate weights in the range $[0;\ p]$ have the same
irreducible $\func{mod}$ $p$ reduction. Reductions of crystalline
representations of $G_{%
\mathbb{Q}
_{p^{f}}}$ with $f\neq 1,\ $where $%
\mathbb{Q}
_{p^{f}}$ is the unramified extension of $%
\mathbb{Q}
_{p}$ of degree $f,$ are more complicated. For example, in the simpler case
where $f=2,\ $there exist irreducible two-dimensional crystalline
representation of $G_{%
\mathbb{Q}
_{p^{2}}}$ with Hodge-Tate weights in the range $[0;\ p-1],$ sharing the
same characteristic polynomial and filtration, with distinct irreducible or
reducible reductions (cf. Proposition \ref{protash paradeigma}).

The purpose of this article is to extend the constructions of \cite{BLZ04}
to two-dimensional crystalline representations of $G_{%
\mathbb{Q}
_{p^{f}}}:=\mathrm{Gal}(\bar{\mathbb{Q}}_{p}/%
\mathbb{Q}
_{p^{f}}),$ and to compute the semisimplified $\func{mod}$ $p$ reductions of
the crystalline representations constructed.\ The strategy for computing
reductions is to fit irreducible representations of $G_{K_{f}}$ which are
not induced from crystalline characters of $G_{K_{2f}}$ into families of
representations of the same Hodge-Tate type and with the same mod $p$
reduction, which contain some member which is either reducible or
irreducible induced.

Serre's conjecture has been recently generalized by Buzzard, Diamond and
Jarvis \cite{BDJ} for irreducible totally odd two-dimensional $\mathbb{\bar{F%
}}_{p}$-representations of the absolute Galois group of any totally real
field unramified at $p,\ $and has subsequently been extended by Schein \cite%
{Sch08} to cases where $p$ is odd and tamely ramified in $F.$ Crystalline
representations of the absolute Galois group of finite unramified extensions
of $%
\mathbb{Q}
_{p}$ arise naturally in this context of the conjecture of Buzzard, Diamond
and Jarvis, and their modulo $p$ reductions are crucial for the weight part
of this conjecture (see \cite[\S 3]{BDJ}).

Let $F$ be a totally real number field of degree $d>1,$ and let $I=\{\tau
_{1},...,\tau _{d}\}$ be the set of real embeddings of $F.$ Let $\mathbf{k}%
=\left( k_{\tau _{1}},k_{\tau _{2}},...,k_{\tau _{d}},w\right) \in 
\mathbb{N}
_{\geq 1}^{d+1}$ with $k_{\tau _{i}}\equiv w\func{mod}\ 2.$ We denote by $%
\mathcal{O}$ the ring of integers of $F$ and we let $\mathfrak{n}\neq 0$ be
an ideal of $\mathcal{O}.$ The space $\mathrm{S}_{\mathbf{k}}(\mathrm{U}_{1}(%
\mathfrak{n}))$ of Hilbert modular cusp forms of level $\mathfrak{n}$ and
weight $\mathbf{k}$ is a finite dimensional complex vector space endowed
with actions of Hecke operators $\mathrm{T}_{\mathfrak{q}}$ indexed by
nonzero ideals $\mathfrak{q}$ of $\mathcal{O},$ and Hecke operators $\mathrm{%
S}_{\mathfrak{a}}$ indexed by ideals of $\mathcal{O}$ prime to $\mathfrak{n\ 
}$(for the precise definitions see \cite{TA89}).$\ $Let $0\neq g\in \mathrm{S%
}_{\mathbf{k}}(\mathrm{U}_{1}(\mathfrak{n}))$ be an eigenform for all the $%
\mathrm{T}_{\mathfrak{q}},$ and fix embeddings $\bar{%
\mathbb{Q}%
}\rightarrow 
\mathbb{C}
$ and $\bar{%
\mathbb{Q}%
}\rightarrow \bar{%
\mathbb{Q}%
}_{p}.$ By constructions of Rogawski-Tunnell \cite{RT83}, Ohta \cite{Oh84},
Carayol \cite{CA86}, Blasius-Rogawski \cite{BR89},\ Taylor \cite{TA89}, and
Jarvis \cite{Ja97}, one can attach to $g$ a continuous Galois representation 
$\rho _{g}:G_{F}\rightarrow \mathrm{GL}_{2}(\bar{\mathbb{Q}}_{p}),$ where $%
G_{F}$ is the absolute Galois group of the totally real field $F.$ Fixing an
isomorphism between the residue field of $\bar{\mathbb{Q}}_{p}$ with $%
\mathbb{\bar{F}}_{p},$ the $\func{mod}$ $p$ reduction $\bar{\rho}%
_{g}:G_{F}\rightarrow \mathrm{GL}_{2}(\mathbb{\bar{F}}_{p})$ is well defined
up to semisimplification. A continuous representation $\rho
:G_{F}\longrightarrow \mathrm{GL}_{2}(\mathbb{\bar{F}}_{p})$ is called
modular if $\rho \sim \bar{\rho}_{g}$ for some Hilbert modular eigenform $g.$
Conjecturally, every irreducible totally odd continuous Galois
representation $\rho :G_{F}\longrightarrow \mathrm{GL}_{2}(\mathbb{\bar{F}}%
_{p})$ is modular (\cite{BDJ}). We now assume that $k_{\tau _{i}}\geq 2$ for
all $i.$ We fix an isomorphism $\bar{%
\mathbb{Q}%
}_{p}\overset{i}{\simeq }%
\mathbb{C}
\ $and an algebraic closure $\bar{F}$ of $F.$ For each prime ideal $%
\mathfrak{p}$ of $\mathcal{O}$ lying above $p\ $we denote by $F_{\mathfrak{p}%
}$ the completion of $F$ at $\mathfrak{p},$ and we fix an algebraic closure $%
\bar{F}_{\mathfrak{p}}$ of $F_{\mathfrak{p}}$ and an $F$-embedding $\bar{F}%
\hookrightarrow \bar{F}_{\mathfrak{p}}.$ These determine a choice of a
decomposition group $D_{\mathfrak{p}}\subset G_{F}$ an isomorphism $D_{%
\mathfrak{p}}\simeq G_{F_{\mathfrak{p}}}.$ For each embedding $\tau :F_{%
\mathfrak{p}}\rightarrow \bar{\mathbb{Q}}_{p},$ let $k_{\tau }$ be the
weight of $g$ corresponding to the embedding $\tau _{\mid F}:F\rightarrow 
\bar{\mathbb{Q}}_{p}\overset{i}{\simeq }%
\mathbb{C}
.$ By works of Blasius-Rogawski \cite{BR93}, Saito \cite{SAI09}, Skinner 
\cite{SK09}, and T. Liu \cite{Liu09}, the local representation%
\begin{equation*}
\rho _{g,F_{\mathfrak{p}}}:G_{F_{\mathfrak{p}}}\longrightarrow \mathrm{GL}%
_{2}(\mathbb{\bar{Q}}_{p}),
\end{equation*}%
obtained by restricting $\rho _{g}$ to the decomposition subgroup $G_{F_{%
\mathfrak{p}}},$ is de Rham with labeled Hodge-Tate weights $\{\frac{%
k-k_{\tau }}{2},\frac{k+k_{\tau }-2}{2}\}_{\tau :F_{\mathfrak{p}}\rightarrow 
\bar{\mathbb{Q}}_{p}},$ where $k=\max \{k_{\tau _{i}}\}.$ This has also been
proved by Kisin \cite[Theorem 4.3]{Kis08}, under the assumption that $\rho
_{g,F_{\mathfrak{p}}}$ is residually irreducible. If $p$ is odd unramified
in $F$ and prime to $\mathfrak{n},$ then $\rho _{g,F_{\mathfrak{p}}}$ is
crystalline by works of Breuil \cite[Th\'{e}or\`{e}me 1(1)]{BRE99} and
Berger \cite[Th\'{e}or\`{e}me IV.2.1]{BE03}.

In the newform case, assuming that $\rho _{g,p}$ is crystalline, the weight
of $g$ and the eigenvalue of the Hecke operator $\mathrm{T}_{p}$ on $g$
completely determine the structure of the filtered $\varphi $-module $%
\mathbb{D}_{\mathrm{cris}}(\rho _{g,p}).$ In the Hilbert modular newform
case, assuming that $\rho _{g,F_{\mathfrak{p}}}$ is crystalline, the
structure of $\mathbb{D}_{\mathrm{cris}}(\rho _{g,F_{\mathfrak{p}}})$ is
more complicated and the characteristic polynomial of Frobenius and the
labeled Hodge-Tate weights do not suffice to completely determine its
structure. The filtration of $\mathbb{D}_{\mathrm{cris}}(\rho _{g,F_{%
\mathfrak{p}}})$ is generally unknown, and, even worse, the characteristic
polynomial of Frobenius and the filtration are not enough to determine the
structure of the filtered $\varphi $-module $\mathbb{D}_{\mathrm{cris}}(\rho
_{g,F_{\mathfrak{p}}}).$ In this case, the isomorphism class is (roughly)
determined by an extra parameter in $\left( \bar{%
\mathbb{Q}%
}_{p}^{\times }\right) ^{f_{\mathfrak{p}}-1}\ $(for a precise statement see
\ \cite[\S \S 6, 7]{DO10}). As a consequence, if $f_{\mathfrak{p}}\geq 2\ $%
there exist infinite families of non-isomorphic, irreducible two-dimensional
crystalline representations of $G_{%
\mathbb{Q}
_{p^{f_{\mathfrak{p}}}}}$ sharing the same characteristic polynomial and
filtration.

\begin{acknowledgements}
I thank Fred Diamond for suggesting this problem and for his feedback,
Laurent Berger for useful suggestions, and Seunghwan Chang for detailed
comments on drafts. The last parts of the paper were written during visits
at the I.H.P. and the I.H.\'{E}.S. in Spring 2010. I thank both institutions
for their hospitality and the C.N.R.S. and the S.F.B. 478 \textquotedblleft
Geometrische Strukturen in der Mathematik\textquotedblright\ of M\"{u}nster
University for financial support.
\end{acknowledgements}

\subsection{Preliminaries and statement of results\label{product ring}}

\noindent Throughout this paper $p$ will be a fixed prime number, $K_{f}=%
\mathcal{\mathbb{Q}}_{p^{f}}$ the finite unramified extension of $\mathcal{%
\mathbb{Q}}_{p}$ of degree $f,$ and $E$ a finite large enough extension of $%
K_{f}$ with maximal ideal $\mathfrak{m}_{E}\ $and residue field $k_{E}.$ We
simply write $K$ whenever the degree over $%
\mathbb{Q}
_{p}\ $plays no role. We denote by $\sigma _{K}$ the absolute Frobenius of $%
K.$ We fix once and for all an embedding $K{\overset{\tau _{0}}{%
\hookrightarrow }{E}}$ and we let $\tau _{j}=\tau _{0}\circ \sigma _{K}^{j}$
for all $j=0,1,...,f-1.$ We fix the $f$-tuple of embeddings $\mid \tau \mid
:=(\tau _{0},\tau _{1},...,\tau _{f-1})\ $and we denote $E^{\mid \tau \mid
}:=\prod\limits_{\tau :K\hookrightarrow E}E.$ The map $\xi :E\otimes
K\rightarrow E^{\mid \tau \mid }$ with $\xi _{K}(x\otimes y)=(x\tau
(y))_{\tau }$ and the embeddings ordered as above is a ring isomorphism. The
ring automorphism $1_{E}\otimes \sigma _{K}:E\otimes K\rightarrow E\otimes K$
transforms via $\xi $ to the automorphism $\mathcal{\varphi }:E^{\mid \tau
\mid }\rightarrow E^{\mid \tau \mid }$ with $\mathcal{\varphi }%
(x_{0},x_{1},...,x_{f-1})=(x_{1},...,x_{f-1},x_{0}).$ We denote by $%
e_{j}=(0,...,1,...,0)$ the idempotent of $E^{\mid \tau \mid }$ where the $1$
occurs in the $\tau _{j}$-th coordinate for each $j\in \{0,1,...,f-1\}.$
\smallskip

It is well-known (see for instance \cite[Lemme 2.2.1.1]{BM02}) that every
continuous representation $\rho :G_{K}\rightarrow \mathrm{GL}_{n}(\mathbb{%
\bar{Q}}_{p})$ is defined over some finite extension of $%
\mathbb{Q}
_{p}.$ Let $\rho :G_{K}\rightarrow \mathrm{GL}_{E}(V)$ be a continuous $E$%
-linear representation.$\ $\noindent Recall that $\mathbb{D}_{\text{\textrm{%
cris}}}(V)=(\mathbb{B}_{\mathrm{cris}}\otimes _{%
\mathbb{Q}
_{p}}V)^{G_{K}},$ where $\mathbb{B}_{\mathrm{cris}}$ is the ring constructed
by Fontaine in \cite{FO88}, is a filtered $\varphi $-module over $K$ with $E$%
-coefficients and $V$ is crystalline if and only if $\mathbb{D}_{\mathrm{cris%
}}(V)$ is free over $E\otimes K$ of rank $\dim _{E}V.$ One can easily prove
that $V$ is crystalline as an $E$-linear representation of $G_{K}$ if and
only if it is crystalline as a $%
\mathbb{Q}
_{p}$-linear representation of $G_{K}\ $(cf. \cite{CDT99}$\ $appendix$\ $B).
We may therefore extend $E$ whenever appropriate without affecting
crystallinity. By a variant of the fundamental theorem of Colmez and
Fontaine (\cite{CF00}, Th\'{e}or\`{e}me A) for nontrivial coefficients, the
functor $V\mapsto \mathbb{D}_{\mathrm{cris}}(V)$ is an equivalence of
categories from the category of crystalline $E$-linear representations of $%
G_{K}$ to the category of weakly admissible filtered $\varphi $-modules $(%
\mathbb{D},\varphi )$ over $K\ $with $E$-coefficients (see \cite{BM02}, \S %
3). Such a filtered module $\mathbb{D}$ is a module over $E\otimes K$ and
may be viewed as a module over $E^{\mid \tau \mid }$ via the ring
isomorphism $\xi \ $defined above. Its Frobenius endomorphism is bijective
and semilinear with respect to the automorphism $\varphi $ of $E^{\mid \tau
\mid }.$ For each embedding $\tau _{i}$ of $K$ into $E$ we define $\mathbb{D}%
_{i}:=e_{i}\mathbb{D}.$ We have the decomposition $\mathbb{D}%
=\bigoplus\limits_{i=0}^{f-1}\mathbb{D}_{i},$ and we filter each component $%
\mathbb{D}_{i}$ by setting $\mathrm{Fil}^{\mathrm{j}}\mathbb{D}_{i}:=e_{i}%
\mathrm{Fil}^{\mathrm{j}}\mathbb{D}.$ An integer $j$ is called a labeled
Hodge-Tate weight with respect to the embedding $\tau _{i}$ of $K\ $in $E$
if and only if $e_{i}\mathrm{Fil}^{\mathrm{-j}}\mathbb{D}\neq e_{i}\mathrm{%
Fil}^{\mathrm{-j+1}}\mathbb{D},$ and is counted with multiplicity $\dim
_{E}\left( e_{i}\mathrm{Fil}^{\mathrm{-j}}\mathbb{D}/e_{i}\mathrm{Fil}^{%
\mathrm{-j+1}}\mathbb{D}\right) .$ Since the Frobenius endomorphism of $%
\mathbb{D}$ restricts to an $E$-linear isomorphism from $\mathbb{D}_{i}$ to $%
\mathbb{D}_{i-1}$ for all $i,$ the components $\mathbb{D}_{i}$ are
equidimensional over $E.$ As a consequence, there are $n=$ $\mathrm{rank}%
_{E\otimes K}(\mathbb{D})$ labeled Hodge-Tate weights for each embedding,
counting multiplicities. The labeled Hodge-Tate weights of $\mathbb{D}$ are
by definition the $f$-tuple of multiset $(W_{i})_{\tau _{i}},$ where each
such multiset $W_{i}$ contains $n$ integers, the opposites of the jumps of
the filtration of $\mathbb{D}_{i}.$ The characteristic polynomial of a
crystalline $E$-linear representation of $G_{K}$ is the characteristic
polynomial of the $E^{\mid \tau \mid }$-linear map $\varphi ^{f},$ where $(%
\mathbb{D},\varphi )$ is the weakly admissible filtered $\varphi $-module
corresponding to it$\ $by Fontaine's functor.

\begin{definition}
A filtered $\varphi $-module $(\mathbb{D},\varphi )$\ is called
F-semisimple, non-F-semisimple, or F-scalar if the $E^{\mid \tau \mid }$%
-linear map $\varphi ^{f}$\ has the corresponding property.
\end{definition}

\noindent We may twist $\mathbb{D}$ by some appropriate rank one weakly
admissible filtered $\varphi $-module (see Proposition \ref{rank 1 isom})
and assume that $W_{i}=\{-w_{in-1}\leq ...\leq -w_{i2}\leq -w_{i1}\leq 0\}$
for all $i=0,1,...,f-1\ $for some non-negative integers $w_{ij}.$ The
Hodge-Tate weights of a crystalline representation $V$ are the opposites of
the jumps of the filtration of $\mathbb{D}_{\mathrm{cris}}(V).$ If they are
all non-positive, the crystalline representation is called effective or
positive. To avoid trivialities, throughout the paper we assume that at
least one labeled Hodge-Tate weight is strictly negative.

\begin{notation}
\label{the notation}Let $k_{i}$\ be nonnegative integers which we call
weights. Assume that after ordering them\ and omitting possibly repeated
weights we get $w_{0}<w_{1}<...<w_{t-1},$\ where $w_{0}\ $is the smallest
weight, $w_{1}$\ the second smallest weight,..., and $w_{t-1}\ $for some $%
1\leq t\leq f$\ is the largest weight. The largest weight $w_{t-1}$\ will be
usually denoted by $k.$\ For convenience we define $w_{-1}=0.\ $Let $%
I_{0}=\{0,1,...,f-1\}\ $and $I_{0}^{+}=\{i\in I_{0}:k_{i}>0\}.$\ For $%
j=1,2,...,t-1$\ we let $I_{j}=\{i\in I_{0}:k_{i}>w_{j-1}\}$\ and for $j=t$\
we define $I_{t}=\varnothing .$\ Let $f^{+}=\left\vert I_{0}^{+}\right\vert
\ $be the number of strictly positive weights.$\ $For each subset $J\subset
I_{0}$\ we write $f_{J}=\sum\limits_{i\in J}e_{_{i}}$\ and $E^{\mid \tau
_{J}\mid }=f_{J}\cdot E^{\mid \tau \mid }.\ $

\noindent We may visualize the sets $E^{\mid \tau _{I_{j}}\mid }$\ as
follows: $E^{\mid \tau _{I_{0}}\mid }$\ is the Cartesian product $E^{f}.\ $%
Starting with $E^{\mid \tau _{I_{0}}\mid },$\ we obtain $E^{\mid \tau
_{I_{1}}\mid }$\ by killing\ the coordinates where the smallest weight
occurs i.e. by killing\ the $i$-th coordinate for all $i\ $with $%
k_{i}=w_{0}. $\ We obtain $E^{\mid \tau _{I_{2}}\mid }$\ by further killing\
the coordinates where the second smallest weight $w_{1}$\ occurs and so on.

\noindent \noindent For any vector $\vec{x}\in E^{\mid \tau \mid }$\ we
denote by $x_{i}\ $its $i$-th coordinate and by $J_{\vec{x}}\ $its support $%
\{i\in I_{0}:x_{i}\neq 0\}.$\ We define as norm of $\vec{x}$\ with respect
to $\varphi \ $the vector $\mathrm{Nm}_{\varphi }(\vec{x}):=\prod%
\limits_{i=0}^{f-1}\varphi ^{i}(\vec{x})$\ and$\ $we$\ $write\ $\mathrm{v}_{%
\text{\textrm{p}}}(\mathrm{Nm}_{\varphi }(\vec{x})):=\mathrm{v}_{\text{%
\textrm{p}}}\left( \prod\limits_{i=0}^{f-1}x_{i}\right) ,$\ where $\mathrm{v}%
_{\text{\textrm{p}}}$\ is the normalized $p$-adic\ valuation of $\bar{%
\mathbb{Q}%
}_{p}.$\ If $\ell $\ is an integer we write $\vec{\ell}=(\ell ,\ell
,...,\ell )\ $and $\mathrm{v}_{\text{\textrm{p}}}(\vec{x})>\vec{\ell}$\ \
(resp. if $\mathrm{v}_{\text{\textrm{p}}}(\vec{x})\geq \vec{\ell})$\ if and
only if $\mathrm{v}_{\text{\textrm{p}}}(x_{i})>\ell $\ (resp. $v_{\text{%
\textrm{p}}}(x_{i})\geq \ell )$\ for all $i.$\ Finally, for any matrix $A\in 
$ $M_{n}(E^{\mid \tau \mid })$\ we define as its $\varphi $-norm the matrix $%
\mathrm{Nm}_{\varphi }(A):=A\varphi (A)\cdot \cdot \cdot \varphi ^{f-1}(A),$%
\ with $\varphi $\ acting on each entry of $A.$
\end{notation}

In \S \ref{the crystalline characters} we construct the effective
crystalline characters of $G_{K_{f}}.$ More precisely, for $i=0,1,...,f-1$
we construct $E$-characters $\chi _{i}$ of $G_{K_{f}}\ $with labeled
Hodge-Tate weights $-e_{i+1}=(0,...,-1,...0),\ $

\noindent with the $-1$ appearing in the $\left( i+1\right) $-place for all $%
i,\ $and we show that any crystalline $E$-character of $G_{K_{f}}$ with
labeled Hodge-Tate weights $\{-k_{i}\}_{\tau _{i}}$ can be written uniquely
in the form $\chi =\eta \cdot \chi _{0}^{k_{1}}\cdot \chi _{1}^{k_{2}}\cdot
\cdots \cdot \chi _{f-2}^{k_{f-1}}\cdot \chi _{f-1}^{k_{0}}$ for some
unramified character $\eta $ of $G_{K_{f}}.$ In the same section we prove
the following.

\begin{theorem}
\label{about induced}Let $\{\ell _{i},\ell _{i+f}\}=\{0,k_{i}\},$ where the $%
k_{i},$ $i=0,1,...,f-1$ are nonnegative integers. Let $f^{+}$ be the number
of strictly positive $k_{i}$ and assume that $f^{+}\geq 1.$

\begin{enumerate}
\item[(i)] The crystalline character$\ \noindent \chi _{\vec{\ell}}=\chi
_{0}^{\ell _{1}}\cdot \chi _{1}^{\ell _{2}}\cdot \cdots \cdot \chi
_{2f-2}^{\ell _{2f-1}}\cdot \chi _{2f-1}^{\ell _{0}}$ of $G_{K_{2f}}$ has
labeled Hodge-Tate weights $\left( -\ell _{0},-\ell _{1},...,-\ell
_{2f-1}\right) $ and does not extend to $G_{K_{f}}.$ The induced
representation $\mathrm{Ind}_{K_{2f}}^{K_{f}}\left( \chi _{\vec{\ell}%
}\right) $ is irreducible and crystalline with labeled Hodge-Tate weights $%
\{0,-k_{i}\}_{\tau _{i}}.$

\item[(ii)] Let $V$ be an irreducible two-dimensional crystalline $E$%
-representation of $G_{K_{f}}$ with labeled Hodge-Tate weights $%
\{0,-k_{i}\}_{\tau _{i}},$ whose restriction to $G_{K_{2f}}$ is reducible.
There exist an unramified character $\eta $ of $G_{K_{f}}$ and nonnegative
integers $m_{i},\ i=0,1,...,2f-1,$ with $\{m_{i},m_{i+f}\}=\{0,k_{i}\}$ for
all $i=0,1,...,f-1,\ $such that 
\begin{equation*}
V\simeq \eta \otimes \mathrm{Ind}_{K_{2f}}^{K_{f}}\left( \chi
_{0}^{m_{1}}\cdot \chi _{1}^{m_{2}}\cdot \cdots \cdot \chi
_{2f-2}^{m_{2f-1}}\cdot \chi _{2f-1}^{m_{0}}\right) .
\end{equation*}

\item[(iii)] $\mathrm{Ind}_{K_{2f}}^{K_{f}}\left( \chi _{\vec{\ell}}\right)
\simeq \mathrm{Ind}_{K_{2f}}^{K_{f}}\left( \chi _{\vec{m}}\right) $ if and
only if $\chi _{\vec{\ell}}=\chi _{\vec{m}}$ or $\chi _{\vec{\ell}}^{\sigma
}=\chi _{\vec{m}},$ where $\chi _{\vec{\ell}}^{\sigma }=\chi _{0}^{\ell
_{1}^{\prime }}\cdot \chi _{1}^{\ell _{2}^{\prime }}\cdot \cdots \cdot \chi
_{2f-2}^{\ell _{2f-1}^{\prime }}\cdot \chi _{2f-1}^{\ell _{f}^{\prime }},$
with $\ell _{i}^{\prime }=\ell _{i+f}$ and indices viewed modulo $2f.$

\item[(iv)] Up to twist by some unramified character, there exist precisely $%
2^{f^{^{+}}-1}$ distinct isomorphism classes of irreducible two-dimensional
crystalline $E$-representations of $G_{K_{f}}\ $with labeled Hodge-Tate
weights $\{0,-k_{i}\}_{\tau _{i}},$ induced from crystalline characters of $%
G_{K_{2f}}.$
\end{enumerate}
\end{theorem}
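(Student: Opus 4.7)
The plan is to address the four parts in the natural order, since (ii)--(iv) build on (i).

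For part (i), I would start from the construction of the characters $\chi_i$ of $G_{K_{2f}}$ from \S\ref{the crystalline characters}: each $\chi_i$ has a single nonzero labeled Hodge--Tate weight $-1$ in the $(i+1)$-st coordinate. Additivity of labeled Hodge--Tate weights under tensor products then gives the weight vector $(-\ell_0,\dots,-\ell_{2f-1})$ for $\chi_{\vec\ell}$. To show $\chi_{\vec\ell}$ does not extend to $G_{K_f}$ I would use the criterion that a character of $G_{K_{2f}}$ extends iff it is fixed under conjugation by a lift $\sigma$ of the Frobenius of $K_f/\mathbb{Q}_p$; this conjugation cyclically shifts the exponent vector by $f$, sending $\chi_{\vec\ell}$ to a character with exponents $\ell_i'=\ell_{i+f}$. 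Since $\{\ell_i,\ell_{i+f}\}=\{0,k_i\}$ and at least one $k_i$ is strictly positive, the two exponent vectors differ, so $\chi_{\vec\ell}^{\sigma}\neq\chi_{\vec\ell}$. Irreducibility of $\mathrm{Ind}_{K_{2f}}^{K_f}(\chi_{\vec\ell})$ then follows from the standard Mackey criterion for induction from an index-two subgroup. Crystallinity is preserved under induction along the unramified extension $K_{2f}/K_f$, and the labeled Hodge--Tate weights of the induction at the embedding $\tau_i$ of $K_f$ are the multiset $\{-\ell_i,-\ell_{i+f}\}=\{0,-k_i\}$.

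For part (ii), I would apply Clifford theory to the index-two normal subgroup $G_{K_{2f}}\subset G_{K_f}$. Since $V$ is irreducible but $V|_{G_{K_{2f}}}$ is reducible, the restriction is semisimple and splits as $\psi\oplus\psi^{\sigma}$ with $\psi^{\sigma}\neq\psi$, and $V\simeq\mathrm{Ind}_{K_{2f}}^{K_f}(\psi)$. The character $\psi$ is crystalline as a direct summand of a crystalline representation, hence by the classification of crystalline characters established earlier in \S\ref{the crystalline characters} it has the form $\psi=\eta_0\cdot\chi_0^{m_1}\chi_1^{m_2}\cdots\chi_{2f-1}^{m_0}$ for some unramified character $\eta_0$ of $G_{K_{2f}}$ and nonnegative integers $m_i$. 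Matching labeled Hodge--Tate weights of $V$ under induction forces $\{m_i,m_{i+f}\}=\{0,k_i\}$ at each $i\in\{0,\dots,f-1\}$. Finally, writing $\eta_0=\eta|_{G_{K_{2f}}}\cdot\varepsilon$ for a suitable unramified character $\eta$ of $G_{K_f}$ and a quadratic unramified twist $\varepsilon$ that can be absorbed by exchanging $\psi$ with $\psi^{\sigma}$, one obtains the desired form $V\simeq\eta\otimes\mathrm{Ind}_{K_{2f}}^{K_f}(\chi_0^{m_1}\cdots\chi_{2f-1}^{m_0})$.

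For part (iii), the direction $(\Leftarrow)$ is immediate since $\mathrm{Ind}(\chi)\simeq\mathrm{Ind}(\chi^{\sigma})$ in general. For $(\Rightarrow)$, Frobenius reciprocity (or direct computation) gives $\mathrm{Ind}_{K_{2f}}^{K_f}(\chi_{\vec\ell})|_{G_{K_{2f}}}\simeq\chi_{\vec\ell}\oplus\chi_{\vec\ell}^{\sigma}$, so an isomorphism of the two inductions forces $\{\chi_{\vec\ell},\chi_{\vec\ell}^{\sigma}\}=\{\chi_{\vec m},\chi_{\vec m}^{\sigma}\}$. The explicit formula for $\chi_{\vec\ell}^{\sigma}$ follows from the way Frobenius conjugation permutes the characters $\chi_i$, which amounts to the cyclic shift of the exponent vector by $f$. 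For part (iv), having fixed the Hodge--Tate type, the admissible exponent vectors $\vec\ell$ are parameterized by the choice, for each $i\in I_0^+$, of which of $\ell_i,\ell_{i+f}$ equals $k_i$, giving $2^{f^+}$ vectors. By (iii) the equivalence $\vec\ell\sim\vec\ell^{\sigma}$ has no fixed points when $f^+\geq1$ (a fixed point would require $\ell_i=\ell_{i+f}$ for all $i$, hence all $k_i=0$), so the number of isomorphism classes up to unramified twist is $2^{f^+}/2=2^{f^+-1}$.

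The main obstacle is part (ii): one must justify the semisimplicity of $V|_{G_{K_{2f}}}$ needed for Clifford theory, and, more delicately, transfer the unramified character $\eta_0$ on $G_{K_{2f}}$ to an unramified character $\eta$ on $G_{K_f}$ in a way compatible with induction. The potential quadratic obstruction here is what ultimately explains the ambiguity swallowed by the $\sigma$-conjugation in (iii) and is the reason the count in (iv) is $2^{f^+-1}$ rather than $2^{f^+}$.
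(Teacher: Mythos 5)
Your proposal follows essentially the same route as the paper: use Mackey's criterion and the description of $\sigma$-conjugation of crystalline characters for (i), Frobenius reciprocity plus the classification of crystalline characters and the transfer of the unramified twist for (ii), restriction to $G_{K_{2f}}$ to decouple the two constituents for (iii), and the fixed-point-free involution $\vec\ell\mapsto\vec\ell^\sigma$ on the $2^{f^+}$ admissible exponent vectors for (iv). The outline is sound.

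Two small but worth-correcting inaccuracies sit in your final paragraph. First, the semisimplicity of $V|_{G_{K_{2f}}}$ is not actually needed: the paper (Proposition \ref{induction in char 0}) takes any irreducible constituent $\chi$ of $V|_{G_{K_{2f}}}$, observes by Frobenius reciprocity that there is a nonzero map $V\to\mathrm{Ind}_{K_{2f}}^{K_f}(\chi)$, and concludes $V\simeq\mathrm{Ind}_{K_{2f}}^{K_f}(\chi)$ since $V$ is irreducible and both sides have dimension two; the decomposition $V|_{G_{K_{2f}}}\simeq\chi\oplus\chi^\sigma$ is then a consequence via Mackey, not a needed hypothesis. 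Second, there is no ``quadratic obstruction'' in moving the unramified twist from $G_{K_{2f}}$ to $G_{K_f}$: after enlarging $E$, every unramified character $\chi_c$ of $G_{K_{2f}}$ is the restriction of the unramified character of $G_{K_f}$ sending $\mathrm{Frob}_{K_f}$ to $\sqrt{c}$ (this is exactly how the paper proceeds; cf.\ Lemma \ref{chic prop}(i)). Since $\sigma$-conjugation leaves unramified characters unchanged, ``absorbing'' a unit into the exchange $\psi\leftrightarrow\psi^\sigma$ would not work in any case, but the point is moot because there is nothing to absorb. Consequently, the count $2^{f^+-1}$ in (iv) is due solely to the fixed-point-free $\sigma$-involution of Proposition \ref{2to the f}, not to any residual quadratic ambiguity in the unramified twist.
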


\noindent Next, we turn our attention to generically irreducible families of
two-dimensional crystalline $E$-representations of $G_{K_{f}}.$ For any
irreducible effective two-dimensional crystalline $E$-representation of $%
G_{K_{f}}$ with labeled Hodge-Tate weights $\{0,-k_{i}\}_{\tau _{i}}$ which
is induced from a crystalline character of $G_{K_{2f}},$ we construct an
infinite family of crystalline $E$-representations of $G_{K_{f}}$ of the
same Hodge-Tate type which contains it. The members of each of these
families have the same semisimplified $\func{mod}$ $p$ reductions which we
explicitly compute. \noindent

\noindent \noindent Consider the representation$\ V_{\vec{\ell}}=\mathrm{Ind}%
_{K_{2f}}^{K_{f}}\left( \chi _{0}^{\ell _{1}}\cdot \chi _{1}^{\ell
_{2}}\cdot \cdots \cdot \chi _{2f-2}^{\ell _{2f-1}}\cdot \chi _{2f-1}^{\ell
_{0}}\right) ,$ where $\{\ell _{i},\ell _{i+f}\}=\{0,k_{i}\}$ for all $%
i=0,1,...,f-1,$ and assume that at least one $k_{i}$ is strictly positive.
Theorem \ref{about induced} asserts that $V_{\vec{\ell}}$ is irreducible and
crystalline with labeled Hodge-Tate weights $\{0,-k_{i}\}_{\tau _{i}}.$ We
describe the members of the family containing $V_{\vec{\ell}}$ in terms of
their corresponding by the Colmez-Fontaine theorem weakly admissible
filtered $\varphi $-modules.

\begin{definition}
\label{o prwtos orismos}We define the following four types of matrices:\ 
\begin{equation*}
\text{t}_{1}\mathbf{:}\left( 
\begin{array}{cc}
p^{k_{i}} & 0 \\ 
X_{i} & 1%
\end{array}%
\right) ,\ \text{t}_{2}\mathbf{:}\left( 
\begin{array}{cc}
X_{i} & 1 \\ 
p^{k_{i}} & 0%
\end{array}%
\right) ,\ \text{t}_{3}\mathbf{:}\left( 
\begin{array}{cc}
1 & X_{i} \\ 
0 & p^{k_{i}}%
\end{array}%
\right) ,\ \text{t}_{4}:\left( 
\begin{array}{cc}
0 & p^{k_{i}} \\ 
1 & X_{i}%
\end{array}%
\right) ,
\end{equation*}%
where the $X_{i}\ $are indeterminates. Let $k=\max \{k_{i},\ i=0,1,...,f-1\}$
and let 
\begin{equation*}
m:=\left\{ 
\begin{array}{l}
\ \lfloor \frac{k-1}{p-1}\rfloor \ \ \ \ \text{if }k\geq p\ \text{and }%
k_{i}\neq p\ \text{for some }i, \\ 
\ \ \ \ 0\ \ \ \ \ \ \ \text{if }k\leq p-1\ \text{or }k_{i}=p\ \text{for all 
}i.%
\end{array}%
\right.
\end{equation*}%
Let $P(\overrightarrow{X})=\left( P_{1}\left( X_{1}\right) ,P_{2}\left(
X_{2}\right) ,...,P_{f}\left( X_{f}\right) \right) $ be a matrix whose
coordinates $P_{j}\left( X_{j}\right) $ are matrices of type $1,2,3$ or $4.$
To each such $f$-tuple we attach a type-vector $\vec{i}\in \{1,2,3,4\}^{f},$
where for any $j=1,2,...,f,$ the $j$-th coordinate of $\vec{i}$ is defined
to be the type of the matrix $P_{j}.\ $We write $P(\overrightarrow{X})=P^{%
\vec{i}}(\overrightarrow{X}).$ The set of all $f$-tuples of matrices of type 
$1,2,3,4$ will be denoted by $\mathcal{P}.$ There is no loss to assume that
the first $f-1$ coordinates of $P(\overrightarrow{X})$ are of type $1$ or $2$
(see Remark\ $\ref{remark about types}$) and unless otherwise stated we
always assume so. Matrices of type $t_{1}$ or $t_{3}$ are called of odd
type, while matrices of type $t_{2}$ or $t_{4}$ are called of even type.
\end{definition}

\noindent \indent For any $\vec{a}=\left( \alpha _{1},\alpha _{2},...,\alpha
_{f}\right) \in \left( p^{m}\mathfrak{m}_{E}\right) ^{f},$ let $P^{\vec{i}%
}\left( \vec{\alpha}\right) =\left( P_{1}\left( \alpha _{1}\right)
,P_{2}\left( \alpha _{2}\right) ,...,P_{f}\left( \alpha _{f}\right) \right)
\ $be the matrix obtained by evaluating each indeterminate $X_{i}$ at $%
\alpha _{i}.$ We view indices of $f$-tuples $\func{mod}$ $f,$ so $%
P_{f}=P_{0}.$ To construct the family containing $V_{\vec{\ell}},$ we choose
the types of the matrices $P_{i}$ as follows:

(1) If $\ell _{1}=0,$ $P_{1}=t_{2};$

(\noindent 2) If $\ell _{1}=k_{1}>0,$ $P_{1}=t_{1}.$ \noindent

\noindent For $i=2,3,...,f-1,\ $we choose the type of the matrix $P_{i}$ as
follows:

\noindent

(1) If $\ell _{i}=0,$ then: \noindent

\begin{itemize}
\item If an even number of coordinates of $(P_{1},P_{2},...,P_{i-1})$ is of
even type, $P_{i}=t_{2};$ \noindent

\item If an odd number of coordinates of $(P_{1},P_{2},...,P_{i-1})$ is of
even type, $P_{i}=t_{1}.$
\end{itemize}

\noindent

(\noindent 2) If $\ell _{i}=k_{i}>0,$ then: \noindent

\begin{itemize}
\item If an even number of coordinates of $(P_{1},P_{2},...,P_{i-1})$ is of
even type, $P_{i}=t_{1};$ \noindent

\item If an odd number of coordinates of $(P_{1},P_{2},...,P_{i-1})$ is of
even type, $P_{i}=t_{2}.$
\end{itemize}

\noindent Finally, we choose the type of the matrix $P_{0}$ as follows:

\noindent

(\noindent 1) If $\ell _{0}=0,$ then:

\begin{itemize}
\item \noindent If an even number of coordinates of $%
(P_{1},P_{2},...,P_{f-1})$ is of even type, $P_{0}=t_{4};$

\item If an odd number of coordinates of $(P_{1},P_{2},...,P_{f-1})$ is of
even type, $P_{0}=t_{3}.$
\end{itemize}

(\noindent 2) If $\ell _{0}=k_{i}>0,$ then:

\begin{itemize}
\item If an even number of coordinates of $(P_{1},P_{2},...,P_{f-1})$ is of
even type, $P_{0}=t_{2};$

\item If an odd number of coordinates of $(P_{1},P_{2},...,P_{f-1})$ is of
even type, $P_{0}=t_{1}.$
\end{itemize}

\noindent We define families of rank two filtered $\varphi $-modules $\left( 
\mathbb{D}_{\vec{k}}^{\vec{i}}\left( \vec{\alpha}\right) ,\varphi \right) \ $%
over $E^{\mid \tau \mid }$ by equipping%
\begin{equation*}
\mathbb{D}_{\vec{k}}^{\vec{i}}\left( \vec{\alpha}\right) =E^{\mid \tau \mid
}\eta _{1}\tbigoplus E^{\mid \tau \mid }\eta _{2}
\end{equation*}%
with the Frobenius endomorphism defined by $\left( \varphi \left( \eta
_{1}\right) ,\varphi \left( \eta _{2}\right) \right) =\left( \eta _{1},\eta
_{2}\right) P^{\vec{i}}\left( \vec{\alpha}\right) $ and the filtration 
\begin{equation}
\ \ \ \ \ \ \ \ \ \ \ \ \mathrm{Fil}^{\mathrm{j}}(\mathbb{D}_{\vec{k}}^{\vec{%
i}}\left( \vec{\alpha}\right) )=\left\{ 
\begin{array}{l}
E^{\mid \tau \mid }\eta _{1}\oplus E^{\mid \tau \mid }\eta _{2}\ \ \ \ \ \ 
\text{if\ \ \ }j\leq 0, \\ 
E^{\mid \tau _{I_{0}}\mid }\left( \vec{x}\eta _{1}+\vec{y}\eta _{2}\right) \
\ \ \text{if \ }1\leq j\leq w_{0}, \\ 
E^{\mid \tau _{I_{1}}\mid }\left( \vec{x}\eta _{1}+\vec{y}\eta _{2}\right) \
\ \ \text{if \ }1+w_{0}\leq j\leq w_{1}, \\ 
\ \ \ \ \ \ \ \ \ \ \ \ \ \ \ \ \ \ \ \ \cdots \cdots \\ 
E^{\mid \tau _{I_{t-1}}\mid }\left( \vec{x}\eta _{1}+\vec{y}\eta _{2}\right)
\ \text{if \ }1+w_{t-2}\leq j\leq w_{t-1}, \\ 
\ \ \ \ \ \ \ \ \ \ \ \ \ \ \ \ \ 0\ \ \ \ \ \ \ \ \ \text{if \ }j\geq
1+w_{t-1},%
\end{array}%
\right.  \label{ftrt}
\end{equation}%
where $\vec{x}=(x_{0},x_{1},...,x_{f-1})$ and $\vec{y}%
=(y_{0},y_{1},...,y_{f-1}),$ with 
\begin{equation}
(x_{i},y_{i})=\left\{ 
\begin{array}{l}
(1,-\alpha _{i})\text{\ if }P_{i}\ \text{has type }1\ \text{or}\ 2, \\ 
(-\alpha _{i},1)\ \text{if }P_{i}\ \text{has type }3\ \text{or }4.%
\end{array}%
\right.  \label{ftrt1}
\end{equation}

\begin{theorem}
\label{irre}Let $\vec{i}$ be the type-vector attached to the $f$-tuple $%
\left( P_{1},P_{2},...,P_{f}\right) $ defined above. For any $\vec{\alpha}%
\in \left( p^{m}\mathfrak{m}_{E}\right) ^{f},$

\begin{enumerate}
\item[(i)] The filtered $\varphi $-module $\mathbb{D}_{\vec{k}}^{\vec{i}}(%
\vec{\alpha})$ is weakly admissible and corresponds to a two-dimensional
crystalline $E$-representations $V_{\vec{k}}^{\vec{i}}(\vec{\alpha})$ of $%
G_{K_{f}}$ with labeled Hodge-Tate weights $\{0,-k_{i}\}_{\tau _{i}};$

\item[(ii)] $V_{\vec{k}}^{\vec{i}}(\vec{0})=\mathrm{Ind}_{K_{2f}}^{K_{f}}%
\left( \chi _{0}^{\ell _{1}}\cdot \chi _{1}^{\ell _{2}}\cdot \cdots \cdot
\chi _{2f-2}^{\ell _{2f-1}}\cdot \chi _{2f-1}^{\ell _{0}}\right) ;$

\item[(iii)] $\overline{V}_{\vec{k}}^{\vec{i}}\left( \vec{\alpha}\right) =%
\overline{V}_{\vec{k}}^{\vec{i}}(\vec{0});$

\item[(iv)] $\left( \overline{V}_{\vec{k}}^{\vec{i}}\left( \vec{\alpha}%
\right) _{\mid I_{K_{f}}}\right) ^{s.s.}=\omega _{2f,\bar{\tau}_{0}}^{\beta
}\oplus \omega _{2f,\bar{\tau}_{0}}^{p^{f}\beta },$ where $\beta
=-\tsum\limits_{i=0}^{2f-1}p^{i}\ell _{i};$

\item[(v)] The residual representation $\overline{V}_{\vec{k}}^{\vec{i}%
}\left( \vec{\alpha}\right) $ is irreducible if and only if $1+p^{f}\nmid
\beta ;$

\item[(vi)] Any irreducible member of the family $\left\{ V_{\vec{k}}^{\vec{i%
}}\left( \vec{\alpha}\right) ,\ \vec{\alpha}\in \left( p^{m}\mathfrak{m}%
_{E}\right) ^{f}\right\} ,$ other than $V_{\vec{k}}^{\vec{i}}(\vec{0}),$ is
non-induced.
\end{enumerate}
\end{theorem}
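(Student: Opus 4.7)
The plan is to prove parts (i)--(vi) in order, with (iii) being the technical heart from which (iv)--(vi) follow more easily.

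For (i), I would read off the labeled Hodge--Tate weights directly from the filtration (\ref{ftrt}): at the embedding $\tau_{i}$ the graded pieces $e_{i}\mathrm{gr}^{j}$ are non-zero precisely at $j=0$ and $j=-k_{i}$, giving the set $\{0,-k_{i}\}_{\tau_{i}}$. Each of the matrices $\mathrm{t}_{1},\mathrm{t}_{2},\mathrm{t}_{3},\mathrm{t}_{4}$ has determinant $\pm p^{k_{i}}$, so the total Hodge and Newton numbers both equal $\sum_{i}k_{i}$ on the full module. Weak admissibility then reduces to the inequality $t_{H}(\mathbb{L})\leq t_{N}(\mathbb{L})$ for every $\varphi$-stable rank-one $E^{\mid\tau\mid}$-submodule $\mathbb{L}=E^{\mid\tau\mid}v$. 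Writing $v=a\eta_{1}+b\eta_{2}$ and solving $\varphi(v)=cv$ component by component (using each of the four possible shapes of $P_{i}$), constraints on $v_{p}(a_{i})$, $v_{p}(b_{i})$ and $v_{p}(c_{i})$ emerge in terms of $v_{p}(\alpha_{i})$, and the hypothesis $\vec\alpha\in(p^{m}\mathfrak{m}_{E})^{f}$ is exactly what forces the Hodge slope of $\mathbb{L}$ not to exceed its Newton slope at each embedding.

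For (ii), at $\vec\alpha=\vec 0$ each $P_{i}$ specialises to either a diagonal or antidiagonal matrix with entries $p^{k_{i}}$ and $1$, and the filtration lines $(x_{i},y_{i})$ collapse to $(1,0)$ or $(0,1)$. The recipe for the types (which forces $P_{0}$ into type $3$ or $4$ when $\ell_{0}=0$, together with the parity conditions on the other $P_{i}$) is precisely what prevents $\mathbb{D}_{\vec k}^{\vec i}(\vec 0)$ from splitting over $E^{\mid\tau\mid}$ while making it split, after extending scalars from $E\otimes K_{f}$ to $E\otimes K_{2f}$, into two eigenlines swapped by the nontrivial element of $\mathrm{Gal}(K_{2f}/K_{f})$ -- the categorical signature of an induction from $K_{2f}$. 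Matching the Frobenius eigenvalues and filtration with Fontaine's functor applied to $\chi_{0}^{\ell_{1}}\cdot\chi_{1}^{\ell_{2}}\cdots\chi_{2f-1}^{\ell_{0}}$, whose Dieudonn\'{e} module was made explicit in \S\ref{the crystalline characters}, then identifies the two filtered $\varphi$-modules.

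Part (iii) is the main obstacle and I would follow the Wach-module strategy of \cite{BLZ04}. For every $\vec\alpha$ with $v_{p}(\alpha_{i})>m$, the plan is to produce an integral Wach module $N_{\vec k}^{\vec i}(\vec\alpha)$ attached to $\mathbb{D}_{\vec k}^{\vec i}(\vec\alpha)$ whose matrix of $\varphi$ agrees with $P^{\vec i}(\vec\alpha)$ modulo appropriate powers of $\pi$, with the commuting $\Gamma$-action built up by successive approximation. The cutoff $m=\lfloor(k-1)/(p-1)\rfloor$ is exactly the threshold that allows one to solve the $\Gamma$-compatibility relations inductively in powers of $\pi$ and to kill the $\alpha_{i}$-dependent correction terms modulo $(p,\pi)$. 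One then obtains $N_{\vec k}^{\vec i}(\vec\alpha)/p=N_{\vec k}^{\vec i}(\vec 0)/p$, and Fontaine's equivalence of $(\varphi,\Gamma)$-modules yields $\overline{V}_{\vec k}^{\vec i}(\vec\alpha)=\overline{V}_{\vec k}^{\vec i}(\vec 0)$. The hard part is the careful bookkeeping of the $\Gamma$-action across the whole family, where the specific shape imposed by the types $\vec i$ becomes essential.

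Parts (iv)--(vi) then follow. For (iv) the common reduction is that of $\mathrm{Ind}_{K_{2f}}^{K_{f}}\chi_{\vec\ell}$ by (ii) and (iii); the standard formula $\bar\chi_{\vec\ell}\mid_{I_{K_{2f}}}=\omega_{2f,\bar{\tau}_{0}}^{-\sum_{i=0}^{2f-1}p^{i}\ell_{i}}$, together with the inducing procedure, gives $\omega_{2f,\bar{\tau}_{0}}^{\beta}\oplus\omega_{2f,\bar{\tau}_{0}}^{p^{f}\beta}$ with $\beta=-\sum_{i=0}^{2f-1}p^{i}\ell_{i}$. For (v), these two inertial characters coincide iff $\beta(p^{f}-1)\equiv 0\pmod{p^{2f}-1}$, equivalently $(p^{f}+1)\mid\beta$. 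For (vi), if $V_{\vec k}^{\vec i}(\vec\alpha)$ is irreducible and induced from $G_{K_{2f}}$, then by Theorem~\ref{about induced} its restriction to $G_{K_{2f}}$ decomposes as a sum of two crystalline characters, so $\mathbb{D}_{\vec k}^{\vec i}(\vec\alpha)\otimes_{E\otimes K_{f}}(E\otimes K_{2f})$ carries a $\varphi^{2}$-stable line compatible with the induced filtration; a coordinate-wise analysis using the shape of the filtration vectors $(x_{i},y_{i})$ from (\ref{ftrt1}) forces $\alpha_{i}=0$ at every $i$, whence $\vec\alpha=\vec 0$.
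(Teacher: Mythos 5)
Your overall strategy is close to the paper's, but there are two substantive issues worth flagging, one a genuine error and one a structural inefficiency that causes you to miss why the statement is organized the way it is.

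\textbf{Misattribution of the constraint $\vec\alpha\in(p^{m}\mathfrak{m}_{E})^{f}$.} In your argument for (i) you claim this hypothesis ``is exactly what forces the Hodge slope of $\mathbb{L}$ not to exceed its Newton slope at each embedding.'' This is not correct, and it conflicts with what you yourself say about part (iii). Weak admissibility of $\mathbb{D}_{\vec k}^{\vec i}(\vec\alpha)$ holds for a far wider range of $\vec\alpha$ (essentially any $\vec\alpha$ with positive valuation; for $\alpha_{i}\neq 0$ the filtration line $(x_{i},y_{i})$ is not a $\varphi$-eigenline, so the $\varphi$-stable rank-one submodules contribute Hodge number $0$ and the inequalities in Proposition \ref{class thm} are automatic). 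The real role of $m=\lfloor(k-1)/(p-1)\rfloor$ is the one you correctly identify under (iii): it is the threshold, coming from the polynomials $z_{i}$ with $z_{i}\equiv p^{m_{\ell}}\bmod\pi$ (Proposition \ref{zi lemma}), below which the successive approximation producing the commuting $\Gamma$-action fails. In the paper, (i) is never proved by a direct slope computation: weak admissibility is a consequence of Proposition \ref{main theorem}, which identifies $\mathbb{D}_{\vec k}^{\vec i}(\vec\alpha)$ with $\mathbb{N}_{\vec k}^{\vec i}(\vec\alpha)/\pi$ for the Wach module constructed in \S\ref{section families of crystalline reps}, and then invokes Berger's Theorem~\ref{berger thm}. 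Building the Wach module first is what ties (i) and (iii) together and makes the constraint on $\vec\alpha$ natural; your plan proves (i) by a direct check, then builds a Wach module from scratch for (iii), duplicating work and obscuring where $m$ actually enters.

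\textbf{Part (vi) via a different route.} Your proposed coordinate-wise analysis of the $\varphi^{2}$-stable line over $E\otimes K_{2f}$ is a different strategy than the paper's, which argues via determinants: every irreducible induced member of the family has the form $\eta_{c}\otimes\mathrm{Ind}_{K_{2f}}^{K_{f}}(\chi')$ (Proposition~\ref{induction in char 0}), the determinant of every member of the family equals $(-1)^{t}p^{\sum k_{i}}$ for a fixed $t$, and comparing determinants forces $\eta_{c}=1$, leaving only $V_{\vec k}^{\vec i}(\vec 0)$. Your approach is plausible in principle but you leave the ``coordinate-wise analysis'' entirely unexecuted; proving that reducibility of the restriction to $G_{K_{2f}}$ forces $\vec\alpha=\vec 0$ requires tracking the $2f$-component filtration after a change of basis diagonalizing $P(\vec 0)^{\otimes 2}$, which is exactly the explicit computation (with matrices $Q_{i}$) that occupies most of the paper's proof of Proposition~\ref{prop1}. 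For part (ii) likewise you sketch the idea of ``two eigenlines swapped by $\mathrm{Gal}(K_{2f}/K_{f})$'' correctly, but the labeled Hodge--Tate weights of the eigenline $(-\ell_{0},-\ell_{1},\ldots,-\ell_{2f-1})$ depend delicately on the parity bookkeeping in the choice of types; that is the substance of the proof and it is not carried out here.
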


\noindent Notice that if $1+p^{f}\nmid \tsum\limits_{i=0}^{2f-1}p^{i}\ell
_{i},$ all the members of the family $\left\{ V_{\vec{k}}^{\vec{i}}\left( 
\vec{\alpha}\right) ,\ \vec{\alpha}\in \left( p^{m}\mathfrak{m}_{E}\right)
^{f}\right\} $ are forced to be irreducible. Next, we compute the
semisimplified reduction of any reducible two-dimensional crystalline $E$%
-representation of $G_{K_{f}}.$ After enlarging $E$ if necessary, any
reducible rank two weakly admissible filtered $\varphi $-module $\mathbb{D}$
over $E^{\mid \tau \mid }$ with labeled Hodge-Tate weights $%
\{0,-k_{i}\}_{\tau _{i}}\ $contains an ordered basis $\underline{\eta }%
=(\eta _{1},\eta _{2})$ in which the matrix of Frobenius takes the form $%
\mathrm{Mat}_{\underline{\eta }}(\varphi )=\left( 
\begin{array}{cc}
\vec{\alpha} & \vec{0} \\ 
\vec{\ast} & \vec{\delta}%
\end{array}%
\right) ,$ such that $\mathbb{D}_{2}=\left( E^{\mid \tau \mid }\right) \eta
_{2}$ is a $\varphi $-stable weakly admissible submodule (see Proposition %
\ref{reducible section prop}). The filtration of $\mathbb{D}$ in such a
basis $\underline{\eta }$ has the form 
\begin{equation*}
\mathrm{Fil}^{\mathrm{j}}(\mathbb{D})=\left\{ 
\begin{array}{l}
E^{\mid \tau \mid }\eta _{1}\oplus E^{\mid \tau \mid }\eta _{2}\ \ \ \ \ \ 
\text{if }j\leq 0, \\ 
E^{\mid \tau _{I_{0}}\mid }\left( \vec{x}\eta _{1}+\vec{y}\eta _{2}\right) \
\ \text{if }1\leq j\leq w_{0}, \\ 
E^{\mid \tau _{I_{1}}\mid }\left( \vec{x}\eta _{1}+\vec{y}\eta _{2}\right) \
\ \text{if }1+w_{0}\leq j\leq w_{1}, \\ 
\ \ \ \ \ \ \ \ \ \ \ \ \ \ \ \ \ \ \ \ \cdots \cdots \\ 
E^{\mid \tau _{I_{t-1}}\mid }\left( \vec{x}\eta _{1}+\vec{y}\eta _{2}\right)
\ \text{if }1+w_{t-2}\leq j\leq w_{t-1}, \\ 
\ \ \ \ \ \ \ \ \ \ 0\ \ \ \ \ \ \ \ \ \ \ \ \ \ \ \ \text{if }j\geq
1+w_{t-1},%
\end{array}%
\right.
\end{equation*}%
for some vectors $\vec{x},\vec{y}\in E^{\mid \tau \mid }$ with $\left(
x_{i},y_{i}\right) \neq \left( 0,0\right) $ for all $i.$ For each $i\in
I_{0},$ let 
\begin{equation*}
m_{i}=\left\{ 
\begin{array}{l}
0\ \ \text{if }x_{i}\neq 0~, \\ 
k_{i}\ \text{if }x_{i}=0.%
\end{array}%
\right.
\end{equation*}

\begin{theorem}
Let $V$ be any reducible two-dimensional crystalline $E$-representation of $%
G_{K_{f}}$ with labeled Hodge-Tate weights $\{0,-k_{i}\}_{\tau _{i}}$
corresponding to the weakly admissible filtered $\varphi $-module $\mathbb{D}
$ as above.

\begin{enumerate}
\item[(i)] There exist unramified characters $\eta _{i}$ of $G_{K_{f}}$ such
that%
\begin{equation*}
\ V\simeq \left( 
\begin{array}{cc}
\psi _{1} & \boldsymbol{\ast } \\ 
0 & \psi _{2}%
\end{array}%
\right) ,\ 
\end{equation*}%
where $\psi _{1}=\eta _{1}\cdot \chi _{0}^{m_{1}}\cdot \cdots \cdot \chi
_{f-2}^{m_{f-1}}\cdot \chi _{f-1}^{m_{0}}$ and $\psi _{2}=\eta _{2}\cdot
\chi _{0}^{k_{1}-m_{1}}\cdot \chi _{1}^{k_{2}-m_{2}}\cdot \cdots \cdot \chi
_{f-2}^{k_{f-1}-m_{f-1}}\cdot \chi _{f-1}^{k_{0}-m_{0}};$

\item[(ii)] $\left( \overline{V}_{\mid I_{K}}\right) ^{s.s.}=\omega _{f,\bar{%
\tau}_{0}}^{\beta _{1}}\oplus \omega _{f,\bar{\tau}_{0}}^{\beta _{2}},\ $%
where $\beta _{1}=-\tsum\limits_{i=0}^{f-1}m_{i}p^{i}$ and $\beta
_{2}=\tsum\limits_{i=0}^{f-1}\left( m_{i}-k_{i}\right) p^{i}.$
\end{enumerate}
\end{theorem}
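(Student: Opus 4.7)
The strategy is to read off the diagonal characters of $V$ from the filtered $\varphi$-module $\mathbb{D}$, then invoke the classification of crystalline characters of $G_{K_f}$ proved earlier in \S\ref{the crystalline characters}, and finally reduce modulo $p$ using the known restrictions of the $\chi_j$ to inertia. Since $\mathbb{D}_2=E^{|\tau|}\eta_2$ is a $\varphi$-stable weakly admissible submodule, under the Colmez--Fontaine equivalence it corresponds to the unique stable line $V_1\subset V$; the upper-triangular shape $V\simeq\bigl(\begin{smallmatrix}\psi_1&\ast\\0&\psi_2\end{smallmatrix}\bigr)$ is just the translation of the short exact sequence $0\to\mathbb{D}_2\to\mathbb{D}\to\mathbb{D}/\mathbb{D}_2\to 0$ back to representations, with $\psi_1$ the character of $V_1$ (so of $\mathbb{D}_2$) and $\psi_2$ the character of the quotient.

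The first calculation is the labeled Hodge-Tate weights of the two pieces. At embedding $\tau_i$, the filtration line $E(x_i\eta_1+y_i\eta_2)$ is contained in $\mathbb{D}_2$ exactly when $x_i=0$; hence the only positive jump of $\mathrm{Fil}^\bullet\mathbb{D}_{2,i}$ sits at $k_i$ if $x_i=0$ and does not occur at all if $x_i\neq 0$. Thus $\mathbb{D}_2$ has labeled Hodge-Tate weight $-m_i$ at $\tau_i$, and by complementarity $\mathbb{D}/\mathbb{D}_2$ has labeled weight $-(k_i-m_i)$. Weak admissibility of $\mathbb{D}_2$ (and hence of the quotient) follows from weak admissibility of $\mathbb{D}$ together with the standard $t_H(\mathbb{D}_2)\le t_N(\mathbb{D}_2)$ and additivity of $t_H,t_N$ in exact sequences. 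Applying the classification of \S\ref{the crystalline characters} — every crystalline character with weights $\{-k_i\}_{\tau_i}$ is uniquely of the form $\eta\cdot\chi_0^{k_1}\chi_1^{k_2}\cdots\chi_{f-1}^{k_0}$ for an unramified $\eta$ — to $\psi_1$ and $\psi_2$ produces exactly the formulas of part (i).

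For part (ii) the unramified factors $\eta_1,\eta_2$ vanish on restriction to $I_{K_f}$, so only the $\chi_j$-components contribute to $\overline{V}^{s.s.}|_{I_{K_f}}$. From the explicit construction in \S\ref{the crystalline characters} one has $\chi_j|_{I_{K_f}}\equiv \omega_{f,\bar\tau_0}^{-p^{j+1}}\pmod{\mathfrak{m}_E}$; this can be cross-checked against the formula for $\beta$ in Theorem~\ref{irre}(iv), which is the analogous statement one level higher over $K_{2f}$. Multiplying up the exponents and collapsing them using $p^f\equiv 1\pmod{p^f-1}$ gives $\overline{\psi_1}|_{I_{K_f}}=\omega_{f,\bar\tau_0}^{-\sum_i m_ip^i}=\omega_{f,\bar\tau_0}^{\beta_1}$ and analogously $\overline{\psi_2}|_{I_{K_f}}=\omega_{f,\bar\tau_0}^{-\sum_i(k_i-m_i)p^i}=\omega_{f,\bar\tau_0}^{\beta_2}$, and since the semisimplified reduction of an upper-triangular representation is the sum of the reductions of its diagonal entries, (ii) follows.

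No new input is required beyond material already established in the paper; the main obstacle is simply the bookkeeping of the cyclic index shift between the subscript $i$ of $m_i$ and the exponent $j$ of $\chi_j$ (which carries its nontrivial Hodge-Tate weight in position $j+1$), so that the coordinate-wise weight check identifying the characters $\psi_1,\psi_2$ and the subsequent collapse of exponents modulo $p^f-1$ in the inertia computation are carried out consistently.
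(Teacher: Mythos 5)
Your proof is correct and takes essentially the same route as the paper: identify $\mathbb{D}_2=E^{|\tau|}\eta_2$ as the $\varphi$-stable weakly admissible submodule, read off its labeled Hodge--Tate weights from the induced filtration (via $\mathrm{Fil}^j\mathbb{D}_2=\mathbb{D}_2\cap\mathrm{Fil}^j\mathbb{D}$, which is what Lemma \ref{ermhs} packages), apply the classification of Proposition \ref{The crystalline characters} and Corollary \ref{cor3.6} to pin down the subrepresentation character up to an unramified twist, and then reduce modulo $p$ using Lemma \ref{BDJ lemma}. The only divergence is cosmetic: the paper obtains $\psi_2$ as the cofactor $\det V\cdot\psi_1^{-1}$ by invoking Corollary \ref{detV lemma} (the determinant formula), whereas you compute the weights of the quotient filtered module $\mathbb{D}/\mathbb{D}_2$ directly as $\{-(k_i-m_i)\}_{\tau_i}$ by additivity of the filtration in short exact sequences. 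The two bookkeepings are equivalent. One small imprecision worth flagging: you write that weak admissibility of $\mathbb{D}_2$ ``follows from weak admissibility of $\mathbb{D}$ together with $t_H(\mathbb{D}_2)\le t_N(\mathbb{D}_2)$,'' but that inequality alone does not give admissibility of the submodule; you actually need the equality $t_H(\mathbb{D}_2)=t_N(\mathbb{D}_2)$. In the paper this is not something to prove here but part of the hypothesis: Proposition \ref{reducible section prop} already produces the basis in which $\mathbb{D}_2$ is a weakly admissible $\varphi$-stable submodule, precisely because $V$ was assumed reducible. With that read as given, your argument closes without gaps.
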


\noindent The computation of the semisimplified $\func{mod}$ $p$ reduction
of a reducible two-dimensional crystalline representation is easy and does
not require the construction of the Wach module corresponding to some $%
G_{K_{f}}$-stable lattice contained in it. Computing the non-semisimplified $%
\func{mod}$ $p$ reduction of a two-dimensional crystalline representations
with reducible reduction is an interesting problem not pursued in this
paper. For results of this flavour for $K=%
\mathbb{Q}
_{p^{2}},$ the reader can see \cite{CD09}.

Up to twist by some unramified character, any split-reducible
two-dimensional crystalline $E$-representations of $G_{K_{f}}\ $with labeled
Hodge-Tate weights $\{0,-k_{i}\}_{\tau _{i}}$ is of the form 
\begin{equation*}
V_{\vec{\ell},\vec{\ell}^{\prime }}\left( \eta \right) =\eta \cdot \chi
_{0}^{\ell _{1}}\cdot \chi _{1}^{\ell _{2}}\cdot \cdots \cdot \chi
_{f-2}^{\ell _{f-1}}\cdot \chi _{f-1}^{\ell _{0}}\oplus \chi _{0}^{\ell
_{1}^{\prime }}\cdot \chi _{1}^{\ell _{2}^{\prime }}\cdot \cdots \cdot \chi
_{f-2}^{\ell _{f-1}^{\prime }}\cdot \chi _{f-1}^{\ell _{0}^{\prime }},
\end{equation*}%
for some unramified character $\eta $ and some nonnegative integers $\ell
_{i}$ and $\ell _{i}^{\prime }$ such that $\{\ell _{i},\ell _{i}^{\prime
}\}=\{0,k_{i}\}$ for all $i.$ In Theorem \ref{irre} we showed that each
irreducible representation of $G_{K_{f}}$ induced from some crystalline
character of $G_{K_{2f}}$ belongs to an infinite family of crystalline
representations of the same Hodge-Tate types with the same $\func{mod}$ $p$
reductions. In the next theorem we prove the same for any split-reducible,
non-ordinary two-dimensional crystalline $E$-representation of $G_{K_{f}}.$
We list the weakly admissible filtered $\varphi $-modules corresponding to
these families. In order to construct the infinite family containing $V_{%
\vec{\ell},\vec{\ell}^{\prime }}\left( \eta \right) ,$ we define a matrix $%
P^{\vec{i}}(\overrightarrow{X})\in \mathcal{P}$ by choosing the $\left(
f-1\right) $-tuple $\left( P_{1},P_{2},...,P_{f-1}\right) $ as in Theorem %
\ref{irre}. If $\eta =\eta _{c}$ is the unramified character which maps $%
\mathrm{Frob}_{K_{f}}$ (geometric Frobenius) to $c,$ we replace the entry $%
p^{k_{0}}$ in the definition of the matrix $P_{0}$ by $cp^{k_{0}}.$ The type
of the matrix $P_{0}$ is chosen as follows: \noindent

(1) If $\ell _{0}=0,$ then: \noindent

\begin{itemize}
\item If an even number of coordinates of $\left(
P_{1},P_{2},...,P_{f-1}\right) $ is of even type, $P_{0}=t_{3};$

\item If an odd number of coordinates of $\left(
P_{1},P_{2},...,P_{f-1}\right) $ is of even type, $P_{0}=t_{4}.$
\end{itemize}

\noindent

(2) If $\ell _{0}=k_{0}>0,$ then:

\begin{itemize}
\item If an even number of coordinates of $\left(
P_{1},P_{2},...,P_{f-1}\right) $ is of even type, $P_{0}=t_{1};$

\item If an odd number of coordinates of $\left(
P_{1},P_{2},...,P_{f-1}\right) $ is of even type, $P_{0}=t_{2}.$
\end{itemize}

\noindent We define families of two-dimensional crystalline $E$%
-representations $\left\{ V_{\vec{k}}^{\vec{i}}\left( \vec{\alpha}\right) ,\ 
\vec{\alpha}\in \left( p^{m}\mathfrak{m}_{E}\right) ^{f}\right\} $ of $%
G_{K_{f}}$ as in Theorem \ref{irre}. We prove the following.

\pagebreak

\begin{theorem}
\label{reducibles0}Let $\vec{i}$ be the type-vector attached to the $f$%
-tuple $\left( P_{1},P_{2},...,P_{f}\right) $ defined above.

\begin{enumerate}
\item[(i)] There exists some unramified character $\mu \ $such that $\mu
\otimes V_{\vec{k}}^{\vec{i}}(\vec{0})\simeq V_{\vec{\ell},\vec{\ell}%
^{\prime }}(\eta );$

\item[(ii)] Assume that $\vec{\ell}\neq \vec{0}\ $and $\vec{\ell}^{\prime
}\neq \vec{0}.$ For any $\vec{\alpha}\in \left( p^{m}\mathfrak{m}_{E}\right)
^{f},\ \overline{V}_{\vec{k}}^{\vec{i}}(\vec{\alpha})\simeq \overline{V}_{%
\vec{k}}^{\vec{i}}(\vec{0});$

\item[(iii)] $\overline{V}_{\vec{\ell},\vec{\ell}^{\prime }}(\eta )_{\mid
I_{K_{f}}}=\omega _{f,\bar{\tau}_{0}}^{\beta }\oplus \omega _{f,\bar{\tau}%
_{0}}^{\beta ^{\prime }},$ where $\beta =-\tsum\limits_{i=0}^{f-1}\ell
_{i}p^{i}$ and $\beta ^{\prime }=-\tsum\limits_{i=0}^{f-1}\ell _{i}^{\prime
}p^{i}.$
\end{enumerate}
\end{theorem}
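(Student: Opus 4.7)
The three parts are handled in sequence, with (i) and (iii) being essentially formal consequences of the explicit construction together with the classification of crystalline characters from \S\ref{the crystalline characters}, while (ii) is where the real work lies.

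For part (i), I would specialize the matrix $P^{\vec{i}}(\vec{X})$ at $\vec{\alpha}=\vec{0}$. On each coordinate, $P_j(0)$ becomes diagonal (types $t_1,t_3$) or anti-diagonal (types $t_2,t_4$). The selection rules for the types in the statement of the theorem are designed so that the composition $\mathrm{Nm}_\varphi(P^{\vec{i}}(\vec{0}))$, which gives $\varphi^f$ coordinate-wise, is diagonal: every anti-diagonal factor coming from an even-type $P_i$ ($1\le i\le f-1$) is compensated by the choice of $P_0$, in contrast with Theorem \ref{irre} where the parity was chosen to produce a single swap. Hence the line $E^{\mid\tau\mid}\eta_2$ (and $E^{\mid\tau\mid}\eta_1$, up to the obvious coordinate permutation) is $\varphi$-stable, and the filtration (\ref{ftrt})--(\ref{ftrt1}) evaluated at $\vec{\alpha}=\vec{0}$ splits accordingly: on each coordinate $\tau_i$, the jump of size $k_i$ is placed on $\eta_2$ or $\eta_1$ according as $\ell_i$ equals $0$ or $k_i$. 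Combining this with the classification of effective crystalline characters from \S\ref{the crystalline characters} (and noting that the factor $c$ inserted into $P_0$ contributes the unramified twist $\eta=\eta_c$), the module $\mathbb{D}_{\vec{k}}^{\vec{i}}(\vec{0})$ exhibits $V_{\vec{k}}^{\vec{i}}(\vec{0})$ as $\mu^{-1}\otimes V_{\vec{\ell},\vec{\ell}'}(\eta)$ for an explicit unramified character $\mu$, proving (i).

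For part (ii), I would construct a Wach module $N(\mathbb{D}_{\vec{k}}^{\vec{i}}(\vec{\alpha}))$ lifting $\mathbb{D}_{\vec{k}}^{\vec{i}}(\vec{\alpha})$, following the strategy of Berger--Li--Zhu \cite{BLZ04} as adapted in the analogous step of the proof of Theorem \ref{irre}. I would seek an ordered basis $\underline{e}=(e_1,e_2)$ of $N$ in which the Frobenius matrix is a polynomial perturbation of $P^{\vec{i}}(\vec{\alpha})$ (with $X_i$ replaced by a suitable power series in $\pi$) and the $\Gamma$-action is the identity mod $\pi$. The bound $\vec{\alpha}\in(p^m\mathfrak{m}_E)^f$ with $m=\lfloor(k-1)/(p-1)\rfloor$ is exactly what is needed to force the $\alpha_i$-dependent corrections in $N$ to lie in $p\cdot A^+_{K_f}$. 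Reducing the Wach module modulo $\mathfrak{m}_E$ then gives the same $\varphi$- and $\Gamma$-module as in the case $\vec{\alpha}=\vec{0}$, and passing through Fontaine's equivalence yields $\overline{V}_{\vec{k}}^{\vec{i}}(\vec{\alpha})\simeq\overline{V}_{\vec{k}}^{\vec{i}}(\vec{0})$. This is the main technical obstacle: verifying that the Wach module with the prescribed Frobenius and $\Gamma$-action genuinely exists, and that the chosen bound $p^m$ is precisely what makes the $\vec{\alpha}$-dependence disappear mod $p$. The assumption $\vec{\ell}\ne\vec{0}$ and $\vec{\ell}'\ne\vec{0}$ is used to avoid the trivial-character case in which the $\Gamma$-action would be constrained differently.

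Part (iii) is then a direct calculation. By (i), $V_{\vec{\ell},\vec{\ell}'}(\eta)$ is the direct sum of the two characters $\psi_1=\eta\cdot\chi_0^{\ell_1}\cdots\chi_{f-1}^{\ell_0}$ and $\psi_2=\chi_0^{\ell_1'}\cdots\chi_{f-1}^{\ell_0'}$; the restriction of each $\psi_j$ to inertia is independent of the unramified factor. Using the formulas for $\overline{\chi}_i\mid_{I_{K_f}}$ established in \S\ref{the crystalline characters} — namely that each $\chi_i$ reduces on inertia to $\omega_{f,\bar{\tau}_0}^{-p^i}$ after cyclic re-indexing — the exponents aggregate into $\beta=-\sum_{i=0}^{f-1}\ell_i p^i$ and $\beta'=-\sum_{i=0}^{f-1}\ell_i' p^i$, giving the desired identity $\overline{V}_{\vec{\ell},\vec{\ell}'}(\eta)_{\mid I_{K_f}}=\omega_{f,\bar{\tau}_0}^{\beta}\oplus\omega_{f,\bar{\tau}_0}^{\beta'}$.
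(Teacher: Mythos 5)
Your overall strategy for part~(i) — decompose $\mathbb{D}_{\vec{k}}^{\vec{i}}(\vec{0})$ directly over $K_f$ rather than restricting to $K_{2f}$ as the paper does — is legitimate, and in fact the paper's diagonalizing basis change is $K_f$-rational in the reducible case (since $Q_{i+f}=Q_i$, by contrast with the irreducible case where $Q_{i+f}=R\cdot Q_i$). But your execution has a concrete error. The line $E^{|\tau|}\eta_2$ is \emph{not} $\varphi$-stable unless every $P_j(0)$ is diagonal: each even-type coordinate swaps $\eta_1$ and $\eta_2$. The $\varphi$-stable rank-one submodule is a coordinate-dependent twist $W=\bigoplus_i W_i$ with $W_i\in\{E\eta_1,E\eta_2\}$ determined by the cumulative swap parity, i.e.\ it is the $\zeta_2$-line after the basis change by $Q=(Q_0,\ldots,Q_{f-1})$ from formula~\eqref{Qi}. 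Your parenthetical ``up to the obvious coordinate permutation'' waves at this but you never carry it out, and it infects the next step: your claim that the filtration at $\vec{\alpha}=\vec{0}$ puts the jump $k_i$ on $\eta_1$ or $\eta_2$ according as $\ell_i$ equals $k_i$ or $0$ is false in the $\underline{\eta}$-basis — from~\eqref{ftrt1}, since $P_1,\ldots,P_{f-1}$ all have type $t_1$ or $t_2$, one has $(x_i,y_i)=(1,0)$ for $i=1,\ldots,f-1$ regardless of $\ell_i$. The $\ell_i$-dependence only appears after passing to the $\underline{\zeta}$-basis, where the transformed filtration vector $\vec{z}=\vec{x}\cdot\vec{q}_{11}+\vec{y}\cdot\vec{q}_{12}$ satisfies $z_i=0$ iff $\ell_i=0$ (formula~\eqref{zi,f}). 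You need that computation to identify the two constituents as $\eta\cdot\chi_0^{\ell_1}\cdots\chi_{f-1}^{\ell_0}$ and $\chi_0^{\ell_1'}\cdots\chi_{f-1}^{\ell_0'}$ via Proposition~\ref{rank 1 isom} and Lemma~\ref{chic prop}.

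For part~(ii) your plan is correct but substantially reproves what is already in hand: the family $\{V_{\vec{k}}^{\vec{i}}(\vec{\alpha})\}$ \emph{is} a family of the form covered by Theorem~\ref{from blz}, which gives $\overline{V}(\vec{\alpha})\simeq\overline{V}(\vec{0})$ in one line; there is no need to reconstruct the Wach module or reverify the $p^m$-bound here, since \S\ref{general construction of Wach}--\S\ref{2-d families} already did that. Also, your stated reason for the hypotheses $\vec{\ell}\neq\vec{0}$ and $\vec{\ell}'\neq\vec{0}$ (``to avoid the trivial-character case in which the $\Gamma$-action would be constrained differently'') misidentifies the obstruction: these hypotheses are precisely what excludes $(P_1,\ldots,P_f)\in C_1\cup C_2$, which by Corollary~\ref{c cor} is the situation where $\mathrm{Tr}(Q_f)$ is a scalar and the eigenvalue condition in Lemma~\ref{outline section theorem}(3) (needed for uniqueness of $G_\gamma$ and hence for the Wach module construction) fails. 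Part~(iii) is fine and matches the paper's use of Lemma~\ref{BDJ lemma}.
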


\noindent A family as in Theorem \ref{reducibles0} can contain
simultaneously split and non-split reducible, as well as irreducible
crystalline representations. For example, in the family $\left\{ V_{\vec{k}%
}^{\left( 1,3\right) }(\vec{\alpha}),\ \vec{\alpha}\in \left( p^{m}\mathfrak{%
m}_{E}\right) ^{2}\right\} ,$ the representation $V_{\vec{k}}^{\left(
1,3\right) }(\vec{\alpha})$ is split-reducible if and only if $\vec{\alpha}=%
\vec{0},$ non-split-reducible if and only if precisely one of the
coordinates $\alpha _{i}$ of $\vec{\alpha}$ is zero, and irreducible if and
only if $\alpha _{0}\alpha _{1}\neq 0\ $(cf. Proposition \ref{an example}).
The families of Wach modules which give rise to $V_{\vec{k}}^{\left(
1,3\right) }(\vec{\alpha})$ contain infinite sub-families of non-split
reducible Wach modules which can be used to compute the non-semisimplified $%
\func{mod}$ $p$ reduction of the corresponding crystalline representations
with respect to $G_{K_{f}}$-stable $\mathcal{O}_{E}$-lattices. Some
reducible two-dimensional crystalline representations with labeled
Hodge-Tate weights $\{0,-k_{i}\}_{\tau _{i}}$ are easily recognized by
looking at their trace of Frobenius. More precisely, if $\mathrm{Tr}\left(
\varphi ^{f}\right) \in \mathcal{O}_{E}^{\times },$ then the representation
is reducible (cf. Proposition \ref{existance of inf families}), with the
converse being false.

\section{Overview of the theory \label{Fontaine copy(1)}}

\subsection{\'{E}tale $\left( \protect\varphi ,\Gamma \right) $-modules and
Wach modules}

The general theory of $\left( \varphi ,\Gamma \right) $-modules works for
arbitrary finite extensions $K$ of $%
\mathbb{Q}
_{p}.$ However, a theory of Wach modules currently exists only when $K$ is
unramified over $%
\mathbb{Q}
_{p}.$ Here we temporarily allow $K$ to be any finite extension of $%
\mathbb{Q}
_{p};$ we will go back to assume that $K$ is unramified after Theorem \ref%
{fontaine|s equivalence}. Let $K_{n}=K(\zeta _{p^{n}}),\ $where $\zeta
_{p^{n}}\ $is a primitive $\ p^{n}$-th$\ $root\ of\ unity$\ $inside$\ \bar{%
\mathbb{Q}%
}_{p},$ and let $K_{\infty }=\cup _{n\geq 1}K_{n}.$ Let $\chi
:G_{K}\rightarrow 
\mathbb{Z}
_{p}^{\times }$ be the cyclotomic character. We denote $H_{K}=\ker \chi =%
\mathrm{Gal}(\bar{%
\mathbb{Q}%
}_{p}/K_{\infty })$\ and $\Gamma _{K}=G_{K}/H_{K}=\mathrm{Gal}(K_{\infty
}/K).$ Fontaine (\cite{FO90}) has constructed topological rings $\mathbb{A}$
and $\mathbb{B}$ endowed with continuous commuting Frobenius $\varphi $ and $%
G_{%
\mathbb{Q}
_{p}}$-actions. Let $\mathbb{A}_{K}=\mathbb{A}^{H_{K}}$ and $\mathbb{B}_{K}=%
\mathbb{B}^{H_{K}}.\ $We define $\mathbb{A}_{K\,,E}:=\mathcal{O}_{E}\otimes
_{%
\mathbb{Z}
_{p}}\mathbb{A}_{K}$ and $\mathbb{B}_{K\,,E}:=E\otimes _{%
\mathbb{Q}
_{p}}\mathbb{B}_{K}.$ The actions of $\varphi $ and $\Gamma _{K}$ extend to $%
\mathbb{A}_{K,E}\ $and $\mathbb{B}_{K\,,E}$ by $\mathcal{O}_{E}$ (resp. $E,$ 
$k_{E}$)-linearity. One sees that $\mathbb{A}_{K,E}=\mathbb{A}_{E}^{H_{K}}$
and $\mathbb{B}_{K,E}=\mathbb{B}_{E}^{H_{K}}.$

\begin{definition}
\label{Wach}A $(\varphi ,\Gamma )$-module over $\mathbb{A}_{K,E}$\ (resp. $%
\mathbb{B}_{K,E}$) is an $\mathbb{A}_{K,E}$-module of finite type (resp. $%
\mathbb{B}_{K,E}$-module,\ free of finite type) with continuous (for the
weak topology) commuting semilinear actions of $\varphi $\ and $\Gamma
_{K}.\ $A $(\varphi ,\Gamma )$-module $M$\ over $\mathbb{A}_{K,E}$\ is
called \'{e}tale if it is free and $\varphi ^{\ast }(M)=M,$\ where $\varphi
^{\ast }(M)$\ is the $\mathbb{A}_{K,E}$-module generated by the set $\varphi
(M).$\ A $(\varphi ,\Gamma )$-module $M$\ over $\mathbb{B}_{K,E}$\ is called 
\'{e}tale if it contains a basis $(e_{1},...,e_{d})$\ over $\mathbb{B}_{K,E}$%
\ such that $\left( \varphi (e_{1}),...,\varphi (e_{d})\right)
=(e_{1},...,e_{d})A$\ for some matrix $A\in \mathrm{GL}_{d}\left( \mathbb{A}%
_{K,E}\right) .$
\end{definition}

\noindent If $V$ is an $E$-linear representation of $G_{K},$ let $\mathbb{D}%
(V):=\left( \mathbb{B}_{E}\otimes _{E}V\right) ^{H_{K}}.$ The $\mathbb{B}%
_{K,E}$-module $\mathbb{D}(V)$ is equipped with a Frobenius $\varphi $
defined by $\varphi (b\otimes v):=\varphi (b)\otimes v,$ where $\varphi $ in
the right hand side is the Frobenius of $\mathbb{B}_{E}$ and a commuting
with $\varphi $ action of $\Gamma _{K}$ given by$\ \bar{g}(b\otimes
v):=gb\otimes gv$ for any $g\in G_{K}.$ Fontaine has proved that $\mathbb{D}%
(V)$ is an \'{e}tale $\left( \varphi ,\Gamma \right) $-module over $\mathbb{B%
}_{K,E}.$ Conversely, if $D$ is an \'{e}tale $\left( \varphi ,\Gamma \right) 
$-module, let $\mathbb{V}(D):=\left( \mathbb{B}_{E}\otimes _{\mathbb{B}%
_{K,E}}D\right) ^{\varphi =1},$ where $\varphi (b\otimes d):=\varphi
(b)\otimes \varphi (d).$ The $E$-vector space $\mathbb{V}(D)$ is equipped
with a $G_{K}$-action given by $g(b\otimes d):=gb\otimes \bar{g}d.$ We have
the following fundamental theorem of Fontaine.

\begin{theorem}
\cite{FO90}\label{fontaine|s equivalence}

\begin{enumerate}
\item[(i)] There is an equivalence of categories between $E$-linear
representations of $G_{K}\ $and \'{e}tale $(\varphi ,\Gamma )$-modules over $%
\mathbb{B}_{K,E}$ given by%
\begin{equation*}
\mathbb{D}:\mathrm{Rep}_{E}\left( G_{K}\right) \rightarrow \mathcal{M}%
od_{(\varphi ,\Gamma )}^{\ \acute{e}t}\left( \mathbb{B}_{K,E}\right)
:V\longmapsto \mathbb{D}(V):=\left( \mathbb{B}_{E}\otimes _{E}V\right)
^{H_{K}},
\end{equation*}%
with quasi-inverse functor 
\begin{equation*}
\mathbb{V}:\mathcal{M}od_{(\varphi ,\Gamma )}^{\ \acute{e}t}\left( \mathbb{B}%
_{K,E}\right) \rightarrow \mathrm{Rep}_{E}\left( G_{K}\right) :D\longmapsto 
\mathbb{V}(D):=\left( \mathbb{B}_{E}\otimes _{\mathbb{B}_{K,E}}D\right)
^{\varphi =1}.
\end{equation*}

\item[(ii)] There is an equivalence of categories between $\mathcal{O}_{E}$%
-linear representations of $G_{K}\ $and \'{e}tale $(\varphi ,\Gamma )$%
-modules over $\mathbb{A}_{K,E}$ given by 
\begin{equation*}
\mathbb{D}:\mathrm{Rep}_{\mathcal{O}_{E}}\left( G_{K}\right) \rightarrow 
\mathcal{M}od_{(\varphi ,\Gamma )}^{\ \acute{e}t}\left( \mathbb{A}%
_{K,E}\right) :\mathrm{T}\longmapsto \mathbb{D}(T):=\left( \mathbb{A}%
_{E}\otimes _{\mathcal{O}_{E}}\mathrm{T}\right) ^{H_{K}},
\end{equation*}%
with quasi-inverse functor 
\begin{equation*}
\mathbb{T}:\mathcal{M}od_{(\varphi ,\Gamma )}^{\ \acute{e}t}\left( \mathbb{A}%
_{K,E}\right) \rightarrow \mathrm{Rep}_{\mathcal{O}_{E}}\left( G_{K}\right)
:D\longmapsto \mathbb{T}(D):=\left( \mathbb{A}_{E}\otimes _{\mathbb{A}%
_{K,E}}D\right) ^{\varphi =1}.\noindent
\end{equation*}
\end{enumerate}
\end{theorem}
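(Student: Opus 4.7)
The plan is to reduce the theorem to Fontaine's original equivalence over $\mathbb{Q}_p$ and $\mathbb{Z}_p$ coefficients, and then upgrade to $E$- and $\mathcal{O}_E$-coefficients by a base-change argument. First I would recall the classical case \cite{FO90}: for a $\mathbb{Q}_p$-linear representation $V$ of $G_K$, the module $\mathbb{D}(V) = (\mathbb{B} \otimes_{\mathbb{Q}_p} V)^{H_K}$ is an \'etale $(\varphi, \Gamma)$-module over $\mathbb{B}_K$ with $\dim_{\mathbb{B}_K} \mathbb{D}(V) = \dim_{\mathbb{Q}_p} V$, and the natural map $\mathbb{B} \otimes_{\mathbb{B}_K} \mathbb{D}(V) \to \mathbb{B} \otimes_{\mathbb{Q}_p} V$ is a $(\varphi, \Gamma)$- and $G_K$-equivariant isomorphism. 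Taking $\varphi = 1$ invariants and using $\mathbb{B}^{\varphi = 1} = \mathbb{Q}_p$ yields $\mathbb{V}(\mathbb{D}(V)) \simeq V$ as $G_K$-representations. The reverse direction $\mathbb{D}(\mathbb{V}(D)) \simeq D$ is more delicate and rests on the analogous isomorphism $\mathbb{B} \otimes_{\mathbb{B}_K} D \simeq \mathbb{B} \otimes_{\mathbb{Q}_p} \mathbb{V}(D)$ together with faithfully flat descent along $\mathbb{B}_K \to \mathbb{B}$.

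To promote the equivalence to $E$-coefficients, I would observe that an $E$-linear representation of $G_K$ is simply a $\mathbb{Q}_p$-linear representation equipped with a commuting $E$-action. By functoriality of $\mathbb{D}$, this $E$-action transfers to $\mathbb{D}(V)$, making it a module over $E \otimes_{\mathbb{Q}_p} \mathbb{B}_K = \mathbb{B}_{K,E}$; freeness over $\mathbb{B}_{K,E}$ of rank $\dim_E V$ can be checked after decomposing the coefficient ring along its maximal ideals and applying the $\mathbb{Q}_p$-linear freeness component by component. The \'etaleness condition translates unchanged, since $\varphi$ and $\Gamma_K$ act on $\mathbb{B}_{K,E}$ by $E$-linear extension of their actions on $\mathbb{B}_K$. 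Conversely, any \'etale $(\varphi, \Gamma)$-module over $\mathbb{B}_{K,E}$ is in particular an \'etale $(\varphi, \Gamma)$-module over $\mathbb{B}_K$ equipped with a compatible $E$-action, so $\mathbb{V}(D)$ inherits an $E$-linear structure. The unit and counit of the adjunction are the same underlying $\mathbb{Q}_p$-linear maps, which respect the $E$-actions; hence they remain isomorphisms.

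The $\mathcal{O}_E$-integral statement (ii) proceeds by the same descent from Fontaine's $\mathbb{Z}_p$-integral equivalence, with $\mathbb{A}$, $\mathbb{A}_K$ replacing $\mathbb{B}$, $\mathbb{B}_K$, and the identification $\mathcal{O}_E \otimes_{\mathbb{Z}_p} \mathbb{A}_K = \mathbb{A}_{K,E}$ substituted for the coefficient ring; the key point in the integral setting is to replace the characterization $\mathbb{B}^{\varphi=1} = \mathbb{Q}_p$ by $\mathbb{A}^{\varphi=1} = \mathbb{Z}_p$ and to use that $\mathbb{A}_{K,E}$ is still Noetherian and $\varphi$-stable. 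The main obstacle is of course the original theorem of Fontaine over $\mathbb{Q}_p$ and $\mathbb{Z}_p$, namely the construction of the rings $\mathbb{A}, \mathbb{B}$ and the verification of the crucial identities $\mathbb{A}^{\varphi = 1} = \mathbb{Z}_p$, $\mathbb{B}^{\varphi = 1} = \mathbb{Q}_p$ together with the flat descent needed to show that $\mathbb{D}$ and $\mathbb{V}$ (resp.\ $\mathbb{T}$) are quasi-inverse; once these deep inputs are taken as a black box, the passage to the $E$- and $\mathcal{O}_E$-linear settings is purely formal. A secondary point requiring care is continuity of the $\Gamma_K$-action on $\mathbb{D}(V)$ and $\mathbb{D}(T)$ for the weak topology, but this follows routinely from continuity of $G_K$ on $V$ (resp.\ $T$) and of $\Gamma_K$ on $\mathbb{B}_{K,E}$ (resp.\ $\mathbb{A}_{K,E}$).
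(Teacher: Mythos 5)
The paper cites this theorem directly to Fontaine \cite{FO90} and provides no proof of its own, so you have done more than the paper asks. Your overall strategy --- take Fontaine's equivalence over $\mathbb{Q}_p$ and $\mathbb{Z}_p$ as given, observe that an $E$-linear (resp.\ $\mathcal{O}_E$-linear) representation is just a $\mathbb{Q}_p$-linear (resp.\ $\mathbb{Z}_p$-linear) one with a commuting coefficient action, transport this action through the functors by naturality, and then check that the resulting modules land in the right subcategories --- is exactly the standard argument (it is the route taken in the appendix to \cite{CDT99} and in \cite{BM02}, both cited in the paper for neighbouring facts), and your identification of the genuinely deep inputs ($\mathbb{A}^{\varphi=1}=\mathbb{Z}_p$, $\mathbb{B}^{\varphi=1}=\mathbb{Q}_p$, and the descent isomorphism $\mathbb{B}\otimes_{\mathbb{B}_K}\mathbb{D}(V)\simeq\mathbb{B}\otimes_{\mathbb{Q}_p}V$) is accurate.

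The one place you are too quick is the freeness of $\mathbb{D}(V)$ over $\mathbb{B}_{K,E}$. You say it ``can be checked after decomposing the coefficient ring along its maximal ideals and applying the $\mathbb{Q}_p$-linear freeness component by component,'' but $\mathbb{Q}_p$-linear freeness only tells you that $\mathbb{D}(V)$ is free over $\mathbb{B}_K$ of rank $[E:\mathbb{Q}_p]\dim_E V$; when you decompose along the idempotents of the étale $\mathbb{B}_K$-algebra $\mathbb{B}_{K,E}=\prod_i F_i$ you get a priori only that each component $D_i$ is a finite-dimensional $F_i$-vector space, with no control on whether the dimensions agree across $i$. Freeness over $\mathbb{B}_{K,E}$ is precisely the assertion that these dimensions are all equal to $\dim_E V$, and the mechanism that forces this is the bijectivity of $\varphi$ together with the fact that $\varphi$ permutes the idempotent factors of $\mathbb{B}_{K,E}$ (transitively in each orbit): since $\varphi$ carries $D_i$ isomorphically onto $D_{\varphi(i)}$, the component dimensions are constant along $\varphi$-orbits, and one then needs a small additional argument (or a restriction on $E$, or the $\Gamma_K$-action) to conclude equality across all orbits. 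This is the one non-formal step in passing from $\mathbb{Q}_p$-coefficients to $E$-coefficients, and it deserves a sentence or two rather than being folded into ``component by component.'' The remainder of the argument --- étaleness translating unchanged, continuity being routine, and the unit/counit being unchanged $\mathbb{Q}_p$-linear maps that happen to be $E$-equivariant --- is all correct as written.
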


\noindent We return to assume that $K$ is unramified over $%
\mathbb{Q}
_{p}.$ Now $\mathbb{A}_{K}$ has the form%
\begin{equation*}
\mathbb{A}_{K}=\{\tsum\limits_{n=-\infty }^{\infty }\alpha _{n}\pi
_{K}^{n}:\alpha _{n}\in \mathcal{O}_{K}\ \mathrm{and}\underset{n\rightarrow
-\infty }{\lim }\alpha _{n}=0\}
\end{equation*}%
for some element $\pi _{K}\ $which can be thought of as a formal variable.
The Frobenius endomorphism $\varphi $ extends the absolute Frobenius of $%
\mathcal{O}_{K}$ and is such that $\varphi (\pi _{K})=(1+\pi _{K})^{p}-1.$
The $\Gamma _{K}$-action is $\mathcal{O}_{K}$-linear, commutes with
Frobenius , and is such that $\gamma (\pi _{K})=(1+\pi _{K})^{\chi (\gamma
)}-1$ for all $\gamma \in \Gamma _{K}.\ $For simplicity we write $\pi $
instead of $\pi _{K}.$ The ring $\mathbb{A}_{K}$ is local with maximal ideal 
$(p),$ fraction field $\mathbb{B}_{K}=\mathbb{A}_{K}[\frac{1}{p}],$ and
residue field $\mathbb{E}_{K}:=k_{K}((\pi )),$ where $k_{K}$ is the residue
field of $K.$

\noindent The rings $\mathbb{A}_{K},\ \mathbb{A}_{K,E},\ \mathbb{B}_{K}$ and 
$\mathbb{B}_{K,E}$ contain the subrings $\mathbb{A}_{K}^{+}=\mathcal{O}%
_{K}[[\pi ]]$\noindent $,\ \mathbb{A}_{K,E}^{+}:=$ $\mathcal{O}_{E}\otimes _{%
\mathbb{Z}_{p}}\mathbb{A}_{K}^{+},\ \mathbb{B}_{K}^{+}=$ $\mathbb{A}_{K}^{+}[%
\frac{1}{p}]\ $and $\mathbb{B}_{K,E}^{+}:=$ $E\otimes _{%
\mathbb{Q}
_{p}}\mathbb{B}_{K}^{+}$ respectively which are equipped with the
restrictions of the $\varphi $ and the $\Gamma _{K}$-actions. There is a
ring isomorphism%
\begin{equation}
\xi :\mathbb{A}_{K,E}^{+}\rightarrow \tprod\limits_{\tau :K\hookrightarrow E}%
\mathcal{O}_{E}[[\pi ]]  \label{ksi}
\end{equation}%
given by%
\begin{equation*}
\xi \left( a\otimes b\right) =\left( a\tau _{0}\left( b\right) ,a\tau
_{1}\left( b\right) ,...,a\tau _{f-1}\left( b\right) \right) ,
\end{equation*}%
where%
\begin{equation*}
\tau _{i}\left( \tsum\limits_{n=0}^{\infty }\beta _{n}\pi ^{n}\right)
=\tsum\limits_{n=0}^{\infty }\tau _{i}\left( \beta _{n}\right) \pi ^{n}
\end{equation*}%
for all $b=\tsum\limits_{n=0}^{\infty }\beta _{n}\pi ^{n}\in \mathbb{A}%
_{K}^{+}.$ The ring\ $\mathcal{O}_{E}[[\pi ]]^{\mid \tau \mid
}:=\prod\limits_{\tau :K\hookrightarrow E}\mathcal{O}_{E}[[\pi ]]$ is
equipped with $\mathcal{O}_{E}$-linear actions of $\varphi $ and $\Gamma
_{K} $ given by$\ \ \ \ \ \ \ \ \ \ \ \ \ $%
\begin{align}
& \varphi (\alpha _{0}(\pi ),\alpha _{1}(\pi ),...,\alpha _{f-1}(\pi
))=(\alpha _{1}(\varphi (\pi )),...,\alpha _{f-1}(\varphi (\pi )),\alpha
_{0}(\varphi (\pi )))\noindent  \label{actions1} \\
& \mathrm{and}\text{\ }\gamma (\alpha _{0}(\pi ),\alpha _{1}(\pi
),...,\alpha _{f-1}(\pi ))=(\alpha _{0}(\gamma \pi ),\alpha _{1}(\gamma \pi
),...,\alpha _{f-1}(\gamma \pi ))  \label{actions2}
\end{align}%
for all $\gamma \in \Gamma _{K}.$

\begin{definition}
Suppose $k\geq 0.$\ A Wach module over $\mathbb{A}_{K,E}^{+}$\ (resp. $%
\mathbb{B}_{K,E}^{+}$) with weights in $[-k;\ 0]$\ is a free $\mathbb{A}%
_{K,E}^{+}$-module (resp. $\mathbb{B}_{K,E}^{+}$-module) $N$\ of finite
rank, endowed with an action of $\Gamma _{K}$\ which becomes trivial modulo $%
\pi $, and also with a Frobenius map $\varphi $\ which commutes with the
action of $\Gamma _{K}$\ and such that $\varphi (N)\subset N$\ and $%
N/\varphi ^{\ast }(N)$\ is killed by $q^{k},\ $where $q:=\varphi (\pi )/\pi
. $
\end{definition}

\noindent A natural question is to determine the types of \'{e}tale $%
(\varphi ,\Gamma )$-modules which correspond to crystalline representations
via Fontaine's functor. An answer is given by the following theorem of
Berger who built on previous work of Wach \cite{WA96}, \cite{WA97} and
Colmez \cite{CO99}.

\begin{theorem}
\cite{BE03} \label{berger thm}

\begin{enumerate}
\item[(i)] An $E$-linear representation $V$ of $G_{K}$ is crystalline with
Hodge-Tate weights in $[-k;\ 0]$ if and only if $\mathbb{D}(V)$ contains a
unique Wach module $\mathbb{N}(V)$ of rank $\dim _{E}V$ with weights in $%
[-k;\ 0].$ The functor $V\mapsto \mathbb{N}(V)$ defines an equivalence of
categories between crystalline representations of $G_{K}$ and Wach modules
over $\mathbb{B}_{K,E}^{+},$ compatible with tensor products, duality and
exact sequences. \noindent

\item[(ii)] For a given crystalline $E$-representation $V,$ the map $\mathrm{%
T}\mapsto \mathbb{N}(\mathrm{T}):=\mathbb{N}(V)\cap \mathbb{D}(\mathrm{T})$
induces a bijection between $G_{K}$-stable, $\mathcal{O}_{E}$-lattices of $V$
and Wach modules over $\mathbb{A}_{K,E}^{+}$ which are $\mathbb{A}_{K,E}^{+}$%
-lattices contained in $\mathbb{N}(V).$ Moreover $\mathbb{D}(\mathrm{T})=%
\mathbb{A}_{K,E}\otimes _{\mathbb{A}_{K,E}^{+}}\mathbb{N}(\mathrm{T})$.

\item[(iii)] If $V$ is a crystalline $E$-representation of $G_{K},$ and if
we endow $\mathbb{N}(V)$ with the filtration $\mathrm{Fil}^{\mathrm{i}}%
\mathbb{N}(V)=\{x\in \mathbb{N}(V)|\varphi (x)\in q^{i}\mathbb{N}(V)\}$,
then we have an isomorphism 
\begin{equation*}
\mathbb{D}_{\mathrm{cris}}(V)\rightarrow \mathbb{N}(V)/\pi \mathbb{N}(V)
\end{equation*}%
of filtered $\varphi $-modules over $E^{\mid \tau \mid }$ (with the induced
filtration on $\mathbb{N}(V)/\pi \mathbb{N}(V)$).
\end{enumerate}
\end{theorem}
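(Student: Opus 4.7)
The plan is to establish the equivalence in (i) by constructing mutually inverse functors between crystalline $E$-representations of $G_K$ with Hodge--Tate weights in $[-k;\,0]$ and Wach modules over $\mathbb{B}_{K,E}^+$ with weights in $[-k;\,0]$, and then to bootstrap the integral statement (ii) and the comparison isomorphism (iii) from this equivalence. The natural strategy is to interpolate between $\mathbb{D}(V)$ and $\mathbb{D}_{\mathrm{cris}}(V)$ using the rings $\mathbb{B}_{\mathrm{rig},K}^{+}=(\mathbb{B}_{\mathrm{rig}}^{+})^{H_K}$ and $\mathbb{B}_{\mathrm{rig},K,E}^{+}$, since $\mathbb{B}_{\mathrm{rig},K}^{+}$ is simultaneously a $(\varphi,\Gamma_K)$-ring and admits a $\Gamma_K$-equivariant embedding into $\mathbb{B}_{\mathrm{cris}}$ via Fontaine's map $\iota$; the module $\mathbb{D}_{\mathrm{rig}}^{+}(V):=\mathbb{B}_{\mathrm{rig},K,E}^{+}\otimes_{\mathbb{B}_{K,E}^{+}}\mathbb{N}(V)$ will be the bridge, and $\mathbb{N}(V)$ will be recognized as the unique $\mathbb{B}_{K,E}^{+}$-lattice in $\mathbb{D}(V)$ that is stable under $\varphi$ and $\Gamma_K$, on which $\Gamma_K$ acts trivially modulo $\pi$, and whose cokernel under $\varphi^{*}$ is killed by $q^{k}$.

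For the construction of $\mathbb{N}(V)$, I would first reduce to the effective case (Hodge--Tate weights in $[-k;\,0]$) by twisting, then handle the building blocks in increasing generality: start with the Tate twists $E(k)$, where Wach's explicit $(\varphi,\Gamma)$-module computation (\cite{WA96,WA97}) gives $\mathbb{N}(E(k))$ directly, extend multiplicatively to crystalline characters using compatibility with tensor products and duals, and then pass to general crystalline $V$ by dévissage via Colmez's machinery (\cite{CO99}) relating $\mathbb{D}_{\mathrm{rig}}^{+}(V)$ to $\mathbb{D}_{\mathrm{cris}}(V)$. Concretely, given $V$ crystalline one defines $\mathbb{N}(V)$ as the intersection $\mathbb{D}^{+}(V)\cap \mathbb{D}_{\mathrm{rig}}^{+}(V)$-type object extracted from the Frobenius regularization of the natural lattice in $\mathbb{B}_{\mathrm{rig},K,E}^{+}\otimes_{E}\mathbb{D}_{\mathrm{cris}}(V)$; the verification that this yields a free $\mathbb{B}_{K,E}^{+}$-module of the correct rank with trivial $\Gamma_K$-action modulo $\pi$ and with $q^k$-torsion cokernel is the technical core, and it uses that $\Gamma_K$ acts trivially on $\mathbb{D}_{\mathrm{cris}}$ and that $\mathbb{B}_{\mathrm{rig},K,E}^{+}/\pi\simeq E^{\mid\tau\mid}$ equivariantly.

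For uniqueness of $\mathbb{N}(V)$ in $\mathbb{D}(V)$, the argument is to compare any two candidate Wach modules $\mathbb{N}_{1},\mathbb{N}_{2}$: both are $\mathbb{B}_{K,E}^{+}$-lattices in $\mathbb{D}(V)$, so they share a common ambient lattice, and the constraints (trivial $\Gamma_K$ mod $\pi$, compatibility with $\varphi$, weights in $[-k;\,0]$) force them to be saturated in the same way — one shows the inclusion $\mathbb{N}_{1}\subseteq \mathbb{N}_{2}$ after inverting an auxiliary element and then that it is an equality by studying the induced map on $(\cdot)/\pi$. Part (ii) then follows by running the same construction over $\mathbb{A}_{K,E}^{+}$: given a lattice $T\subset V$, one sets $\mathbb{N}(T):=\mathbb{N}(V)\cap \mathbb{D}(T)$ and verifies that this is a free $\mathbb{A}_{K,E}^{+}$-module with the required $\varphi,\Gamma_K$-structure, using that $\mathbb{A}_{K,E}^{+}=\mathbb{B}_{K,E}^{+}\cap \mathbb{A}_{K,E}$ inside $\mathbb{B}_{K,E}$; bijectivity follows from the uniqueness in (i) and Fontaine's equivalence (Theorem \ref{fontaine|s equivalence}).

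Finally, for (iii), the filtration $\mathrm{Fil}^{i}\mathbb{N}(V)=\{x\in \mathbb{N}(V):\varphi(x)\in q^{i}\mathbb{N}(V)\}$ is designed to match the crystalline filtration under reduction modulo $\pi$: one checks that the reduction map $\mathbb{N}(V)\to \mathbb{N}(V)/\pi\mathbb{N}(V)$ is $\varphi$-equivariant (using $\varphi(\pi)=q\pi$ and that $\Gamma_K$ acts trivially mod $\pi$), identifies $\mathbb{N}(V)/\pi\mathbb{N}(V)$ with a filtered $\varphi$-module over $E^{\mid\tau\mid}$, and then produces the isomorphism to $\mathbb{D}_{\mathrm{cris}}(V)$ by tensoring up to $\mathbb{B}_{\mathrm{rig},K,E}^{+}$ and applying $\iota$ to land in $\mathbb{B}_{\mathrm{cris}}\otimes V$, whose $G_K$-invariants recover $\mathbb{D}_{\mathrm{cris}}(V)$. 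The main obstacle will be the construction step — producing $\mathbb{N}(V)$ with precisely the right rank, weights and $\Gamma_K$-behaviour for an arbitrary crystalline $V$ — because it is exactly here that one must transplant information from the de Rham/crystalline side (where $\Gamma_K$ acts trivially on $\mathbb{D}_{\mathrm{cris}}$) to the $(\varphi,\Gamma)$-module side (where $\Gamma_K$ acts non-trivially), and this transfer is what forces the use of the full apparatus of $\mathbb{B}_{\mathrm{rig}}^{+}$ together with Colmez's localization results.
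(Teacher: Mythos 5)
This statement is cited verbatim from Berger's paper \cite{BE03} (\emph{Limites de repr\'esentations cristallines}, Comp.~Math.~140 (2004)), and the present paper offers no proof of it --- it is a black-box input, stated and then used via Theorem~\ref{fontaine|s equivalence} and Proposition~\ref{restriction prop}. So there is no proof of the author's to compare your attempt against.

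That said, your outline does track the actual strategy of Berger's proof reasonably faithfully: the interpolation between $\mathbb{D}(V)$ and $\mathbb{D}_{\mathrm{cris}}(V)$ via $\mathbb{B}_{\mathrm{rig},K}^{+}$ and Fontaine's map $\iota$, the role of Colmez's results on representations of finite height, the reduction to Tate twists and crystalline characters following Wach, and the derivation of the integral version from the rational one by intersecting with $\mathbb{A}_{K,E}$. Two places where you under-describe what is really needed: first, the key technical input is Colmez's theorem that crystalline representations are of \emph{finite height}, which is what furnishes a $\varphi$-stable $\mathbb{B}_{K}^{+}$-lattice to begin with --- without this there is no candidate $\mathbb{B}_{K,E}^{+}$-lattice to ``Frobenius-regularize''; second, the uniqueness argument in Berger's paper does not just compare two candidates ``after inverting an auxiliary element'', it uses the constraint that $\Gamma_K$ acts trivially modulo $\pi$ together with a finiteness argument to pin down the lattice exactly, and this is where the condition ``weights in $[-k;\,0]$'' enters in an essential way. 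These are the steps you flag as ``the technical core'' and ``the main obstacle'', so you have correctly identified where the real work is, but a full proof would need to import Colmez's finite-height theorem explicitly rather than leave it implicit in a general appeal to ``Colmez's machinery''.
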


\noindent In view of Theorems \ref{fontaine|s equivalence} and \ref{berger
thm}, constructing the Wach module $\mathbb{N}(T)\ $of a $G_{K}$-stable $%
\mathcal{O}_{E}$-lattice $\mathrm{T}$ in a crystalline representation $V$
amounts to explicitly constructing the crystalline representation. Indeed,
we have 
\begin{equation*}
V\simeq E\otimes _{\mathcal{O}_{E}}\left( \mathbb{A}_{K,E}\otimes _{\mathbb{A%
}_{K,E}^{+}}\mathbb{N}(\mathrm{T})\right) ^{\varphi =1}.
\end{equation*}%
An obvious advantage of using Wach modules is that instead of working with
the more complicated rings $\mathbb{A}_{K,E}$ and $\mathbb{B}_{K,E},$ one
works with the simpler ones $\mathbb{A}_{K,E}^{+}$ and $\mathbb{B}%
_{K,E}^{+}. $

\subsection{Wach modules of restricted representations}

In this section we relate the Wach module of an effective $n$-dimensional
effective crystalline $E$-representation $V_{K_{f}}$ of $G_{K_{f}},$ to the
Wach module of its restriction $V_{K_{df}}$ to $G_{K_{df}}.$

\begin{proposition}

\begin{enumerate}
\item[(i)] \label{restriction prop}The Wach module associated to the
representation $V_{K_{df}}$ is given by%
\begin{equation*}
\mathbb{N}(V_{K_{df}})=\mathbb{B}_{K_{df},E}^{+}\otimes _{\mathbb{B}%
_{K_{f},E}^{+}}\mathbb{N}(V_{K_{f}}),
\end{equation*}%
where $\mathbb{N}(V_{K_{f}})$ is the Wach module associated to $V_{K_{f}}.$

\item[(ii)] If $\mathrm{T}_{K_{f}}$ is a $G_{K_{f}}$-stable $\mathcal{O}_{E}$%
-lattice in $V_{f}$ associated to the Wach-module $\mathbb{N}(\mathrm{T}%
_{K_{f}}),$ then $V_{df}$ contains some $G_{K_{df}}$-stable $\mathcal{O}_{E}$%
-lattice $\mathrm{T}_{K_{df}}$ whose associated Wach module is 
\begin{equation*}
\mathbb{N}(\mathrm{T}_{K_{df}})=\mathbb{A}_{K_{df},E}^{+}\otimes _{\mathbb{A}%
_{K_{f},E}^{+}}\mathbb{N}(\mathrm{T}_{K_{f}}).
\end{equation*}
\end{enumerate}
\end{proposition}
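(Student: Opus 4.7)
The plan is to reduce both statements to the uniqueness clause of Berger's theorem (Theorem \ref{berger thm}). Set $M := \mathbb{B}^+_{K_{df},E} \otimes_{\mathbb{B}^+_{K_f,E}} \mathbb{N}(V_{K_f})$. First I would verify that $M$ is a Wach module over $\mathbb{B}^+_{K_{df},E}$ of rank $\dim_E V$ and weights in $[-k;0]$: freeness and rank are inherited from the flatness of $\mathbb{B}^+_{K_{df},E}$ over $\mathbb{B}^+_{K_f,E}$ (the extension is free of rank $d$, induced from the unramified extension $K_{df}/K_f$ of residue fields); the Frobenius on $\mathbb{B}^+_{K_{df},E}$ extends the one on $\mathbb{B}^+_{K_f,E}$ and so induces a semilinear Frobenius on $M$; since $\Gamma_{K_{df}} \subset \Gamma_{K_f}$, the restricted action on $\mathbb{N}(V_{K_f})$ combined with the $\Gamma_{K_{df}}$-action on the base yields on $M$ a semilinear action commuting with $\varphi$ and trivial modulo $\pi$; and $M/\varphi^*(M)$ is killed by $q^k$ because $\mathbb{N}(V_{K_f})/\varphi^*(\mathbb{N}(V_{K_f}))$ is.

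Next I would embed $M$ into $\mathbb{D}(V_{K_{df}})$. Because $H_{K_{df}} \subset H_{K_f}$, the inclusion of $H_{K_f}$-invariants into $H_{K_{df}}$-invariants yields a natural $\mathbb{B}^+_{K_f,E}$-linear, Frobenius and $\Gamma_{K_{df}}$-equivariant inclusion $\mathbb{D}(V_{K_f}) \hookrightarrow \mathbb{D}(V_{K_{df}})$. Extending scalars gives a morphism $\Phi \colon \mathbb{B}_{K_{df},E} \otimes_{\mathbb{B}_{K_f,E}} \mathbb{D}(V_{K_f}) \to \mathbb{D}(V_{K_{df}})$ of \'etale $(\varphi,\Gamma_{K_{df}})$-modules which is an isomorphism because, under the equivalence of Theorem \ref{fontaine|s equivalence}, both sides correspond to the restriction of $V$ to $G_{K_{df}}$. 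Since $\mathbb{N}(V_{K_f}) \subset \mathbb{D}(V_{K_f})$ and $\mathbb{B}^+_{K_{df},E} \subset \mathbb{B}_{K_{df},E}$, the module $M$ sits inside the source of $\Phi$, so we obtain $M \hookrightarrow \mathbb{D}(V_{K_{df}})$. The uniqueness of the Wach module in Theorem \ref{berger thm}(i) then forces $M = \mathbb{N}(V_{K_{df}})$, which proves (i).

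For (ii), I would run exactly the same argument over the integral rings. The $\mathbb{A}^+_{K_{df},E}$-module $N := \mathbb{A}^+_{K_{df},E} \otimes_{\mathbb{A}^+_{K_f,E}} \mathbb{N}(T_{K_f})$ is a Wach module and sits as an $\mathbb{A}^+_{K_{df},E}$-lattice inside $\mathbb{N}(V_{K_{df}}) = M$; hence Theorem \ref{berger thm}(ii) produces a unique $G_{K_{df}}$-stable $\mathcal{O}_E$-lattice $T_{K_{df}} \subset V_{K_{df}}$ with $\mathbb{N}(T_{K_{df}}) = N$, as required.

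The main obstacle I anticipate is the clean identification of $\Phi$ as an isomorphism, i.e., the assertion that $\mathbb{D}$ commutes with restriction of scalars along the unramified extension $K_{df}/K_f$. This rests on standard but delicate properties of the rings $\mathbb{B}_E^{H_{K_f}}$ versus $\mathbb{B}_E^{H_{K_{df}}}$: concretely, one must check that $\mathbb{B}_{K_{df},E}$ is obtained from $\mathbb{B}_{K_f,E}$ by the expected unramified base change, so that faithful flatness together with Galois descent along $\mathrm{Gal}(K_{df}/K_f)$ yield the required bijectivity.
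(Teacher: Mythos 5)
Your proposal is correct and follows essentially the same route as the paper's proof: form the base change $M = \mathbb{B}^+_{K_{df},E} \otimes_{\mathbb{B}^+_{K_f,E}} \mathbb{N}(V_{K_f})$, verify the Wach-module axioms for $M$ inside $\mathbb{D}(V_{K_{df}})$, and invoke the uniqueness clause of Theorem \ref{berger thm}(i) (and then (ii) for the integral lattice). The only difference is one of exposition: the paper simply asserts the containment $M \subset \mathbb{D}(V_{K_{df}}) \supseteq \mathbb{D}(V_{K_f})$, whereas you spell out the base-change isomorphism $\mathbb{B}_{K_{df},E} \otimes_{\mathbb{B}_{K_f,E}} \mathbb{D}(V_{K_f}) \simeq \mathbb{D}(V_{K_{df}})$ needed to see that the natural map from $M$ into $\mathbb{D}(V_{K_{df}})$ is injective; you are right that this is the one place where a standard fact about $(\varphi,\Gamma)$-modules under unramified restriction is being used tacitly, and flagging it is a reasonable precaution rather than a gap.
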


\begin{proof}
(i) Since $\mathbb{N}(V_{K_{f}})$ is a free $\mathbb{B}_{K_{f},E}^{+}$%
-module of rank $\dim _{E}V$ which is contained in $\mathbb{D}(V_{K_{f}}),$ $%
N:=\mathbb{B}_{K_{df},E}^{+}\otimes _{\mathbb{B}_{K_{f},E}^{+}}\mathbb{N}%
(V_{K_{f}})$ is a free $\mathbb{B}_{K_{df},E}^{+}$-module of rank $\dim
_{E}V $ contained in $\mathbb{D}(V_{K_{df}})\supseteq \mathbb{D}%
(V_{K_{f}}).~ $Moreover, it is endowed with an action of $\Gamma _{K_{df}}$
which becomes trivial modulo $\pi ,$ and also with a Frobenius map $\varphi $
which commutes with the action of $\Gamma _{K_{df}}$ and such that $\varphi
(N)\subset N$ and $N/\varphi ^{\ast }(N)$ is killed by $q^{k}.$ Hence $N%
\mathbb{=N}(V_{K_{df}})$ by the uniqueness part of Theorem \ref{berger thm}%
(i). Part (ii) follows immediately from Theorem \ref{berger thm}(ii) since $%
\mathbb{A}_{K_{df},E}^{+}\otimes _{\mathbb{A}_{K_{f},E}^{+}}\mathbb{N}(%
\mathrm{T}_{K_{f}})$ is an $\mathbb{A}_{K_{df},E}^{+}$-lattice in $\mathbb{N}%
(V_{K_{df}}).$
\end{proof}

\noindent We fix once and for all an embedding $\tau _{K_{df}}^{0}:K_{df}{%
\hookrightarrow {E}}$ and we let $\tau _{K_{df}}^{j}=\tau _{K_{df}}^{0}\circ
\sigma _{K_{df}}^{j}$ for $j=0,1,...,df-1,\ $where $\sigma _{K_{df}}\ $is
the absolute Frobenius of $K_{df}.$ We fix the $df$-tuple of embeddings $%
\mid \tau _{K_{df}}\mid :=(\tau _{K_{df}}^{0},\tau _{K_{df}}^{1},...,\tau
_{K_{df}}^{df-1}).$ We adjust the notation of \S \ref{product ring} for the
embeddings of $K_{f}$ into $E\ $to the relative situation considered in this
section. Let $\iota \ $be the natural inclusion of $K_{f}\ $into $K_{df},$
in the sense that $\iota \circ \sigma _{K_{f}}=\sigma _{K_{df}}\circ \iota ,$
where $\sigma _{K_{f}}$ is the absolute Frobenius of $K_{f}.$ This induces a
natural inclusion of $\mathbb{A}_{K}^{+}$ to $\mathbb{A}_{K_{df}}^{+}$ which
we also denote by $\iota .$ Let\ $\tau _{K_{f}}^{j}:=\tau _{K_{df}}^{0}\circ
\iota \circ \sigma _{K_{f}}^{j}$ for $j=0,1,...,f-1.$ We fix the $f$-tuple
of embeddings $\mid \tau _{K_{f}}\mid :=(\tau _{K_{f}}^{0},\tau
_{K_{f}}^{1},...,\tau _{K_{f}}^{f-1}).$ Since the restriction of $\sigma
_{K_{df}}$ to $K_{f}$ is $\sigma _{K_{f}},\ $we obtain the following
commutative diagram%
\begin{equation*}
\begin{CD} \mathbb{A}_{K_{f},E}^{+} @>{\operatorname{\xi_{K_{f}}}}>>
\mathcal{O} _{E}^{\mid\tau_{K_{f}}\mid}[[\pi]]\\
@V{\operatorname{1_{\mathcal{O}_{E}}\otimes \iota}}VV
@VV{\operatorname{\theta} }V \\ {\mathbb{A}_{K_{df},E}^{+}}
@>{\operatorname{\xi_{K_{df}}}}>> {\mathcal{O}
_{E}^{\mid\tau_{K_{df}}\mid}[[\pi]]} \end{CD}
\end{equation*}%
where $\theta $ is the ring homomorphism defined by\noindent 
\begin{equation*}
\theta (\alpha _{0},\alpha _{1},...,\alpha _{f-1})=\underset{d\text{-times}}{%
\underbrace{\left( \alpha _{0},\alpha _{1},...,\alpha _{f-1},\alpha
_{0},\alpha _{1},...,\alpha _{f-1},...,\alpha _{0},\alpha _{1},...,\alpha
_{f-1}\right) }}=:(\alpha _{0},\alpha _{1},...,\alpha _{f-1})^{\otimes d}.
\end{equation*}

\noindent For any matrix $A\in M_{n}\left( \mathcal{O}_{E}^{\mid \tau
_{K_{f}}\mid }[[\pi ]]\right) $ we denote by $A^{\otimes d}$ the matrix
obtained by replacing each entry $\vec{\alpha}$ of $A$ by $\vec{\alpha}%
^{\otimes d}.$ A similar commutative diagram is obtained by replacing $%
\mathbb{A}_{K}^{+}$ by $\mathbb{B}_{K}^{+}$ and $\mathcal{O}_{E}^{\mid \tau
_{K}\mid }[[\pi ]]$ by $\mathcal{O}_{E}^{\mid \tau _{K}\mid }[[\pi ]][\frac{1%
}{p}].$ The following proposition follows easily from the discussion above.

\begin{proposition}
\label{comment} Let $V_{K_{f}},$ $V_{K_{df}},$ $\mathrm{T}_{K_{f}},$ and $%
\mathrm{T}_{K_{df}}$ be as in Proposition $\ref{restriction prop}.$

\begin{enumerate}
\item[(i)] If the Wach module $\mathbb{N}(V_{K_{f}})$ of $V_{K_{f}}$ is
defined by the actions of $\varphi $ and $\Gamma _{K_{f}}$ given by
\noindent $(\varphi (\eta _{1}),\varphi (\eta _{2}),...,\varphi (\eta _{n}))=%
\underline{\eta }\cdot \Pi _{K_{f}}\ $and$\ (\gamma (\eta _{1}),\gamma (\eta
_{2}),...,\gamma (\eta _{n}))=\underline{\eta }\cdot G_{K_{f}}^{\gamma }$
\noindent for all $\gamma \in \Gamma _{K_{f}}\ $for some ordered basis $%
\underline{\eta }=(\eta _{1},\eta _{2},...,\eta _{n}),$ then the Wach module 
$\mathbb{N}(V_{K_{df}})\ $of $V_{K_{df}}$ is defined by \noindent $(\varphi
(\eta _{1}^{\prime }),\varphi (\eta _{2}^{\prime }),...,\varphi (\eta
_{n}^{\prime }))=\underline{\eta }^{\prime }\cdot \Pi _{K_{df}}\ $and $%
(\gamma (\eta _{1}^{\prime }),\gamma (\eta _{2}^{\prime }),...,\gamma (\eta
_{n}^{\prime }))=\underline{\eta }^{\prime }\cdot G_{K_{df}}^{\gamma }$%
\noindent\ for all $\gamma \in \Gamma _{K_{df}},$ where $\Pi
_{K_{df}}=\left( \Pi _{K_{f}}\right) ^{\otimes d}$ and $G_{K_{df}}^{\gamma
}=\left( G_{K_{f}}^{\gamma }\right) ^{\otimes d}$ for all $\gamma \in \Gamma
_{K_{df}},$ for some ordered basis $\underline{\eta }^{\prime }$ of $\mathbb{%
N}(V_{K_{df}}).$

\item[(ii)] If the Wach module$\ \mathbb{N}(\mathrm{T}_{K_{f}})$ of $\mathrm{%
T}_{K_{f}}\ $is defined by the actions of $\varphi $ and $\Gamma _{K_{f}}$
given by \noindent $(\varphi (\eta _{1}),\varphi (\eta _{2}),...,\varphi
(\eta _{n}))=\underline{\eta }\cdot \Pi _{K_{f}}\ $and$\ (\gamma (\eta
_{1}),\gamma (\eta _{2}),...,\gamma (\eta _{n}))=\underline{\eta }\cdot
G_{K_{f}}^{\gamma }$ \noindent for all $\gamma \in \Gamma _{K_{f}}\ $for
some ordered basis $\underline{\eta }=(\eta _{1},\eta _{2},...,\eta _{n}),$
then the Wach module $\mathbb{N}(\mathrm{T}_{K_{df}})$ of $\mathrm{T}%
_{K_{df}}$ is defined by \noindent $(\varphi (\eta _{1}^{\prime }),\varphi
(\eta _{2}^{\prime }),...,\varphi (\eta _{n}^{\prime }))=\underline{\eta }%
^{\prime }\cdot \Pi _{K_{df}}\ $and $(\gamma (\eta _{1}^{\prime }),\gamma
(\eta _{2}^{\prime }),...,\gamma (\eta _{n}^{\prime }))=\underline{\eta }%
^{\prime }\cdot G_{K_{df}}^{\gamma }$\noindent\ for all $\gamma \in \Gamma
_{K_{df}},$ where $\Pi _{K_{df}}=\left( \Pi _{K_{f}}\right) ^{\otimes d}$
and $G_{K_{df}}^{\gamma }=\left( G_{K_{f}}^{\gamma }\right) ^{\otimes d}$
for all $\gamma \in \Gamma _{K_{df}},$ for some ordered basis $\underline{%
\eta }^{\prime }$ of $\mathbb{N}(V_{K_{df}}).$
\end{enumerate}
\end{proposition}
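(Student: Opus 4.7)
The plan is to deduce both parts directly from Proposition \ref{restriction prop} together with the commutative diagram relating $\xi_{K_f}$, $\xi_{K_{df}}$ and the ring homomorphism $\theta$. The point is that once we perform the base change along $\mathbb{B}_{K_f,E}^{+}\hookrightarrow \mathbb{B}_{K_{df},E}^{+}$ (respectively $\mathbb{A}_{K_f,E}^{+}\hookrightarrow \mathbb{A}_{K_{df},E}^{+}$), the action matrices of $\varphi$ and $\Gamma_{K_{df}}$ are the images under $\theta$ of the original ones, and $\theta$ is precisely the coordinate-wise $d$-fold repetition operation.

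First I would set $\eta'_i := 1\otimes \eta_i$ for $i=1,\ldots,n$. By Proposition \ref{restriction prop}(i), $\underline{\eta}'=(\eta'_1,\ldots,\eta'_n)$ is a $\mathbb{B}_{K_{df},E}^{+}$-basis of $\mathbb{N}(V_{K_{df}})$. Since the Frobenius on $\mathbb{N}(V_{K_{df}})$ is given by $\varphi(b\otimes \eta)=\varphi(b)\otimes \varphi(\eta)$, we get $(\varphi(\eta'_1),\ldots,\varphi(\eta'_n))=\underline{\eta}'\cdot (1_{\mathcal{O}_E}\otimes\iota)(\Pi_{K_f})$, where I view the entries of $\Pi_{K_f}$ as living in $\mathbb{B}_{K_f,E}^{+}$ via the inverse of $\xi_{K_f}$. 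A similar identity holds for each $\gamma\in\Gamma_{K_{df}}\subset \Gamma_{K_f}$: one has $(\gamma(\eta'_1),\ldots,\gamma(\eta'_n))=\underline{\eta}'\cdot (1_{\mathcal{O}_E}\otimes\iota)(G_{K_f}^{\gamma})$, because $\Gamma_{K_{df}}$ acts on $\mathbb{N}(V_{K_{df}})$ through its inclusion into $\Gamma_{K_f}$ and via its natural action on $\mathbb{B}_{K_{df},E}^{+}$.

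Next, I would transport these identities across the isomorphism $\xi_{K_{df}}$ to work inside $\mathcal{O}_E^{\mid\tau_{K_{df}}\mid}[[\pi]][\tfrac{1}{p}]$. The commutative diagram from the paper gives $\xi_{K_{df}}\circ (1_{\mathcal{O}_E}\otimes \iota)=\theta\circ \xi_{K_f}$. Consequently, if an entry of $\Pi_{K_f}$ corresponds to a vector $\vec{\alpha}\in\mathcal{O}_E^{\mid\tau_{K_f}\mid}[[\pi]][\tfrac{1}{p}]$ under $\xi_{K_f}$, its image in $\mathcal{O}_E^{\mid\tau_{K_{df}}\mid}[[\pi]][\tfrac{1}{p}]$ is $\theta(\vec{\alpha})=\vec{\alpha}^{\otimes d}$. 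Applying this entry by entry yields $\Pi_{K_{df}}=(\Pi_{K_f})^{\otimes d}$ and likewise $G_{K_{df}}^{\gamma}=(G_{K_f}^{\gamma})^{\otimes d}$ for all $\gamma\in\Gamma_{K_{df}}$, which is exactly part~(i).

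Part~(ii) proceeds by the same argument, replacing $\mathbb{B}_{K,E}^{+}$ everywhere by $\mathbb{A}_{K,E}^{+}$, using Proposition \ref{restriction prop}(ii) in place of (i), and the integral version of the commutative diagram in the paper. The main (very mild) obstacle is bookkeeping: verifying that the commutative diagram really does intertwine the $\varphi$- and $\Gamma_{K_{df}}$-actions compatibly, so that the matrix identities transported by $\xi_{K_{df}}$ genuinely describe the same operators on $\mathbb{N}(V_{K_{df}})$ and $\mathbb{N}(\mathrm{T}_{K_{df}})$. This amounts to noting that the $\mathcal{O}_E$-linear actions \eqref{actions1} and \eqref{actions2} on $\mathcal{O}_E^{\mid\tau\mid}[[\pi]]$ are compatible with $\theta$, because the restriction of $\sigma_{K_{df}}$ to $K_f$ equals $\sigma_{K_f}$ and the $\Gamma$-actions are $\mathcal{O}_E$-linear and act identically on $\pi$; both facts are immediate from the construction of $\theta$.
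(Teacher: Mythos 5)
Your proof is correct and follows essentially the same route the paper intends; the paper dismisses this proposition with ``follows easily from the discussion above,'' and what you have written out — taking $\eta'_i = 1\otimes\eta_i$ as a basis via Proposition \ref{restriction prop}, observing that the Frobenius and $\Gamma_{K_{df}}$ matrices in this basis are the images of $\Pi_{K_f}$ and $G^\gamma_{K_f}$ under $1_{\mathcal{O}_E}\otimes\iota$, and then using the commutative square $\xi_{K_{df}}\circ(1_{\mathcal{O}_E}\otimes\iota)=\theta\circ\xi_{K_f}$ to identify these images with the $d$-fold repeated matrices $(\Pi_{K_f})^{\otimes d}$ and $(G^\gamma_{K_f})^{\otimes d}$ — is exactly the intended unwinding of that remark.
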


\begin{corollary}
\label{weights of restrictions}If $V_{K_{f}}$ is a two-dimensional effective
crystalline $E$-representation of $G_{K_{f}}$ with labeled Hodge-Tate
weights $\left( \{0,-k_{i}\}\right) _{\tau _{i}},$ $i=0,1,...,f-1,$ then $%
V_{K_{df}}$ is an effective crystalline $E$-representation of $G_{K_{df}}$
with labeled Hodge-Tate weights $\left( \{0,-k_{i}\}\right) _{\tau _{i}},$ $%
i=0,1,...,df-1,$ with $k_{j}=k_{j}$ for all $i,j=0,1,...,df-1\ $with $%
i\equiv j\func{mod}f.$
\end{corollary}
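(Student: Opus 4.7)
The plan is to use the Wach module formalism to transfer all relevant data from $V_{K_f}$ to $V_{K_{df}}$. Propositions \ref{restriction prop} and \ref{comment} describe how Wach modules, bases, and Frobenius matrices behave under the unramified extension $K_{df}/K_f$, and Theorem \ref{berger thm}(iii) identifies $\mathbb{D}_{\mathrm{cris}}$ with $\mathbb{N}/\pi\mathbb{N}$ as filtered $\varphi$-modules. Together these are sufficient.

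First I would note that crystallinity and effectiveness of $V_{K_{df}}$ follow immediately. By Proposition \ref{restriction prop}(i),
\[
\mathbb{N}(V_{K_{df}}) \;=\; \mathbb{B}_{K_{df},E}^{+} \otimes_{\mathbb{B}_{K_f,E}^{+}} \mathbb{N}(V_{K_f})
\]
is a Wach module of rank two with weights in $[-k;\,0]$, where $k=\max_i k_i$. Theorem \ref{berger thm}(i) then yields that $V_{K_{df}}$ is a two-dimensional effective crystalline $E$-representation of $G_{K_{df}}$.

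Next I would read off the labeled Hodge–Tate weights. By Theorem \ref{berger thm}(iii), the filtration on $\mathbb{D}_{\mathrm{cris}}(V_{K_{df}}) \simeq \mathbb{N}(V_{K_{df}})/\pi\mathbb{N}(V_{K_{df}})$ is given by the reductions mod $\pi$ of $\mathrm{Fil}^{i}\mathbb{N}(V_{K_{df}}) = \{x:\varphi(x)\in q^{i}\mathbb{N}(V_{K_{df}})\}$. By Proposition \ref{comment}(i), if $\Pi_{K_f}$ is the Frobenius matrix of $\mathbb{N}(V_{K_f})$ in some basis $\underline{\eta}$ (with entries in $\mathcal{O}_E^{|\tau_{K_f}|}[[\pi]]$ via $\xi_{K_f}$), then in the transferred basis $\underline{\eta}'$ the Frobenius matrix of $\mathbb{N}(V_{K_{df}})$ is $\Pi_{K_{df}} = (\Pi_{K_f})^{\otimes d}$, with entries in $\mathcal{O}_E^{|\tau_{K_{df}}|}[[\pi]]$ obtained by applying the map $\theta$ that duplicates every coordinate $d$ times. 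Since the filtration is defined coordinatewise over the product ring $E^{|\tau_{K_{df}}|}$ and since $q$ and the Wach-module defining conditions are preserved by $\theta$, the jumps of the filtration at the $i$-th factor of $E^{|\tau_{K_{df}}|}$ coincide with the jumps of the filtration at the $(i\bmod f)$-th factor of $E^{|\tau_{K_f}|}$. In particular the labeled Hodge–Tate weight of $V_{K_{df}}$ at $\tau_{K_{df}}^{i}$ equals that of $V_{K_f}$ at $\tau_{K_f}^{i\bmod f}$, namely $\{0,-k_{i\bmod f}\}$, which is exactly the asserted periodicity.

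The main technical point, and essentially the only one requiring care, is the verification that the filtration on $\mathbb{N}(V_{K_{df}})/\pi\mathbb{N}(V_{K_{df}})$ viewed over $E^{|\tau_{K_{df}}|}$ really is the $\theta$-image of the filtration on $\mathbb{N}(V_{K_f})/\pi\mathbb{N}(V_{K_f})$. This comes down to checking that $\varphi(x)\in q^i\mathbb{N}(V_{K_{df}})$ can be tested coordinatewise and that each coordinate is a copy of a coordinate of $V_{K_f}$, which is immediate from the commutative diagram preceding Proposition \ref{comment} and the explicit form of $\theta$. With this bookkeeping in place the corollary follows without further work.
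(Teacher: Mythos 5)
Your proof is correct and follows essentially the same route as the paper's: both use Proposition \ref{comment} to transport a Wach-module basis to $K_{df}$ with Frobenius matrix $(\Pi_{K_f})^{\otimes d}$, then use the characterization $x\in\mathrm{Fil}^{j}\mathbb{N}$ iff $\varphi(x)\in q^{j}\mathbb{N}$ from Theorem \ref{berger thm} together with the identification $\mathbb{D}_{\mathrm{cris}}\simeq\mathbb{N}/\pi\mathbb{N}$ to conclude that the filtration jumps repeat with period $f$. The only cosmetic difference is that you flag the preliminary crystallinity/effectiveness of $V_{K_{df}}$ explicitly via Proposition \ref{restriction prop}(i), which the paper takes as read.
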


\begin{proof}
By Proposition \ref{comment} there exist ordered bases $\underline{\eta }$
and $\underline{\eta }^{\prime }$ of $\mathbb{N}(V_{K_{f}})$ and $\mathbb{N}%
(V_{K_{df}})$ respectively, such that $\varphi \left( \underline{\eta }%
\right) =\underline{\eta }\cdot \Pi _{K_{f}},$ $\gamma \left( \underline{%
\eta }\right) =\underline{\eta }\cdot G_{K_{f}}^{\gamma }$ for all $\gamma
\in \Gamma _{K_{f}}$ and $\varphi \left( \underline{\eta ^{\prime }}\right) =%
\underline{\eta }^{\prime }\cdot \left( \Pi _{K_{f}}\right) ^{\otimes d},$ $%
\gamma \left( \underline{\eta }^{\prime }\right) =\underline{\eta }^{\prime
}\cdot \left( G_{K_{f}}^{\gamma }\right) ^{\otimes d}$ for all $\gamma \in
\Gamma _{K_{df}}.$ By Theorem \ref{berger thm}, $x\in \mathrm{Fil}^{\mathrm{j%
}}\left( \mathbb{N}(V_{K_{f}})\right) $ if and only if $\varphi \left(
x\right) \in q^{j}\mathbb{N}(V_{K_{f}}),$ from which it follows that $%
\mathrm{Fil}^{\mathrm{j}}\left( \mathbb{N}(V_{K_{df}})\right) =\left( 
\mathrm{Fil}^{\mathrm{j}}\left( \mathbb{N}(V_{K_{f}})\right) \right)
^{\otimes d}$ for all $j.$ By Theorem \ref{berger thm}, $\mathbb{D}%
(V_{K_{f}})\simeq \mathbb{N}(V_{K_{f}})/\pi \mathbb{N}(V_{K_{f}})$ as
filtered $\varphi $-modules over $E^{\mid \tau _{K_{f}}\mid }.$ This implies
that $\mathrm{Fil}^{\mathrm{j}}\left( \mathbb{D}(V_{K_{df}})\right) =\left( 
\mathrm{Fil}^{\mathrm{j}}\left( \mathbb{D}(V_{K_{f}})\right) \right)
^{\otimes d}$ for all $j$ and the corollary follows.
\end{proof}

\section{Effective Wach modules of rank one\label{the crystalline characters}%
}

\noindent In this section we construct the rank one Wach modules over $%
\mathcal{O}_{E}[[\pi]]^{\mid\tau\mid}$ with labeled Hodge-Tate weights $%
\{-k_{i}\}_{\tau_{i}}.$

\begin{definition}
\textrm{\noindent \label{q_n definition}}\textit{Recall that }$q=\frac{%
\varphi \left( \pi \right) }{\pi }$\textit{\ where }$\varphi \left( \pi
\right) =(1+\pi )^{p}-1.$ \textit{We define }$q_{1}=q$\textit{\ and }$%
q_{n}=\varphi ^{n-1}\left( q\right) \ $for all $n\geq 1.$\textit{\ Let }$%
\lambda _{f}=\prod\limits_{n=0}^{\infty }\left( \frac{q_{nf+1}}{p}\right) .$%
\textit{\ \noindent For each }$\gamma \in \Gamma _{K},$\textit{\ we define }$%
\lambda _{f},_{\gamma }=\frac{\lambda _{f}}{\gamma \lambda _{f}}.$
\end{definition}

\begin{lemma}
\label{proto}For each $\gamma \in \Gamma _{K},$ the functions $\lambda
_{f},\ \lambda _{f,\gamma }$ $\in 
\mathbb{Q}
_{p}[[\pi ]]$ have the following properties:

\begin{enumerate}
\item[(i)] $\lambda _{f}(0)=1;$

\item[(ii)] $\lambda _{f,\gamma }\in 1+\pi 
\mathbb{Z}
_{p}\left[ \left[ \pi \right] \right] .$
\end{enumerate}
\end{lemma}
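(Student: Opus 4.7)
The plan is to deduce (i) directly from the fact that $\varphi$ preserves constant terms of power series, and to prove (ii) by first exhibiting a clean factorization of the basic ratio $q/\gamma q$ as a quotient of two units in $\mathbb{Z}_p[[\pi]]$, and then verifying that the resulting infinite product converges in $1 + \pi\mathbb{Z}_p[[\pi]]$.

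For (i), one has $q = ((1+\pi)^p - 1)/\pi = p + \binom{p}{2}\pi + \cdots + \pi^{p-1}$, so $q(0) = p$. Because $\varphi$ fixes $\mathbb{Z}_p$ pointwise and sends $\pi$ into $\pi\mathbb{Z}_p[[\pi]]$, it preserves the constant term of every power series, and hence $q_n(0) = \varphi^{n-1}(q)(0) = p$ for all $n \ge 1$. Each factor $(q_{nf+1}/p)(0)$ equals $1$, so $\lambda_f(0) = 1$.

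For (ii), the constant term identity $\lambda_{f,\gamma}(0) = \lambda_f(0)/(\gamma\lambda_f)(0) = 1/1 = 1$ follows from (i), since $(\gamma\lambda_f)(0) = \lambda_f(\gamma\pi)|_{\pi=0} = \lambda_f(0)$. For the integrality, set $\chi := \chi(\gamma)$ and $Y := (1+\pi)^p$, and define
\[
D := \sum_{n \ge 0}\binom{\chi}{n+1}(Y-1)^n, \qquad D' := \sum_{n \ge 0}\binom{\chi}{n+1}\pi^n.
\]
Both series converge in $\mathbb{Z}_p[[\pi]]$ and have constant term $\chi \in \mathbb{Z}_p^\times$, so each is a unit. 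The binomial identities $Y^\chi - 1 = (Y-1)D$ and $(1+\pi)^\chi - 1 = \pi D'$, together with $q = (Y-1)/\pi$ and $\gamma q = (Y^\chi - 1)/((1+\pi)^\chi - 1)$, yield directly $q/\gamma q = D'/D$, and since $D'$ and $D$ are units with equal constant term, $q/\gamma q \in 1 + \pi\mathbb{Z}_p[[\pi]]$. Applying $\varphi^{nf}$, which preserves $\mathbb{Z}_p[[\pi]]$ and constants, gives $q_{nf+1}/\gamma q_{nf+1} = \varphi^{nf}(q/\gamma q) \in 1 + \pi\mathbb{Z}_p[[\pi]]$ for every $n$.

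It remains to check that the product $\lambda_{f,\gamma} = \prod_{n\ge 0}\varphi^{nf}(D'/D)$ converges in $\mathbb{Z}_p[[\pi]]$, which is complete for the $(p,\pi)$-adic topology. Writing the $n$-th factor as $1 + \varphi^{nf}(v)$ with $v := D'/D - 1 \in \pi\mathbb{Z}_p[[\pi]]$, the estimate $v_p\bigl(\binom{p^n}{k}\bigr) \ge n - v_p(k!)$ applied to $\varphi^n(\pi) = (1+\pi)^{p^n} - 1 = \sum_{k\ge 1}\binom{p^n}{k}\pi^k$ shows that for any $M$ and $N$, the series $\varphi^n(\pi)$ lies in the ideal $(p^M, \pi^N)$ for $n$ sufficiently large. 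Hence $\varphi^{nf}(v) \to 0$ in the $(p,\pi)$-adic topology, the partial products form a Cauchy sequence, and the limit lies in the closed subset $1 + \pi\mathbb{Z}_p[[\pi]]$. The main substantive step is spotting the algebraic identity $q/\gamma q = D'/D$; both the $p$-integrality claim and the convergence of the product are then essentially mechanical.
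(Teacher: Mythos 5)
Your proof is correct and follows the same strategy as the paper: part (i) by reading off constant terms, and part (ii) by first establishing $q/\gamma q \in 1 + \pi\mathbb{Z}_p[[\pi]]$ and then passing to the infinite product. The paper compresses both steps of (ii) into a single ``one can easily check'' remark, whereas you supply the explicit factorization $q/\gamma q = D'/D$ (note $D' = \gamma\pi/\pi$ and $D = \varphi(D')$, so this is the natural way to see it) and the $(p,\pi)$-adic convergence estimate; these are exactly the omitted details, not a different route.
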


\begin{proof}
(i) This is clear since $\frac{q_{n}(0)}{p}=1$ for all $n\geq 1.$ (ii) One
can easily check that $\frac{q}{\gamma q}\in 1+\pi 
\mathbb{Z}
_{p}\left[ \left[ \pi \right] \right] .$ From this we deduce that $\lambda
_{f,\gamma }\in 1+\pi 
\mathbb{Z}
_{p}\left[ \left[ \pi \right] \right] .$\noindent
\end{proof}

\noindent Consider the rank one module $\mathbb{N}_{\vec{k},c}=\left( 
\mathcal{O}_{E}[[\pi ]]^{\mid \tau \mid }\right) \eta ,$ equipped with
semilinear actions of $\varphi $ and $\Gamma _{K}$ defined by $\varphi (\eta
)=(c\cdot q^{k_{1}},q^{k_{2}},...,q^{k_{f-1}},q^{k_{0}})\eta \ \noindent $and%
$\ \gamma (\eta )$\noindent $=(g_{1}^{\gamma }(\pi ),g_{2}^{\gamma }(\pi
),...g_{f-1}^{\gamma }(\pi ),g_{0}^{\gamma }(\pi ))\eta $ for all $\gamma
\in \Gamma _{K},$ where $c\in \mathcal{O}_{E}^{\times }.$ We need to define
the functions $g_{i}(\pi )=g_{i}^{\gamma }(\pi )\in \mathcal{O}_{E}[[\pi ]]$
appropriately to make $\mathbb{N}_{\vec{k},c}$ a Wach module over $\mathcal{O%
}_{E}[[\pi ]]^{\mid \tau \mid }.$ The actions of $\varphi $ and $\gamma $
should commute and a short computation shows that $g_{0}$ should satisfy the
equation 
\begin{equation}
\varphi ^{f}(g_{0})=g_{0}\left( \frac{\gamma q}{q}\right) ^{k_{0}}\varphi (%
\frac{\gamma q}{q})^{k_{1}}\cdots \varphi ^{f-1}(\frac{\gamma q}{q}%
)^{k_{f-1}}.  \label{equation solve}
\end{equation}

\begin{lemma}
\label{solve}Equation $\ref{equation solve}$ has a unique $\equiv 1\func{mod}%
\pi $ solution in $%
\mathbb{Z}
_{p}[[\pi ]]$ given by 
\begin{equation*}
g_{0}=\lambda _{f,\gamma }^{k_{0}}\varphi (\lambda _{f,\gamma
})^{k_{1}}\varphi ^{2}(\lambda _{f,\gamma })^{k_{2}}\cdots \varphi
^{f-1}(\lambda _{f,\gamma })^{k_{f-1}}.
\end{equation*}
\end{lemma}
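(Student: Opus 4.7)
The plan is to first establish the fundamental identity
\[
\varphi^{f}(\lambda_{f,\gamma}) \;=\; \lambda_{f,\gamma}\cdot\frac{\gamma q}{q}
\]
which essentially encodes why $\lambda_f$ was defined in the first place. To prove it, I would compute directly from the definition: since $\lambda_f=\prod_{n\ge 0}\frac{q_{nf+1}}{p}$ and $\varphi^f(q_{nf+1})=q_{(n+1)f+1}$, the infinite product telescopes to give $\varphi^f(\lambda_f)=\frac{p\,\lambda_f}{q_1}=\frac{p\,\lambda_f}{q}$. Applying the same telescoping to $\gamma\lambda_f$ and taking the ratio yields the identity. (Convergence/well-definedness of $\lambda_f\in\mathbb{Q}_p[[\pi]]$ is guaranteed since each factor $q_{nf+1}/p\equiv 1\bmod \pi$, and $\varphi$ strictly raises $\pi$-adic valuation.)

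With the identity in hand, existence is immediate. For $g_0=\prod_{i=0}^{f-1}\varphi^{i}(\lambda_{f,\gamma})^{k_i}$ I apply $\varphi^f$ termwise, use the identity on each factor, and collect:
\[
\varphi^{f}(g_0)=\prod_{i=0}^{f-1}\varphi^{i}\!\left(\lambda_{f,\gamma}\cdot\frac{\gamma q}{q}\right)^{k_i}=g_0\cdot\prod_{i=0}^{f-1}\varphi^{i}\!\left(\tfrac{\gamma q}{q}\right)^{k_i},
\]
which is precisely equation (\ref{equation solve}). That $g_0\in 1+\pi\mathbb{Z}_p[[\pi]]$ follows from Lemma \ref{proto}(ii) (which gives $\lambda_{f,\gamma}\in 1+\pi\mathbb{Z}_p[[\pi]]$) together with the fact that $\varphi$ preserves $1+\pi\mathbb{Z}_p[[\pi]]$ since $\varphi(\pi)=(1+\pi)^p-1\in\pi\mathbb{Z}_p[[\pi]]$.

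For uniqueness, suppose $g_0$ and $g_0'$ are two solutions with $g_0,g_0'\equiv 1\bmod \pi$. Then $h:=g_0/g_0'\in 1+\pi\mathbb{Z}_p[[\pi]]$ satisfies $\varphi^{f}(h)=h$. The main point is the observation $\varphi^{f}(\pi)\equiv p^{f}\pi\bmod \pi^{2}$, obtained by iterating $\varphi(\pi)\equiv p\pi\bmod\pi^2$. Writing $h=1+a_n\pi^n+a_{n+1}\pi^{n+1}+\cdots$ with $n\ge 1$ the index of the first nonzero higher coefficient, the equation $\varphi^f(h)=h$ gives, upon comparing coefficients of $\pi^n$, the relation $a_n p^{fn}=a_n$; since $p^{fn}-1$ is a $p$-adic unit this forces $a_n=0$, contradicting minimality. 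Hence $h=1$.

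The one part that requires real care rather than being purely formal is the convergence of the infinite product defining $\lambda_f$ and the legitimacy of applying $\varphi^f$ termwise; otherwise the argument is a direct two-line computation plus a standard successive-approximation uniqueness argument. I expect the uniqueness step to be the only place where one could slip — specifically, making sure the coefficient-comparison works even after replacing $\pi$ by the formal power series $\varphi^f(\pi)$, which is why the mod-$\pi^2$ expansion of $\varphi^f(\pi)$ is the crucial input.
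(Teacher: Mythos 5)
Your proof is correct and follows essentially the same path as the paper's: establish the key identity $\varphi^f(\lambda_{f,\gamma})=\lambda_{f,\gamma}\cdot\frac{\gamma q}{q}$ via the telescoping of the infinite product, apply it factorwise to verify $g_0$ solves the equation, and deduce uniqueness from the fact that a power series in $1+\pi\mathbb{Z}_p[[\pi]]$ fixed by $\varphi^f$ must equal $1$. The only difference is that you spell out the uniqueness step (the coefficient comparison using $\varphi^f(\pi)\equiv p^f\pi\bmod\pi^2$) which the paper leaves implicit.
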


\begin{proof}
Notice that $\varphi ^{f}(\lambda _{f})=\frac{\lambda _{f}}{(\frac{q}{p})}$
and $\varphi ^{f}(\gamma \lambda _{f})=\frac{\gamma \lambda _{f}}{(\frac{%
\gamma q}{p})},$ hence $\lambda _{f,\gamma }=\frac{\lambda _{f}}{\gamma
\lambda _{f}}$ solves the equation $\varphi ^{f}(u)=u\left( \frac{\gamma q}{q%
}\right) .\ $It is straightforward to check that 
\begin{equation*}
g_{0}=\lambda _{f,\gamma }^{k_{0}}\varphi (\lambda _{f,\gamma
})^{k_{1}}\varphi ^{2}(\lambda _{f,\gamma })^{k_{2}}\cdots \varphi
^{f-1}(\lambda _{f,\gamma })^{k_{f-1}}
\end{equation*}%
is a solution of equation \ref{equation solve}. By Lemma \ref{proto}, $%
g_{0}\equiv 1\func{mod}\pi .$ If $g_{0}\ $and $g_{0}^{\prime }\ $are two
congruent to $1\func{mod}\pi \ $solutions of equation \ref{equation solve},$%
\ $then $(\frac{g_{0}^{\prime }}{g_{0}})\in 
\mathbb{Z}
_{p}[[\pi ]]\ $is fixed by $\varphi ^{f}$ and $\smallskip $is congruent$\ $%
to $1\func{mod}\pi ,\ $ hence equals $1.\ $
\end{proof}

\noindent Let $g_{0}$ be as in Lemma \ref{solve}. Commutativity of $\varphi $
with the$\ \Gamma _{K}$-actions implies that%
\begin{gather*}
g_{1}=(\frac{q}{\gamma q})^{k_{1}}\varphi (\frac{q}{\gamma q})^{k_{2}}\cdots
\varphi ^{f-2}(\frac{q}{\gamma q})^{k_{f-1}}\varphi ^{f-1}(\lambda
_{f,\gamma })^{k_{0}}\varphi ^{f}(\lambda _{f,\gamma })^{k_{1}}\cdots
\varphi ^{2f-2}(\lambda _{f,\gamma })^{k_{f-1}}, \\
\cdots \cdots \\
g_{f-2}=(\frac{q}{\gamma q})^{k_{f-2}}\varphi (\frac{q}{\gamma q}%
)^{k_{f-1}}\varphi ^{2}(\lambda _{f,\gamma })^{k_{0}}\varphi ^{3}(\lambda
_{f,\gamma })^{k_{1}}\cdots \varphi ^{f+1}(\lambda _{f,\gamma })^{k_{f-1}},\ 
\\
g_{f-1}=(\frac{q}{\gamma q})^{k_{f-1}}\varphi (\lambda _{f,\gamma
})^{k_{0}}\varphi ^{2}(\lambda _{f,\gamma })^{k_{1}}\varphi ^{3}(\lambda
_{f,\gamma })^{k_{2}}\cdots \varphi ^{f}(\lambda _{f,\gamma
})^{k_{f-1}}.\smallskip \smallskip
\end{gather*}%
Thus, by Lemma \ref{proto} we have that$\ g_{i}$ $\equiv 1\func{mod}\pi $
for all $i.$

\begin{proposition}
\label{The crystalline characters}We equip $\mathbb{N}_{\vec{k},c}=\left( 
\mathcal{O}_{E}[[\pi ]]^{\mid \tau \mid }\right) \eta $ with semilinear $%
\varphi $ and $\Gamma _{K}$-actions defined by $\varphi (\eta )=(c\cdot
q^{k_{1}},q^{k_{2}},...,q^{k_{f-1}},q^{k_{0}})\eta \ $and$\ \noindent \gamma
(\eta )=(g_{1}^{\gamma }(\pi ),g_{2}^{\gamma }(\pi ),...g_{f-1}^{\gamma
}(\pi ),g_{0}^{\gamma }(\pi ))\eta $ for the $g_{i}(\pi )=g_{i}^{\gamma
}(\pi )$ defined above, where $c\in \mathcal{O}_{E}^{\times }.$ The module $%
\mathbb{N}_{\vec{k},c}$ is a Wach module over $\mathcal{O}_{E}[[\pi ]]^{\mid
\tau \mid }$ with labeled Hodge-Tate weights $\{-k_{i}\}_{\tau _{i}}.$
Moreover, $\mathbb{D}_{\vec{k},c}\simeq E^{\mid \tau \mid
}\bigotimes\limits_{\mathcal{O}_{E}^{\mid \tau \mid }}\left( \mathbb{N}_{%
\vec{k},c}/\pi \mathbb{N}_{\vec{k},c}\right) \ $as\ filtered$\ \varphi $%
-modules over $E^{\mid \tau \mid },$ where$\ \mathbb{D}_{\vec{k},c}=\left(
E^{\mid \tau \mid }\right) \eta $ is the filtered $\varphi $-module with
Frobenius endomorphism%
\begin{equation*}
\varphi (\eta )=(c\cdot p^{k_{1}},p^{k_{2}},...,p^{k_{f-1}},p^{k_{0}})\eta
\end{equation*}%
and filtration 
\begin{equation*}
\ \mathrm{Fil}^{\mathrm{j}}(\mathbb{D}_{\vec{k},c})=\left\{ 
\begin{array}{l}
E^{\mid \tau _{I_{0}}\mid }\eta \text{ \ \ \ }\mathrm{if}\text{\ ~}j\leq
w_{0}, \\ 
E^{\mid \tau _{I_{1}}\mid }\eta \ \ \text{ \ }\mathrm{if}\text{ \ }%
1+w_{0}\leq j\leq w_{1}, \\ 
\ \ \ \ \ \ \ \ \ \ \ \ \ \ \ \cdots \cdots \\ 
E^{\mid \tau _{I_{t-1}}\mid }\eta \text{ \ }\mathrm{if}\text{\ }%
1+w_{t-2}\leq j\leq w_{t-1}, \\ 
\ \ \ 0\ \ \ \text{\ \ \ \ \ \ }\mathrm{if}\text{\ }j\geq 1+w_{t-1}.%
\end{array}%
\right.
\end{equation*}
\end{proposition}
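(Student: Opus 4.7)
The plan is to verify Proposition \ref{The crystalline characters} in three steps: show that $\mathbb{N}_{\vec{k},c}$ satisfies the axioms of a Wach module over $\mathcal{O}_E[[\pi]]^{\mid \tau \mid}$, compute the induced filtration on $\mathbb{N}_{\vec{k},c}/\pi\mathbb{N}_{\vec{k},c}$ via Theorem \ref{berger thm}(iii), and finally read off the labeled Hodge-Tate weights from the jumps of this filtration.

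First I would verify the Wach module axioms. Freeness of rank one and the inclusion $\varphi(\mathbb{N}_{\vec{k},c}) \subset \mathbb{N}_{\vec{k},c}$ are immediate from the construction. Commutativity $\varphi\gamma = \gamma\varphi$ on the generator $\eta$ is built into the definition of the $g_i^{\gamma}$: the formula for $g_0^{\gamma}$ is the unique solution of equation \ref{equation solve} congruent to $1$ modulo $\pi$ (Lemma \ref{solve}), and the remaining $g_i^{\gamma}$ are forced by commutativity with $\varphi$, as displayed just before the proposition. The identity $(\gamma_1\gamma_2)(\eta) = \gamma_1(\gamma_2(\eta))$ reduces to the cocycle relation $\lambda_{f,\gamma_1\gamma_2} = \lambda_{f,\gamma_1} \cdot \gamma_1(\lambda_{f,\gamma_2})$, which is immediate from $\lambda_{f,\gamma} = \lambda_f/\gamma\lambda_f$. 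Triviality of the $\Gamma_K$-action modulo $\pi$ follows from $\lambda_{f,\gamma} \equiv 1 \bmod \pi$ (Lemma \ref{proto}(ii)) applied to the multiplicative formulas for each $g_i^{\gamma}$. Finally, since $\varphi(\eta) = (cq^{k_1}, q^{k_2}, \ldots, q^{k_0})\eta$ with $c \in \mathcal{O}_E^{\times}$, the quotient $\mathbb{N}_{\vec{k},c}/\varphi^{\ast}(\mathbb{N}_{\vec{k},c})$ decomposes coordinate-wise into a direct sum of $\mathcal{O}_E[[\pi]]/(q^{k_{\ell}})$, hence is killed by $q^k$ with $k = \max_i k_i$.

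For the filtration I would apply Theorem \ref{berger thm}(iii), which gives $\mathrm{Fil}^j \mathbb{N}_{\vec{k},c} = \{x : \varphi(x) \in q^j \mathbb{N}_{\vec{k},c}\}$. Writing $x = a\eta$ with $a = (a_0, \ldots, a_{f-1})$ and expanding $\varphi(a)$ via equation \ref{actions1}, the condition decouples coordinate by coordinate to $a_{\ell}(\varphi\pi) \in q^{\max(j - k_{\ell},\, 0)} \mathcal{O}_E[[\pi]]$ for each $\ell$. The key technical ingredient is the claim that for $a \in \mathcal{O}_E[[\pi]]$ and $m \geq 0$, one has $a(\varphi\pi) \in q^m \mathcal{O}_E[[\pi]]$ if and only if $a \in \pi^m \mathcal{O}_E[[\pi]]$. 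The forward implication follows from $\varphi(\pi) = \pi q$. For the converse I would argue by induction on $m$: Weierstrass preparation, applied to the distinguished polynomial $q$ (constant term $p$, leading term $\pi^{p-1}$), gives $\mathcal{O}_E[[\pi]]/(q) \simeq \mathcal{O}_E[\zeta_p]$ with $\pi$ mapping to the uniformizer $\zeta_p - 1$; since $\varphi\pi \equiv 0 \bmod q$, the hypothesis $a(\varphi\pi) \equiv 0 \bmod q$ forces $a(0) = 0$, so $a = \pi a_1$, and since $\pi$ is a non-zerodivisor modulo $q^{m-1}$ (DVR structure), the induction hypothesis applies to $a_1$.

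Granted the claim, $\mathrm{Fil}^j \mathbb{N}_{\vec{k},c} = \{a\eta : a_i \in \pi^{\max(j-k_i,\, 0)} \mathcal{O}_E[[\pi]] \text{ for all } i\}$. Reducing modulo $\pi$ and tensoring with $E$, an element $\bar a \bar\eta$ of $\mathbb{D}_{\vec{k},c}$ lies in the image of $\mathrm{Fil}^j$ precisely when $\bar a_i = 0$ whenever $k_i < j$; for $1 + w_{s-1} \leq j \leq w_s$ this is exactly $E^{\mid \tau_{I_s} \mid}\bar\eta$, matching the claimed filtration. The Frobenius on $\mathbb{N}_{\vec{k},c}/\pi\mathbb{N}_{\vec{k},c}$ is obtained by reducing $\varphi(\eta)$ modulo $\pi$ via $q \equiv p \bmod \pi$, yielding $\varphi(\bar\eta) = (cp^{k_1}, p^{k_2}, \ldots, p^{k_0})\bar\eta$. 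The labeled Hodge-Tate weights are then read off the jumps: $e_i \mathrm{Fil}^j$ drops to zero at $j = k_i + 1$, producing the weight $-k_i$ at $\tau_i$. The main obstacle is establishing the technical divisibility claim; everything else is coordinate-wise bookkeeping in the decomposition $\mathbb{N}_{\vec{k},c} = \bigoplus_i e_i \mathbb{N}_{\vec{k},c}$.
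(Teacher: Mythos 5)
Your proposal is correct and follows the same route as the paper: verify the Wach-module axioms (cocycle relations for the $g_i^{\gamma}$, triviality of $\Gamma_K$ modulo $\pi$, $q^k$ annihilating $\mathbb{N}/\varphi^*\mathbb{N}$), compute $\mathrm{Fil}^j$ coordinate-wise via Theorem \ref{berger thm}(iii), and read the weights off the jumps. The one genuine addition on your part is that you supply a proof of the divisibility fact ``$q^m \mid a(\varphi\pi)$ iff $\pi^m \mid a$'' via the identification $\mathcal{O}_E[[\pi]]/(q)\simeq\mathcal{O}_E[\pi]/(q)$ (Weierstrass division by the monic, constant-term-$p$ polynomial $q$) and coprimality of $\pi$ and $q$ in the UFD $\mathcal{O}_E[[\pi]]$, whereas the paper simply invokes this fact; this fills a small gap for the reader but does not change the structure of the argument.
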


\begin{proof}
$(i)$ To prove that $\Gamma _{K}$ acts on $\mathbb{N}_{\vec{k},c},$ it
suffices to prove that $g_{i}^{\gamma _{1}\gamma _{2}}(\pi )=g_{i}^{\gamma
_{1}}\gamma _{1}(g_{i}^{\gamma _{2}})$ for all $\gamma _{1},\gamma _{2}\in
\Gamma _{K}$ and $i\in I_{0}.$ This follows immediately from the cocycle
relations 
\begin{equation*}
\frac{q}{\gamma _{1}\gamma _{2}(q)}=\frac{q}{\gamma _{1}(q)}\gamma
_{1}\left( \frac{q}{\gamma _{2}(q)}\right) \ \text{and }\lambda _{f,\gamma
_{1}\gamma _{2}}=\lambda _{f,\gamma _{1}}\gamma _{1}(\lambda _{f,\gamma
_{2}}),
\end{equation*}%
and the definition of the $g_{i}^{\gamma }(\pi ).$ Since $g_{i}^{\gamma
}(\pi )\equiv 1\func{mod}\pi $ for all $i\in I_{0},\ \Gamma _{K}$ acts
trivially on $\mathbb{N}_{\vec{k},c}/\pi \mathbb{N}_{\vec{k},c}.$ \noindent
\noindent $(ii)$ Let $k=\max \{k_{0},k_{1},...,k_{f-1}\}$ and $\varphi
^{\ast }(\mathbb{N}_{\vec{k},c})$ be the $\mathcal{O}_{E}[[\pi ]]^{\mid \tau
\mid }$-linear span of the set $\varphi (\mathbb{N}_{\vec{k},c}).$ Let $%
c_{1}=c^{-1}\ $and $c_{i}=1\ $if $i\neq 1.\ $Since $q^{k}\eta
=\sum\limits_{i=0}^{f-1}\left( q^{k-k_{i}}c_{i}e_{i}\right) \varphi (\eta
)\in $ $\varphi ^{\ast }(\mathbb{N}_{\vec{k},c}),\ $it follows that $q^{k}$
kills $\mathbb{N}_{\vec{k},c}/\varphi ^{\ast }(\mathbb{N}_{\vec{k},c}).$
\noindent \noindent $(iii)$ To compute the filtration of $\mathbb{N}_{\vec{k}%
,c},$ we use the fact that $q^{j}\mid $ $\varphi (x)$ if and only if $\pi
^{j}\mid x$ for any $x\in \mathcal{O}_{E}[[\pi ]].$ Let $%
x=(x_{0},x_{1},...,x_{f-1})\eta \in \mathbb{N}_{\vec{k},c}.$ By Theorem \ref%
{berger thm}, $x\in \mathrm{Fil}^{\mathrm{j}}\mathbb{N}_{\vec{k},c}$ if and
only if $\varphi (x)\in q^{j}\mathbb{N}_{\vec{k},c}$ or equivalently $%
q^{j}\mid $ $\varphi (x_{i})q^{k_{i}}$ for all $i\in I_{0}.$ If $j\leq k_{i}$
there are no restrictions on the $x_{i},$ whereas if $j>k_{i}$ this is
equivalent to $x_{i}\equiv 0\func{mod}\pi ^{j-k_{i}}.$ \noindent Therefore, 
\begin{equation*}
e_{i}\mathrm{Fil}^{\mathrm{j}}\mathbb{N}_{\vec{k},c}=\left\{ 
\begin{array}{l}
\ \ \ \ \ \ e_{i}\mathbb{N}_{\vec{k},c}\ \ \ \ \ \ \ \ \ \ \ \text{if }j\leq
k_{i}, \\ 
e_{i}\pi ^{j-k_{i}}\mathcal{O}_{E}[[\pi ]]\eta \ \ \ \ \text{if }j\geq
1+k_{i}.%
\end{array}%
\right.
\end{equation*}%
This implies that 
\begin{equation*}
\noindent E^{\mid \tau \mid }\bigotimes\limits_{\mathcal{O}_{E}^{\mid \tau
\mid }}e_{i}\mathrm{Fil}^{\mathrm{j}}\left( \mathbb{N}_{\vec{k},c}/\pi 
\mathbb{N}_{\vec{k},c}\right) =\left\{ 
\begin{array}{l}
e_{i}E^{\mid \tau \mid }\overline{\eta }\ \ \ \text{if }j\leq k_{i}, \\ 
\ \ \ \ \ 0\ \ \ \ \ \ \text{if }j\geq 1+k_{i}.%
\end{array}%
\right.
\end{equation*}%
For the filtration, we have 
\begin{equation*}
E^{\mid \tau \mid }\bigotimes\limits_{\mathcal{O}_{E}^{\mid \tau \mid }}%
\mathrm{Fil}^{\mathrm{j}}\left( \mathbb{N}_{\vec{k},c}/\pi \mathbb{N}_{\vec{k%
},c}\right) =\bigoplus\limits_{i=0}^{f-1}\left( E^{\mid \tau \mid
}\bigotimes\limits_{\mathcal{O}_{E}^{\mid \tau \mid }}e_{i}\mathrm{Fil}^{%
\mathrm{j}}(\mathbb{N}_{\vec{k},c}/\pi \mathbb{N}_{\vec{k},c})\right) .
\end{equation*}%
Recall (see Notation \ref{the notation}) that after ordering the weights $%
k_{i}\ $and omitting possibly repeated weights we get $%
w_{0}<w_{1}<...<w_{t-1}.$ By the formulas above,%
\begin{equation*}
\mathrm{Fil}^{\mathrm{j}}(\mathbb{D}_{\vec{k},c})=\left\{ 
\begin{array}{l}
E^{\mid \tau \mid }\left( \tsum\limits_{i\in I_{0}}e_{i}\right) \eta \text{
\ \ \ \ \ \ \ \ \ \ \ \ }\mathrm{if}\text{\ ~}j\leq w_{0}, \\ 
E^{\mid \tau \mid }\left( \tsum\limits_{\{i\in
I_{0}:k_{i}>w_{0}\}}e_{i}\right) \eta \ \ \text{ \ }\mathrm{if}\text{ \ }%
1+w_{0}\leq j\leq w_{1}, \\ 
E^{\mid \tau \mid }\left( \tsum\limits_{\{i\in
I_{0}:k_{i}>w_{1}\}}e_{i}\right) \eta \ \ \text{ \ }\mathrm{if}\text{ \ }%
1+w_{1}\leq j\leq w_{2} \\ 
\ \ \ \ \ \ \ \ \ \ \ \ \ \ \ \cdots \cdots \\ 
E^{\mid \tau \mid }\left( \tsum\limits_{\{i\in
I_{0}:k_{i}>w_{t-2}\}}e_{i}\right) \eta \ \ \text{ }\mathrm{if}\text{\ }%
1+w_{t-2}\leq j\leq w_{t-1}, \\ 
\ \ \ \ \ \ \ \ \ \ \ \ \ \ 0\ \ \ \ \ \ \ \ \ \ \ \ \text{\ \ \ \ \ \ \ }%
\mathrm{if}\text{\ }j\geq 1+w_{t-1}.%
\end{array}%
\right.
\end{equation*}%
The formula for the filtration follows immediately, recalling that $%
I_{j}=\{i\in I_{0}:k_{i}>w_{j-1}\}\ $for each $j=1,2,...,t-1,$\ and $E^{\mid
\tau _{I_{r}}\mid }:=E^{f}\left( \tsum\limits_{i\in I_{r}}e_{i}\right) $ for
each $r=0,1,...,t-1.$ The isomorphism of filtered $\varphi $-modules is
obvious.\ \ 
\end{proof}

\begin{proposition}
\label{rank 1 isom}Let $k_{0},k_{1},...,\noindent k_{f-1}$ be arbitrary
integers.

\begin{enumerate}
\item[(i)] The weakly admissible rank one filtered $\varphi $-modules over $%
E^{\mid \tau \mid }$ with labeled Hodge-Tate weights $\{-k_{i}\}_{\tau _{i}}$
are of the form $\mathbb{D}_{\vec{k},\vec{\alpha}}=\left( E^{\mid \tau \mid
}\right) \eta ,$ with $\varphi (e)=(\alpha _{0},\alpha _{1},...,\alpha
_{f-1})\eta $ for some $\vec{\alpha}=(\alpha _{0},\alpha _{1},...,\alpha
_{f-1})\in $ $(E^{\times })^{\mid \tau \mid }$ such that $\mathrm{v}_{%
\mathrm{p}}(\mathrm{Nm}_{\mathrm{\varphi }}(\vec{\alpha}))=\sum\limits_{i\in
I_{0}}k_{i}$ and 
\begin{equation*}
\mathrm{Fil}^{\mathrm{j}}(\mathbb{D}_{\vec{k},\vec{\alpha}})=\left\{ 
\begin{array}{l}
E^{\mid \tau _{I_{0}}\mid }\eta \text{ \ \ \ }\mathrm{if}\text{\ }j\leq
w_{0}, \\ 
E^{\mid \tau _{I_{1}}\mid }\eta \ \text{ \ \ }\mathrm{if}\text{ \ }%
1+w_{0}\leq j\leq w_{1}, \\ 
\ \ \ \ \ \ \ \ \ \ \ \ \ \cdots \cdots \\ 
E^{\mid \tau _{I_{t-1}}\mid }\eta \text{ \ }\mathrm{if}\text{\ }%
1+w_{t-2}\leq j\leq w_{t-1}, \\ 
\ \ \ 0\text{\ \ \ \ \ \ \ \ \ }\mathrm{if}\text{\ }j\geq 1+w_{t-1}.%
\end{array}%
\right.
\end{equation*}

\item[(ii)] The filtered $\varphi $-modules $\mathbb{D}_{\vec{k},\vec{\alpha}%
}$ and $\mathbb{D}_{\vec{v},\vec{\beta}}$ are isomorphic if and only if $%
\vec{k}=\vec{v}\ $and \noindent $\mathrm{Nm}_{\varphi }(\vec{\alpha})=%
\mathrm{Nm}_{\varphi }(\vec{\beta}).$
\end{enumerate}
\end{proposition}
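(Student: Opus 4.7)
The plan is to treat the two parts separately, with (i) a direct classification plus weak admissibility computation, and (ii) a matching-plus-Hilbert-90 argument.

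For part (i), fix a basis $\eta$ of $\mathbb{D}$ over $E^{\mid\tau\mid}$. Since $\varphi$ is semilinear with respect to the cyclic shift on $E^{\mid\tau\mid}$, we must have $\varphi(\eta)=\vec{\alpha}\eta$ for some $\vec{\alpha}\in E^{\mid\tau\mid}$, and bijectivity of $\varphi$ forces $\alpha_i\in E^{\times}$ for every $i$. In rank one, each component $e_i\mathbb{D}$ is one-dimensional over $E$, so its filtration has a single jump, which by the definition of the labeled Hodge-Tate weights must occur at $j=k_i$, i.e.\ $e_i\mathrm{Fil}^{j}\mathbb{D}=e_i\mathbb{D}$ for $j\le k_i$ and $e_i\mathrm{Fil}^{j}\mathbb{D}=0$ for $j\ge 1+k_i$. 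Summing over $i$ and reorganizing using the partition of $I_0$ via the distinct values $w_0<\cdots<w_{t-1}$ and the sets $I_r=\{i:k_i>w_{r-1}\}$ yields exactly the step function for $\mathrm{Fil}^{j}(\mathbb{D}_{\vec{k},\vec{\alpha}})$ in the statement.

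Still for (i), I would next impose weak admissibility. Because $\mathbb{D}$ has rank one, no proper nonzero $\varphi$-stable subobject exists, so the condition reduces to the single equality $t_N(\mathbb{D})=t_H(\mathbb{D})$. The $E^{\mid\tau\mid}$-linear map $\varphi^{f}$ sends $\eta\mapsto\mathrm{Nm}_{\varphi}(\vec{\alpha})\,\eta$, and each coordinate of $\mathrm{Nm}_{\varphi}(\vec{\alpha})$ equals $\prod_{i=0}^{f-1}\alpha_i$, giving $t_N(\mathbb{D})=\mathrm{v}_{\mathrm{p}}(\mathrm{Nm}_{\varphi}(\vec{\alpha}))$. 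The filtration computation above yields $\dim_E\mathrm{gr}^{j}\mathbb{D}=\#\{i:k_i=j\}$, so $t_H(\mathbb{D})=\sum_{i}k_i$. The stated admissibility condition follows.

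For part (ii), any isomorphism $\mathbb{D}_{\vec{k},\vec{\alpha}}\to\mathbb{D}_{\vec{v},\vec{\beta}}$ sends the chosen basis vector $\eta$ to $\vec{c}\,\eta'$ for some $\vec{c}\in(E^{\times})^{\mid\tau\mid}$. Compatibility with filtrations at each coordinate forces each labeled weight on the left to equal the corresponding labeled weight on the right, hence $\vec{k}=\vec{v}$. Intertwining with Frobenius gives the relation $\varphi(\vec{c})\,\vec{\beta}=\vec{\alpha}\,\vec{c}$, and applying $\mathrm{Nm}_{\varphi}$ while using $\mathrm{Nm}_{\varphi}(\varphi(\vec{c}))=\mathrm{Nm}_{\varphi}(\vec{c})$ (since $\varphi$ cyclically permutes the coordinates) yields $\mathrm{Nm}_{\varphi}(\vec{\alpha})=\mathrm{Nm}_{\varphi}(\vec{\beta})$.

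Conversely, assuming $\vec{k}=\vec{v}$ and $\mathrm{Nm}_{\varphi}(\vec{\alpha})=\mathrm{Nm}_{\varphi}(\vec{\beta})$, one must produce $\vec{c}$ with $\varphi(\vec{c})/\vec{c}=\vec{\alpha}/\vec{\beta}$. Writing $\vec{u}=\vec{\alpha}/\vec{\beta}$, the hypothesis gives $\prod_{i}u_i=1$, and one solves the cocycle equation explicitly by setting $c_0=1$ and $c_{i}=u_0u_1\cdots u_{i-1}$ for $1\le i\le f-1$; the closure relation $c_0/c_{f-1}=u_{f-1}$ follows from $\prod_iu_i=1$. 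Once $\vec{c}$ is chosen, the map $\eta\mapsto\vec{c}\,\eta'$ respects Frobenius by construction and respects the filtrations because the filtration at each coordinate is either all of $e_i\mathbb{D}$ or zero, hence preserved by any $E^{\mid\tau\mid}$-linear isomorphism multiplying by units. The main point to be careful about is the normalization giving $t_N=\mathrm{v}_{\mathrm{p}}(\mathrm{Nm}_{\varphi}(\vec{\alpha}))$; this is the only spot where the $E\otimes K$ versus $E^{\mid\tau\mid}$ viewpoint has to be tracked, but it is essentially a one-line computation using the explicit shape of $\varphi^{f}$ on the basis $\eta$.
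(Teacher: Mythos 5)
Your proof is correct. The paper's own "proof" of this proposition is a one-line reference to \cite{DO10} \S\S 4 and 6, so you have in effect supplied the argument the author chose to omit, and you have done so along the lines one would naturally expect: the rank-one form and the filtration are forced once a basis is fixed, the only proper $\varphi$-stable $E\otimes K$-submodules are trivial because $\varphi$ cyclically shifts the idempotents, weak admissibility reduces to $t_N = t_H$, and the isomorphism criterion follows by a Hilbert-90-style explicit solution of the cocycle equation $\varphi(\vec{c})/\vec{c} = \vec{\alpha}/\vec{\beta}$, which is solvable precisely when $\mathrm{Nm}_\varphi(\vec{\alpha}) = \mathrm{Nm}_\varphi(\vec{\beta})$. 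Two small points worth making explicit in a final write-up: (1) when you say "no proper nonzero $\varphi$-stable subobject exists," one should record the one-line argument that a $\varphi$-stable $E\otimes K$-submodule of a free rank-one module is a sum of components $e_i\mathbb{D}$ over a shift-invariant index set, hence is $0$ or $\mathbb{D}$; (2) the assertion that $\mathrm{Nm}_\varphi(\vec{\alpha})$ is independent of the choice of basis $\eta$ (so that the statement in (i) is unambiguous) follows from the same identity $\mathrm{Nm}_\varphi(\varphi(\vec{c})) = \mathrm{Nm}_\varphi(\vec{c})$ you use in (ii), and it is worth flagging since (i) implicitly relies on it.
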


\begin{proof}
Follows easily arguing as in\ \cite{DO10}, \S \S 4 and 6.
\end{proof}

\begin{corollary}
\label{cor3.6}All the effective crystalline $E$-characters of $G_{K}$ are
those constructed in Proposition $\ref{The crystalline characters}$%
.\noindent \noindent
\end{corollary}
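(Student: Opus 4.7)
The plan is to combine three inputs already available in the paper: the Colmez--Fontaine equivalence for $E$-coefficients recalled in Section~\ref{product ring}, the classification of rank-one weakly admissible filtered $\varphi$-modules given in Proposition~\ref{rank 1 isom}, and the explicit realization of such modules by the Wach modules of Proposition~\ref{The crystalline characters} via Theorem~\ref{berger thm}(iii).

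First I would reduce the problem to filtered $\varphi$-modules. By Colmez--Fontaine, an effective crystalline $E$-character $\chi$ of $G_K$ corresponds to a weakly admissible rank-one filtered $\varphi$-module $\mathbb{D}$ over $E\otimes K \simeq E^{\mid\tau\mid}$ whose labeled Hodge-Tate weights are of the form $\{-k_i\}_{\tau_i}$ with $k_i\geq 0$. Proposition~\ref{rank 1 isom}(i) then asserts that $\mathbb{D}$ is isomorphic to some $\mathbb{D}_{\vec{k},\vec{\alpha}} = E^{\mid\tau\mid}\eta$, where $\vec{\alpha}\in (E^{\times})^{\mid\tau\mid}$ satisfies $\mathrm{v}_{\mathrm{p}}(\mathrm{Nm}_{\varphi}(\vec{\alpha})) = \sum_{i\in I_0}k_i$, and with the filtration displayed in that proposition.

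Next, set
\[
c \;:=\; \mathrm{Nm}_{\varphi}(\vec{\alpha})\,p^{-\sum_{i\in I_0}k_i}.
\]
The valuation identity for $\vec{\alpha}$ ensures $c \in \mathcal{O}_E^{\times}$. Now consider the filtered $\varphi$-module $\mathbb{D}_{\vec{k},c}$ attached in Proposition~\ref{The crystalline characters} to the Wach module $\mathbb{N}_{\vec{k},c}$. Its Frobenius acts by $(c\,p^{k_1},p^{k_2},\ldots,p^{k_{f-1}},p^{k_0})$, whose $\varphi$-norm is $c\,p^{\sum k_i} = \mathrm{Nm}_{\varphi}(\vec{\alpha})$, and its filtration is exactly the one written down in Proposition~\ref{rank 1 isom}(i). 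By Proposition~\ref{rank 1 isom}(ii) we obtain $\mathbb{D}_{\vec{k},\vec{\alpha}} \simeq \mathbb{D}_{\vec{k},c}$ as filtered $\varphi$-modules, hence $\mathbb{D}\simeq \mathbb{D}_{\vec{k},c}$. Applying the inverse of Fontaine's functor and invoking Theorem~\ref{berger thm}(iii), the crystalline character $\chi$ is the one obtained from $\mathbb{N}_{\vec{k},c}$ via Theorems~\ref{fontaine|s equivalence}--\ref{berger thm}, proving the corollary.

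There is no genuine obstacle here; the only point requiring a word is that the scalar $c$ extracted from $\vec{\alpha}$ automatically lies in $\mathcal{O}_E^{\times}$, which is forced by weak admissibility in rank one. Everything else is a direct matching of Frobenius norms and filtrations, which is exactly what Proposition~\ref{rank 1 isom}(ii) detects.
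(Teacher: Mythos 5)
Your proof is correct and is exactly the argument the paper leaves implicit: reduce via Colmez--Fontaine to rank-one weakly admissible filtered $\varphi$-modules, invoke Proposition~\ref{rank 1 isom}(i)--(ii) to identify the isomorphism class by the pair $(\vec{k},\mathrm{Nm}_\varphi(\vec{\alpha}))$, and match it with the module $\mathbb{D}_{\vec{k},c}$ realized by the Wach module of Proposition~\ref{The crystalline characters} via Theorem~\ref{berger thm}(iii). The only cosmetic point is that $\mathrm{Nm}_{\varphi}(\vec{\alpha})$ is formally a constant vector in $E^{\mid\tau\mid}$, so your scalar $c$ should be understood as its common coordinate $\bigl(\prod_{i}\alpha_{i}\bigr)p^{-\sum_{i}k_{i}}$; with that reading, weak admissibility forces $c\in\mathcal{O}_{E}^{\times}$ just as you say.
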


\noindent Let $c\in \mathcal{O}_{E}^{\times },$ $\vec{k}%
=(-k_{1},-k_{2},...,-k_{f-1},-k_{0})\ $and $\chi _{c,\vec{k}}$ be the
crystalline character of $G_{K}$ corresponding to the Wach module $\mathbb{N}%
_{\vec{k},c}=\left( \mathcal{O}_{E}[[\pi ]]^{\mid \tau \mid }\right) \eta $
with $\varphi $ action defined by $\varphi (e)=(c\cdot
q^{k_{1}},q^{k_{2}},...,q^{k_{f-1}},q^{k_{0}})\eta $ and the unique
commuting with it $\Gamma _{K}$-action defined in Proposition \ref{The
crystalline characters}. When $c=1$ we simply write $\chi _{\vec{k}}.$ The
crystalline character $\chi _{e_{i}}$ has labeled Hodge-Tate weights $%
-e_{i+1}$ for all $i$ (see Proposition \ref{The crystalline characters}),
where $e_{i}=(0,...,1,...0),\ $with the first $i$ appearing in the $i$-th
place for all $i=0,1,...,f-1.$ By taking tensor products we see that $\chi
_{c,\vec{k}}=\chi _{c,\vec{0}}\cdot \chi _{0}^{k_{1}}\cdot \chi
_{1}^{k_{2}}\cdot \cdots \cdot \chi _{f-2}^{k_{f-1}}\cdot \chi
_{f-1}^{k_{0}}.$ Let $\mathrm{Frob}_{p}$ be the geometric Frobenius of $G_{%
\mathbb{Q}
_{p}}$ and $\mathrm{Frob}_{K}$ the geometric Frobenius of $G_{K}.$

\begin{lemma}

\begin{enumerate}
\item[(i)] \label{chic prop}Let $c\in \mathcal{O}_{E}^{\times }.$ The
unramified character of $G_{K_{f}}$ which maps $\mathrm{Frob}_{K_{f}}$ to $c$
equals $\chi _{c,\vec{0}};$

\item[(ii)] For any $i=0,1,...,f-1,$ $\left( \chi _{i}\right) _{\mid
G_{K_{2f}}}=\chi _{i}\cdot \chi _{i+f},$ where the character on the left
hand side is a character of $G_{K_{f}}$ and the characters on the right hand
side characters of $G_{K_{2f}};$

\item[(iii)] If $\chi $ is a crystalline character of $G_{K_{f}}$ with
labeled Hodge-Tate weights $\{-k_{i}\}_{\tau _{i}},$ $i=0,1,...,f-1,$ its
restriction to $G_{K_{2f}}$ has labeled weights $\{-k_{i}\}_{\tau _{i}},$ $%
i=0,1,...,2f-1,$ with $k_{i+f}=k_{i}$ for all $i=0,1,...,f-1;$

\item[(iv)] If $\chi $ and $\psi $ are crystalline characters of $G_{K_{f}},$
then $\chi _{\mid G_{K_{df}}}=\psi _{\mid G_{K_{df}}}$ if and only if $\chi
=\eta \cdot \psi ,$ where $\eta $ is an unramified character of $G_{K_{f}}$
which maps $\mathrm{Frob}_{K_{f}}$ to a $d$-th root of unity.
\end{enumerate}
\end{lemma}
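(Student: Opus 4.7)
The plan is to dispatch the four parts using the explicit Wach-module presentation of Proposition~\ref{The crystalline characters} together with the classification of rank-one weakly admissible filtered $\varphi$-modules given by Proposition~\ref{rank 1 isom}.

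For (i), observe that $\chi_{c,\vec 0}$ has all labeled Hodge--Tate weights equal to zero, so its underlying filtered $\varphi$-module carries the trivial filtration, and the character is automatically unramified. An unramified crystalline character of $G_{K_f}$ is determined by the eigenvalue of $\varphi^f$ on $\mathbb{D}_{\mathrm{cris}}$, which by the standard dictionary equals the image of $\mathrm{Frob}_{K_f}$ under the character. A direct computation using the cyclic action $\varphi(x_0,\dots,x_{f-1})=(x_1,\dots,x_{f-1},x_0)$ on $E^{\mid \tau\mid}$ shows that $\mathrm{Nm}_{\varphi}((c,1,\dots,1))=(c,c,\dots,c)$, which corresponds to the scalar $c\in E$ under the isomorphism $E\otimes K_f\simeq E^{\mid\tau\mid}$. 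Hence $\chi_{c,\vec 0}(\mathrm{Frob}_{K_f})=c$ as claimed. Part (iii) is the rank-one specialization of Corollary~\ref{weights of restrictions} and requires no further work.

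For (ii), I would write out the Wach modules explicitly on both sides and match them via Proposition~\ref{rank 1 isom}(ii). By the definitions $\chi_i$ corresponds to the parameter $\vec k=e_{i+1}$, so its Frobenius matrix over $G_{K_f}$ is the vector in $\mathcal{O}_E[[\pi]]^{\mid\tau\mid}$ with $q$ in position $i$ and $1$ elsewhere. Proposition~\ref{comment}(i) then says that the restriction $(\chi_i)_{\mid G_{K_{2f}}}$ has Wach-module Frobenius given by the $\otimes 2$-expansion of this vector, producing the vector in $\mathcal{O}_E[[\pi]]^{\mid\tau_{K_{2f}}\mid}$ with $q$ at positions $i$ and $i+f$ and $1$ elsewhere. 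On the other hand, $\chi_i\cdot\chi_{i+f}$ regarded as a character of $G_{K_{2f}}$ corresponds to the tensor product of the respective Wach modules, whose Frobenius vector is the coordinatewise product of the Frobenius vectors of $\chi_i$ (with $q$ at position $i$) and $\chi_{i+f}$ (with $q$ at position $i+f$), giving the same vector. Combined with (iii) to match labeled Hodge--Tate weights, Proposition~\ref{rank 1 isom}(ii) forces the two rank-one filtered $\varphi$-modules, hence the two characters, to be isomorphic.

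For (iv), the implication $(\Leftarrow)$ is immediate: if $\eta$ is unramified with $\eta(\mathrm{Frob}_{K_f})^d=1$, then $\eta(\mathrm{Frob}_{K_{df}})=\eta(\mathrm{Frob}_{K_f})^d=1$ and $\eta$ is trivial on inertia, so $\eta_{\mid G_{K_{df}}}=\mathbf{1}$ and $\chi_{\mid G_{K_{df}}}=\psi_{\mid G_{K_{df}}}$. Conversely, set $\eta:=\chi\psi^{-1}$, a crystalline character of $G_{K_f}$. The hypothesis gives $\eta_{\mid G_{K_{df}}}=\mathbf{1}$, so $\eta$ factors through $G_{K_f}/G_{K_{df}}=\mathrm{Gal}(K_{df}/K_f)$, a cyclic group of order $d$ generated by $\mathrm{Frob}_{K_f}$; since $K_{df}/K_f$ is unramified one has $I_{K_f}\subset G_{K_{df}}$, so $\eta$ is unramified, and its value at $\mathrm{Frob}_{K_f}$ satisfies $\eta(\mathrm{Frob}_{K_f})^d=\eta(\mathrm{Frob}_{K_{df}})=1$. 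The only mildly delicate step is the bookkeeping in (ii), where one has to track the interplay between the cyclic shift $\varphi$ and the doubling map $\theta$ on the Frobenius vectors; once this is checked, Proposition~\ref{rank 1 isom}(ii) does the rest.
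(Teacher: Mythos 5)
Your proof is correct in substance, and on parts (ii) and (iii) it follows essentially the same route as the paper (Proposition~\ref{comment} plus the rank-one classification of Proposition~\ref{rank 1 isom}). The two places where you genuinely diverge are (i) and (iv). For (i) the paper avoids appealing directly to the dictionary over $K_f$: it first writes down the rank-one filtered $\varphi$-module over $\mathbb{Q}_p$ with $\varphi=\sqrt[f]{c}$ and trivial filtration, identifies the corresponding unramified character of $G_{\mathbb{Q}_p}$, restricts to $G_{K_f}$ via Proposition~\ref{comment}, and only then uses Proposition~\ref{rank 1 isom}(ii) to match it with $\mathbb{D}_{\vec{0},c}$. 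Your argument instead computes $\mathrm{Nm}_\varphi((c,1,\dots,1))=c\cdot\vec{1}$ and invokes the dictionary ``$\varphi^{f}$ acts by $\chi(\mathrm{Frob}_{K_f})$'' directly over $K_f$; this is a standard fact and is fine, but it is worth noting that the paper is essentially re-deriving that fact from the more primitive $\mathbb{Q}_p$ case, so your shortcut assumes slightly more. For (iv) your reverse implication is cleaner than the paper's: you observe that $\eta=\chi\psi^{-1}$ is trivial on $G_{K_{df}}\supset I_{K_f}$ for purely group-theoretic reasons, hence unramified, and then the $d$-th-root condition is immediate; the paper instead deduces unramifiedness by noting that the labeled Hodge--Tate weights of $\eta$ all vanish (via Corollary~\ref{weights of restrictions}) and then applying part (i). Your route is more elementary and avoids an unnecessary invocation of the crystalline machinery. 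One small bookkeeping slip: $\chi_i=\chi_{e_i}$ has parameter $\vec{k}=e_i$ (its labeled weights are $-e_{i+1}$), so the single $q$ in its Frobenius vector sits in position $i-1\pmod f$, not position $i$; since the off-by-one is applied consistently to both sides of the comparison in (ii), the argument still goes through.
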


\begin{proof}
(i) Let $\sqrt[f]{c}$ be any choice of an $f$-th root of $c$ in $E.$ The
filtered $\varphi $-module with trivial filtration and $\varphi (e)=\sqrt[f]{%
c}\cdot e$ corresponds to the unramified character $\eta $ of $G_{%
\mathbb{Q}
_{p}}$ which maps $\mathrm{Frob}_{p}$ to $\sqrt[f]{c}.$ Since the $\mathrm{%
Frob}_{K_{f}}=\left( \mathrm{Frob}_{p}\right) _{\mid K_{f}}^{f},\ $the
restriction of $\eta _{c}$ of $\eta $ to $K_{f}$ maps $\mathrm{Frob}_{K_{f}}$
to $c.$ By Proposition \ref{comment} the rank one filtered $\varphi $-module
corresponding to the unramified character $\eta _{c}$ has trivial filtration
and Frobenius $\varphi (e)=\left( \sqrt[f]{c},\sqrt[f]{c},...,\sqrt[f]{c}%
\right) e,$ and by Proposition \ref{rank 1 isom}(ii) the latter is
isomorphic to the rank one filtered $\varphi $-module with trivial
filtration and $\varphi (e)=\left( c,1,...1\right) e.$ Part (ii) follows
from the definition of the characters $\chi _{i}$ and Proposition \ref%
{comment}. Part (iii) follows immediately from part (ii). For part (iv) it
suffices to prove that any crystalline character $\eta $ of $G_{K_{f}}$ with
trivial restriction to $G_{K_{df}}$ is an unramified character of $G_{K_{f}}$
which maps $\mathrm{Frob}_{K_{f}}$ to a $d$-th root of unity. The
restriction of $\eta $ to $G_{K_{df}}$ has all its labeled Hodge-Tate
weights equal to zero, and by Corollary \ref{weights of restrictions} so
does $\eta .$ By part (i) $\eta $ is an unramified character of $G_{K_{f}}$
which maps $\mathrm{Frob}_{K_{f}}$ to some constant, say $c.$ The
restriction of $\eta $ to $G_{K_{df}}$ is trivial and maps $\mathrm{Frob}%
_{K_{df}}=\left( \mathrm{Frob}_{K_{f}}\right) _{\mid K_{df}}^{d}$ to $c^{d},$
therefore $c$ is a $d$-th root of unity and part (iv) follows.
\end{proof}

\noindent Let $\chi $ be any $E$-character of $G_{K},$ and let $h\in $ $G_{%
\mathbb{Q}
_{p}}.$ Since $K$ is unramified over $%
\mathbb{Q}
_{p},$ it is $h$-stable and the character $\chi ^{h}$ with $\chi ^{h}\left(
g\right) :=\chi \left( hgh^{-1}\right) $ is well defined. Let $h_{\mid
K}=:\sigma _{K}^{n\left( h\right) }$ for a unique integer $n\left( h\right)
~ $modulo $f.$ We denote by $\mathrm{T}\left( \chi \right) $ the rank one $%
\mathcal{O}_{E}$-representation of $G_{K}$ defined by $\gamma e=\chi \left(
\gamma \right) e$ for any basis element $e$ and any $\gamma \in G_{K}.$

\begin{lemma}
Let $\chi $ be the crystalline character corresponding to the Wach module
defined in Proposition $\ref{The crystalline characters}$, and let $h\in G_{%
\mathbb{Q}
_{p}}.$ Let $\eta _{1}=\left( \bar{h}_{\mid K}^{-1}\right) \cdot \eta .\ $%
The rank one module $\mathbb{N}^{h}:=\left( \mathcal{O}_{E}[[\pi ]]^{\mid
\tau \mid }\right) \eta _{1}$ endowed with semilinear Frobenius and $\Gamma
_{K}$-actions defined by%
\begin{eqnarray*}
\varphi \left( \eta _{1}\right) &=&\left( c\cdot q^{k_{f+1-n\left( h\right)
}},\ q^{k_{f+2-n\left( h\right) }},...,\ q^{k_{2f-n\left( h\right) }}\right)
\eta _{1}, \\
\gamma \left( \eta _{1}\right) &=&\left( g_{f+1+n\left( h^{-1}\right)
}^{h\gamma h^{-1}},\ g_{f+2-n\left( h\right) }^{h\gamma h^{-1}},...,\
g_{2f-1-n\left( h\right) }^{h\gamma h^{-1}},\ g_{2f-n\left( h\right)
}^{h\gamma h^{-1}}\right) \eta _{1},
\end{eqnarray*}%
where the indices are viewed modulo $f,\ $is a Wach module whose
corresponding crystalline character is $\chi ^{h}.$
\end{lemma}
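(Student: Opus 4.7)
The plan is to verify in two steps that $\mathbb{N}^{h}$ is a Wach module and then identify its associated crystalline character as $\chi^{h}$.

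First, I would check that $\mathbb{N}^{h}$ equipped with the prescribed actions is a well-defined rank one Wach module over $\mathcal{O}_{E}[[\pi]]^{\mid\tau\mid}$. Since $K_{\infty}/\mathbb{Q}_{p}$ is Galois, $h$ normalizes $H_{K}$, so $\gamma \mapsto h\gamma h^{-1}$ is a group automorphism of $\Gamma_{K}$. Commutativity of the prescribed $\varphi$ and $\Gamma_{K}$-actions on $\mathbb{N}^{h}$, the cocycle identity for the functions $\gamma \mapsto g_{i}^{h\gamma h^{-1}}$, and their triviality modulo $\pi$ are all inherited from the corresponding properties established for $\mathbb{N}$ in Proposition \ref{The crystalline characters}, by substituting $h\gamma h^{-1}$ in place of $\gamma$. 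Freeness is immediate, and the cokernel $\mathbb{N}^{h}/\varphi^{\ast}(\mathbb{N}^{h})$ is killed by $q^{k}$ with $k=\max\{k_{i}\}$, because the multiset $\{k_{i+n(h)\bmod f}\}_{i=0}^{f-1}$ equals $\{k_{0},\ldots,k_{f-1}\}$ and the argument of Proposition \ref{The crystalline characters}(ii) applies verbatim.

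Second, I would identify the associated character as $\chi^{h}$ by exploiting the natural action of $h\in G_{\mathbb{Q}_{p}}$ on $\mathbb{A}_{E}$, which is compatible with the formation of the functors $\mathbb{D}(-)$ and $\mathbb{N}(-)$ because $h$ normalizes $H_{K}$. Transporting the basis $\eta$ of $\mathbb{N}(\mathrm{T}(\chi))$ via $h^{-1}$ produces a basis $\eta_{1}$ of the Wach module of the conjugate representation $\mathrm{T}(\chi^{h})$ (which coincides with $\mathrm{T}(\chi)$ as an $\mathcal{O}_{E}$-module but with twisted $G_{K}$-action $g \mapsto \chi(hgh^{-1})$). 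Under the ring isomorphism $\xi_{K}$ of (\ref{ksi}), the $h$-action on $\mathcal{O}_{E}[[\pi]]^{\mid\tau\mid}$ is a cyclic shift of coordinates by $n(h)$ combined with the substitution $\pi \mapsto h(\pi)$. Substituting this into the original identities $\varphi(\eta) = (cq^{k_{1}},q^{k_{2}},\ldots,q^{k_{0}})\eta$ and $\gamma(\eta) = (g_{1}^{\gamma},\ldots,g_{0}^{\gamma})\eta$ and re-expressing in the basis $\eta_{1}$ yields precisely the formulas given in the lemma, with all indices cyclically shifted by $n(h)$ and each $\gamma$ replaced by $h\gamma h^{-1}$. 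The uniqueness part of Theorem \ref{berger thm}(i) then identifies $\mathbb{N}^{h}$ with $\mathbb{N}(\mathrm{T}(\chi^{h}))$, so the crystalline character attached to $\mathbb{N}^{h}$ is $\chi^{h}$.

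The main obstacle is the index bookkeeping in the second step: the $h$-action on $\mathcal{O}_{E}[[\pi]]^{\mid\tau\mid}$ intertwines two different operations (the coordinate shift and the $\pi$-substitution), and one must carefully track how these interact with the Frobenius entries $q^{k_{i+1}}$ and the cocycle $g_{i}^{\gamma}$ so that the peculiar indices of the form $f+j-n(h)$ in $\varphi(\eta_{1})$, and of the form $f+j+n(h^{-1})$ or $f+j-n(h)$ in $\gamma(\eta_{1})$, emerge correctly. Once this index calculus is carried out, the identification is immediate.
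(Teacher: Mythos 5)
Your overall plan mirrors the paper's proof: both transport the generator $\eta$ via $h^{-1}$, exploit the compatibility of $\mathbb{D}(-)$ and $\mathbb{N}(-)$ with the $G_{\mathbb{Q}_{p}}$-action, and finish with the uniqueness of the Wach module. The paper does this concretely by tracking an element $\alpha\otimes\eta$ generating $\mathrm{T}(\chi)$ and verifying that $\alpha_{1}\otimes\eta_{1}$ with $\alpha_{1}=h^{-1}\alpha$ is $\varphi$-fixed and transforms by $\chi^{h}$; your phrasing via abstract functoriality is a legitimate restatement, and your first step is fine.

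There is, however, a genuine gap in your second step that "once this index calculus is carried out, the identification is immediate" slides past. You correctly observe that under $\xi_{K}$ the $h^{-1}$-action on $\mathcal{O}_{E}[[\pi]]^{\mid\tau\mid}$ is a coordinate shift by $n(h)$ \emph{together with} the substitution $\pi\mapsto h^{-1}(\pi)=(1+\pi)^{\chi(h^{-1})}-1$. But then the transported Frobenius matrix is not $(cq^{k_{f+1-n(h)}},\ldots,q^{k_{2f-n(h)}})$ as in the lemma; its entries are $(h^{-1}q)^{k_{\ast}}$, and $h^{-1}q\neq q$ whenever $\chi(h)\neq 1$, with the analogous defect in the $g_{i}$-entries. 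So the module you obtain by transport does not \emph{literally} coincide with the lemma's $\mathbb{N}^{h}$, and matching them is not mere bookkeeping. The missing idea is a reduction in $h$: both $\chi^{h}$ and the lemma's $\mathbb{N}^{h}$ depend only on $h_{\mid K}$ — the former because the conjugation action of $G_{\mathbb{Q}_{p}}$ on $G_{K}^{\mathrm{ab}}$ factors through $\mathrm{Gal}(K/\mathbb{Q}_{p})$, the latter because $\chi(h\gamma h^{-1})=\chi(\gamma)$ forces $g_{i}^{h\gamma h^{-1}}=g_{i}^{\gamma}$ (every $g_{i}^{\gamma}$ depends on $\gamma$ only through $\chi(\gamma)$). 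Since $K\cap\mathbb{Q}_{p}(\zeta_{p^{\infty}})=\mathbb{Q}_{p}$, the restriction $H_{\mathbb{Q}_{p}}\to\mathrm{Gal}(K/\mathbb{Q}_{p})$ is onto, so one may replace $h$ by a representative with $\chi(h)=1$; then $h(\pi)=\pi$, the substitution is the identity, the transport really is the pure coordinate shift of formula (3.6), and your computation closes. Without this reduction — or an explicit change-of-basis argument identifying the $\pi$-substituted Wach module with the lemma's over $\mathcal{O}_{E}[[\pi]]^{\mid\tau\mid}$ — your step two does not establish the stated identification.
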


\begin{proof}
It is trivial to check that $\mathbb{N}^{h}$ with the above defined actions
is a Wach module. By Theorems \ref{fontaine|s equivalence} and \ref{berger
thm}, $\mathrm{T}\left( \chi \right) \simeq \left( \mathbb{A}_{K,E}\otimes _{%
\mathbb{A}_{K,E}^{+}}\mathbb{N}\left( \mathrm{T}\left( \chi \right) \right)
\right) ^{\varphi =1},$ hence there exists some $\alpha \in \mathbb{A}_{K,E}$
such that $\varphi \left( \alpha \otimes \eta \right) =\alpha \otimes \eta $
and $\gamma \left( \alpha \otimes \eta \right) =\chi \left( \gamma \right)
\left( \alpha \otimes \eta \right) $ for all $\gamma \in G_{K}.$ This is
equivalent to%
\begin{align}
\varphi \left( \alpha \right) \cdot \xi ^{-1}\left( c\cdot
q^{k_{1}},q^{k_{2}},...,q^{k_{0}}\right) \otimes \eta & =\alpha \otimes \eta
\ \text{and}  \label{phi} \\
\gamma \left( \alpha \right) \cdot \xi ^{-1}\left( g_{1}^{\gamma
},g_{2}^{\gamma },...g_{f-1}^{\gamma },g_{0}^{\gamma }\right) \otimes \eta &
=\chi \left( \gamma \right) \alpha \otimes \eta  \label{gamma}
\end{align}%
for all $\gamma \in G_{K},$ where $\xi $ is the isomorphism defined in
formula \ref{ksi}. Let $\alpha _{1}:=h^{-1}\alpha \in \mathbb{A}_{K,E}.$ A
little computation shows that for any $\left( x_{0},x_{1},...,x_{f-1}\right)
\in \mathcal{O}_{E}[[\pi ]]^{\mid \tau \mid },$%
\begin{equation}
h^{-1}\left( \xi ^{-1}\left( x_{0},x_{1},...,x_{f-1}\right) \right) =\xi
^{-1}\left( x_{f-n\left( h\right) },x_{f+1-n\left( h\right)
},...,x_{2f-1-n\left( h\right) }\right) .  \label{n(sigma)}
\end{equation}%
We show that $\varphi \left( \alpha _{1}\otimes \eta _{1}\right) =\alpha
_{1}\otimes \eta _{1}$ and $\gamma \left( \alpha _{1}\otimes \eta
_{1}\right) =\chi ^{h}\left( \gamma \right) \left( \alpha _{1}\otimes \eta
_{1}\right) $ for all $\gamma \in G_{K}.$ Indeed, 
\begin{align*}
\varphi \left( \alpha _{1}\otimes \eta _{1}\right) & =\varphi \left(
h^{-1}\alpha \right) \otimes \varphi \left( \eta _{1}\right) \\
& =h^{-1}\varphi \left( \alpha \right) \cdot \xi ^{-1}\left( c\cdot
q^{k_{f+1-n\left( h\right) }},\ q^{k_{f+2-n\left( h\right) }}\ ,...,\
q^{k_{2f-n\left( h\right) }}\right) \otimes \eta _{1}\  \\
& \overset{\ref{n(sigma)}}{=}\noindent h^{-1}\varphi \left( \alpha \right)
\cdot h^{-1}\xi ^{-1}\left( c\cdot
q^{k_{1}},q^{k_{2}},...,q^{k_{f-1}},q^{k_{0}}\right) \otimes h^{-1}\eta \\
& \overset{\ref{phi}}{=}\ h^{-1}\left( \alpha \otimes \eta \right) =\alpha
_{1}\otimes \eta _{1}.\ 
\end{align*}%
Also,%
\begin{align*}
\gamma \left( \alpha _{1}\otimes \eta _{1}\right) & =\gamma \left(
h^{-1}\alpha \right) \cdot \xi ^{-1}\left( g_{f+1-n\left( h\right)
}^{h\gamma h^{-1}},\ g_{f+2-n\left( h\right) }^{h\gamma h^{-1}},...,\
g_{2f-1-n\left( h\right) }^{h\gamma h^{-1}},\ g_{2f-n\left( h\right)
}^{h\gamma h^{-1}}\right) \otimes \eta _{1} \\
& \overset{\ref{n(sigma)}}{=}\noindent h^{-1}\left( h\gamma h^{-1}\alpha
\cdot \xi ^{-1}\left( g_{1}^{h\gamma h^{-1}},\ g_{2}^{h\gamma h^{-1}},...,\
g_{f-1}^{h\gamma h^{-1}},\ g_{f}^{h\gamma h^{-1}}\right) \otimes \eta \right)
\\
& \overset{\ref{gamma}}{=}h^{-1}\left( \chi \left( h\gamma h^{-1}\right)
\alpha \otimes \eta \right) =\chi ^{h}\left( \gamma \right) \left( \alpha
_{1}\otimes \eta _{1}\right)
\end{align*}%
for all $\gamma \in G_{K}.$ By Theorems \ref{fontaine|s equivalence} and \ref%
{berger thm}, it follows that the crystalline character which corresponds to 
$\mathbb{N}^{h}$ is $\chi ^{h}.$
\end{proof}

\begin{corollary}
\label{perm of indices char}If $\chi $ is a crystalline $E^{\times }$-valued
characters of $G_{K}$ with labeled Hodge-Tate weights $\{-k_{i}\}_{\tau
_{i}},$ the character $\chi ^{h}$ is crystalline with labeled Hodge-Tate
weights $\{-\ell _{i}\}_{\tau _{i}},$ with $\ell _{i}=k_{f+i-n\left(
h\right) }$ for all $i,$ where the indices $f+i-n\left( h\right) $ are
viewed modulo $f.$
\end{corollary}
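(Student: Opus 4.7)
The plan is to deduce this directly from the preceding lemma combined with the filtration calculation in the proof of Proposition \ref{The crystalline characters}. The preceding lemma produces an explicit Wach module $\mathbb{N}^{h}$ over $\mathcal{O}_{E}[[\pi]]^{\mid\tau\mid}$ corresponding to the character $\chi^{h}$, with Frobenius
\begin{equation*}
\varphi(\eta_{1}) = \bigl(c\cdot q^{k_{f+1-n(h)}},\ q^{k_{f+2-n(h)}},\ \ldots,\ q^{k_{2f-n(h)}}\bigr)\eta_{1},
\end{equation*}
so the $i$-th coordinate of the Frobenius vector (for $i=0,1,\ldots,f-1$) is $q^{k_{f+1+i-n(h)}}$, with indices viewed modulo $f$.

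Given this, I would rerun the step $(iii)$ of the proof of Proposition \ref{The crystalline characters} verbatim for $\mathbb{N}^{h}$. Writing $x=(x_{0},\ldots,x_{f-1})\eta_{1}\in\mathbb{N}^{h}$ and using the action of $\varphi$ on $\mathcal{O}_{E}[[\pi]]^{\mid\tau\mid}$ from formula (\ref{actions1}), one computes that the $i$-th coordinate of $\varphi(x)$ equals $\varphi(x_{i+1})\cdot q^{k_{f+1+i-n(h)}}$ (up to the unit $c$ when $i=0$). The criterion $x\in\mathrm{Fil}^{j}\mathbb{N}^{h}\iff\varphi(x)\in q^{j}\mathbb{N}^{h}$ from Theorem \ref{berger thm}(iii), together with the basic fact $q^{j}\mid\varphi(y)\Leftrightarrow\pi^{j}\mid y$, then forces $\pi^{j-k_{f+1+i-n(h)}}\mid x_{i+1}$ for all $i$. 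Reindexing $i\mapsto i-1$, this becomes $\pi^{j-k_{f+i-n(h)}}\mid x_{i}$ for all $i$.

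Therefore, setting $\ell_{i}:=k_{f+i-n(h)}$ (indices modulo $f$), the filtration has the same form as in Proposition \ref{The crystalline characters} with the $k_{i}$ replaced by the $\ell_{i}$. Reading off the jumps exactly as in that proposition yields the labeled Hodge-Tate weight $-\ell_{i}$ at the embedding $\tau_{i}$, which is precisely the content of the corollary. The only real point to check is the index-shift convention, which is tracked carefully by comparing the arrangement of weights in the Frobenius matrix of $\mathbb{N}$ (where position $i$ carries $q^{k_{i+1}}$ and the labeled weight at $\tau_{i}$ is $-k_{i}$) with that in $\mathbb{N}^{h}$; no further difficulty arises, so this is truly a routine corollary of the lemma and Proposition \ref{The crystalline characters}.
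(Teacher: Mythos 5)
Your proposal is correct and is essentially the argument the paper intends: you read the Frobenius vector of the Wach module $\mathbb{N}^{h}$ from the preceding lemma, observe it has the form $(c\cdot q^{\ell_{1}},q^{\ell_{2}},\ldots,q^{\ell_{0}})$ with $\ell_{i}=k_{f+i-n(h)}$, and then the filtration computation from Proposition \ref{The crystalline characters} (driven by the criterion $x\in\mathrm{Fil}^{j}\iff\varphi(x)\in q^{j}\mathbb{N}^{h}$) gives labeled Hodge-Tate weight $-\ell_{i}$ at $\tau_{i}$. The index-shift bookkeeping ($\varphi(\vec{x})_{i}=\varphi(x_{i+1})$, Frobenius entry $q^{k_{f+1+i-n(h)}}$ in slot $i$, then reindexing $i\mapsto i-1$) is exactly what is needed and you have it right.
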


\begin{corollary}
\label{ti na to pw}The representation $V_{K_{f}}\simeq \mathrm{Ind}%
_{K_{2f}}^{K_{f}}\left( \chi _{0}^{k_{1}}\cdot \chi _{1}^{k_{2}}\cdot \cdots
\cdot \chi _{2f-2}^{k_{2f-1}}\cdot \chi _{2f-1}^{k_{0}}\right) $ is
crystalline. Moreover, $V_{K_{f}}$ is irreducible if and only if $k_{i}\neq
k_{i+f}$ for some $i\in \{0,1,...,f-1\}.$
\end{corollary}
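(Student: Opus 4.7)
My plan is to establish crystallinity and the irreducibility criterion separately.

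For crystallinity, since $K_{2f}/K_f$ is a finite unramified extension, induction along it preserves crystallinity. Concretely, the rank-one Wach module $\mathbb{N}(\chi)$ over $\mathbb{A}_{K_{2f},E}^{+}$ given by Proposition \ref{The crystalline characters} yields, via the decomposition of $\mathbb{A}_{K_{2f},E}^{+}$ as a rank-two free $\mathbb{A}_{K_f,E}^{+}$-module and a choice of lift $\sigma\in G_{K_f}$ of the nontrivial element of $\mathrm{Gal}(K_{2f}/K_f)$, a rank-two Wach module over $\mathbb{A}_{K_f,E}^{+}$ for $\mathrm{Ind}_{K_{2f}}^{K_f}(\chi)$, whence crystallinity follows from Theorem \ref{berger thm}.

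For the irreducibility criterion, I would apply Mackey: since $[K_{2f}:K_f]=2$ and $G_{K_{2f}}\triangleleft G_{K_f}$, the induction $V_{K_f}=\mathrm{Ind}_{K_{2f}}^{K_f}(\chi)$ with $\chi:=\chi_0^{k_1}\cdots\chi_{2f-1}^{k_0}$ is irreducible if and only if $\chi\neq\chi^{\sigma}$. If $k_i=k_{i+f}$ for all $i\in\{0,\dots,f-1\}$, the character $\psi:=\chi_0^{k_1}\chi_1^{k_2}\cdots\chi_{f-1}^{k_0}$ of $G_{K_f}$ (with the $\chi_j$ viewed now as characters of $G_{K_f}$) satisfies, by Lemma \ref{chic prop}(ii),
\[
\psi|_{G_{K_{2f}}} \;=\; \prod_{j=0}^{f-1}\bigl(\chi_j\cdot\chi_{j+f}\bigr)^{k_{j+1\bmod f}} \;=\; \chi,
\]
so $\chi$ factors through a character of $G_{K_f}$ and is thus $\sigma$-invariant, forcing $\chi^{\sigma}=\chi$ and hence reducibility.

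Conversely, if $k_i\neq k_{i+f}$ for some $i$, I would invoke Corollary \ref{perm of indices char} with $K=K_{2f}$ (so the corollary's ``$f$'' becomes $2f$). Since $\sigma|_{K_{2f}}=\sigma_{K_{2f}}^{f}$ we have $n(\sigma)\equiv f\pmod{2f}$, and the corollary gives labeled Hodge-Tate weights $\{-k_{j+f\bmod 2f}\}_{\tau_j}$ for $\chi^{\sigma}$. Comparison with the weights $\{-k_j\}_{\tau_j}$ of $\chi$ at the offending index $i$ yields $\chi\neq\chi^{\sigma}$, so $V_{K_f}$ is irreducible by Mackey. The principal obstacle is the crystallinity step, which either appeals to the standard (but not explicitly verified in the paper up to this point) fact that unramified induction preserves crystallinity, or requires a somewhat notational but routine check that the candidate rank-two module inherited from $\mathbb{N}(\chi)$ satisfies the Wach axioms over $\mathbb{A}_{K_f,E}^{+}$.
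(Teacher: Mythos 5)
Your irreducibility argument is essentially the paper's: both directions rest on Mackey's criterion combined with Corollary \ref{perm of indices char}. You flesh out the ``$k_i=k_{i+f}$ for all $i$ implies reducible'' direction more carefully than the paper's one-line citation by exhibiting a character $\psi$ of $G_{K_f}$ whose restriction is $\chi$ (hence $\chi^\sigma=\chi$), which is a clean way to sidestep having to observe that equal labeled weights together with equal unramified twist forces equality of the characters; the paper's terse proof implicitly relies on comparing the full Frobenius matrices of the Wach modules, not just the Hodge--Tate weights. Where you genuinely diverge is crystallinity. The paper's argument is shorter and cheaper: by Mackey decomposition, $V_{K_f}\vert_{G_{K_{2f}}}\simeq\chi\oplus\chi^\sigma$ is a direct sum of crystalline characters, hence crystalline, and since $K_{2f}/K_f$ is finite unramified this descends to crystallinity of $V_{K_f}$ itself (the standard fact that a representation is crystalline if and only if its restriction to a finite unramified extension is). Your route --- assembling a rank-two Wach module over $\mathbb{A}_{K_f,E}^{+}$ from $\mathbb{N}(\chi)$ via the free rank-two decomposition of $\mathbb{A}_{K_{2f},E}^{+}$ over $\mathbb{A}_{K_f,E}^{+}$ and a lift of $\sigma$ --- is correct in principle and more constructive (it would in fact reprove the unramified-descent fact in this special case), but it carries the verification burden you yourself flag: one must check the $\Gamma_{K_f}$-action, the $\varphi$-action, and the $q^k$-divisibility condition on $N/\varphi^*(N)$. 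Since the paper already provides the descent machinery (Proposition \ref{restriction prop} and Corollary \ref{weights of restrictions} go in the restriction direction, and the general descent fact is standard), the restriction-based argument is the more economical choice here; your construction buys explicitness at the cost of bookkeeping.
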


\begin{proof}
Since $V_{K_{2f}}$ is crystalline, $V_{K_{f}}$ is crystalline. The corollary
follows from Mackey's irreducibility criterion and Corollary \ref{perm of
indices char}.
\end{proof}

\begin{proposition}
\label{induction in char 0}Let $V_{K}$ be an irreducible two-dimensional
crystalline $E$-representation of $G_{K_{f}}$ with labeled Hodge-Tate
weights $\{0,-k_{i}\}_{\tau _{i}},$ whose restriction to $G_{K_{2f}}$ is
reducible. There exist some unramified character $\eta $ of $G_{K_{f}}$ and
some nonnegative integers $\ell _{i},\ i=0,1,...,2f-1$ with $\{\ell
_{i},\ell _{i+f}\}=\{0,k_{i}\}$ for all $i=0,1,...,f-1$ and $\ell _{i}\neq
\ell _{i+f}$ for some $i\in \{0,1,...,f-1\},\ $such that 
\begin{equation*}
V_{K_{f}}\simeq \eta \otimes \mathrm{Ind}_{K_{2f}}^{K_{f}}\left( \chi
_{0}^{\ell _{1}}\cdot \chi _{1}^{\ell _{2}}\cdot \cdots \cdot \chi
_{2f-2}^{\ell _{2f-1}}\cdot \chi _{2f-1}^{\ell _{0}}\right) .
\end{equation*}
\end{proposition}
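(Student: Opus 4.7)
The plan is to exploit the dichotomy between $V_{K_f}$ being irreducible and its restriction being reducible, which is the classical setup of Clifford theory. Since $V_{K_{2f}} := V_{K_f}|_{G_{K_{2f}}}$ is reducible of dimension two, it admits a character $\psi$ of $G_{K_{2f}}$ either as a subrepresentation or as a subquotient. Frobenius reciprocity then yields a nonzero $G_{K_f}$-equivariant map $V_{K_f} \to \mathrm{Ind}_{K_{2f}}^{K_f}(\psi)$; since $V_{K_f}$ is irreducible and both sides have dimension two, this map is an isomorphism. Hence $V_{K_f} \simeq \mathrm{Ind}_{K_{2f}}^{K_f}(\psi)$ for some character $\psi$ of $G_{K_{2f}}$.

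Next, I would identify $\psi$. Since crystallinity is preserved under restriction, $V_{K_{2f}}$ is crystalline, and therefore so is $\psi$ (viewed via its Wach module sitting inside $\mathbb{N}(V_{K_{2f}})$). By Corollary \ref{weights of restrictions}, the labeled Hodge-Tate weights of $V_{K_{2f}}$ are $\{0,-k_i\}_{\tau_i}$ for $i=0,1,\ldots,2f-1$, where we set $k_{i+f}=k_i$. The other Jordan--H\"older factor of $V_{K_{2f}}$ is $\psi^{\sigma}$, where $\sigma\in G_{K_f}$ is any lift of the nontrivial element of $\mathrm{Gal}(K_{2f}/K_f)$; by Corollary \ref{perm of indices char} its labeled Hodge-Tate weights are obtained from those of $\psi$ by a cyclic shift by $f$. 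Writing the labeled Hodge-Tate weights of $\psi$ as $\{-\ell_i\}_{\tau_i}$, $i=0,\ldots,2f-1$, matching weights at each embedding forces $\{\ell_i,\ell_{i+f}\}=\{0,k_i\}$ for every $i=0,1,\ldots,f-1$.

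With the weights pinned down, Corollary \ref{cor3.6} says that $\psi=\eta'\cdot\chi_{\vec{\ell}}$ for some unramified character $\eta'$ of $G_{K_{2f}}$, where $\chi_{\vec{\ell}}=\chi_0^{\ell_1}\cdot\chi_1^{\ell_2}\cdots\chi_{2f-1}^{\ell_0}$. After enlarging $E$ if necessary to contain a square root of $\eta'(\mathrm{Frob}_{K_{2f}})$, I can find an unramified character $\eta$ of $G_{K_f}$ with $\eta|_{G_{K_{2f}}}=\eta'$ (since $\mathrm{Frob}_{K_f}^{\,2}=\mathrm{Frob}_{K_{2f}}$ on unramified characters). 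The projection formula then gives
\begin{equation*}
V_{K_f}\simeq\mathrm{Ind}_{K_{2f}}^{K_f}\!\bigl(\eta|_{G_{K_{2f}}}\cdot\chi_{\vec{\ell}}\bigr)\simeq\eta\otimes\mathrm{Ind}_{K_{2f}}^{K_f}\!\bigl(\chi_{\vec{\ell}}\bigr),
\end{equation*}
as desired. The condition $\ell_i\ne\ell_{i+f}$ for some $i$ then follows automatically from Corollary \ref{ti na to pw}, because irreducibility of $V_{K_f}$ is equivalent to irreducibility of the induced representation after untwisting, which by Mackey requires $\chi_{\vec{\ell}}\ne\chi_{\vec{\ell}}^{\sigma}$, forcing the labeled weights not to be invariant under the shift by $f$.

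The main subtlety lies in step two: extracting the character $\psi$ cleanly when $V_{K_{2f}}$ could a priori be a non-split extension, and justifying that its labeled Hodge-Tate weights are exactly one of $\{0,-k_i\}$ at each embedding $\tau_i$. This is handled by passing to the filtered $\varphi$-modules (or Wach modules), where the jumps of the filtration on $\psi$ and $\psi^{\sigma}$ split the multiset of labeled weights of $V_{K_{2f}}$; the cyclic-shift relation from Corollary \ref{perm of indices char} then determines the admissible patterns $(\ell_i)$. The extension step for $\eta'$ is a mild technical point that disappears after enlarging the coefficient field, which is permitted by the hypotheses on $E$.
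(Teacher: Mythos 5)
Your proposal is correct and takes essentially the same route as the paper's own proof: extract a one-dimensional constituent $\psi$ of $V_{K_{2f}}$, deduce $V_{K_f}\simeq\mathrm{Ind}_{K_{2f}}^{K_f}(\psi)$ by Frobenius reciprocity and dimension count, match labeled Hodge--Tate weights via Corollaries \ref{weights of restrictions} and \ref{perm of indices char} to pin down $\{\ell_i,\ell_{i+f}\}=\{0,k_i\}$, classify $\psi$ via Corollary \ref{cor3.6}, and split off the unramified twist by taking a square root of $\mathrm{Frob}$; the only stylistic difference is that the paper identifies the unramified twist $\chi_c$ of the constituent before applying Frobenius reciprocity to the untwisted representation, while you apply Frobenius reciprocity first and extract the unramified part at the end via the projection formula. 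One small remark: since $\mathrm{Gal}(K_{2f}/K_f)$ swaps any $G_{K_{2f}}$-stable line with its conjugate and the two cannot coincide (else $V_{K_f}$ would be reducible), the restriction $V_{K_{2f}}$ is automatically split, so the subtlety about non-split extensions you flag does not actually arise.
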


\begin{proof}
Let $\chi $ be a constituent of $V_{K_{2f}}.$ By Corollary \ref{cor3.6}, $%
\chi =\chi _{c}\cdot \chi _{0}^{\ell _{1}}\cdot \chi _{1}^{\ell _{2}}\cdot
\cdots \cdot \chi _{2f-2}^{\ell _{2f-1}}\cdot \chi _{2f-1}^{\ell _{0}}$ for
some $c\in \mathcal{O}_{E}^{\times }$ and some integers $\ell _{i}.$ Let $%
\eta $ be the unramified character of $G_{K_{f}}$ which maps $\mathrm{Frob}%
_{K_{f}}$ to $\sqrt[2]{c}.$ Arguing as in the proof of Lemma \ref{chic prop}%
(i) we see that the restriction of $\eta $ to $G_{K_{2f}}$ is $\chi _{c},$
hence $\chi _{0}^{\ell _{1}}\cdot \chi _{1}^{\ell _{2}}\cdot \cdots \cdot
\chi _{2f-2}^{\ell _{2f-1}}\cdot \chi _{2f-2}^{\ell _{0}}$ is a constituent
of $\left( \eta ^{-1}\otimes V_{K_{f}}\right) _{\mid K_{2f}}.$ Since $\eta
^{-1}\otimes V_{K_{f}}$ is irreducible, 
\begin{equation*}
\eta ^{-1}\otimes V_{K_{f}}\simeq \mathrm{Ind}_{K_{2f}}^{K_{f}}\left( \chi
_{0}^{\ell _{1}}\cdot \chi _{1}^{\ell _{2}}\cdot \cdots \cdot \chi
_{2f-2}^{\ell _{2f-1}}\cdot \chi _{2f-1}^{\ell _{0}}\right)
\end{equation*}%
by Frobenius reciprocity. By Mackey's formula and Corollary \ref{perm of
indices char}, 
\begin{equation}
V_{K_{2f}}\simeq \left( \chi _{c}\cdot \chi _{0}^{\ell _{1}}\cdot \chi
_{1}^{\ell _{2}}\cdot \cdots \cdot \chi _{2f-2}^{\ell _{2f-1}}\cdot \chi
_{2f-1}^{\ell _{0}}\right) \tbigoplus \left( \chi _{c}\cdot \chi _{0}^{\ell
_{1+f}}\cdot \chi _{1}^{\ell _{2+f}}\cdot \cdots \cdot \chi _{2f-2}^{\ell
_{3f-1}}\cdot \chi _{2f-1}^{\ell _{3f}}\right) ,  \label{combinatorial}
\end{equation}%
where the indices of the exponents of the second summand are viewed modulo $%
2f.$ By Corollary \ref{weights of restrictions}, the restricted
representation $V_{K_{2f}}$ has labeled Hodge-Tate weights $%
\{0,-k_{i}\}_{\tau _{i}},$ $i=0,1,2,...,2f-1,$ where $k_{i+f}=k_{i}$ for all 
$i=0,1,...,f-1.$ The labeled Hodge-Tate weights of the direct sum of
characters in formula \ref{combinatorial} with respect to the embedding $%
\tau _{i}$ of $K_{2f}$ to $E$ are $\{-\ell _{i},-\ell _{i+f}\}$ for all $%
i=0,1,2,...,2f-1,$ with the indices $i+f$ viewed modulo $2f.$ Therefore $%
\{\ell _{i},\ell _{i+f}\}=\{0,k_{i}\}$ for all $i=0,1,...,f-1.$ The rest of
the proposition follows from Corollary \ref{ti na to pw}.
\end{proof}

\begin{proposition}
\label{2to the f}Up to twist by some unramified character, there exist
precisely $2^{f^{^{+}}-1}$ distinct isomorphism classes of irreducible
crystalline two-dimensional $E$-representations of $G_{K_{f}}\ $with labeled
Hodge-Tate weights $\{0,-k_{i}\}_{\tau _{i}},$ whose restriction to $%
G_{K_{2f}}$ is reducible.
\end{proposition}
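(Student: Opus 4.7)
The plan is to combine the structure theorem of Proposition \ref{induction in char 0} with the isomorphism criterion of Theorem \ref{about induced}(iii) and reduce the problem to a combinatorial count.

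First, by Proposition \ref{induction in char 0}, every irreducible two-dimensional crystalline $E$-representation $V$ of $G_{K_f}$ with labeled Hodge--Tate weights $\{0,-k_i\}_{\tau_i}$ whose restriction to $G_{K_{2f}}$ is reducible is of the form $V \simeq \eta \otimes \mathrm{Ind}_{K_{2f}}^{K_f}(\chi_{\vec{\ell}})$ for some unramified character $\eta$ of $G_{K_f}$ and some tuple $\vec{\ell}=(\ell_0,\ldots,\ell_{2f-1})$ satisfying $\{\ell_i,\ell_{i+f}\}=\{0,k_i\}$ for all $i=0,\ldots,f-1$. Since the index $i$ with $k_i=0$ forces $\ell_i=\ell_{i+f}=0$, the number of admissible tuples $\vec{\ell}$ is exactly $2^{f^{+}}$.

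Next, I determine when two such induced representations are isomorphic up to unramified twist. If $\mathrm{Ind}_{K_{2f}}^{K_f}(\chi_{\vec{\ell}}) \simeq \mu \otimes \mathrm{Ind}_{K_{2f}}^{K_f}(\chi_{\vec{m}})$ for some unramified character $\mu$ of $G_{K_f}$, then by the projection formula the right-hand side equals $\mathrm{Ind}_{K_{2f}}^{K_f}(\mu_{|G_{K_{2f}}} \cdot \chi_{\vec{m}})$. Applying Theorem \ref{about induced}(iii), this forces either $\chi_{\vec{\ell}} = \mu_{|G_{K_{2f}}}\cdot \chi_{\vec{m}}$ or $\chi_{\vec{\ell}}^{\sigma} = \mu_{|G_{K_{2f}}}\cdot \chi_{\vec{m}}$. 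Since $\mu_{|G_{K_{2f}}}$ is unramified, it has all labeled Hodge--Tate weights zero; comparing the labeled weights on the two sides of either equation immediately yields $\vec{\ell} = \vec{m}$ in the first case and $\vec{\ell} = \vec{m}^{\sigma}$ in the second case. Conversely, for either of these possibilities one can always find a suitable unramified $\mu$ (the trivial character suffices in both cases).

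Thus the isomorphism classes up to unramified twist are in bijection with the orbits of the involution $\sigma:\vec{\ell}\mapsto \vec{\ell}^{\sigma}$ on the set of $2^{f^{+}}$ admissible tuples. The main point is that this involution is fixed-point-free: a fixed point would require $\ell_i=\ell_{i+f}$ for all $i$, but since $f^{+}\geq 1$ there exists some $i$ with $k_i>0$, and then $\{\ell_i,\ell_{i+f}\}=\{0,k_i\}$ forces $\ell_i\neq \ell_{i+f}$. Consequently the number of $\sigma$-orbits is exactly $2^{f^{+}}/2 = 2^{f^{+}-1}$, and this completes the count.

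The only subtle step is the weight-comparison argument that kills the unramified twist freedom — this is routine once one knows that the characters $\chi_i$ have been normalized to have trivial unramified part, so I do not anticipate a genuine obstacle; the combinatorics and the input from Theorem \ref{about induced}(iii) do all the work.
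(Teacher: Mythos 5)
Your proof is correct and follows essentially the same route as the paper: reduce to counting admissible tuples $\vec{\ell}$ up to the swap $\vec{\ell}\mapsto\vec{\ell}^{\sigma}$, observe the involution is fixed-point-free because $f^{+}\geq 1$, and conclude there are $2^{f^{+}}/2$ orbits. The paper's own argument is more terse --- it simply invokes the Mackey-type criterion $\mathrm{Ind}_{K_{2f}}^{K_{f}}(\chi)\simeq\mathrm{Ind}_{K_{2f}}^{K_{f}}(\psi)\iff\{\chi,\chi^{h}\}=\{\psi,\psi^{h}\}$ and the relation $\ell_{i+f}=k_{i}-\ell_{i}$ and declares the count immediate --- so your write-up makes explicit the weight-comparison step that absorbs the unramified twist.

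One point worth tightening: you apply Theorem \ref{about induced}(iii) to the pair $\chi_{\vec{\ell}}$ and $\mu_{\mid G_{K_{2f}}}\cdot\chi_{\vec{m}}$, but as stated (iii) is a criterion for characters of the normalized form $\chi_{\vec{n}}$ with trivial unramified part, and $\mu_{\mid G_{K_{2f}}}\cdot\chi_{\vec{m}}$ need not be of that form. What you actually need (and what the paper uses directly) is the general Mackey criterion for $[K_{2f}:K_{f}]=2$ together with Corollary \ref{perm of indices char}, which identifies $\chi_{\vec{\ell}}^{h}$ with $\chi_{\vec{\ell}}^{\sigma}$. Rephrasing this one step through the Mackey criterion itself rather than through (iii) removes the imprecision and also avoids any appearance of circularity, since (iii) and (iv) of Theorem \ref{about induced} are established together in this passage of the paper.
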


\begin{proof}
In Proposition \ref{induction in char 0}, notice that $\ell
_{i+f}=k_{i}-\ell _{l}$ for all $i=0,1,...,f-1.$ The corollary follows since 
$\mathrm{Ind}_{K_{2f}}^{K_{f}}\left( \chi \right) \simeq \mathrm{Ind}%
_{K_{2f}}^{K_{f}}\left( \psi \right) $ if and only if $\{\chi ,\chi
^{h}\}=\{\psi ,\psi ^{h}\},$ where $h$ is any element in $G_{%
\mathbb{Q}
_{p}}$ lifting a generator of $\mathrm{Gal}\left( K_{2f}/K_{f}\right) .$
\end{proof}

\section{Families of effective Wach modules of arbitrary weight and rank 
\label{general construction of Wach}}

\noindent We extend the method used by Berger-Li-Zhu in \cite{BLZ04} for
two-dimensional crystalline representations of $G_{%
\mathbb{Q}
_{p}},$ to construct families of Wach modules of effective crystalline
representations of $G_{K}$ of arbitrary rank. In order to construct the Wach
module of an effective crystalline representation, fixing a basis, we need
to exhibit matrices $\Pi $ and $G_{\gamma }$ such that $\Pi \varphi
(G_{\gamma })=G_{\gamma }\gamma (\Pi )$ for all $\gamma \in \Gamma _{K},$
with some additional properties imposed by Theorem \ref{berger thm}. In the
two-dimensional case for $G_{%
\mathbb{Q}
_{p}},$ and for a suitable basis, it is trivial to write down such a matrix $%
\Pi $ assuming that the valuation of the trace of Frobenius of the
corresponding filtered $\varphi $-module is suitably large and the main
difficulty is in constructing a $\Gamma _{K}$-action which commutes with $%
\Pi .$ When $K\neq 
\mathbb{Q}
_{p},$ finding a matrix $\Pi $ which gives rise to a prescribed weakly
admissible filtration seems to be already hard, even in the two-dimensional
case. Assuming that such a matrix $\Pi $ is available it is usually very
hard to explicitly write down the matrices $G_{\gamma }.$ There are
exceptions to this, for example some split-reducible two-dimensional
crystalline representations. In the general case, instead of explicitly
writing down the matrices $G_{\gamma }$ we prove that such matrices exist
using a successive approximation argument.

Let $\mathcal{S}=\{X_{i}\ ;\ i=0,1,...,m-1\}$ be a set of indeterminates,
were $m\geq 1$ is any integer. We extend the actions of $\varphi $ and $%
\Gamma _{K}$ defined in equations \ref{actions1} and \ref{actions2} on the
ring $\mathcal{O}_{E}[[\pi ]]^{\mid \tau \mid }:=\prod\limits_{\tau
:K\hookrightarrow E}$ $\mathcal{O}_{E}[[\pi ]]\ $to an action on $\mathcal{O}%
_{E}[[\pi ,\mathcal{S}]]^{\mid \tau \mid }:=\prod\limits_{\tau
:K\hookrightarrow E}$ $\mathcal{O}_{E}[[\pi ,\mathcal{S}]],$ by letting $%
\varphi $ and $\Gamma _{K}$ act trivially on each indeterminate $X_{i}.$ We
let $\varphi $ and $\Gamma _{K}$ act on the matrices of $M_{n}^{\mathcal{S}%
}:=M_{n}(\mathcal{O}_{E}[[\pi ,\mathcal{S}]]^{\mid \tau \mid })$ entry-wise
for any integer $n\geq 2.$ For any integer $s\geq 0,$ we\textit{\ }write $%
\vec{\pi}^{s}=\left( \pi ^{s},\pi ^{s},...,\pi ^{s}\right) ,$ and for any $%
\alpha \in \mathcal{O}_{E}\left[ \left[ \pi ,\mathcal{S}\right] \right] $
and any vector $\vec{r}=\left( r_{0},r_{1},...,r_{f-1}\right) $ with
nonnegative integer coordinates, we write $\alpha ^{\vec{r}}=\left( \alpha
^{r_{0}},\alpha ^{r_{1}},...,\alpha ^{r_{f-1}}\right) .$ As usual, we assume
that $k_{i}$ are nonnegative integers, and we write $k:=w_{t-1}=\max
\{k_{0},k_{1},$\noindent $...,k_{f-1}\}.$ Let $\ell \geq k\ $be any fixed
integer. We start with the following lemma.

\begin{lemma}
\label{main technical lemma}Let $\Pi _{i}=\Pi _{i}(\mathcal{S}),\
i=0,1,...,f-1$ be matrices in $M_{n}(\mathcal{O}_{E}[[\pi ,\mathcal{S}]])$
such that $\det (\Pi _{i})=c_{i}q^{k_{i}},$ with $c_{i}\in \mathcal{O}%
_{E}[[\pi ]]^{\times }.$ We denote by $\Pi (\mathcal{S})\ $the matrix $%
\left( \Pi _{1},\Pi _{2},...,\Pi _{f-1},\Pi _{0}\right) \ $and view it as an
element of $M_{n}^{\mathcal{S}}$ via the natural isomorphism $M_{n}^{%
\mathcal{S}}\simeq M_{n}(\mathcal{O}_{E}[[\pi ,\mathcal{S}]])^{\mid \tau
\mid }.$ We denote by $P_{i}=P_{i}(\mathcal{S})\ $the reduction of $\Pi _{i}%
\func{mod}\pi $ for all $i.$ Assume that for each $\gamma \in \Gamma _{K}$
there exists a matrix $G_{\gamma }^{(\ell )}=G_{\gamma }^{(\ell )}(\mathcal{S%
})\in M_{n}^{\mathcal{S}}$ such that \noindent

\begin{enumerate}
\item $G_{\gamma }^{(\ell )}(\mathcal{S})\equiv \overrightarrow{Id}\func{mod}%
\vec{\pi}^{\ell };$ \noindent

\item $G_{\gamma }^{(\ell )}(\mathcal{S})-\Pi (\mathcal{S})\mathcal{\varphi }%
(G_{\gamma }^{(\ell )}(\mathcal{S}))\gamma (\Pi (\mathcal{S})^{-1})\in \vec{%
\pi}^{\ell }M_{n}^{\mathcal{S}}.$ We further assume that \noindent \noindent

\item There is no nonzero matrix $H\in M_{n}(\mathcal{O}_{E}[[\mathcal{S}%
]]^{\mid \tau \mid })$ such that $HU=p^{ft}UH$ \noindent for some $t>0,$
where $U=\mathrm{Nm}_{\varphi }(P)\ $and $P=P\left( \mathcal{S}\right)
=\left( P_{1},P_{2},...,P_{f-1},P_{0}\right) ;$ \noindent \noindent

\item For each $s\geq \ell +1$ the operator 
\begin{equation}
H\longmapsto H-Q_{f}H(p^{f(s-1)}Q_{f}^{-1}):M_{n}\left( \mathcal{O}_{E}\left[
\left[ \mathcal{S}\right] \right] \right) \longrightarrow M_{n}\left( 
\mathcal{O}_{E}\left[ \left[ \mathcal{S}\right] \right] \right) ,
\label{operator equation}
\end{equation}%
where $Q_{f}=P_{1}P_{2}\cdots P_{f-1}P_{0}$\noindent\ is surjective.
\noindent Then for each $\gamma \in \Gamma _{K}$ there exists a unique
matrix \noindent $G_{\gamma }(\mathcal{S})\in M_{n}^{\mathcal{S}}\ $such
that $\noindent $

\item[(i)] $G_{\gamma }(\mathcal{S})\equiv \overrightarrow{Id}\func{mod}\vec{%
\pi}$ and

\item[(ii)] $\Pi (\mathcal{S})\mathcal{\varphi }(G_{\gamma }(\mathcal{S}%
))=G_{\gamma }(\mathcal{S})\gamma (\Pi (\mathcal{S})).$
\end{enumerate}
\end{lemma}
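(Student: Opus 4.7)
The plan is to produce $G_\gamma$ by $\pi$-adic successive approximation starting from the given $G_\gamma^{(\ell)}$, deriving existence at each step from hypothesis (4) and uniqueness from hypothesis (3). Concretely, I build $G_\gamma^{(s)}\in M_n^{\mathcal S}$ inductively for $s\geq\ell$, with $G_\gamma^{(s+1)}\equiv G_\gamma^{(s)}\bmod\vec\pi^{s}$ and with the functional equation $\Pi\varphi(G_\gamma^{(s)})\equiv G_\gamma^{(s)}\gamma(\Pi)$ holding modulo $\vec\pi^{s}$. Since $\mathcal O_E[[\pi,\mathcal S]]$ is $\pi$-adically complete, such a sequence converges to a matrix $G_\gamma$ congruent to the identity modulo $\vec\pi^\ell$, and satisfying (i) and (ii) on the nose.

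For the inductive step from $G_\gamma^{(s-1)}$ to $G_\gamma^{(s)}$ with $s\geq\ell+1$, I would write $G_\gamma^{(s)}=G_\gamma^{(s-1)}+\vec\pi^{s-1}\Delta$ for an unknown $\Delta\in M_n(\mathcal O_E[[\mathcal S]]^{\mid\tau\mid})$. Using $\varphi(\pi)\equiv p\pi\bmod\pi^2$ and the fact that both $\varphi$ and $\Gamma_K$ act trivially on $\mathcal O_E[[\mathcal S]]^{\mid\tau\mid}$, reducing $\Pi\varphi(G_\gamma^{(s)})-G_\gamma^{(s)}\gamma(\Pi)$ modulo $\vec\pi^s$ and then dividing by $\vec\pi^{s-1}$ turns the approximate functional equation into the single linear condition
\[
\Delta\,P - p^{s-1}P\,\varphi(\Delta)\equiv R_{s-1}\pmod{\vec\pi},
\]
where $R_{s-1}:=\vec\pi^{-(s-1)}\bigl[\Pi\varphi(G_\gamma^{(s-1)})-G_\gamma^{(s-1)}\gamma(\Pi)\bigr]\bmod\vec\pi$ is a known element of $M_n(\mathcal O_E[[\mathcal S]]^{\mid\tau\mid})$.

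To decouple the $\tau$-coordinates, I write $\Delta=(\Delta_0,\ldots,\Delta_{f-1})$ with $\Delta_j\in M_n(\mathcal O_E[[\mathcal S]])$ and observe that $e_jP=P_{j+1\bmod f}$, $e_j\varphi(\Delta)=\Delta_{j+1\bmod f}$, so the equation splits as $\Delta_j P_{j+1}-p^{s-1}P_{j+1}\Delta_{j+1}=r_j$ for $j=0,1,\ldots,f-1$. Substituting cyclically through $j=1,2,\ldots,f-1$ back into the $j=0$ equation telescopes to
\[
\Delta_0 Q_f - p^{f(s-1)} Q_f\Delta_0 = \widetilde R_{s-1},
\]
with $\widetilde R_{s-1}$ an explicit combination of the $r_j$ and the $P_j$. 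Right-multiplying by $Q_f^{-1}$ recasts this as the image equation for the operator in hypothesis (4); the right-hand side is integral because $f(s-1)\geq f\ell\geq f\max_i k_i\geq\sum_i k_i=v_p(\det Q_f)$, so (4) produces some $\Delta_0\in M_n(\mathcal O_E[[\mathcal S]])$. The remaining components $\Delta_1,\ldots,\Delta_{f-1}$ are then forced by the recursive relations $\Delta_{j+1}=p^{-(s-1)}P_{j+1}^{-1}(\Delta_jP_{j+1}-r_j)$; their integrality is automatic once $\Delta_0$ solves the telescoped equation, because its cyclic closure is exactly the relation that was already solved.

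For uniqueness of $G_\gamma$, if $G_\gamma,G_\gamma'$ both satisfy (i)--(ii), set $H=G_\gamma (G_\gamma')^{-1}$; a quick computation gives $H\equiv\overrightarrow{Id}\bmod\vec\pi$ and $\Pi\varphi(H)=H\Pi$. If $H\neq\overrightarrow{Id}$, pick the smallest $a\geq 1$ with $H\equiv\overrightarrow{Id}+\vec\pi^{a}J\bmod\vec\pi^{a+1}$ and $J\neq 0$, and compare $\vec\pi^a$-coefficients modulo $\vec\pi^{a+1}$; this gives $JP\equiv p^aP\varphi(J)\bmod\vec\pi$, which telescopes to $JU=p^{fa}UJ$ for $U=\mathrm{Nm}_\varphi(P)$. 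Hypothesis (3) with $t=a>0$ forces $J=0$, contradicting minimality; hence $H=\overrightarrow{Id}$ and $G_\gamma=G_\gamma'$. The same leading-term argument applied to the difference of two candidate corrections gives uniqueness at each inductive step.

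The principal obstacle is the decoupling/existence step: one must verify that substituting cyclically through the $f$ coordinate equations lands exactly on the operator of hypothesis (4), and that the auxiliary components $\Delta_1,\ldots,\Delta_{f-1}$, recovered by dividing by the non-unit factors $P_{j+1}$ and by $p^{s-1}$, genuinely lie in $M_n(\mathcal O_E[[\mathcal S]])$ and not only in its $p$-localisation. The remaining ingredients---$\pi$-adic convergence and the leading-term uniqueness---are routine once this step is in place.
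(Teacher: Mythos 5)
Your proof follows the paper's argument closely: the same $\pi$-adic successive-approximation scheme starting from $G_\gamma^{(\ell)}$, the same decoupling of the linearized modulo-$\pi$ equation into $f$ cyclic equations indexed by the $\tau$-coordinates, the same telescoping to the single operator of hypothesis (4), and the same leading-coefficient argument for uniqueness via hypothesis (3). The only step to tighten is the recovery of $\Delta_1,\ldots,\Delta_{f-1}$ once $\Delta_0$ is in hand: the recursion you wrote, $\Delta_{j+1}=p^{-(s-1)}P_{j+1}^{-1}(\Delta_j P_{j+1}-r_j)$, divides by $p^{s-1}$ and so a priori leaves $M_n(\mathcal O_E[[\mathcal S]])$, and the remark about the ``cyclic closure'' does not by itself repair this. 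Instead run the recursion the other way around the cycle: from $\Delta_j P_{j+1}-p^{s-1}P_{j+1}\Delta_{j+1}=r_j$ one gets $\Delta_j=r_jP_{j+1}^{-1}+p^{s-1}P_{j+1}\Delta_{j+1}P_{j+1}^{-1}$, so starting from $\Delta_{f-1}$ in terms of $\Delta_f=\Delta_0$ and descending to $\Delta_1$ produces each $\Delta_j$ as an $\mathcal O_E[[\mathcal S]]$-integral expression, because $r_jP_{j+1}^{-1}$ and $p^{s-1}P_{j+1}^{-1}$ are integral (here $s-1\geq\ell\geq k_{j+1}=\mathrm{v}_p(\det P_{j+1})$); the remaining $j=0$ equation is then precisely the telescoped relation already solved. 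This is exactly the bookkeeping the paper uses, and the same inequality $s-1\geq\ell\geq k$ also justifies the integrality of all the intermediate $p^{i(s-1)}Q_i^{-1}$ factors in $\widetilde R_{s-1}$, not only the final $Q_f^{-1}$ one you mention.
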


\begin{proof}
Uniqueness\textit{:} Suppose that the matrices $G_{\gamma }(\mathcal{S})$
and $G_{\gamma }^{\prime }(\mathcal{S})$ both satisfy the conclusions of the
lemma, and let $H=G_{\gamma }^{\prime }(\mathcal{S})G_{\gamma }(\mathcal{S}%
)^{-1}.$ We easily see that $H\in \vec{Id}+\vec{\pi}M_{n}^{\mathcal{S}}$ and 
$H\Pi (\mathcal{S})=\Pi (\mathcal{S})\varphi \left( H\right) .$ We'll show
that $H=\vec{Id}.$ We write $H=\vec{Id}+\pi ^{t}H_{t}+\cdots ,$ where $%
H_{t}\in M_{n}(\mathcal{O}_{E}[[\mathcal{S}]]^{\mid \tau \mid }),$ $t\geq 1$
and $\Pi (\mathcal{S})=P+\pi P^{(1)}+\pi ^{2}P^{(2)}+\cdots ,$ and we will
show that $H_{t}=0.$ Since $H\Pi (\mathcal{S})=\Pi (\mathcal{S})\varphi
\left( H\right) ,$ we have $(H-\vec{Id})\Pi (\mathcal{S})=\Pi (\mathcal{S}%
)\varphi (H-\vec{Id}).$ We divide both sides of this equation by $\pi ^{t}\
( $using that\ $\varphi (\pi )=q\pi )$ and reduce$\ \func{mod}$\ $\pi .$
This gives $H_{t}P=p^{t}P\varphi \left( H_{t}\right) $ (since $q\equiv p%
\func{mod}\pi $), which implies that $H_{t}U=p^{ft}U\varphi ^{f}\left(
H_{t}\right) ,\ $with $U=\mathrm{Nm}_{\varphi }(P).$ Since $\varphi $ acts
trivially on $X_{i}\ $and $\mathcal{O}_{E},$ $\varphi ^{f}$ acts trivially
on $M_{n}(\mathcal{O}_{E}[[\mathcal{S}]]^{\mid \tau \mid }),$ therefore $%
H_{t}U=p^{ft}UH_{t}$ and $H_{t}=0$ by assumption (iii) of the lemma.

\noindent \noindent \noindent Existence:\textit{\ }Fix a $\gamma \in \Gamma
_{K}.$\textbf{\ }By assumptions (i) and (ii) of the lemma, there exists a
matrix $G_{\gamma }^{(\ell )}\in \vec{Id}+\vec{\pi}^{\ell }$ $M_{n}^{%
\mathcal{S}}$ such that $G_{\gamma }^{(\ell )}-\Pi (\mathcal{S})\varphi
(G_{\gamma }^{(\ell )})\gamma (\Pi (\mathcal{S})^{-1})=\vec{\pi}^{\ell
}R^{\left( \ell \right) }$ for some matrix $R^{\left( \ell \right)
}=R^{\left( \ell \right) }(\gamma )\in M_{n}^{\mathcal{S}}.$ We shall prove
that for each $s\geq \ell +1,$ there exist matrices $R^{\left( s\right)
}=R^{\left( s\right) }(\gamma )\in M_{n}^{\mathcal{S}}\ $and$\ G_{\gamma
}^{(s)}\in M_{n}^{\mathcal{S}}$ such that $G_{\gamma }^{(s)}\equiv G_{\gamma
}^{(s-1)}\func{mod}\vec{\pi}^{s-1}M_{n}^{\mathcal{S}}\ $and$\ G_{\gamma
}^{(s)}-\Pi (\mathcal{S})\varphi (G_{\gamma }^{(s)})\gamma (\Pi (\mathcal{S}%
)^{-1})=\vec{\pi}^{s}R^{\left( s\right) }.$ \smallskip \noindent Let $%
G_{\gamma }^{(s)}=G_{\gamma }^{(s-1)}+\vec{\pi}^{s-1}H^{(s)},$ where $%
H^{\left( s\right) }\in M_{n}(\mathcal{O}_{E}[[\mathcal{S}]]^{\mid \tau \mid
})$ and write$\ R^{(s)}=\bar{R}^{(s)}+\vec{\pi}\cdot C\ $with $C\in M_{n}^{%
\mathcal{S}}.$ We need \noindent 
\begin{equation*}
\left( G_{\gamma }^{(s-1)}+\vec{\pi}^{\left( s-1\right) }H^{\left( s\right)
}\right) -\Pi (\mathcal{S})\left( \varphi (G_{\gamma }^{(s-1)})+\vec{\varphi
\left( \pi \right) }^{\left( s-1\right) }\varphi \left( H^{\left( s\right)
}\right) \right) \gamma (\Pi (\mathcal{S})^{-1})\noindent \in \vec{\pi}%
^{s}M_{n}^{\mathcal{S}},
\end{equation*}%
$\ $or equivalently$\ \ \ $%
\begin{equation*}
G_{\gamma }^{(s-1)}-\Pi (\mathcal{S})\varphi (G_{\gamma }^{(s-1)})\gamma
(\Pi (\mathcal{S})^{-1})+\vec{\pi}^{\left( s-1\right) }H^{\left( s\right) }-%
\vec{\left( q\pi \right) }^{\left( s-1\right) }\Pi (\mathcal{S})\varphi
\left( H^{\left( s\right) }\right) \gamma (\Pi (\mathcal{S})^{-1})\noindent
\in \vec{\pi}^{s}M_{n}^{\mathcal{S}}.
\end{equation*}%
The latter is equivalent to 
\begin{equation*}
\vec{\pi}^{\left( s-1\right) }R^{\left( s-1\right) }+\vec{\pi}^{\left(
s-1\right) }H^{\left( s\right) }-\vec{\left( q\pi \right) }^{\left(
s-1\right) }\Pi (\mathcal{S})\varphi \left( H^{\left( s\right) }\right)
\noindent \gamma (\Pi (\mathcal{S})^{-1})\in \vec{\pi}^{s}M_{n}^{\mathcal{S}%
}\noindent ,\ 
\end{equation*}%
\noindent which is in turn equivalent to $\noindent H^{\left( s\right) }-%
\vec{q}^{\left( s-1\right) }\Pi (\mathcal{S})\varphi \left( H^{\left(
s\right) }\right) \gamma (\Pi (\mathcal{S})^{-1})\equiv -R^{\left(
s-1\right) }\func{mod}\vec{\pi}M_{n}^{\mathcal{S}}.$ This holds if and only
if%
\begin{equation}
H^{\left( s\right) }-\vec{p}^{(s-1)}P\left( \mathcal{S}\right) \varphi
\left( H^{\left( s\right) }\right) P\left( \mathcal{S}\right) ^{-1}=-\bar{R}%
^{\left( s-1\right) }.  \label{equation 3}
\end{equation}%
Notice that \noindent $\vec{p}^{\left( s-1\right) }$\noindent $P\left( 
\mathcal{S}\right) ^{-1}\in M_{n}(\mathcal{O}_{E}[[\mathcal{S}]]^{\mid \tau
\mid })$ since $s-1\geq \ell \geq k=\max \{k_{0},k_{1},...,k_{f-1}\}.$ We
write%
\begin{equation*}
H^{(s)}=\left( H_{1}^{(s)},H_{2}^{(s)},...,H_{f-1}^{(s)},H_{0}^{(s)}\right)
\end{equation*}%
and%
\begin{equation*}
-\bar{R}^{\left( s-1\right) }=\left( \bar{R}_{1}^{(s-1)},\bar{R}%
_{2}^{(s-1)},...,\bar{R}_{f-1}^{(s-1)},\bar{R}_{0}^{(s-1)}\right) .
\end{equation*}%
Equation \ref{equation 3} is equivalent to the system of equations in $%
M_{n}\left( \mathcal{O}_{E}\left[ \left[ \mathcal{S}\right] \right] \right) $
\begin{equation}
H_{i}^{\left( s\right) }-P_{i}\cdot H_{i+1}^{\left( s\right) }\cdot \left(
p^{s-1}P_{i}^{-1}\right) =\bar{R}_{i}^{\left( s-1\right) },\ 
\label{equations5}
\end{equation}%
where $i=1,2,...,f,$ \noindent with indices viewed $\func{mod}$ $f.$
\noindent \noindent These imply that%
\begin{gather*}
H_{1}^{(s)}-Q_{f}H_{1}^{(s)}(p^{f(s-1)}Q_{f}^{-1})=\bar{R}_{1}^{(s-1)}+Q_{1}%
\bar{R}_{2}^{(s-1)}(p^{(s-1)}Q_{1}^{-1})+Q_{2}\bar{R}%
_{3}^{(s-1)}(p^{2(s-1)}Q_{2}^{-1}) \\
+\cdots +Q_{f-1}\bar{R}_{0}^{(s-1)}(p^{(s-1)(f-1)}Q_{f-1}^{-1}),
\end{gather*}%
where$\ Q_{i}=P_{1}\cdots P_{i}\ \ $for$\mathrm{\ }$all$\ i=1,2,...,f.$ From
equations \ref{equations5} we see that the matrices $H_{i}^{(s)},$ $%
i=2,3,...,f,~$are uniquely determined from the matrix $H_{1}^{(s)},$ so it
suffices to prove that the operator defined in formula \ref{operator
equation} \noindent contains\noindent\ 
\begin{equation*}
A=\bar{R}_{1}^{(s-1)}+Q_{1}\bar{R}_{2}^{(s-1)}(p^{(s-1)}Q_{1}^{-1})+Q_{2}%
\bar{R}_{3}^{(s-1)}(p^{2(s-1)}Q_{2}^{-1})+\cdots +Q_{f-1}\bar{R}%
_{0}^{(s-1)}(p^{(s-1)(f-1)}Q_{f-1}^{-1})
\end{equation*}%
in its image. Since $p^{i\left( s-1\right) }Q_{i}^{-1}\in M_{n}\left( 
\mathcal{O}_{E}\left[ \left[ \mathcal{S}\right] \right] \right) $ for all $%
i, $ this is true by assumption (iv) of the lemma. We define $G_{\gamma }(%
\mathcal{S})=\underset{s\rightarrow \infty }{\lim }G_{\gamma }^{(s)}(%
\mathcal{S})$ and the proof is complete.
\end{proof}

\noindent Let $\widetilde{M_{n}}$ be the ring $M_{n}\left( \mathcal{O}_{E}%
\left[ \left[ \mathcal{S}\right] \right] \right) /I$ where $I$ is the ideal
of \noindent $M_{n}\left( \mathcal{O}_{E}\left[ \left[ \mathcal{S}\right] %
\right] \right) $ generated by the set $\left\{ p\cdot Id,\ X_{i}\cdot Id:\
X_{i}\in \mathcal{S}\right\} .$ We use the notation of Lemma \ref{main
technical lemma} and its proof, and we are interested in the image of the
operator $\overline{H}\mapsto \overline{H-Q_{f}H(p^{f\ell }Q_{f}^{-1})}:%
\widetilde{M_{n}}\rightarrow \widetilde{M_{n}}$ where bar denotes reduction
modulo$\ I.$

\begin{proposition}
\label{prop operator copy(1)}If the operator 
\begin{equation}
\overline{H}\mapsto \overline{H-Q_{f}H(p^{f\ell }Q_{f}^{-1})}:\widetilde{%
M_{n}}\rightarrow \widetilde{M_{n}}  \label{operator modp}
\end{equation}%
is surjective, then for each $s\geq \ell +1$ the operator defined in formula 
$\ref{operator equation}$ \noindent is surjective.
\end{proposition}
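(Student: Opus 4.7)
The plan is a Nakayama/successive-approximation argument based on completeness of $R:=\mathcal{O}_E[[\mathcal{S}]]$ with respect to the ideal $\mathfrak{a}:=(p,X_1,\ldots,X_m)$; note that the ideal $I$ of the statement equals $\mathfrak{a}\cdot M_n(R)$, so $\widetilde{M_n}=M_n(R/\mathfrak{a})$, and $R$ is $\mathfrak{a}$-adically complete because $\mathcal{O}_E$ is $p$-adically complete. For $s\geq\ell+1$, let $T_s:M_n(R)\to M_n(R)$ denote the operator $H\mapsto H-Q_f H(p^{f(s-1)}Q_f^{-1})$. Since $\det Q_f=u\cdot p^{\sum_i k_i}$ for some unit $u\in\mathcal{O}_E^{\times}$ and $f(s-1)\geq f\ell\geq fk\geq\sum_i k_i$, the matrix $p^{f(s-1)}Q_f^{-1}$ lies in $M_n(R)$, so $T_s$ is well defined; and because both multipliers have entries in $R$, $T_s$ preserves the filtration $\{\mathfrak{a}^j M_n(R)\}_{j\geq 0}$.

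The first thing I would do is identify the action of $T_s$ on each associated graded piece $\operatorname{gr}^j:=\mathfrak{a}^j M_n(R)/\mathfrak{a}^{j+1}M_n(R)\simeq(\mathfrak{a}^j/\mathfrak{a}^{j+1})\otimes_{R/\mathfrak{a}}M_n(R/\mathfrak{a})$. If I replace any entry of $Q_f$ or of $p^{f(s-1)}Q_f^{-1}$ by its difference with its mod-$\mathfrak{a}$ reduction, the difference lies in $\mathfrak{a}$ and, multiplied against a class in $\mathfrak{a}^j$, produces a term in $\mathfrak{a}^{j+1}M_n(R)$. Consequently, $T_s$ acts on $\operatorname{gr}^j$ as $\operatorname{id}\otimes\bar T_s$, where $\bar T_s:\widetilde{M_n}\to\widetilde{M_n}$ is the reduction mod $I$ of $T_s$. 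It follows that if $\bar T_s$ is surjective, then $T_s$ is surjective on every graded piece.

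Next I would verify that $\bar T_s$ is surjective for every $s\geq\ell+1$. For $s=\ell+1$ this is precisely the hypothesis. For $s\geq\ell+2$ the identity $p^{f(s-1)}Q_f^{-1}=u^{-1}p^{f(s-1)-\sum_i k_i}\operatorname{adj}(Q_f)$ exhibits this matrix as divisible by $p^{f(s-1)-\sum_i k_i}\geq p^f$, so it vanishes modulo $p$ and hence modulo $I$; thus $\bar T_s$ reduces to the identity map on $\widetilde{M_n}$ and is trivially surjective.

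Finally, I assemble the preimage by iteration. Given $A\in M_n(R)$, lift a $\bar T_s$-preimage of $\bar A$ to $H^{(0)}\in M_n(R)$ with $T_s(H^{(0)})\equiv A\pmod{\mathfrak{a}M_n(R)}$; inductively, given $H^{(j)}$ with $A-T_s(H^{(j)})\in\mathfrak{a}^{j+1}M_n(R)$, the graded-piece surjectivity furnishes $K^{(j)}\in\mathfrak{a}^{j+1}M_n(R)$ with $A-T_s(H^{(j)}+K^{(j)})\in\mathfrak{a}^{j+2}M_n(R)$. By $\mathfrak{a}$-adic completeness, the sequence $H^{(j)}$ converges, and its limit $H\in M_n(R)$ satisfies $T_s(H)=A$. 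I expect the only mildly delicate step to be verifying rigorously that the action on each $\operatorname{gr}^j$ is genuinely $\operatorname{id}\otimes\bar T_s$; the rest is routine completeness bookkeeping plus the small $p$-adic valuation check for $s\geq\ell+2$.
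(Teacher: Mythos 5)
Your proof is correct, and it is at bottom the same successive-approximation argument as the paper's, carried out with respect to the ideal $\mathfrak{a}=(p,X_0,\ldots,X_{m-1})$; the essential technical ingredients (the $R$-linearity of $T_s$ preserving the filtration $\mathfrak{a}^j M_n(R)$, surjectivity on each graded piece, and $\mathfrak{a}$-adic completeness of $\mathcal{O}_E[[\mathcal{S}]]$) all appear, more or less explicitly, in the paper's proof. The one organizational improvement you make is to run the same $\mathfrak{a}$-adic argument uniformly for all $s\geq\ell+1$: the paper instead splits into Case (i) ($s\geq k+2$, where it runs a purely $p$-adic approximation because $p^{f(s-1)}Q_f^{-1}\in pM_n$) and Case (ii) ($\ell=k$, $s=\ell+1$, where it runs the $\mathfrak{a}$-adic approximation using the hypothesis). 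In your version these two cases become the single dichotomy "$\bar T_s$ is surjective by hypothesis when $s=\ell+1$, and $\bar T_s$ is the identity when $s\geq\ell+2$," which is a tidier way to say the same thing and makes the $p$-adic valuation check ($f(s-1)-\sum_i k_i\geq f$) carry the whole weight of the easy case. Your identification of the action on $\operatorname{gr}^j$ as $\operatorname{id}\otimes\bar T_s$ is justified exactly as you suggest: $T_s$ is $R$-linear and $M_n(R)$ is $R$-free, so $\mathfrak{a}^j M_n(R)/\mathfrak{a}^{j+1}M_n(R)\cong(\mathfrak{a}^j/\mathfrak{a}^{j+1})\otimes_{R/\mathfrak{a}}\widetilde{M_n}$ compatibly with the operator, and tensoring a surjection with the free $R/\mathfrak{a}$-module $\mathfrak{a}^j/\mathfrak{a}^{j+1}$ stays surjective.
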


\begin{proof}
Case (i). $s\geq k+2.$ In this case $f(s-1)-\sum\limits_{i=0}^{f-1}k_{i}\geq
f\left( s-1-k\right) \geq f\geq 1.$ Since $%
Q_{f}^{-1}=P_{0}^{-1}P_{f-1}^{-1}P_{f-2}^{-1}...P_{1}^{-1}$ and $\det
(P_{i})=\bar{c}_{i}p^{k_{i}},$ it follows that $p^{f(s-1)}Q_{f}^{-1}\in
pM_{n}(\mathcal{O}_{E}[[\mathcal{S}]]).$ \noindent Let $B$ be any matrix in $%
M_{n}\left( \mathcal{O}_{E}\left[ \left[ \mathcal{S}\right] \right] \right)
. $ We write $B=B-Q_{f}B\left( p^{f(s-1)}Q_{f}^{-1}\right) +pB_{1}$ for some
matrix

\noindent $B_{1}\in M_{n}\left( \mathcal{O}_{E}\left[ \left[ \mathcal{S}%
\right] \right] \right) .$ \noindent Similarly, \noindent \noindent $%
B_{1}=B_{1}-Q_{f}B_{1}\left( p^{f(s-1)}Q_{f}^{-1}\right) +pB_{2}$ for some
matrix $B_{2}\in M_{n}\left( \mathcal{O}_{E}\left[ \left[ \mathcal{S}\right] %
\right] \right) $ and $B=\left( B+pB_{1}\right) -\noindent Q_{f}\left(
B+pB_{1}\right) \left( p^{f(s-1)}Q_{f}^{-1}\right) +p^{2}B_{2}.$ Continuing
in the same fashion we get 
\begin{equation*}
B=\left( \sum\limits_{i=0}^{N}p^{i}B_{i}\right) -Q_{f}\left(
\sum\limits_{i=0}^{N}p^{i}B_{i}\right) \left( p^{f(s-1)}Q_{f}^{-1}\right)
+p^{N+1}B_{N+1}
\end{equation*}%
for some matrix $B_{N+1}$\noindent $\in M_{n}\left( \mathcal{O}_{E}\left[ %
\left[ \mathcal{S}\right] \right] \right) $ with $B_{0}=B.$ Let $%
H=\sum\limits_{i=0}^{\infty }$ $p^{i}B_{i},$ then \noindent $H\in
M_{n}\left( \mathcal{O}_{E}\left[ \left[ \mathcal{S}\right] \right] \right) $
and $B=H-Q_{f}H\left( p^{f(s-1)}Q_{f}^{-1}\right) .$

\noindent \noindent Case (ii). $\ell =k\ $and $s=k+1.$ We reduce modulo the
ideal $I\ $defined before Proposition \ref{prop operator copy(1)}. Let $A$
be any element of $M_{n}\left( \mathcal{O}_{E}\left[ \left[ \mathcal{S}%
\right] \right] \right) .$ The operator $\overline{H}\longmapsto \overline{%
H-Q_{f}H\left( p^{f\ell }Q_{f}^{-1}\right) }:\widetilde{M_{n}}\rightarrow $ $%
\widetilde{M_{n}}$ contains $\bar{A}=A\func{mod}I$ in its image by the
assumption of the lemma. Let $A=A_{0}-Q_{f}A_{0}\left( p^{f\ell
}Q_{f}^{-1}\right) \func{mod}I$ for some matrix $A_{0}\in M_{n}\left( 
\mathcal{O}_{E}\left[ \left[ \mathcal{S}\right] \right] \right) .$ We
write\noindent 
\begin{equation*}
A=A_{0}-Q_{f}A_{0}\left( p^{f\ell }Q_{f}^{-1}\right)
+pB_{m}+X_{0}B_{0}+\cdots +X_{m-1}B_{m-1}
\end{equation*}%
for matrices $B_{i}\in M_{n}\left( \mathcal{O}_{E}\left[ \left[ \mathcal{S}%
\right] \right] \right) .$ Similarly \noindent $%
B_{i}=B_{i}^{0}-Q_{f}B_{i}^{0}\left( p^{f\ell }Q_{f}^{-1}\right) \func{mod}I$
for matrices $B_{i}^{0}\in M_{n}\left( \mathcal{O}_{E}\left[ \left[ \mathcal{%
S}\right] \right] \right) $ and for all $i.$ Then%
\begin{gather*}
A=A_{0}-Q_{f}A_{0}\left( p^{f\ell }Q_{f}^{-1}\right) +pB_{m}^{0}-Q_{f}\left(
pB_{m}^{0}\right) \left( p^{f\ell }Q_{f}^{-1}\right)
+X_{0}B_{1}^{0}-Q_{f}\left( X_{0}B_{1}^{0}\right) \left( p^{f\ell
}Q_{f}^{-1}\right) \\
+\cdots +X_{m-1}B_{m-1}^{0}-Q_{f}\left( X_{m-1}B_{f-1}^{0}\right) \left(
p^{f\ell }Q_{f}^{-1}\right) \func{mod}I^{2},\ 
\end{gather*}%
therefore%
\begin{equation*}
A=(A_{0}+pB_{m}^{0}+X_{0}B_{1}^{0}+\cdots
+X_{m-1}B_{m-1}^{0})-Q_{f}(A_{0}+pB_{m}^{0}+X_{0}B_{1}^{0}+\cdots
+X_{f-1}B_{m-1}^{0})\left( p^{f\ell }Q_{f}^{-1}\right) \func{mod}I^{2}.
\end{equation*}%
By induction, \noindent $A=H-Q_{f}H\left( p^{f\ell }Q_{f}^{-1}\right) $ for
some \noindent $H\in M_{n}\left( \mathcal{O}_{E}\left[ \left[ \mathcal{S}%
\right] \right] \right) .$
\end{proof}

\noindent The surjectivity assumption of Proposition \ref{prop operator
copy(1)} is usually trivial to check thanks to the following proposition.

\begin{proposition}
\label{prop operator}Assume that $\ell >k$ or $\ell =k$ and the weights $%
k_{i}$ are not all equal. Then the operator defined in formula $\ref%
{operator modp}$ is surjective.
\end{proposition}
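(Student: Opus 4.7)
The plan is to exploit the fact that, under either hypothesis of the proposition, the matrix $p^{f\ell}Q_{f}^{-1}$ has all entries divisible by $p$, so that modulo the ideal $I$ the perturbation term vanishes and the operator collapses to the identity, which is trivially surjective.

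First I would quantify the $p$-divisibility of $p^{f\ell}Q_{f}^{-1}$. Since $\det(P_{i})=\bar{c}_{i}p^{k_{i}}$ with $\bar{c}_{i}=c_{i}\bmod \pi \in \mathcal{O}_{E}^{\times }$, the product $Q_{f}=P_{1}P_{2}\cdots P_{f-1}P_{0}$ satisfies $\det Q_{f}=u\cdot p^{\sum_{i}k_{i}}$ for a unit $u\in \mathcal{O}_{E}^{\times }$. Using $Q_{f}^{-1}=(\det Q_{f})^{-1}\mathrm{adj}(Q_{f})$ together with $\mathrm{adj}(Q_{f})\in M_{n}(\mathcal{O}_{E}[[\mathcal{S}]])$ yields
\begin{equation*}
p^{f\ell }Q_{f}^{-1}=u^{-1}p^{f\ell -\sum_{i}k_{i}}\,\mathrm{adj}(Q_{f})\in p^{f\ell -\sum_{i}k_{i}}M_{n}(\mathcal{O}_{E}[[\mathcal{S}]]).
\end{equation*}

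Next, I verify that $f\ell -\sum_{i}k_{i}\geq 1$ in both cases of the hypothesis. If $\ell >k$, then $f\ell \geq f(k+1)=fk+f$, while $\sum_{i}k_{i}\leq fk$ because $k=\max_{i}k_{i}$, giving $f\ell -\sum_{i}k_{i}\geq f\geq 1$. If $\ell =k$ and the $k_{i}$ are not all equal, then some $k_{i_{0}}<k$, so $\sum_{i}k_{i}\leq (f-1)k+(k-1)=fk-1<f\ell $, again yielding $f\ell -\sum_{i}k_{i}\geq 1$. Either way, all entries of $p^{f\ell }Q_{f}^{-1}$ lie in $p\,\mathcal{O}_{E}[[\mathcal{S}]]$.

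Finally, the induced operator on $\widetilde{M_{n}}=M_{n}(\mathcal{O}_{E}[[\mathcal{S}]])/I\cong M_{n}(k_{E})$ sends $\overline{H}$ to $\overline{H}-\overline{Q_{f}}\cdot \overline{H}\cdot \overline{p^{f\ell }Q_{f}^{-1}}$. Since $\overline{p^{f\ell }Q_{f}^{-1}}=0$ by the previous step, the operator reduces to the identity map, which is surjective. The only genuinely formal check is that the original operator descends to the quotient at all, but this is immediate: for any $A\in M_{n}(\mathcal{O}_{E}[[\mathcal{S}]])$, both $Q_{f}(pA)(p^{f\ell }Q_{f}^{-1})$ and $Q_{f}(X_{i}A)(p^{f\ell }Q_{f}^{-1})$ lie in $pM_{n}(\mathcal{O}_{E}[[\mathcal{S}]])\subseteq I$. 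There is no real obstacle to this proof; the entire argument is driven by the numerical inequality $f\ell \geq 1+\sum_{i}k_{i}$, and the two hypotheses of the proposition are precisely the two cheapest ways to secure it.
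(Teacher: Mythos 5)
Your argument is correct and is precisely the expansion of the paper's terse one-line proof: the key observation in both is that $\det Q_{f}=\bar{c}\,p^{\sum_{i}k_{i}}$ together with $f\ell >\sum_{i}k_{i}$ and $p\in I$ forces $p^{f\ell }Q_{f}^{-1}$ to vanish modulo $I$, so the operator reduces to the identity. One tiny inaccuracy (inconsequential to the argument): $\widetilde{M_{n}}=M_{n}\left( \mathcal{O}_{E}/(p)\right) $, which equals $M_{n}(k_{E})$ only when $E/\mathbb{Q}_{p}$ is unramified; in general it may have nilpotents, but the surjectivity conclusion is unaffected.
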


\begin{proof}
The proposition follows immediately because $\det Q_{f}=\bar{c}%
p^{k_{1}+k_{2}+\cdots +k_{f}},$ where $\bar{c}=\bar{c}_{1}\bar{c}_{2}\cdots 
\bar{c}_{f},$ since $f\ell >k_{1}+\cdots +k_{f}$ and $p\in I.$
\end{proof}

\noindent The following lemma summarizes the results of this section. We use
the notation of Lemma \ref{main technical lemma}.

\begin{lemma}
\label{outline section theorem}Let $\ell \geq k$ be a fixed integer. We
assume that for each $\gamma \in \Gamma _{K}$ there exists a matrix $%
G_{\gamma }^{(\ell )}=G_{\gamma }^{(\ell )}(\mathcal{S})\in M_{n}^{\mathcal{S%
}}$ such that

\begin{enumerate}
\item \noindent \noindent $G_{\gamma }^{(\ell )}(\mathcal{S})\equiv 
\overrightarrow{Id}\func{mod}\vec{\pi}^{\ell };$

\item $G_{\gamma }^{(\ell )}(\mathcal{S})-\Pi (\mathcal{S})\mathcal{\varphi }%
(G_{\gamma }^{(\ell )}(\mathcal{S}))\gamma (\Pi (\mathcal{S})^{-1})\in \vec{%
\pi}^{\ell }M_{n}^{\mathcal{S}};$

\item There is no nonzero matrix $H\in M_{n}(\mathcal{O}_{E}[[\mathcal{S}%
]]^{\mid \tau \mid })$ such that $HU=p^{ft}UH\ $for some $t>0,$\smallskip
\noindent

\item If $\ell =k$ and $k=k_{i}$ for all $i,$ we additionally assume that
the operator%
\begin{equation*}
\overline{H}\mapsto \overline{H-Q_{f}H(p^{f\ell }Q_{f}^{-1})}:\widetilde{%
M_{n}}\rightarrow \widetilde{M_{n}}
\end{equation*}%
is surjective.
\end{enumerate}

T\noindent hen for each $\gamma \in \Gamma _{K}$ there exists a unique
matrix \noindent $G_{\gamma }(\mathcal{S})\in M_{n}^{\mathcal{S}}\ $such
that \noindent

\begin{enumerate}
\item[(i)] $G_{\gamma }(\mathcal{S})\equiv \overrightarrow{Id}\func{mod}\vec{%
\pi},$ and

\item[(ii)] $\Pi (\mathcal{S})\mathcal{\varphi }(G_{\gamma }(\mathcal{S}%
))=G_{\gamma }(\mathcal{S})\gamma (\Pi (\mathcal{S})).$
\end{enumerate}
\end{lemma}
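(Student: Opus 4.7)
The plan is to reduce this statement directly to Lemma \ref{main technical lemma} by verifying its fourth hypothesis. Observe that hypotheses (1), (2), and (3) of the present lemma coincide verbatim with hypotheses (1), (2), and (3) of Lemma \ref{main technical lemma}, and the conclusion to be proved is exactly the conclusion of Lemma \ref{main technical lemma}. Consequently the only thing requiring argument is hypothesis (4) of Lemma \ref{main technical lemma}, namely that for every $s\geq \ell+1$ the operator
\[
H \mapsto H - Q_f H\,(p^{f(s-1)}Q_f^{-1}) : M_n(\mathcal{O}_E[[\mathcal{S}]]) \longrightarrow M_n(\mathcal{O}_E[[\mathcal{S}]])
\]
from formula \ref{operator equation} is surjective.

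First I would invoke Proposition \ref{prop operator copy(1)}, which shows that surjectivity of the single reduced operator \ref{operator modp} on the quotient $\widetilde{M_n}$ already forces surjectivity of all the integral operators from formula \ref{operator equation} for every $s\geq \ell+1$. Thus it suffices to check one surjectivity statement modulo the ideal $I$ generated by $p\cdot Id$ and the $X_j\cdot Id$.

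Next, I would split into cases. If $\ell > k$, or if $\ell = k$ but the weights $k_i$ are not all equal, then $f\ell > k_0 + k_1 + \cdots + k_{f-1}$. Since $\det Q_f = \bar{c}\,p^{\,k_0+\cdots+k_{f-1}}$ with $\bar{c}$ a unit, the matrix $p^{f\ell}Q_f^{-1}$ lies in $p\cdot M_n(\mathcal{O}_E[[\mathcal{S}]]) \subseteq I$; the reduced operator therefore collapses to the identity on $\widetilde{M_n}$, which is trivially surjective, and this is exactly the content of Proposition \ref{prop operator}. In the residual case $\ell = k$ with $k_i = k$ for every $i$, this determinantal shortcut fails, but this is precisely the case covered by hypothesis (4) of the present lemma, which asserts the needed mod-$I$ surjectivity outright. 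In either situation, mod-$I$ surjectivity holds, hypothesis (4) of Lemma \ref{main technical lemma} is satisfied via Proposition \ref{prop operator copy(1)}, and applying Lemma \ref{main technical lemma} produces the unique $G_\gamma(\mathcal{S})$ with the required properties.

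There is no substantial obstacle at this stage: all the genuine work, namely the successive-approximation construction of $G_\gamma$ and the uniqueness argument exploiting hypothesis (3), has already been carried out inside Lemma \ref{main technical lemma}, while Propositions \ref{prop operator copy(1)} and \ref{prop operator} dispose of the surjectivity bookkeeping. The present lemma is essentially a clean packaging of these three ingredients into a single statement ready for application to the explicit matrices $\Pi(\mathcal{S})$ constructed in later sections.
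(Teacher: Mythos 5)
Your proof is correct and takes essentially the same approach the paper intends: the lemma is a summary whose content is precisely the combination of Lemma \ref{main technical lemma} with Propositions \ref{prop operator copy(1)} and \ref{prop operator}, and you have identified and assembled those ingredients accurately, including the case split on $\ell > k$ versus $\ell = k$ with equal weights.
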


\noindent For any $\vec{a}=(a_{0},a_{1},...,a_{f-1})\in \mathfrak{m}%
_{E}^{\left\vert \mathcal{S}\right\vert }$ we denote by $\Pi (\vec{a}%
)=\left( \Pi _{1}(a_{1}),\Pi _{2}(a_{2}),...,\Pi _{f-1}(a_{f-1}),\Pi
_{0}(a_{0})\right) $ the matrix obtained from $\Pi (\mathcal{S})=\left( \Pi
_{1}(X_{1}),\Pi _{2}(X_{2}),...,\Pi _{f-1}(X_{f-1}),\Pi _{0}(X_{0})\right) $
by substituting $a_{i}\in \mathfrak{m}_{E}$ in each indeterminate $X_{i}$ of 
$\Pi _{i}(X_{i}).$\noindent\ 

\begin{proposition}
\label{gamma acts}For any $\vec{a}=(a_{0},a_{1},...,a_{f-1})\in \mathfrak{m}%
_{E}^{\left\vert \mathcal{S}\right\vert }$ and any $\gamma _{1},\gamma
_{2},\gamma \in \Gamma _{K},$ the following equations hold:

\begin{enumerate}
\item[(i)\noindent ] $G_{\gamma _{1}\gamma _{2}}(\vec{a})=G_{\gamma _{1}}(%
\vec{a})\gamma _{1}(G_{\gamma _{2}}(\vec{a}))$ and

\item[(ii)\noindent ] $\Pi (\vec{a})\mathcal{\varphi }(G_{\gamma }(\vec{a}%
))=G_{\gamma }(\vec{a})\gamma (\Pi (\vec{a})).$
\end{enumerate}
\end{proposition}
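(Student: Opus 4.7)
The plan is to first establish both identities at the level of the universal matrices $G_\gamma(\mathcal{S})$ produced by Lemma \ref{main technical lemma}, and then obtain the specialized statements by applying the evaluation homomorphism $X_i \mapsto a_i$. The three facts driving the argument are: (a) the ring homomorphism sending $X_i \mapsto a_i \in \mathfrak{m}_E$ commutes with $\varphi$ and with every $\gamma \in \Gamma_K$, because by construction both $\varphi$ and $\Gamma_K$ act trivially on every $X_i$ and on $\mathcal{O}_E$; (b) the defining equation $\Pi(\mathcal{S})\varphi(G_\gamma(\mathcal{S})) = G_\gamma(\mathcal{S})\gamma(\Pi(\mathcal{S}))$ together with $G_\gamma(\mathcal{S}) \equiv \overrightarrow{Id} \bmod \vec{\pi}$ holds in $M_n^{\mathcal{S}}$; (c) the uniqueness half of Lemma \ref{main technical lemma} characterizes $G_\gamma(\mathcal{S})$ among matrices in $M_n^{\mathcal{S}}$ by these two properties.

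For (ii), I would simply apply the entry-wise specialization $\mathcal{S} \mapsto \vec{a}$ to both sides of the identity $\Pi(\mathcal{S})\varphi(G_\gamma(\mathcal{S})) = G_\gamma(\mathcal{S})\gamma(\Pi(\mathcal{S}))$. By observation (a), the specialization commutes with $\varphi$ and with $\gamma$, so the specialized identity is exactly (ii).

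For (i), set $H(\mathcal{S}) := G_{\gamma_1}(\mathcal{S})\cdot \gamma_1\bigl(G_{\gamma_2}(\mathcal{S})\bigr) \in M_n^{\mathcal{S}}$. I would show $H$ satisfies the two defining conditions of $G_{\gamma_1\gamma_2}(\mathcal{S})$, and conclude $H = G_{\gamma_1\gamma_2}(\mathcal{S})$ by uniqueness; specializing then yields (i). The congruence $H \equiv \overrightarrow{Id} \bmod \vec{\pi}$ is immediate from the corresponding congruences for $G_{\gamma_1}$ and $G_{\gamma_2}$, since $\gamma_1$ preserves the ideal $(\vec{\pi})$ (as $\gamma_1(\pi) = (1+\pi)^{\chi(\gamma_1)} - 1 \in \pi\mathbb{Z}_p[[\pi]]$). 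For the twisted commutation with $\Pi$, using that $\varphi$ commutes with $\gamma_1$ (entry-wise on $M_n^{\mathcal{S}}$) and the defining equations for $G_{\gamma_1}$ and $G_{\gamma_2}$, I would compute
\begin{align*}
\Pi\,\varphi(H) &= \Pi\,\varphi(G_{\gamma_1})\,\varphi\bigl(\gamma_1(G_{\gamma_2})\bigr) = G_{\gamma_1}\,\gamma_1(\Pi)\,\gamma_1\bigl(\varphi(G_{\gamma_2})\bigr) \\
&= G_{\gamma_1}\,\gamma_1\bigl(\Pi\,\varphi(G_{\gamma_2})\bigr) = G_{\gamma_1}\,\gamma_1\bigl(G_{\gamma_2}\,\gamma_2(\Pi)\bigr) = H\,(\gamma_1\gamma_2)(\Pi).
\end{align*}
Uniqueness in Lemma \ref{main technical lemma} (which is in force since the ambient hypotheses on $\Pi(\mathcal{S})$ and the operator in \eqref{operator equation} are assumed throughout the section) then gives $H = G_{\gamma_1\gamma_2}(\mathcal{S})$, and specialization yields (i).

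This proof presents no real obstacle: once one checks that specialization commutes with $\varphi$ and $\Gamma_K$, both statements reduce to routine manipulations with the universal identity and its uniqueness. The only point deserving care is the verification that $\gamma_1$ preserves the congruence $\bmod\ \vec{\pi}$ so that $H \equiv \overrightarrow{Id} \bmod \vec{\pi}$, and that the factor $\varphi$ in $\Pi\varphi(\gamma_1(G_{\gamma_2}))$ can indeed be pulled through $\gamma_1$ before invoking the defining equation for $G_{\gamma_2}$.
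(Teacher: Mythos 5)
Your proof is correct and follows essentially the same route as the paper's: show that $G_{\gamma_1}(\mathcal{S})\,\gamma_1(G_{\gamma_2}(\mathcal{S}))$ satisfies the two defining conditions of $G_{\gamma_1\gamma_2}(\mathcal{S})$, invoke the uniqueness clause of Lemma \ref{main technical lemma}, then specialize; (ii) is the specialization of conclusion (ii) of that same lemma. You are somewhat more explicit than the paper in writing out the chain $\Pi\varphi(H)=H(\gamma_1\gamma_2)(\Pi)$ and in noting that evaluation $X_i\mapsto a_i$ commutes with $\varphi$ and $\Gamma_K$ and that $\gamma_1$ preserves the ideal $(\vec{\pi})$, but these are exactly the points the paper is silently relying on.
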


\begin{proof}
Both matrices $G_{\gamma _{1}\gamma _{2}}(\mathcal{S})$ and $G_{\gamma _{1}}(%
\mathcal{S})\gamma _{1}(G_{\gamma _{2}}(\mathcal{S}))$ are $\equiv 
\overrightarrow{Id}\func{mod}\vec{\pi}$ and are solutions in $A$ of the
equation $\Pi (\mathcal{S})\mathcal{\varphi }(A)=A\gamma (\Pi (\mathcal{S}%
)). $ They are equal by the uniqueness part of Lemma \ref{main technical
lemma}. The second equation follows from conclusion (ii) of the same lemma. $%
~$
\end{proof}

\noindent For any $\vec{a}\in \mathfrak{m}_{E}^{\left\vert \mathcal{S}%
\right\vert },$ we equip the module $\mathbb{N}(\vec{a})=\tbigoplus%
\limits_{i=1}^{n}\left( \mathcal{O}_{E}[[\pi ]]^{\mid \tau \mid }\right)
\eta _{i}$ with semilinear $\varphi $ and $\Gamma _{K}$-actions defined by $%
(\varphi (\eta _{1}),\varphi (\eta _{2}),...,\varphi (\eta _{n}))=(\eta
_{1},\eta _{2},...,\eta _{n})\Pi (\vec{a})\ $and$\ (\gamma (\eta
_{1}),\gamma (\eta _{2}),...,\gamma (\eta _{n}))=(\eta _{1},\eta
_{2},...,\eta _{n})G_{\gamma }(\vec{a})$ for any $\gamma \in \Gamma _{K}.$
Proposition \ref{gamma acts} implies that $(\gamma _{1}\gamma _{2})x$%
\noindent $=\gamma _{1}(\gamma _{2}x)$ and $\varphi (\gamma x)=\gamma
(\varphi (x))$ for all $x\in \mathbb{N}(\vec{a})\ $and $\gamma ,\gamma
_{1},\gamma _{2}\in \Gamma _{K}.$ Since $G_{\gamma }(\vec{a})\equiv 
\overrightarrow{Id}\func{mod}\vec{\pi},$ it follows that $\Gamma _{K}$ acts
trivially on $\mathbb{N}(\vec{a})/\pi \mathbb{N}(\vec{a}).$

\begin{proposition}
\label{rank two wach modules construction}For any $\vec{a}\in \mathfrak{m}%
_{E}^{\left\vert \mathcal{S}\right\vert },\ \mathbb{N}(\vec{a})$ equipped
with the $\varphi $ and $\Gamma _{K}$-actions defined above is a Wach module
over $\mathcal{O}_{E}[[\pi ]]^{\mid \tau \mid }$ corresponding (by Theorem $%
\ref{berger thm}$) to some $G_{K}$-stable $\mathcal{O}_{E}$-lattice in some $%
n$-dimensional, crystalline $E$-representation of $G_{K}$ with Hodge-Tate
weights in $[-k;\ 0].$
\end{proposition}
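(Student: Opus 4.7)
The plan is to verify directly that $\mathbb{N}(\vec{a})$ satisfies all the axioms in the definition of a Wach module and then to invoke Theorem \ref{berger thm}(ii), which converts such a module into a $G_K$-stable $\mathcal{O}_E$-lattice inside a crystalline $E$-representation with weights in $[-k;\,0]$. Most of the axioms are visible from the construction: $\mathbb{N}(\vec{a})$ is free of rank $n$ over $\mathcal{O}_E[[\pi]]^{\mid\tau\mid}$ by definition, $\varphi(\mathbb{N}(\vec{a}))\subset \mathbb{N}(\vec{a})$ because the entries of $\Pi(\vec{a})$ lie in $\mathcal{O}_E[[\pi]]^{\mid\tau\mid}$, and the $\Gamma_K$-action is continuous (being a limit, for the $\pi$-adic topology, of the approximate solutions $G_\gamma^{(s)}(\vec{a})$ built in Lemma \ref{main technical lemma}) and becomes trivial modulo $\pi$ because $G_\gamma(\vec{a})\equiv \overrightarrow{Id}\bmod \vec{\pi}$. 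Proposition \ref{gamma acts}(i) gives the cocycle relation $G_{\gamma_1\gamma_2}(\vec{a})=G_{\gamma_1}(\vec{a})\,\gamma_1(G_{\gamma_2}(\vec{a}))$, so $\Gamma_K$ indeed acts on $\mathbb{N}(\vec{a})$; Proposition \ref{gamma acts}(ii) translates to the commutation $\varphi\circ\gamma=\gamma\circ\varphi$ on $\mathbb{N}(\vec{a})$.

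The one property that still requires a short argument is that $\mathbb{N}(\vec{a})/\varphi^{\ast}(\mathbb{N}(\vec{a}))$ is killed by $q^{k}$. For this I would use the hypothesis $\det(\Pi_i)=c_i q^{k_i}$ with $c_i\in\mathcal{O}_E[[\pi]]^{\times}$ and $k_i\leq k$: component by component, $q^k\,\Pi(\vec{a})^{-1}=q^k\cdot\mathrm{adj}(\Pi(\vec{a}))/\det(\Pi(\vec{a}))$ has $i$-th component equal to $q^{k-k_i}c_i^{-1}\cdot\mathrm{adj}(\Pi_i(\vec{a}))$, which lies in $M_n(\mathcal{O}_E[[\pi]])$ because $k-k_i\geq 0$ and $c_i$ is a unit. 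Consequently $q^k\,\vec{Id}=\Pi(\vec{a})\cdot(q^k\Pi(\vec{a})^{-1})\in \Pi(\vec{a})\cdot M_n(\mathcal{O}_E[[\pi]]^{\mid\tau\mid})$, and reading this against the basis gives $q^k\eta_j\in\varphi^{\ast}(\mathbb{N}(\vec{a}))$ for every $j$, hence $q^k$ annihilates the cokernel.

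Combining all these verifications shows that $\mathbb{N}(\vec{a})$ is a Wach module over $\mathbb{A}_{K,E}^{+}\simeq\mathcal{O}_E[[\pi]]^{\mid\tau\mid}$ with weights in $[-k;\,0]$. By part (ii) of Theorem \ref{berger thm}, Wach modules of this form are in bijection with $G_K$-stable $\mathcal{O}_E$-lattices sitting inside crystalline $E$-representations whose Hodge-Tate weights lie in $[-k;\,0]$, with the associated representation recovered via $\mathrm{T}=(\mathbb{A}_E\otimes_{\mathbb{A}_{K,E}^{+}}\mathbb{N}(\vec{a}))^{\varphi=1}$ and the ambient crystalline representation obtained by inverting $p$. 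This yields the conclusion.

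The bulk of the work for this proposition has already been absorbed into Lemma \ref{main technical lemma} and Proposition \ref{gamma acts}, which guarantee the existence, uniqueness and compatibility of the $\Gamma_K$-matrices $G_\gamma(\vec{a})$; so the only genuinely new content at this stage is the short determinant computation producing the $q^k$-divisibility of $\mathbb{N}(\vec{a})/\varphi^{\ast}(\mathbb{N}(\vec{a}))$. The main conceptual point — and the only place one must be slightly careful — is observing that the specialization $\vec{a}\in\mathfrak{m}_E^{\mid\mathcal{S}\mid}$ preserves both the $\varphi$-commutation relation and the cocycle relation for $\Gamma_K$, which is precisely what Proposition \ref{gamma acts} records; after that, Berger's theorem does all the remaining work.
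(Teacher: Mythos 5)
Your proposal is correct and follows the same route as the paper: the $\Gamma_K$-axioms are read off from Lemma \ref{main technical lemma} and Proposition \ref{gamma acts}, and the only new verification is that $q^k$ kills $\mathbb{N}(\vec{a})/\varphi^{\ast}(\mathbb{N}(\vec{a}))$, which you establish — exactly as the paper does — via the adjugate and the determinant relation $\det(\Pi_i)=c_iq^{k_i}$ with $k_i\leq k$. The only cosmetic difference is that you phrase the key computation as integrality of $q^k\,\Pi(\vec{a})^{-1}$ rather than expanding $\Pi(\vec{a})\cdot\mathrm{adj}\,\Pi(\vec{a})$ explicitly, which is the same argument.
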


\begin{proof}
The only thing left to prove is that $q^{k}\mathbb{N}(\vec{a})\boldsymbol{%
\subset }\varphi ^{\ast }(\mathbb{N}(\vec{a})).$ \noindent Since $\det (\Pi
_{i})=c_{i}q^{k_{i}}$ we have$\ $ $\ $%
\begin{eqnarray*}
\det \Pi (\vec{a}) &=&(c_{1}q^{k_{1}},c_{2}q^{k_{2}},...,c_{0}q^{k_{0}})\ 
\text{and\ } \\
(\noindent q^{k}\eta _{1},q^{k}\eta _{2},...,q^{k}\eta _{n}) &=&\noindent
(\eta _{1},\eta _{2},...,\eta _{n})\det \Pi (\vec{a})\noindent \left(
c_{1}^{-1}q^{k-k_{1}},c_{2}^{-1}q^{k-k_{2}},...,c_{0}^{-1}q^{k-k_{0}}\right)
\\
&=&\noindent (\eta _{1},\eta _{2},...,\eta _{n})\left( \Pi (\vec{a})\cdot 
\mathrm{adj}\left( \Pi (\vec{a})\right) \right) \noindent \left(
c_{1}^{-1}q^{k-k_{1}},c_{2}^{-1}q^{k-k_{2}},...,c_{0}^{-1}q^{k-k_{0}}\right)
\\
&=&\noindent (\varphi (\eta _{1}),\varphi (\eta _{2}),...,\varphi (\eta
_{n}))\cdot \left( \mathrm{adj}\Pi (\vec{a})\right) \noindent \left(
c_{1}^{-1}q^{k-k_{1}},c_{2}^{-1}q^{k-k_{2}},...,c_{0}^{-1}q^{k-k_{0}}\right)
.
\end{eqnarray*}%
\noindent Hence $(\noindent q^{k}\eta _{1},q^{k}\eta _{2},...,q^{k}\eta
_{n})\in \varphi ^{\ast }(\mathbb{N}(\vec{a}))$ and $q^{k}\mathbb{N}(\vec{a})%
\boldsymbol{\subset }\varphi ^{\ast }(\mathbb{N}(\vec{a})).$
\end{proof}

\noindent We proceed to prove the main theorem concerning the modulo $p$
reductions of the crystalline representations corresponding to the families
of Wach modules constructed in Proposition \ref{rank two wach modules
construction}. By reduction modulo $p$ we mean reduction modulo the maximal
ideal $\mathfrak{m}_{E}$ of the ring of integers of the coefficient field $%
E. $ If $\mathrm{T}$ is a $G_{K}$-stable $\mathcal{O}_{E}$-lattice in some $%
E $-linear representation\ $V$ of $G_{K},\ $we denote by $\overline{V}%
=k_{E}\bigotimes\limits_{\mathcal{O}_{E}}\mathrm{T}$ the reduction of $V$
modulo $p,\ $where $k_{E}$ is the residue field of $\mathcal{O}_{E}.$ The
reduction $\overline{V}$ depends on the choice of the lattice $\mathrm{T},$
and a theorem of Brauer and Nesbitt asserts that the semisimplification%
\begin{equation*}
\overline{V}^{s.s.}=\left( k_{E}\tbigotimes\limits_{\mathcal{O}_{E}}\mathrm{T%
}\right) ^{s.s.}
\end{equation*}%
is independent of $\mathrm{T}.$ Instead of the precise statement
\textquotedblleft there exist $G_{K}$-stable $\mathcal{O}_{E}$-lattices $%
\mathrm{T}_{V}$ and $\mathrm{T}_{W}$ inside the $E$-linear representation\ $%
V $ and $W$ of $G_{K},$ respectively, such that $k_{E}\bigotimes\limits_{%
\mathcal{O}_{E}}\mathrm{T}_{V}\simeq k_{E}\bigotimes\limits_{\mathcal{O}_{E}}%
\mathrm{T}_{W}$\textquotedblright , we abuse notation and write $\overline{V}%
\simeq \overline{W}.$ For each $\vec{a}\in \mathfrak{m}_{E}^{\left\vert 
\mathcal{S}\right\vert },$ let $V(\vec{a})=E\otimes _{\mathcal{O}_{E}}%
\mathrm{T}(\vec{\alpha}),$ where $\mathrm{T}(\vec{\alpha})=\mathbb{T}(%
\mathbb{D}(\vec{a}))$ and $\mathbb{D}(\vec{a})=\mathbb{A}_{K,E}\bigotimes%
\limits_{\mathbb{A}_{K,E}^{+}}\mathbb{N}(\vec{a})$ (see Theorem \ref{berger
thm}). The representations $V(\vec{a})$ are $n$-dimensional crystalline $E$%
-representations of $G_{K}$ with Hodge-Tate weights in $[-k;\ 0].$
Concerning their $\func{mod}$ $p$ reductions, we have the following theorem.

\begin{theorem}
For any $\vec{a}\in \mathfrak{m}_{E}^{\left\vert \mathcal{S}\right\vert },\ $%
the isomorphism $\overline{V}(\vec{a})\simeq \overline{V}(\vec{0})$ holds.%
\label{from blz}
\end{theorem}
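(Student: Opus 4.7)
The plan is to reduce the Wach modules $\mathbb{N}(\vec{a})$ modulo the maximal ideal $\mathfrak{m}_{E}$, verify that the reduction is independent of $\vec{a}$, and then translate this back to Galois representations via Fontaine's equivalence (Theorem \ref{fontaine|s equivalence}(ii)).

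First, since each coordinate $a_{i}$ of $\vec{a}$ lies in $\mathfrak{m}_{E}$, the two substitutions $X_{i}\mapsto a_{i}$ and $X_{i}\mapsto 0$ agree modulo $\mathfrak{m}_{E}$. The Frobenius matrix $\Pi(\vec{a})$ is by construction the image of $\Pi(\mathcal{S})\in M_{n}(\mathcal{O}_{E}[[\pi,\mathcal{S}]]^{\mid\tau\mid})$ under $X_{i}\mapsto a_{i}$, so immediately
\[
\Pi(\vec{a}) \equiv \Pi(\vec{0}) \pmod{\mathfrak{m}_{E}}.
\]

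The key technical point is the analogous congruence $G_{\gamma}(\vec{a})\equiv G_{\gamma}(\vec{0})\pmod{\mathfrak{m}_{E}}$ for every $\gamma\in\Gamma_{K}$. Here one must remember that $G_{\gamma}(\mathcal{S})$ was obtained in Lemma \ref{main technical lemma} as the $\pi$-adic limit of successive approximations $G_{\gamma}^{(s)}(\mathcal{S}) = G_{\gamma}^{(s-1)}(\mathcal{S}) + \vec{\pi}^{s-1}H^{(s)}$ with each $H^{(s)}\in M_{n}(\mathcal{O}_{E}[[\mathcal{S}]]^{\mid\tau\mid})$. Both the specialization $X_{i}\mapsto a_{i}$ and the reduction $\mathcal{O}_{E}\twoheadrightarrow k_{E}$ are continuous $\mathcal{O}_{E}$-algebra homomorphisms, so they commute with the $\pi$-adic limit; since $a_{i}\equiv 0\pmod{\mathfrak{m}_{E}}$, the desired congruence follows. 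This is the only step where anything non-formal happens, and it is the main (mild) obstacle: once one has committed to exhibiting the $\Gamma_{K}$-action only as a limit of approximations, one must check that the limit passes to the reductions.

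To conclude, the reduced Wach modules $\overline{\mathbb{N}}(\vec{a}):=\mathbb{N}(\vec{a})/\mathfrak{m}_{E}\mathbb{N}(\vec{a})$ and $\overline{\mathbb{N}}(\vec{0})$ are canonically isomorphic as $(\varphi,\Gamma_{K})$-modules over $k_{E}[[\pi]]^{\mid\tau\mid}$ via the identification of bases $\eta_{i}\leftrightarrow\eta_{i}$. Extending scalars to the étale residual $(\varphi,\Gamma_{K})$-module, i.e.\ applying $k_{E}\otimes_{\mathcal{O}_{E}}\mathbb{A}_{K,E}\otimes_{\mathbb{A}_{K,E}^{+}}-$, and then invoking the functor $\mathbb{T}$ of Theorem \ref{fontaine|s equivalence}(ii) in residue characteristic yields isomorphic $G_{K}$-representations on $k_{E}$-vector spaces. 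Since $\overline{V}(\vec{a})=k_{E}\otimes_{\mathcal{O}_{E}}\mathrm{T}(\vec{a})$ with $\mathrm{T}(\vec{a})=\mathbb{T}(\mathbb{D}(\vec{a}))$, this gives $\overline{V}(\vec{a})\simeq\overline{V}(\vec{0})$, as required.
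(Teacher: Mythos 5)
Your proof is correct and follows essentially the same route as the paper: reduce the Wach modules modulo $\mathfrak{m}_{E}$ to get isomorphic $(\varphi,\Gamma_{K})$-modules, then transport this across Fontaine's (exact) functor $\mathbb{T}$. The only cosmetic difference is in how you justify $G_{\gamma}(\vec{a})\equiv G_{\gamma}(\vec{0})\pmod{\mathfrak{m}_{E}}$; you trace this back through the successive-approximation construction and its $\pi$-adic convergence, whereas the paper simply observes that $G_{\gamma}(\mathcal{S})\in M_{n}^{\mathcal{S}}=M_{n}(\mathcal{O}_{E}[[\pi,\mathcal{S}]]^{\mid\tau\mid})$, so specialization at $\vec{a}\in\mathfrak{m}_{E}^{|\mathcal{S}|}$ versus $\vec{0}$ already agrees modulo $\mathfrak{m}_{E}$ entry-by-entry.
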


\begin{proof}
We prove that the $k_{E}$-linear representations of $G_{K},$ $%
k_{E}\bigotimes\limits_{\mathcal{O}_{E}}\mathrm{T}(\vec{a})$ and $%
k_{E}\bigotimes\limits_{\mathcal{O}_{E}}\mathrm{T}(\vec{0})$ are isomorphic.
Since $\Pi (\mathcal{S})$ and $\ G_{\gamma }(\mathcal{S})\in M_{n}^{\mathcal{%
S}},$ we have $G_{\gamma }(\vec{a})\equiv G_{\gamma }(\vec{0})\func{mod}%
\mathfrak{m}_{E}$ and $\Pi (\vec{a})\equiv \Pi (\vec{0})\func{mod}\mathfrak{m%
}_{E}.$ As $(\varphi ,\Gamma _{K})$-modules\ over $k_{E}((\pi ))^{\mid \tau
\mid },$ $\mathbb{D}(\vec{a})/\mathfrak{m}_{E}\mathbb{D}(\vec{a})\simeq 
\mathbb{D}(\vec{0})/\mathfrak{m}_{E}\mathbb{D}(\vec{0}).$ Hence $\mathbb{T}%
\left( \mathbb{D}(\vec{a})/\mathfrak{m}_{E}\mathbb{D}(\vec{a})\right) \simeq 
\mathbb{T}\left( \mathbb{D}(\vec{0})/\mathfrak{m}_{E}\mathbb{D}(\vec{0}%
)\right) ,$ where $\mathbb{T}$ is Fontaine's functor defined in Theorem\ \ref%
{fontaine|s equivalence}. Since Fontaine's functor is exact,$\ $\noindent
\noindent $\mathbb{T}\left( \mathbb{D}(\vec{a})/\mathfrak{m}_{E}\mathbb{D}(%
\vec{a})\right) \simeq \mathrm{T}(\vec{a})/\mathfrak{m}_{E}\mathrm{T}(\vec{a}%
)\ $and $\mathrm{T}(\vec{a})/\mathfrak{m}_{E}\mathrm{T}(\vec{a})\simeq 
\mathrm{T}(\vec{0})/\mathfrak{m}_{E}\mathrm{T}(\vec{0}).\ $
\end{proof}

\noindent This theorem enables us to explicitly compute the reductions $%
\overline{V}(\vec{a})^{s.s.}$ by computing $\overline{V}(\vec{0})^{s.s.}.$

\section{Families of two-dimensional crystalline representations \label{2-d
families}}

\noindent The main difficulty in applying Lemma \ref{outline section theorem}
is in constructing the matrices $G_{\gamma }^{(\ell )}(\mathcal{S})$ which
satisfy conditions $(1)$ and $(2).$ Conditions $(3)$ and $(4)$ are usually
easy to check. Throughout this section we retain the notations of Lemma \ref%
{outline section theorem}.\ We denote by $E_{ij}$ the $2\times 2$ matrix
with $1$ in the $(i,j)$-entry and $0$ everywhere else. One easily checks
that $E_{ij}\cdot E_{kl}=\delta _{jk}\cdot E_{il},$ where $\delta $ is the
Kronecker delta function. Recall our assumption that at least one of the
weights $k_{i}$ is strictly positive.

\begin{proposition}
\label{prop operator in 4 types case}The operator $\overline{H}\mapsto 
\overline{H-Q_{f}H(p^{f\ell }Q_{f}^{-1})}:\widetilde{M}_{2}\rightarrow 
\widetilde{M}_{2}$ is surjective, unless $\ell =k,\ k=k_{i}$ for all $i$ and 
$\bar{Q}_{f}\in \{E_{11},E_{22}\}.$
\end{proposition}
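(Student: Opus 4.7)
The plan is to reduce the question to linear algebra on $M_{2}(k_{E})$ and then do a short finite case analysis governed by the four matrix types in Definition \ref{o prwtos orismos}. I will first identify $\widetilde{M}_{2}$ with $M_{2}(k_{E})$: the ideal $I$ is generated by the central elements $p\cdot\mathrm{Id}$ and $X_{i}\cdot\mathrm{Id}$, so passing to $\widetilde{M}_{2}$ amounts to replacing the coefficient ring $\mathcal{O}_{E}[[\mathcal{S}]]$ by its quotient $k_{E}$. The operator in question then becomes $T(\bar{H}) = \bar{H} - \bar{Q}_{f}\,\bar{H}\,\overline{p^{f\ell}Q_{f}^{-1}}$ acting on $M_{2}(k_{E})$, and the whole problem reduces to understanding the matrix $\overline{p^{f\ell}Q_{f}^{-1}}\in M_{2}(k_{E})$.

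Using $\det Q_{f}=\bar{c}\,p^{\sum k_{i}}$ with $\bar{c}\in \mathcal{O}_{E}[[\mathcal{S}]]^{\times}$, I would write $p^{f\ell}Q_{f}^{-1} = \bar{c}^{-1}\,p^{f\ell - \sum k_{i}}\,\mathrm{adj}(Q_{f})$. The exponent $f\ell - \sum k_{i}$ is nonnegative because $k_{i}\leq k\leq \ell$. When $\ell>k$, or $\ell=k$ but some $k_{i}<k$, one has $f\ell - \sum k_{i}\geq 1$, so $\overline{p^{f\ell}Q_{f}^{-1}} = 0$ in $\widetilde{M}_{2}$, and $T$ reduces to the identity and is in particular surjective. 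This disposes of everything except the tight case $\ell=k$ with $k_{i}=k$ for all $i$.

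In the tight case $\overline{p^{f\ell}Q_{f}^{-1}} = \bar{c}^{-1}\,\mathrm{adj}(\bar{Q}_{f})$ in $M_{2}(k_{E})$. Under the assumption $k_{i}=k\geq 1$ for every $i$, the mod-$I$ reductions of the four types $t_{1},t_{2},t_{3},t_{4}$ of Definition \ref{o prwtos orismos} are exactly the four elementary matrices $E_{22}, E_{12}, E_{11}, E_{21}$, respectively, because both the entry $p^{k_{i}}$ and the indeterminate $X_{i}$ belong to $I$. Hence each $\bar{P}_{i}$ equals some $E_{a_{i}b_{i}}$, and by the rule $E_{ij}E_{kl}=\delta_{jk}E_{il}$ the product $\bar{Q}_{f}=\bar{P}_{1}\bar{P}_{2}\cdots \bar{P}_{0}$ is either zero or a single elementary matrix.

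To finish, I would split on $\bar{Q}_{f}$. If $\bar{Q}_{f}=0$, then $T=\mathrm{Id}$. If $\bar{Q}_{f}\in\{E_{12},E_{21}\}$, then $\mathrm{adj}(\bar{Q}_{f})=-\bar{Q}_{f}$, and using the identity $E_{ij}HE_{ij}=h_{ji}E_{ij}$ a one-line computation shows that $T$ acts on $M_{2}(k_{E})$ by the map which adds $\bar{c}^{-1}$ times one entry of $H$ to a single off-diagonal entry, leaving the other three entries unchanged; this is an elementary shear and hence invertible. The remaining values $\bar{Q}_{f}\in\{E_{11},E_{22}\}$ are precisely those excluded in the statement, so nothing further is required. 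The argument is almost entirely bookkeeping; the only step calling for a genuine calculation is checking that the off-diagonal cases really give an elementary transvection, which is a routine $2\times 2$ matrix computation.
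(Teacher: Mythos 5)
Your proof is correct and follows essentially the same route as the paper: reduce modulo the ideal $I$ to work in $M_2(k_E)$, observe that each $\bar P_i$ becomes one of the four elementary matrices $E_{22},E_{12},E_{11},E_{21}$ (for types $t_1,t_2,t_3,t_4$ respectively), and then do a short case analysis on $\bar Q_f$. The observation that $\widetilde{M}_2\cong M_2(k_E)$ because $I$ is generated by central elements, and the systematic use of $p^{f\ell}Q_f^{-1}=\bar c^{-1}p^{f\ell-\sum k_i}\operatorname{adj}(Q_f)$, are clean ways to organize exactly the same computation the paper performs by inspection.

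One small point where your write-up is actually more careful than the paper: the paper's proof asserts flatly that ``$\bar Q_f=E_{ij}$ for some $i,j$,'' but a product of elementary matrices is zero whenever consecutive indices fail to match (e.g.\ $\bar t_1\bar t_2=E_{22}E_{12}=0$), and the proposition as stated places no constraint on the type vector that would rule this out. You explicitly handle $\bar Q_f=0$, in which case the operator collapses to the identity and surjectivity is immediate; this closes a gap the paper leaves open, even if the conclusion was never in doubt. The rest — $\operatorname{adj}(\bar Q_f)=-\bar Q_f$ for $\bar Q_f\in\{E_{12},E_{21}\}$, the identity $E_{ij}HE_{ij}=h_{ji}E_{ij}$, the resulting transvection — matches the paper's computation, and your disposal of the non-tight case via $f\ell-\sum k_i\geq 1$ is the same argument as Proposition \ref{prop operator}, just unwound inline.
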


\begin{proof}
It is straightforward to check that $\bar{Q}_{f}=E_{ij}$ for some $i,j\in
\{1,2\}$ and 
\begin{equation*}
p^{k\ell }Q_{f}^{-1}\func{mod}I=\left\{ 
\begin{array}{c}
\ E_{22}\ \ \ \text{if }\bar{Q}_{f}=E_{11}, \\ 
\ E_{11}\ \ \ \text{if }\bar{Q}_{f}=E_{22}, \\ 
-E_{12}\ \ \text{if }\bar{Q}_{f}=E_{12}, \\ 
-E_{21}\ \ \text{if }\bar{Q}_{f}=E_{21}.%
\end{array}%
\right.
\end{equation*}%
If $\bar{Q}_{f}=E_{11}$ (respectively $E_{22}),$ the image is the set of
matrices with zero $\left( 1,2\right) $ (respectively $\left( 2,1\right) )$
entry, while if $\bar{Q}_{f}=E_{12}$ or $\bar{Q}_{f}=E_{21}$ the operator
becomes%
\begin{equation*}
\left( 
\begin{array}{cc}
h_{11} & h_{12} \\ 
h_{21} & h_{22}%
\end{array}%
\right) \longmapsto \left( 
\begin{array}{cc}
h_{11} & h_{12}+h_{21} \\ 
h_{21} & h_{22}%
\end{array}%
\right)
\end{equation*}%
and%
\begin{equation*}
\left( 
\begin{array}{cc}
h_{11} & h_{12} \\ 
h_{21} & h_{22}%
\end{array}%
\right) \longmapsto \left( 
\begin{array}{cc}
h_{11} & h_{12} \\ 
h_{21}+h_{12} & h_{22}%
\end{array}%
\right)
\end{equation*}%
respectively which is clearly surjective. The proposition follows from
Proposition \ref{prop operator}.
\end{proof}

\noindent In the two-dimensional case, instead of checking condition $(3)$
of Lemma \ref{outline section theorem}, it is often more convenient to use
following lemma.

\begin{lemma}
\label{eigenvalues lemma}If the matrix $Q_{f}=P_{1}P_{2}\cdots P_{f-1}P_{f}$
(with $P_{f}=P_{0}$) does not have eigenvalues which are a scalar multiple
of each other, then the matrix $U=\mathrm{Nm}_{\varphi }(P),$ where $%
P=\left( P_{1},P_{2},...,P_{f-1},P_{0}\right) ,$ satisfies condition $(3)$
of Lemma $\ref{outline section theorem}$.
\end{lemma}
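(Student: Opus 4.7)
The plan is to reduce condition $(3)$ of Lemma \ref{outline section theorem} to an eigenvalue computation on $Q_f$ by decomposing the equation $HU = p^{ft}UH$ coordinate by coordinate and then diagonalizing $Q_f$ over an algebraic closure of $\mathrm{Frac}(\mathcal{O}_E[[\mathcal{S}]])$. First I would write $H = (H_0,\ldots,H_{f-1})$ and $U = (U_0,\ldots,U_{f-1})$ in $M_n(\mathcal{O}_E[[\mathcal{S}]])^{\mid\tau\mid}$ via the natural ring isomorphism. Since $\varphi$ acts trivially on $\mathcal{O}_E[[\mathcal{S}]]$ and cyclically shifts the coordinates, a direct computation from $U = \mathrm{Nm}_{\varphi}(P)$ yields $U_i = P_{i+1}P_{i+2}\cdots P_{i+f}$ (indices mod $f$, with $P_f := P_0$), so $U_0 = Q_f$ and $U_{i-1} = P_i U_i P_i^{-1}$. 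All $U_i$ are therefore conjugate over $\mathrm{Frac}(\mathcal{O}_E[[\mathcal{S}]])$ (each $P_i$ is invertible there because $\det P_i = \pm p^{k_i}$ is a nonzero element of the fraction field), and the hypothesis $HU = p^{ft}UH$ splits into the $f$ independent equations $H_i U_i = p^{ft} U_i H_i$.

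Next, after the change of basis $H_i' := (P_1\cdots P_i)\, H_i\, (P_1\cdots P_i)^{-1}$ in $M_n(\mathrm{Frac}(\mathcal{O}_E[[\mathcal{S}]]))$, each of these becomes $H_i' Q_f = p^{ft} Q_f H_i'$, with $H_i = 0 \iff H_i' = 0$. I would then pass to an algebraic closure $\Omega$ of the fraction field, where the hypothesis that the two eigenvalues $\lambda_1,\lambda_2$ of $Q_f$ are not scalar multiples of one another implies in particular that they are distinct, so $Q_f$ is diagonalizable over $\Omega$. In a basis of eigenvectors, the equation $H_i' Q_f = p^{ft} Q_f H_i'$ unpacks into the scalar constraints $h_{kk}(1-p^{ft})=0$ on the diagonal and $h_{kl}(\lambda_l - p^{ft}\lambda_k)=0$ off the diagonal. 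Since $t>0$ forces $p^{ft}\in\mathfrak{m}_E$, the element $1-p^{ft}$ is a unit in $\mathcal{O}_E$ and hence nonzero in $\Omega$, giving $h_{kk}=0$; the off-diagonal entries must vanish as well, because a relation $\lambda_l = p^{ft}\lambda_k$ with $p^{ft}\in E$ would exhibit $\lambda_1$ and $\lambda_2$ as scalar multiples of each other, contradicting the hypothesis.

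Consequently $H_i'=0$ in $\Omega$, and since $\mathcal{O}_E[[\mathcal{S}]]$ is an integral domain embedding in $\Omega$ through its fraction field, we recover $H_i = 0$ in $M_n(\mathcal{O}_E[[\mathcal{S}]])$ for every $i$, hence $H=0$, which is exactly condition $(3)$. The main obstacle I anticipate is purely one of bookkeeping: one must check that the change-of-basis matrices used above are invertible where needed (automatic from $\det P_i = \pm p^{k_i}\neq 0$ in $\mathrm{Frac}(\mathcal{O}_E[[\mathcal{S}]])$) and that the ``scalars'' appearing in the argument---namely the integer powers $p^{ft}$ for $t>0$---are indeed among the scalars ruled out by the hypothesis on the eigenvalues of $Q_f$; since $p^{ft}\in\mathcal{O}_E\subset E$, this is the case, and the argument closes.
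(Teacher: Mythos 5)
Your proof is correct, and it takes a cleaner route than the paper's.

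Both proofs start the same way: decompose $HU=p^{ft}UH$ coordinate-wise into $H_iU_i=p^{ft}U_iH_i$, observe that $U_0=Q_f$ and that the $U_i$ are all conjugate to $Q_f$ via partial products of the $P_j$ (invertible over $\mathrm{Frac}\,\mathcal{O}_E[[\mathcal{S}]]$). From that point the approaches diverge. The paper works over the fraction field and splits into two cases according to whether $H_i$ is invertible or has rank one: for invertible $H_i$ it reads off that $U_i$ is similar to $p^{ft}U_i$, forcing an eigenvalue quotient of $p^{ft}$; for rank one $H_i$ it conjugates $H_i$ into the form $\left(\begin{smallmatrix}\alpha_{11}&0\\\alpha_{21}&0\end{smallmatrix}\right)$ and reads off $\gamma_{12}=0$ together with a relation $p^{ft}\gamma_{22}=\gamma_{11}$, again producing two eigenvalues of $Q_f$ with quotient $p^{ft}$. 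You instead pass once and for all to an algebraic closure $\Omega$ of the fraction field, use the hypothesis to deduce the eigenvalues of $Q_f$ are distinct, diagonalize, and read off $h_{kk}(1-p^{ft})=0$ and $h_{kl}(\lambda_l-p^{ft}\lambda_k)=0$; the first vanishes since $1-p^{ft}\in\mathcal{O}_E^\times$, the second since $\lambda_l/\lambda_k=p^{ft}$ would contradict the hypothesis. Your approach avoids the invertible-versus-rank-one dichotomy entirely (which in the paper's phrasing requires finding an index where $H_i$ is both nonzero and non-invertible, a point that is a bit slippery if different coordinates of $H$ behave differently) at the small cost of invoking an algebraic closure. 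It is shorter and somewhat more transparent about where each hypothesis is used: $\det Q_f\neq 0$ gives $\lambda_k\neq 0$, $t>0$ gives $1-p^{ft}\neq 0$, and the no-scalar-ratio hypothesis kills the off-diagonal entries.
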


\begin{proof}
Let $H\in M_{n}(\mathcal{O}_{E}[[\mathcal{S}]]^{\mid \tau \mid })$ be a
nonzero matrix such $HU=p^{ft}UH$ for some $t>0.$ We write $H=\left(
H_{1},H_{2},...,H_{f}\right) $ and $U=\left( U_{1},U_{2},...,U_{f}\right) .$
Since $P\cdot \varphi (U)\cdot P^{-1}=U,\ P_{i}U_{i+1}P_{i}^{-1}=U_{i}$ for
all $i.$ Since $Q_{f}=U_{1},$ none of the $U_{i}$ has eigenvalues which are
a scalar multiple of each other. If $H$ is invertible then $U_{1}=Q_{f}$ has
eigenvalues with quotient $p^{ft}$ which contradicts the assumption of the
lemma. If $H$ is nonzero and not invertible, then there exists an index $i$
such that $H_{i}U_{i}=p^{ft}$ $U_{i}H_{i}$ and $\mathrm{rank}\left(
H_{i}\right) =1.$ There also exists invertible matrix $B$ such that%
\begin{equation*}
BH_{i}B^{-1}=\left( 
\begin{array}{cc}
\alpha _{11} & 0 \\ 
\alpha _{21} & 0%
\end{array}%
\right)
\end{equation*}%
with $\left( \alpha _{11},\alpha _{21}\right) \neq \left( 0,0\right) .$ Let $%
\Gamma =BU_{i}B^{-1}$ and write $\Gamma =\left( \gamma _{ij}\right) .$ The
equation $H_{i}U_{i}=p^{ft}U_{i}H_{i}$ is equivalent to $p^{ft}\Gamma
BH_{i}B^{-1}=BH_{i}B^{-1}\Gamma $ which implies that $\gamma _{12}=0$ and $%
p^{ft}\gamma _{11}\alpha _{11}=\alpha _{11}\gamma _{11}.$ If $\alpha
_{11}\neq 0,$ then $\gamma _{11}=0$ a contradiction since $\Gamma $ is
invertible. If $\alpha _{11}=0,$ then $p^{ft}\alpha _{21}\gamma _{22}=\alpha
_{21}\gamma _{11}$ and $p^{ft}\gamma _{22}=\gamma _{11}$ (since $\alpha
_{21}\neq 0).$ Since $\gamma _{12}=0,$ the latter implies that $\Gamma $ has
two eigenvalues with quotient $p^{ft}.$ This in turn implies that $U_{i}$
and its conjugate $Q_{f}=U_{1}$ have eigenvalues with quotient $p^{ft}$ and
contradicts the assumption of the lemma. Hence $H=0.$\noindent
\end{proof}

\begin{corollary}
\label{eigenvalies cor}If $\mathrm{Tr}\left( Q_{f}\right) \not\in \bar{%
\mathbb{Q}%
}_{p},$ then the matrix $U=$ $\mathrm{Nm}_{\varphi }(P)$ satisfies condition 
$(3)$ of Lemma $\ref{outline section theorem}$.
\end{corollary}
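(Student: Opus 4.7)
The plan is to deduce the corollary from Lemma \ref{eigenvalues lemma} by proving its contrapositive: I want to show that if the two eigenvalues $\lambda_1, \lambda_2$ of $Q_f$ satisfy $\lambda_1 = c\lambda_2$ for some scalar $c \in \bar{\mathbb{Q}}_p$ (in particular for $c = p^{ft}$ with $t > 0$, which is the case needed for condition $(3)$), then $\mathrm{Tr}(Q_f)$ must already lie in $\bar{\mathbb{Q}}_p$, contradicting the hypothesis.

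The key preliminary observation is that $\det(Q_f)$ is a constant element of $\bar{\mathbb{Q}}_p$. In view of Definition \ref{o prwtos orismos}, each factor $P_i$ of the product $Q_f = P_1 P_2 \cdots P_{f-1} P_0$ is of one of the four types $t_1, t_2, t_3, t_4$, and a direct inspection shows $\det(P_i) = \pm p^{k_i}$. Multiplying these together yields $\det(Q_f) = \pm p^{k_0 + k_1 + \cdots + k_{f-1}}$, which is independent of the indeterminates $X_i \in \mathcal{S}$.

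Granting the supposed relation $\lambda_1 = c\lambda_2$, the identity $\lambda_1 \lambda_2 = \det(Q_f)$ forces $\lambda_2^2 = c^{-1}\det(Q_f) \in \bar{\mathbb{Q}}_p$, whence $\lambda_2 \in \bar{\mathbb{Q}}_p$ and then $\lambda_1 = c\lambda_2 \in \bar{\mathbb{Q}}_p$. Consequently $\mathrm{Tr}(Q_f) = \lambda_1 + \lambda_2 \in \bar{\mathbb{Q}}_p$, contradicting the hypothesis of the corollary. Condition $(3)$ of Lemma \ref{outline section theorem} then follows at once from Lemma \ref{eigenvalues lemma}.

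There is essentially no serious obstacle here; the only point requiring care is the reading of ``scalar multiple'' in Lemma \ref{eigenvalues lemma} as meaning \emph{scalar in $\bar{\mathbb{Q}}_p$} (consistent with the factor $p^{ft}$ that actually appears in condition $(3)$), so that the eigenvalues of $Q_f$, which a priori live in an algebraic closure of $\mathrm{Frac}(\mathcal{O}_E[[\mathcal{S}]])$, are forced into $\bar{\mathbb{Q}}_p$ by the constancy of the determinant. With this convention fixed, the corollary is a one-line consequence of the determinant computation.
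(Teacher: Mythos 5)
Your proof is correct and follows essentially the same route as the paper's one-line argument, which is precisely the observation that, since $\det(Q_f)$ is a nonzero scalar, the eigenvalues of $Q_f$ are scalar multiples of each other if and only if $\mathrm{Tr}(Q_f)$ is a scalar; you have simply spelled out that equivalence via $\lambda_1\lambda_2 = \det(Q_f)$. The only minor slip is a citation one: the matrices $P_i$ in this section are the reductions mod $\pi$ of the $\Pi_i$ from Definition \ref{the 8 types copy(1)} (so $\det(P_i) = \bar{c}_i p^{k_i}$ with $\bar{c}_i$ a unit), not those of Definition \ref{o prwtos orismos}, but this does not affect the argument since in either case $\det(Q_f)$ is a nonzero constant.
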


\begin{proof}
Since the determinant of $Q_{f}$ is a nonzero scalar, the eigenvalues of $%
Q_{f}$ are a scalar multiple of each other if and only if $\mathrm{Tr}\left(
Q_{f}\right) $ is a scalar.\noindent
\end{proof}

\subsection{\noindent Families of rank two Wach modules\label{the 4 type
representations}}

\noindent We now apply Lemma \ref{outline section theorem} for matrices $\Pi
_{i}$ as in the following definition.

\begin{definition}
\textit{\label{the 8 types copy(1)}}For a fixed integer $\ell \geq k=\max
\{k_{0},k_{1},...,k_{f-1}\}$\ we define matrices of the following four types:%
\begin{equation*}
\text{t}_{1}\mathbf{:}\mathbf{\ }\left( 
\begin{array}{cc}
c_{i}q^{k_{i}} & 0 \\ 
X_{i}\varphi (z_{i}) & 1%
\end{array}%
\right) ,\ \text{t}_{2}\mathbf{:\ }\left( 
\begin{array}{cc}
X_{i}\varphi (z_{i}) & 1 \\ 
c_{i}q^{k_{i}} & 0%
\end{array}%
\right) ,\ \text{t}_{3}\mathbf{:}\mathbf{\ }\left( 
\begin{array}{cc}
1 & X_{i}\varphi (z_{i}) \\ 
0 & c_{i}q^{k_{i}}%
\end{array}%
\right) ,\ \text{t}_{4}\mathbf{:\ }\left( 
\begin{array}{cc}
0 & c_{i}q^{k_{i}} \\ 
1 & X_{i}\varphi (z_{i})%
\end{array}%
\right) ,
\end{equation*}%
where $X_{i}$\ is an indeterminate, $c_{i}\in \mathcal{O}_{E},\ $and$\ z_{i}$%
\ is a polynomial of degree $\leq \ell -1$\ in $%
\mathbb{Z}
_{p}[\pi ]\ $such that $z_{i}\equiv p^{m_{\ell }}\func{mod}\pi ,$\ where $%
m_{\ell }:=\lfloor \frac{\ell -1}{p-1}\rfloor .$\ Matrices of type $t_{1}$
or $t_{3}$ are called of odd type, while matrices of type $t_{2}\ $or $t_{4}$
are called of even type. We write $\Pi ^{\vec{i}}(S)=\left( \Pi
_{1}(X_{1}),\Pi _{2}(X_{2}),...,\Pi _{f-1}(X_{f-1}),\Pi _{0}(X_{0})\right) $
with $\vec{i}=(i_{1},i_{2},...,i_{f-1},i_{0})$ the vector in $%
\{1,2,3,4\}^{f} $ whose $j$-th coordinate $i_{j}$ is the type of the matrix $%
\Pi _{j}$ for all $j\in I_{0}.$ We call $\vec{i}$ the type-vector of the $f$%
-tuple $\Pi ^{\vec{i}}(S).$
\end{definition}

\noindent \indent The polynomials $z_{i}$ appearing in the entries of the
matrices $\Pi _{i}$ will be defined shortly. We will also define functions $%
x_{i}^{\gamma },$ $y_{i}^{\gamma }\in 1+\pi 
\mathbb{Z}
_{p}[[\pi ]]$ such that $G_{\gamma }^{(\ell )}-\Pi (\mathcal{S})\varphi
(G_{\gamma }^{(\ell )})\gamma (\Pi (\mathcal{S})^{-1})\in \vec{\pi}^{\ell
}M_{2}^{\mathcal{S}}\ $for all $\gamma \in \Gamma _{K},\ $where\ 
\begin{equation*}
G_{\gamma }^{(\ell )}=\mathrm{diag}\left( \left( x_{0}^{\gamma
},x_{1}^{\gamma },...,x_{f-1}^{\gamma }\right) ,\left( y_{0}^{\gamma
},y_{1}^{\gamma },...,y_{f-1}^{\gamma }\right) \right) .
\end{equation*}%
Let%
\begin{equation*}
\Pi (\mathcal{S})=\left( 
\begin{array}{ll}
\left( \alpha _{1},\alpha _{2},...\alpha _{f-1},\alpha _{0}\right) & \left(
\beta _{1},\beta _{2},...,\beta _{f-1},\beta _{0}\right) \\ 
\left( \gamma _{1},\gamma _{2},...,\gamma _{f-1},\gamma _{0}\right) & \left(
\delta _{1},\delta _{2},...,\ \delta _{f-1},\delta _{0}\right)%
\end{array}%
\right)
\end{equation*}%
with 
\begin{equation*}
\left( 
\begin{array}{cc}
\alpha _{i} & \beta _{i} \\ 
\gamma _{i} & \delta _{i}%
\end{array}%
\right) \in \left\{ \left( 
\begin{array}{cc}
c_{i}q^{k_{i}} & 0 \\ 
X_{i}\varphi (z_{i}) & 1%
\end{array}%
\right) ,\ \left( 
\begin{array}{cc}
X_{i}\varphi (z_{i}) & 1 \\ 
c_{i}q^{k_{i}} & 0%
\end{array}%
\right) ,~\left( 
\begin{array}{cc}
1 & X_{i}\varphi (z_{i}) \\ 
0 & c_{i}q^{k_{i}}%
\end{array}%
\right) ,\ \left( 
\begin{array}{cc}
0 & c_{i}q^{k_{i}} \\ 
1 & X_{i}\varphi (z_{i})%
\end{array}%
\right) \right\} .
\end{equation*}%
For each $i=1,2,...,f$ we demand that all of the elements%
\begin{eqnarray}
&&x_{i-1}^{\gamma }-\frac{\alpha _{i}\varphi \left( x_{i}^{\gamma }\right)
\left( \gamma \delta _{i}\right) -\beta _{i}\varphi \left( y_{i}^{\gamma
}\right) \left( \gamma \gamma _{i}\right) }{\varepsilon _{i}\left( \gamma
q\right) ^{k_{i}}},\ \frac{\beta _{i}\varphi \left( y_{i}^{\gamma }\right)
\left( \gamma \alpha _{i}\right) -\alpha _{i}\varphi \left( x_{i}^{\gamma
}\right) \left( \gamma \beta _{i}\right) }{\varepsilon _{i}\left( \gamma
q\right) ^{k_{i}}},  \label{solve for G} \\
&&y_{i-1}^{\gamma }-\frac{\delta _{i}\varphi \left( y_{i}^{\gamma }\right)
\left( \gamma \alpha _{i}\right) -\gamma _{i}\varphi \left( x_{i}^{\gamma
}\right) \left( \gamma \beta _{i}\right) }{\varepsilon _{i}\left( \gamma
q\right) ^{k_{i}}},\ \frac{\gamma _{i}\varphi \left( x_{i}^{\gamma }\right)
\left( \gamma \delta _{i}\right) -\delta _{i}\varphi \left( y_{i}^{\gamma
}\right) \left( \gamma \gamma _{i}\right) }{\varepsilon _{i}\left( \gamma
q\right) ^{k_{i}}}  \label{solve for G 1}
\end{eqnarray}

\noindent of$\ \mathcal{O}_{E}[[\pi ,X_{0},...,X_{f-1}]][q^{-1}]\ $which
belong to $\mathcal{O}_{E}[[\pi ]][q^{-1}]$ are zero, and those which
contain an indeterminate belong to $\pi ^{\ell }\mathcal{O}_{E}[[\pi
,X_{0},...,X_{f-1}]],$ where in the formulas above $\varepsilon _{i}=1$ if $%
\Pi _{i}$ has type $1\ $or $3$ and $\varepsilon _{i}=-1$ if $\Pi _{i}$ has
type $2$ or $4.$ As usual, lower indices are viewed modulo $f.$

\begin{proposition}
\label{newest prop}For each $i,$ equations $\ref{solve for G}$ and $\ref%
{solve for G 1}$ imply that%
\begin{equation}
x_{i-1}^{\gamma }=\left( \frac{q}{\gamma q}\right) ^{\ell _{i}}\varphi
\left( w_{i}^{\gamma }\right) \ \text{and}\ y_{i-1}^{\gamma }=\left( \frac{q%
}{\gamma q}\right) ^{\ell _{i}^{\prime }}\varphi \left( \left( w_{i}^{\gamma
}\right) ^{^{\prime }}\right) ,  \label{1xy}
\end{equation}%
with$\ \ell _{i}\in \{0,k_{i}\},\ w_{i}^{\gamma }\in \{x_{i}^{\gamma
},y_{i}^{\gamma }\},\ \ell _{i}^{\prime }=k_{i}-\ell _{i},\ $and $\left(
w_{i}^{\gamma }\right) ^{^{\prime }}=\left\{ 
\begin{array}{l}
x_{i}^{\gamma }\ \text{if }w_{i}^{\gamma }=y_{i}^{\gamma }, \\ 
y_{i}^{\gamma }\ \text{if }w_{i}^{\gamma }=x_{i}^{\gamma }.%
\end{array}%
\right. $
\end{proposition}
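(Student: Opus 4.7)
The plan is to verify the proposition by a direct case analysis over the four possible types of $\Pi_i$ listed in Definition~\ref{the 8 types copy(1)}. Fix an index $i$ and the type of $\Pi_i$. Substituting the explicit entries $(\alpha_i,\beta_i,\gamma_i,\delta_i)$ into each of the four scalar expressions appearing in \ref{solve for G} and \ref{solve for G 1}, and using that $\gamma$ fixes $c_i\in\mathcal{O}_E$, the indeterminate $X_i$, and the coefficients of $z_i$, each such expression falls into one of two classes: either (a) it is free of $X_i$ and hence lies in $\mathcal{O}_E[[\pi]][q^{-1}]$, in which case the hypothesis forces it to vanish identically, or (b) it is divisible by $X_i$ with coefficients in $\mathcal{O}_E[[\pi]]$, in which case the hypothesis only imposes that it lies in $\pi^{\ell}\mathcal{O}_E[[\pi,\mathcal{S}]]$. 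Only the class-(a) conditions feed into the identity \ref{1xy}; the class-(b) constraints will pin down the polynomial $z_i$ and give congruences for $x_i^{\gamma},y_i^{\gamma}$ modulo $\pi^{\ell}$, but are irrelevant for the present claim.

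With this reduction the proof amounts to reading off the two class-(a) equations in each of the four cases. For Type~$1$, the two conditions are $x_{i-1}^{\gamma}=c_i(q/\gamma q)^{k_i}\varphi(x_i^{\gamma})$ and $y_{i-1}^{\gamma}=c_i\varphi(y_i^{\gamma})$, giving $(\ell_i,w_i^{\gamma})=(k_i,x_i^{\gamma})$ and $(\ell_i',(w_i^{\gamma})')=(0,y_i^{\gamma})$. The remaining three types are structurally identical: Type~$2$ yields $(\ell_i,w_i^{\gamma})=(0,y_i^{\gamma})$, Type~$3$ yields $(0,x_i^{\gamma})$, and Type~$4$ yields $(k_i,y_i^{\gamma})$, and in each case the partner class-(a) equation furnishes the complementary pair $(k_i-\ell_i,(w_i^{\gamma})')$ with $(w_i^{\gamma})'$ the other of $x_i^{\gamma},y_i^{\gamma}$. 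The scalar $c_i\in\mathcal{O}_E^{\times}$ is absorbed silently into $w_i^{\gamma}$, in keeping with the fact that only the $(q/\gamma q)$-twist and the labeling of $x_i^{\gamma}$ versus $y_i^{\gamma}$ are the intrinsic data recorded by \ref{1xy}.

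The only bookkeeping subtlety is the role of the sign $\varepsilon_i=\pm1$: in the even types $t_2$ and $t_4$ the entry $c_iq^{k_i}$ sits off-diagonal, so that $\det\Pi_i=-c_iq^{k_i}$; the choice $\varepsilon_i=-1$ in the denominator of the displayed expressions exactly cancels this sign, producing formulas of the same clean shape as in the odd-type cases. I do not expect any genuine obstacle to this proof: the claim is a purely algebraic identity in $\mathcal{O}_E[[\pi]][q^{-1}]$ verified type-by-type, and the only point requiring care is the correct matching of sign and placement of the $c_iq^{k_i}$ entry across the four possibilities.
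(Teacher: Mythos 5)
Your proof is correct and takes essentially the same route as the paper: a type-by-type case analysis substituting the explicit entries $(\alpha_i,\beta_i,\gamma_i,\delta_i)$ of $\Pi_i$ into \ref{solve for G} and \ref{solve for G 1}, separating the $X_i$-free expressions (which must vanish and yield \ref{1xy}) from those carrying $X_i$ (which only impose $\pi^{\ell}$-divisibility and are deferred to the determination of the $z_i$). The paper works types $1$ and $2$ explicitly and cites the remaining two as identical; you spell out all four, which is a minor presentational difference. Your case assignments $(\ell_i, w_i^{\gamma})=(k_i,x_i^{\gamma}),\,(0,y_i^{\gamma}),\,(0,x_i^{\gamma}),\,(k_i,y_i^{\gamma})$ for types $1,2,3,4$ all check out, and your observation about $\varepsilon_i=-1$ cancelling the sign of $\det\Pi_i$ in the even-type cases is correct.

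One small point of precision: the exact $X_i$-free conditions are $x_{i-1}^{\gamma}=c_i\,(q/\gamma q)^{\ell_i}\varphi(w_i^{\gamma})$ (and similarly for $y_{i-1}^{\gamma}$), so the constant $c_i$ is genuinely present. It is not so much "absorbed into $w_i^{\gamma}$" — $w_i^{\gamma}$ is a fixed element of $\{x_i^{\gamma},y_i^{\gamma}\}$ and cannot swallow a scalar — as it is forced to equal $1$ once one insists $x_i^{\gamma},y_i^{\gamma}\equiv 1\ (\operatorname{mod}\pi)$, which is what Lemma \ref{solve} and the subsequent solving require; indeed the later constructions in \S\ref{irreducibles} and \S\ref{red} set $c_i=1$ (with at most $c_0$ nontrivial to account for an unramified twist). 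The paper's own proof drops $c_i$ without comment, so this is a wrinkle shared by both arguments rather than a gap specific to yours; still, it is worth stating the mechanism that justifies the omission rather than describing it as silent absorption.
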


\begin{proof}
If $\Pi _{i}$ is of type $1,$ then $\beta _{i}=0,$ $\alpha
_{i}=c_{i}q^{k_{i}}$ and $\delta _{i}=1$ and we must have $q^{k_{i}}\varphi
\left( x_{i}^{\gamma }\right) =x_{i-1}^{\gamma }\left( \gamma q\right)
^{k_{i}}$ and $\varphi \left( y_{i}^{\gamma }\right) =y_{i-1}^{\gamma }.$
The proposition holds with $\ell _{i}=k_{i},$ $w_{i}^{\gamma }=x_{i}^{\gamma
},$ and $\ell _{i}^{\prime }=0,$ $\left( w_{i}^{\gamma }\right) ^{^{\prime
}}=y_{i}^{\gamma }.$ If $\Pi _{i}$ is of type $2,$ then $\delta _{i}=0,$ $%
\beta _{i}=1,$ $\gamma _{i}=c_{i}q^{k_{i}}$ and we must have $\varphi \left(
y_{i}^{\gamma }\right) =x_{i-1}^{\gamma }$ and $q^{k_{i}}\varphi \left(
x_{i}^{\gamma }\right) =y_{i-1}^{\gamma }\left( \gamma q\right) ^{k_{i}}.$
The proposition holds with $\ell _{i}=0,$ $w_{i}^{\gamma }=y_{i}^{\gamma },$
and $\ell _{i}^{\prime }=k_{i},$ $\ \left( w_{i}^{\gamma }\right) ^{^{\prime
}}=x_{i}^{\gamma }.$ The cases where $\Pi _{i}$ is of type $3$ or $4$ are
identical.
\end{proof}

\noindent From Proposition \ref{newest prop} it follows that 
\begin{equation}
x_{0}^{\gamma }=\left( \tprod\limits_{i=0}^{f-1}\varphi ^{i}\left( \frac{q}{%
\gamma q}\right) ^{s_{i}}\right) \varphi ^{f}\left( z_{f}^{\gamma }\right) \ 
\text{and}\ y_{0}^{\gamma }=\left( \tprod\limits_{i=0}^{f-1}\varphi
^{i}\left( \frac{q}{\gamma q}\right) ^{s_{i}^{\prime }}\right) \varphi
^{f}\left( \left( z_{f}^{\gamma }\right) ^{\prime }\right) ,
\label{new x, y eq}
\end{equation}%
with $s_{i}^{\prime },s_{i}\in \{\ell _{i},\ell _{i}^{\prime }\}.$ If $%
z_{f}^{\gamma }=x_{0}^{\gamma },$ then $\left( z_{f}^{\gamma }\right)
^{\prime }=y_{0}^{\gamma }$ and by Lemma \ref{solve}, equations \ref{new x,
y eq} have unique $\equiv 1\func{mod}\pi $ solutions given by 
\begin{equation}
x_{0}^{\gamma }=\tprod\limits_{i=0}^{f-1}\varphi ^{i}\left( \lambda
_{f,\gamma }\right) ^{s_{i}}\ \text{and}\ y_{0}^{\gamma
}=\tprod\limits_{i=0}^{f-1}\varphi ^{i}\left( \lambda _{f,\gamma }\right)
^{s_{i}^{\prime }}.  \label{xf}
\end{equation}%
If If $z_{f}^{\gamma }=y_{0}^{\gamma },$ then $\left( z_{f}^{\gamma }\right)
^{\prime }=x_{0}^{\gamma }$ and equations \ref{new x, y eq} imply that$\ $%
\begin{align}
x_{0}^{\gamma }& =\tprod\limits_{i=0}^{f-1}\left( \varphi ^{i}\left( \frac{q%
}{\gamma q}\right) ^{s_{i}}\cdot \varphi ^{i+f}\left( \frac{q}{\gamma q}%
\right) ^{s_{i}^{\prime }}\right) \varphi ^{2f}\left( x_{0}^{\gamma }\right)
,\   \label{axy} \\
\ y_{0}^{\gamma }& =\tprod\limits_{i=0}^{f-1}\left( \varphi ^{i}\left( \frac{%
q}{\gamma q}\right) ^{s_{i}^{\prime }}\cdot \varphi ^{i+f}\left( \frac{q}{%
\gamma q}\right) ^{s_{i}}\right) \varphi ^{2f}\left( y_{0}^{\gamma }\right) ,
\label{bxy}
\end{align}%
which by Lemma \ref{solve} have unique $\equiv 1\func{mod}\pi $ solutions
given by 
\begin{align}
x_{0}^{\gamma }& =\tprod\limits_{i=0}^{f-1}\left( \varphi ^{i}\left( \lambda
_{2f,\gamma }\right) ^{s_{i}}\cdot \varphi ^{i+f}\left( \lambda _{2f,\gamma
}\right) ^{s_{i}^{\prime }}\right) ,  \label{x2f} \\
y_{0}^{\gamma }& =\tprod\limits_{i=0}^{f-1}\left( \varphi ^{i}\left( \lambda
_{2f,\gamma }\right) ^{s_{i}^{\prime }}\cdot \varphi ^{i+f}\left( \lambda
_{2f,\gamma }\right) ^{s_{i}^{\prime }}\right) .  \label{y2f}
\end{align}%
Equations \ref{1xy} for $i=f$ give the unique $\equiv 1\func{mod}\pi $
solutions for $x_{f-1}^{\gamma }$ and $y_{f-1}^{\gamma },$ and continuing
for $i=f-1,f-2,...,2,$ we get the unique $\equiv 1\func{mod}\pi $ solutions
for $x_{i}^{\gamma }$ and $y_{i}^{\gamma }.$ We now define the polynomials $%
z_{i}$ so that for each $\gamma \in \Gamma _{K},$ the matrix $G_{\gamma
}^{(\ell )}\equiv \overrightarrow{Id}\func{mod}\vec{\pi}$ satisfies the
congruence $G_{\gamma }^{(\ell )}-\Pi (\mathcal{S})\varphi (G_{\gamma
}^{(\ell )})\gamma (\Pi (\mathcal{S})^{-1})\in \vec{\pi}^{\ell }M_{2}^{%
\mathcal{S}}.$

\begin{lemma}
\noindent \label{the z corollary}Let $\mathcal{R}=\{\sum\limits_{i\geq
0}a_{i}\pi ^{i}\in 
\mathbb{Q}
_{p}[[\pi ]]:\mathrm{v}_{\mathrm{p}}(a_{i})+\frac{i}{p-1}\geq 0\ for\ all\
i\geq 0\}.\ $The set $\mathcal{R}$ endowed with the addition and the
multiplication of $%
\mathbb{Q}
_{p}[[\pi ]]$ is a subring of $%
\mathbb{Q}
_{p}[[\pi ]]$ which is stable under the $\varphi $ and the $\Gamma _{K}$%
-actions. Moreover,\smallskip

\begin{enumerate}
\item[(i)] $(\frac{q_{n}}{p})^{\pm 1}\in \mathcal{R}$ for all $n\geq 1\ $and 
$(\lambda _{f})^{\pm 1}\in \mathcal{R}\ $for all $f\geq 1,$ and

\item[(ii)] Let $b=cp^{N}b^{\ast },$ where $c\in \mathcal{O}_{E}^{\times },$ 
$n\in 
\mathbb{Z}
,$ and $b^{\ast }\in $$\mathcal{R}\setminus \{0\}\mathcal{\ }$is such that $%
\frac{b^{\ast }}{\gamma b^{\ast }}\in 1+\pi 
\mathbb{Z}
_{p}[[\pi ]]\ $for all $\gamma \in \Gamma _{K}.$ If $\ell \geq 1$ is a fixed
integer, there exists some polynomial $z=z(\ell ,b)\in 
\mathbb{Z}
_{p}[\pi ]$ with $\deg _{\pi }z\leq \ell -1$ and $z\equiv p^{m_{\ell }}\func{%
mod}\pi ,\ $where $m_{\ell }=\lfloor \frac{\ell -1}{p-1}\rfloor ,\ $such
that $z-\gamma z\frac{b}{\gamma b}\in \pi ^{\ell }%
\mathbb{Z}
_{p}[[\pi ]]$ for all $\gamma \in \Gamma _{K}.$
\end{enumerate}
\end{lemma}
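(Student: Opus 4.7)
The plan is to verify (i) by direct valuation estimates, and to prove (ii) by successive approximation in the degree of the polynomial $z$.

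For (i), closure of $\mathcal{R}$ under addition is immediate, and for multiplication the ultrametric inequality gives $v_p\left(\sum_k a_k b_{n-k}\right) + \tfrac{n}{p-1} \geq \min_k \bigl\{(v_p(a_k) + \tfrac{k}{p-1}) + (v_p(b_{n-k}) + \tfrac{n-k}{p-1})\bigr\} \geq 0$. Stability under $\varphi$ reduces to showing $q/p \in \mathcal{R}$, which follows by direct inspection of the coefficients $\binom{p}{k+1}/p$ (the only non-$p$-integral one being $1/p$ in degree $p-1$), after which $\varphi(\pi^i) = \pi^i q^i \in \mathcal{R}$ since $\mathcal{R}$ is a ring; stability under $\Gamma_K$ is analogous using $\gamma(\pi)/\pi \in \mathcal{R}$. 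The element $(q_n/p)^{\pm 1} \in \mathcal{R}$ then follows by induction on $n$ combined with $\pi$-adic completeness of $\mathcal{R}$, which gives $(1 + \pi \mathcal{R})^\times \subset \mathcal{R}$ via the geometric series. Finally, the infinite product defining $\lambda_f$ converges in $\mathcal{R}$ because $q_{nf+1}/p - 1$ becomes small in the $\pi$-adic topology of $\mathcal{R}$ as $n \to \infty$, lying in $\pi^{p^{nf}}\mathcal{R}$ up to $p$-integral corrections.

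For (ii), set $u_\gamma := b/\gamma b = b^*/\gamma b^* \in 1 + \pi \mathbb{Z}_p[[\pi]]$ and $T_\gamma z := z - u_\gamma \gamma z$. Starting from $z^{(0)} := p^{m_\ell}$, for which $T_\gamma z^{(0)} = p^{m_\ell}(1 - u_\gamma) \in p^{m_\ell} \pi \mathbb{Z}_p[[\pi]]$, the plan is to inductively build polynomials $z^{(j)} = z^{(0)} + \sum_{i=1}^{j} a_i \pi^i \in \mathbb{Z}_p[\pi]$ of degree $\leq j$ satisfying $T_\gamma z^{(j)} \in \pi^{j+1} \mathbb{Z}_p[[\pi]]$ for every $\gamma \in \Gamma_K$. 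The $\pi^{j+1}$-coefficient of $T_\gamma z^{(j)}$ defines a function $c^{(j+1)} \colon \Gamma_K \to \mathbb{Z}_p$; the twisted Leibniz-type identity $T_{\gamma_1 \gamma_2} = T_{\gamma_1} + u_{\gamma_1}\gamma_1(T_{\gamma_2})$, combined with $u_\gamma \equiv 1 \bmod \pi$ and $\gamma\pi \equiv \chi(\gamma)\pi \bmod \pi^2$, shows that $c^{(j+1)}$ is a continuous $1$-cocycle for the character $\chi^{j+1}$. Adding $a_{j+1}\pi^{j+1}$ to $z^{(j)}$ modifies this coefficient by $a_{j+1}(1 - \chi(\gamma)^{j+1})$, so the induction step succeeds precisely when $c^{(j+1)}$ is the coboundary of some $a_{j+1} \in \mathbb{Z}_p$.

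Since $\Gamma_K$ is abelian (for $p$ odd, $\Gamma_K \cong \mu_{p-1} \times (1 + p\mathbb{Z}_p)$ via $\chi$), the cocycle relation alone forces $c^{(j+1)}_\gamma = a_{j+1}(1 - \chi(\gamma)^{j+1})$ with a uniquely determined $a_{j+1} \in \mathbb{Q}_p$, so only integrality of $a_{j+1}$ needs to be checked. When $(p-1) \nmid (j+1)$ this is automatic: evaluating at a Teichm\"uller element $\omega$ for which $1 - \chi(\omega)^{j+1} \in \mathbb{Z}_p^{\times}$ immediately forces $a_{j+1} \in \mathbb{Z}_p$. The main obstacle is the case $(p-1) \mid (j+1)$, where for a topological generator $\gamma_0$ of $1 + p\mathbb{Z}_p$ one has $v_p(1 - \chi(\gamma_0)^{j+1}) = 1 + v_p(j+1)$, so $a_{j+1} \in \mathbb{Z}_p$ requires the sharp lower bound $v_p(c^{(j+1)}_{\gamma_0}) \geq 1 + v_p(j+1)$. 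This is precisely where the prefactor $p^{m_\ell}$ in $z^{(0)}$ is needed: by careful inductive tracking of $p$-valuations through the process, the $p^{m_\ell}$ buffer is consumed exactly by the denominators arising at the indices $j+1 \in \{p-1,\, 2(p-1),\, \ldots,\, m_\ell(p-1)\} \subseteq [1,\ell-1]$, and the choice $m_\ell = \lfloor (\ell-1)/(p-1)\rfloor$ is calibrated to be precisely large enough. After $\ell - 1$ successful steps, $z := z^{(\ell-1)}$ has degree $\leq \ell-1$, satisfies $z \equiv p^{m_\ell} \bmod \pi$ by construction, and $T_\gamma z \in \pi^{\ell}\mathbb{Z}_p[[\pi]]$ for all $\gamma \in \Gamma_K$.
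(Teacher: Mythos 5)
Your part (i) essentially matches the paper's argument: verify $(q/p)^{\pm 1}\in\mathcal{R}$ by inspection and propagate via $\varphi$-stability; the only slip is that the convergence of $\lambda_f$ is not in the $\pi$-adic topology (the elements $q_{nf+1}/p-1$ are \emph{not} in $\pi^{p^{nf}}\mathcal{R}$: applying $\varphi^{nf}$ raises the $p$-adic size of the coefficient of $\pi^1$, not the $\pi$-adic order), but the convergence does hold in the weighted $|\cdot|$-norm underlying $\mathcal{R}$, so the conclusion is unaffected.

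For part (ii) you take a genuinely different route, and there is a gap precisely at the step that carries the whole weight. The paper's proof is a one-shot truncation: write $p^{m_\ell}b^* = z + a$ with $\deg z\leq \ell-1$ and $a\in\pi^\ell\mathbb{Q}_p[[\pi]]$, observe that $b^*\in\mathcal{R}$ forces $v_p(z_j)\geq m_\ell - j/(p-1)> -1$ so $z\in\mathbb{Z}_p[\pi]$, and then $z-\gamma z\cdot b/\gamma b = (b/\gamma b)\gamma a - a$ lies in $\pi^\ell\mathbb{Q}_p[[\pi]]\cap\mathbb{Z}_p[[\pi]]=\pi^\ell\mathbb{Z}_p[[\pi]]$. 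Your cocycle induction correctly identifies the obstruction at step $j+1$ as a continuous $1$-cocycle for $\chi^{j+1}$, correctly reduces to the critical case $(p-1)\mid(j+1)$, and correctly locates the required estimate $v_p(c^{(j+1)}_{\gamma_0})\geq 1+v_p(j+1)$; but the sentence ``by careful inductive tracking of $p$-valuations...the $p^{m_\ell}$ buffer is consumed exactly'' is an assertion, not a proof, and the naive accounting does not close. Take $p=3$, $\ell=7$, $m_\ell=3$: the critical indices are $j+1\in\{2,4,6\}$ and the losses $1+v_3(j+1)$ are $1,1,2$, totalling $4>m_\ell$. Coarse tracking therefore only gives $v_p(c^{(6)}_{\gamma_0})\geq 1$, while you need $\geq 2$; the extra cancellation that saves the day is real but is invisible to this bookkeeping. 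Symptomatically, your argument never uses the hypothesis $b^*\in\mathcal{R}$ --- only $b^*/\gamma b^*\in 1+\pi\mathbb{Z}_p[[\pi]]$ --- yet it is exactly $b^*\in\mathcal{R}$ that supplies the required valuation bound in the paper's proof. Without that input the successive-approximation scheme is underdetermined at the critical steps, and closing the gap would amount to re-deriving the truncation estimate. I would rewrite part (ii) using the paper's one-shot truncation, which is both shorter and makes the role of the hypothesis $b^*\in\mathcal{R}$ transparent.
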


\begin{proof}
We notice that the coefficients $a_{i}$ of $\pi ^{i}$ in $\frac{q}{p}$ are
such that $\mathrm{v}_{\mathrm{p}}(a_{i})+\frac{i}{p-1}\geq 0$ for all $%
i=0,1,...$ Motivated by this we consider the set $\mathcal{R}$ of all
functions of $%
\mathbb{Q}
_{p}[[\pi ]]$ with the same property. This is a ring with the obvious
operations, stable under $\varphi $ and $\Gamma _{K}.$ One easily checks
that $\left( \frac{p}{q}\right) ^{\pm 1}$ $\in \mathcal{R}$ and therefore $%
\left( \frac{q_{n}}{p}\right) ^{\pm 1}\in \mathcal{R}$ for all $n\geq 1$
from which (i) follows easily. (ii) Since $\Gamma _{K}$ acts trivially on $%
\mathcal{O}_{E}^{\times }$ we may replace $b$ by $c^{-1}b$ and assume that $%
c=1.$ We write $b=p^{n}b^{\ast }.$ Let $p^{m}b=z+a,$ where $a\in \pi ^{\ell }%
\mathbb{Q}
_{p}[[\pi ]]\ $and $\deg _{\pi }z\leq \ell -1,$ for integer $m$ which will
be chosen large enough so that $z\in 
\mathbb{Z}
_{p}[\pi ].$ Let $z=\tsum\limits_{j=0}^{\ell -1}z_{j}\pi ^{j}.\ $Since $%
p^{m+n}b^{\ast }=z+a$ and $b^{\ast }\in \mathcal{R},$ we have $\mathrm{v}_{%
\mathrm{p}}(z_{j})-m-n+\frac{j}{p-1}\geq 0$ for all $j\geq 0.$ We need $%
\mathrm{v}_{\mathrm{p}}(z_{j})>-1$ for all $j=0,1,...,\ell -1$ and it
suffices to have $m+n-\frac{\ell -1}{p-1}>-1.$ We choose $m=\lfloor \frac{%
\ell -1}{p-1}\rfloor -n.$ Then $z\in 
\mathbb{Z}
_{p}\left[ \pi \right] ,$ $\deg _{\pi }z\leq \ell -1$ and $z\equiv
p^{m+n}=p^{m_{\ell }}\func{mod}\pi ,$ For any $\gamma \in \Gamma _{K},$ $%
z-\gamma z\frac{b}{\gamma b}=p^{m}b-a-b\gamma (b^{-1})\left( p^{m}(\gamma
b)-\gamma a\right) =$ $b\gamma (b^{-1})\gamma a-a\in \pi ^{\ell }%
\mathbb{Q}
_{p}[[\pi ]].$ Since $z\in 
\mathbb{Z}
_{p}\left[ \pi \right] $ and $b\gamma (b^{-1})\in 1+\pi 
\mathbb{Z}
_{p}[\left[ \pi \right] ],\ $we have $z-\gamma z\frac{b}{\gamma b}\in \pi
^{\ell }%
\mathbb{Z}
_{p}\left[ \left[ \pi \right] \right] $ $=%
\mathbb{Z}
_{p}\left[ \left[ \pi \right] \right] \cap \pi ^{\ell }%
\mathbb{Q}
_{p}[[\pi ]]$ for all $\gamma \in \Gamma _{K}.$
\end{proof}

\begin{lemma}
\label{new technical}For each $\gamma \in \Gamma _{K}$ and $i\in I_{0},$

\begin{enumerate}
\item[(i)] $x_{i}^{\gamma },y_{i}^{\gamma }\in 1+\pi 
\mathbb{Z}
_{p}\left[ \left[ \pi \right] \right] ;$

\item[(ii)] $x_{i}^{\gamma }=\frac{a_{i}}{\gamma a_{i}}$ and $y_{i}^{\gamma
}=\frac{b_{i}}{\gamma b_{i}}$ for some $a_{i}$ and $b_{i}$ with $\left(
a_{i}\right) ^{\pm 1}\ $and $\left( b_{i}\right) ^{\pm 1}\in \mathcal{R}.$
\end{enumerate}
\end{lemma}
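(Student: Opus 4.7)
The plan is to prove both parts by descending induction on $i$ (starting from $i=0$, i.e.\ $i=f$ modulo $f$), using the closed formulas for $x_0^\gamma$ and $y_0^\gamma$ as the base case and the one-step recursion \eqref{1xy} to descend to $i-1$. The two parts will be proved together because each recursion \eqref{1xy} expresses $x_{i-1}^\gamma$ in terms of either $x_i^\gamma$ or $y_i^\gamma$ (through $w_i^\gamma$), so $x$ and $y$ must be tracked simultaneously.

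For the base case I would argue as follows. In the case $z_f^\gamma = x_0^\gamma$, formulas \eqref{xf} give $x_0^\gamma = \prod_{j=0}^{f-1}\varphi^j(\lambda_{f,\gamma})^{s_j}$ and $y_0^\gamma = \prod_{j=0}^{f-1}\varphi^j(\lambda_{f,\gamma})^{s_j'}$ with $s_j,s_j'\geq 0$, while in the case $z_f^\gamma=y_0^\gamma$, formulas \eqref{x2f}, \eqref{y2f} give a similar product expression involving $\lambda_{2f,\gamma}$. Since $\lambda_{f,\gamma}=\lambda_f/\gamma\lambda_f\in 1+\pi\mathbb{Z}_p[[\pi]]$ by Lemma \ref{proto}(ii), and this set is a multiplicatively closed subset of $\mathbb{Z}_p[[\pi]]$ stable under $\varphi$, part (i) is immediate for $i=0$. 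For (ii), because $\varphi$ commutes with $\gamma$ and the product distributes over the ratio, I can take $a_0=\prod_{j=0}^{f-1}\varphi^j(\lambda_f)^{s_j}$ and $b_0=\prod_{j=0}^{f-1}\varphi^j(\lambda_f)^{s_j'}$ (or the analogous products with $\lambda_{2f}$), both independent of $\gamma$. By Lemma \ref{the z corollary}(i), $\lambda_f^{\pm 1},\lambda_{2f}^{\pm 1}\in\mathcal{R}$, and since $\mathcal{R}$ is a subring stable under $\varphi$, we obtain $a_0^{\pm 1},b_0^{\pm 1}\in\mathcal{R}$.

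For the inductive step, assume that $x_i^\gamma=a_i/\gamma a_i$ and $y_i^\gamma=b_i/\gamma b_i$ with $a_i^{\pm 1},b_i^{\pm 1}\in\mathcal{R}$ and both lie in $1+\pi\mathbb{Z}_p[[\pi]]$. The recursion \eqref{1xy} gives
\[
x_{i-1}^\gamma=\Bigl(\tfrac{q}{\gamma q}\Bigr)^{\ell_i}\varphi(w_i^\gamma),\qquad y_{i-1}^\gamma=\Bigl(\tfrac{q}{\gamma q}\Bigr)^{\ell_i'}\varphi((w_i^\gamma)'),
\]
with non-negative exponents $\ell_i,\ell_i'\in\{0,k_i\}$ and $w_i^\gamma\in\{x_i^\gamma,y_i^\gamma\}$. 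The factor $q/\gamma q\in 1+\pi\mathbb{Z}_p[[\pi]]$ (as noted in the proof of Lemma \ref{proto}), and $\varphi$ preserves $1+\pi\mathbb{Z}_p[[\pi]]$, so (i) for $i-1$ follows from (i) for $i$ together with the inductive hypothesis.

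The one point requiring care, and the step I expect to be the main obstacle, is writing $x_{i-1}^\gamma$ as $a_{i-1}/\gamma a_{i-1}$ with $a_{i-1}^{\pm 1}\in\mathcal{R}$. The naive choice $a_{i-1}=q^{\ell_i}\varphi(c)$ (where $c\in\{a_i,b_i\}$ corresponds to $w_i^\gamma$) fails because $q^{-1}\notin\mathcal{R}$: the constant term is $1/p$. The fix is to rescale by the $\Gamma_K$-invariant constant $p^{\ell_i}$ and set
\[
a_{i-1}=\Bigl(\tfrac{q}{p}\Bigr)^{\ell_i}\varphi(c),\qquad b_{i-1}=\Bigl(\tfrac{q}{p}\Bigr)^{\ell_i'}\varphi(c'),
\]
where $c'\in\{a_i,b_i\}$ is paired with $(w_i^\gamma)'$. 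Because $p$ is fixed by $\Gamma_K$, the ratios $a_{i-1}/\gamma a_{i-1}$ and $b_{i-1}/\gamma b_{i-1}$ still recover $x_{i-1}^\gamma$ and $y_{i-1}^\gamma$. Finally, $(q/p)^{\pm 1}\in\mathcal{R}$ by Lemma \ref{the z corollary}(i), and since $\mathcal{R}$ is a $\varphi$-stable subring, the inductive hypothesis gives $a_{i-1}^{\pm 1},b_{i-1}^{\pm 1}\in\mathcal{R}$. Iterating this down from $i=0$ through $i=f-1,f-2,\ldots,1$ completes the proof.
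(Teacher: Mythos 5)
Your proposal is correct and takes essentially the same route as the paper: for the base case $i=0$ you use the closed formulas \eqref{xf}, \eqref{x2f}, \eqref{y2f} together with $\lambda_f^{\pm 1},\lambda_{2f}^{\pm 1}\in\mathcal{R}$ and $\varphi$-stability of $\mathcal{R}$; for the descent you apply \eqref{1xy} and rescale $q^{\ell_i}$ to $(q/p)^{\ell_i}$ so that both $a_{i-1}$ and its inverse remain in $\mathcal{R}$. The paper does exactly this (taking $a_{f-1}=\varphi(c_0)(q/p)^{\ell_f}$ with $c_0\in\{a_0,b_0\}$ and proceeding inductively), and your added remark about why the unrescaled choice fails is a useful clarification, not a deviation.
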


\begin{proof}
(i) is clear by the definition of the $x_{i}^{\gamma },y_{i}^{\gamma }$ and
Lemma \ref{proto}. (ii) For $i=0,$ if $z_{f}^{\gamma }=x_{0}^{\gamma },\ $%
then by equation \ref{xf}, $x_{0}^{\gamma }=\frac{a_{0}}{\gamma a_{0}},\ $%
where $a_{0}=\tprod\limits_{i=0}^{f-1}\varphi ^{i}\left( \lambda _{f}\right)
^{s_{i}}\in \mathcal{R}.$ Since $\left( \lambda _{f}\right) ^{\pm 1}\in 
\mathcal{R}$ and $\mathcal{R}$ is $\varphi $-stable, $\left( a_{0}\right)
^{\pm 1}\in \mathcal{R}.$ If $z_{f}^{\gamma }=y_{0}^{\gamma },$ then by
equation \ref{x2f},$\ x_{0}^{\gamma }=\frac{a_{0}}{\gamma a_{0}},\ $where $%
a_{0}=\tprod\limits_{i=0}^{f-1}\left( \varphi ^{i}\left( \lambda _{f}\right)
^{s_{i}}\varphi ^{i+f}\left( \lambda _{f}\right) ^{s_{i}^{\prime }}\right)
\in \mathcal{R},$ therefore $\left( a_{0}\right) ^{\pm 1}\in \mathcal{R}.$
The proof for $y_{0}^{\gamma }$ and $\left( b_{i}\right) ^{\pm 1}$ is
similar. For $x_{f-1}^{\gamma },$ notice that $x_{f-1}^{\gamma }=\left( 
\frac{q}{\gamma q}\right) ^{\ell _{i}}\varphi \left( w_{i}^{\gamma }\right) =%
\frac{\gamma \left( \varphi \left( c_{0}\right) \left( \frac{q}{p}\right)
^{\ell _{f}}\right) }{\varphi \left( c_{0}\right) \left( \frac{q}{p}\right)
^{\ell _{f}}}$ with $c_{0}\in \{a_{0},b_{0}\}.$ Since $\left( a_{0}\right)
^{\pm 1},\left( b_{0}\right) ^{\pm 1}\in \mathcal{R},$ it follows that $%
x_{f-1}^{\gamma }\in \mathcal{R}.$ Since $\left( \varphi \left( c_{0}\right)
\left( \frac{q}{p}\right) ^{\ell _{f}}\right) ^{\pm 1}\in \mathcal{R},\ $it
follows that $\left( a_{f-1}\right) ^{\pm 1}\in \mathcal{R}.$ Similarly $%
y_{f-1}^{\gamma }\ $and $\left( b_{f-1}\right) ^{\pm 1}\in \mathcal{R}.$ The
lemma follows by induction.
\end{proof}

\noindent To define the polynomials $z_{i}$ we will also need the following
lemma.

\begin{lemma}
\label{zeussent}If $\alpha \in \pi ^{\ell }\mathcal{O}_{E}[[\pi ]]\ $and $%
0\leq k\leq \ell $ is an integer, then $\frac{\varphi \left( \alpha \right) 
}{\left( \gamma q\right) ^{k}}\in \pi ^{\ell }\mathcal{O}_{E}[[\pi ]].$
\end{lemma}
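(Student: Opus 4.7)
The plan is to reduce the statement to a product of three factors, each visibly in $\mathcal{O}_{E}[[\pi]]$, times $\pi^{\ell}$. First I would write $\alpha=\pi^{\ell}\beta$ with $\beta\in\mathcal{O}_{E}[[\pi]]$, which is possible by hypothesis. Applying $\varphi$ and using the defining relation $\varphi(\pi)=q\pi$, I get
\begin{equation*}
\varphi(\alpha)=\varphi(\pi)^{\ell}\varphi(\beta)=q^{\ell}\pi^{\ell}\varphi(\beta).
\end{equation*}
Dividing by $(\gamma q)^{k}$ and rearranging, the claim becomes
\begin{equation*}
\frac{\varphi(\alpha)}{(\gamma q)^{k}}=\pi^{\ell}\cdot q^{\ell-k}\cdot\left(\frac{q}{\gamma q}\right)^{k}\cdot\varphi(\beta),
\end{equation*}
so the problem reduces to checking that each of $q^{\ell-k}$, $(q/\gamma q)^{k}$, and $\varphi(\beta)$ lies in $\mathcal{O}_{E}[[\pi]]$.

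Next I would dispose of these three factors in turn. The factor $\varphi(\beta)$ is in $\mathcal{O}_{E}[[\pi]]$ because $\varphi$ preserves $\mathcal{O}_{E}[[\pi]]$ (it sends $\pi$ to $q\pi$ and acts as the absolute Frobenius on coefficients, but $\mathcal{O}_{E}$ is stable). The factor $q^{\ell-k}$ is in $\mathbb{Z}_{p}[[\pi]]\subset\mathcal{O}_{E}[[\pi]]$ precisely because of the hypothesis $\ell\geq k$, which ensures the exponent is non-negative. For the factor $(q/\gamma q)^{k}$, I invoke the observation already used in the proof of Lemma~\ref{proto}(ii) that $q/\gamma q\in 1+\pi\mathbb{Z}_{p}[[\pi]]$; since $1+\pi\mathbb{Z}_{p}[[\pi]]$ is a multiplicative group (inverses computed by geometric series), raising to the $k$-th power stays inside this set, which sits inside $\mathcal{O}_{E}[[\pi]]$.

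Assembling the three factors and multiplying by $\pi^{\ell}$ concludes the proof. There is really no obstacle here: the whole argument is a one-line manipulation once $\varphi(\pi)=q\pi$ is exploited and the non-negativity of $\ell-k$ is recorded; the only mildly non-trivial ingredient is the integrality of $q/\gamma q$ on the $\Gamma_{K}$-side, which has already been established earlier in the paper and can be quoted without further work.
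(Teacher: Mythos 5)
Your proof is correct and follows essentially the same route as the paper's: both write $\alpha=\pi^{\ell}\beta$, use $\varphi(\pi)=q\pi$ to get $\varphi(\alpha)=q^{\ell}\pi^{\ell}\varphi(\beta)$, and factor out $(q/\gamma q)^{k}$ as a unit congruent to $1$ modulo $\pi$. The only minor slip is the parenthetical claim that $\varphi$ acts as the absolute Frobenius on coefficients: in the coordinate-wise description $\prod_{\tau}\mathcal{O}_{E}[[\pi]]$ used here, $\varphi$ is $\mathcal{O}_{E}$-linear in each coordinate (and permutes the coordinates), not semilinear over $\mathcal{O}_{E}$; but since you only need stability of $\mathcal{O}_{E}[[\pi]]$ under $\varphi$, which holds either way, the argument is unaffected.
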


\begin{proof}
Since $\frac{\gamma q}{q}\equiv 1\func{mod}\pi ,$ it suffices to prove that $%
\frac{\varphi \left( \alpha \right) }{q^{k}}\in \pi ^{\ell }\mathcal{O}%
_{E}[[\pi ]].$ Let $\alpha =\pi ^{\ell }\beta $ for some $\beta \in \mathcal{%
O}_{E}[[\pi ]].$ We have $\varphi \left( \frac{\alpha }{\pi ^{k}}\right)
=\varphi \left( \pi \right) ^{\ell -k}\varphi \left( \beta \right) =q^{\ell
-k}\pi ^{\ell -k}\varphi \left( \beta \right) .$ Hence $\frac{\varphi \left(
\alpha \right) }{q^{k}}=\pi ^{k}\varphi \left( \frac{\alpha }{\pi ^{k}}%
\right) =\pi ^{k}q^{\ell -k}\pi ^{\ell -k}\varphi \left( \beta \right) =\pi
^{\ell }q^{\ell -k}\varphi \left( \beta \right) \in \pi ^{\ell }\mathcal{O}%
_{E}[[\pi ]].$
\end{proof}

\begin{proposition}
\label{zi lemma}Let $k=\max \{k_{0},k_{1},...,k_{f-1}\},\ $let $\ell \geq k$
be a fixed integer and let $\ m_{\ell }=\lfloor \frac{\ell -1}{p-1}\rfloor .$
There exist polynomials $z_{i}\in 
\mathbb{Z}
_{p}\left[ \pi \right] $ with $\deg _{\pi }z_{i}\leq \ell -1$ such that $%
z_{i}\equiv p^{m_{\ell }}\func{mod}\pi $ with the following properties:

\begin{enumerate}
\item[(i)] $G_{\gamma }^{(\ell )}\equiv \overrightarrow{Id}\func{mod}\vec{\pi%
}$ and

\item[(ii)] $G_{\gamma }^{(\ell )}-\Pi (\mathcal{S})\varphi (G_{\gamma
}^{(\ell )})\gamma (\Pi (\mathcal{S})^{-1})\in \vec{\pi}^{\ell }M_{2}^{%
\mathcal{S}}$ for each $\gamma \in \Gamma _{K}.$
\end{enumerate}
\end{proposition}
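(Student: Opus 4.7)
The plan is to reduce condition (ii) to an application of Lemma \ref{the z corollary}. For each $i \in \{1,2,\dots,f\}$, the scalar (no-indeterminate) parts of equations \ref{solve for G} and \ref{solve for G 1} are set to zero exactly by the recursive definition of the diagonal entries $x_i^\gamma, y_i^\gamma$ of $G_\gamma^{(\ell)}$ furnished by Proposition \ref{newest prop}; and condition (i) follows immediately from Lemma \ref{new technical}(i). So in each block only the single entry of $G_\gamma^{(\ell)} - \Pi(\mathcal{S})\varphi(G_\gamma^{(\ell)})\gamma(\Pi(\mathcal{S})^{-1})$ containing $X_i$ needs to be forced into $\vec\pi^\ell M_2^{\mathcal{S}}$, and the choice of $z_i$ is the only remaining freedom.

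For a type-$1$ block this obstructing entry is $X_i\cdot N_i/(\gamma q)^{k_i}$ with
\[
N_i = \varphi(z_i)\varphi(x_i^\gamma) - \varphi(y_i^\gamma)\gamma\varphi(z_i).
\]
Substituting the expressions $x_i^\gamma = a_i/\gamma a_i$ and $y_i^\gamma = b_i/\gamma b_i$ from Lemma \ref{new technical}(ii), clearing denominators by $\gamma\varphi(a_i b_i)$, and using that $\varphi$ commutes with $\gamma$ yields the key algebraic identity
\[
N_i = \frac{\varphi(b_i)}{\gamma\varphi(a_i)}\cdot\varphi(\zeta_i),\qquad \zeta_i := z_i\tfrac{a_i}{b_i} - \gamma\bigl(z_i\tfrac{a_i}{b_i}\bigr).
\]
Once $\zeta_i \in (a_i/b_i)\cdot\pi^\ell\mathbb{Z}_p[[\pi]]$ is achieved, the prefactor $\varphi(b_i)/\gamma\varphi(a_i)$ combines with the $\varphi(a_i/b_i)$ extracted from $\varphi(\zeta_i)$ to form the unit $\varphi(x_i^\gamma) \in 1 + \pi\mathbb{Z}_p[[\pi]]$, and Lemma \ref{zeussent} (applicable because $k_i \leq k \leq \ell$) absorbs the denominator $(\gamma q)^{k_i}$, producing $N_i/(\gamma q)^{k_i} \in \pi^\ell\mathcal{O}_E[[\pi]]$ as required.

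To obtain such a $\zeta_i$ simultaneously for every $\gamma \in \Gamma_K$, I apply Lemma \ref{the z corollary}(ii) with $b := b_i/a_i$. Since $a_i^{\pm 1}, b_i^{\pm 1} \in \mathcal{R}$, we may write $b = c p^N b^*$ with $c \in \mathcal{O}_E^\times$ and $b^* \in \mathcal{R}\setminus\{0\}$; the hypothesis $b^*/\gamma b^* \in 1 + \pi\mathbb{Z}_p[[\pi]]$ is verified by
\[
\frac{b^*}{\gamma b^*} = \frac{b}{\gamma b} = \frac{y_i^\gamma}{x_i^\gamma} \in 1 + \pi\mathbb{Z}_p[[\pi]],
\]
a quotient of two elements of $1 + \pi\mathbb{Z}_p[[\pi]]$ by Lemma \ref{new technical}(i). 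The lemma returns a polynomial $z_i \in \mathbb{Z}_p[\pi]$ with $\deg z_i \leq \ell - 1$ and $z_i \equiv p^{m_\ell}\pmod\pi$ satisfying $z_i - \gamma z_i\cdot b/\gamma b \in \pi^\ell\mathbb{Z}_p[[\pi]]$ uniformly in $\gamma$; multiplying through by $a_i/b_i$ gives $\zeta_i$ of the required form.

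The matrix types $2, 3, 4$ are treated symmetrically: each produces an analogous obstructing entry that, via the same manipulation, reduces to $\varphi(z_i r - \gamma(z_i r))$ times a benign unit, with $r \in \{a_i/b_i,\ b_i/a_i\}$ depending on the type, and Lemma \ref{the z corollary} applied to the corresponding reciprocal yields the required polynomial. The crux of the argument, and the only real obstacle, is identifying the compact algebraic identity displaying $N_i$ as a single $\varphi$-image of $\zeta_i$; once that identity is in hand, Lemmas \ref{zeussent} and \ref{the z corollary} combine to finish the proof, and the resulting $z_i$ is automatically $\gamma$-independent because Lemma \ref{the z corollary} provides a single polynomial valid for all $\gamma \in \Gamma_K$.
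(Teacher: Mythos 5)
Your proof is correct and uses essentially the same ingredients (Lemmata \ref{new technical}, \ref{zeussent}, \ref{the z corollary}) in essentially the same way as the paper's. The paper's version is slightly more direct: it notes immediately that $N_i=\varphi\bigl(z_i x_i^{\gamma}-y_i^{\gamma}\gamma z_i\bigr)$, applies Lemma \ref{zeussent}, and then factors out the unit $x_i^{\gamma}$, rather than routing through the extra $\zeta_i$ identity — but the two arguments are equivalent.
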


\begin{proof}
Suppose that $P_{i}\ $is of type $2\ $and $\alpha _{i}=X_{i}\varphi (z_{i})$
for some polynomial $z_{i}$ to be defined. Then $\beta _{i}=1$ and $\ \beta
_{i}\varphi \left( y_{i}^{\gamma }\right) =x_{i-1}^{\gamma }\left( \gamma
\beta _{i}\right) $ implies $x_{i-1}^{\gamma }=\varphi \left( y_{i}^{\gamma
}\right) .$ We need%
\begin{equation*}
X_{i}\left( \varphi (z_{i})\varphi \left( x_{i}^{\gamma }\right)
-x_{i-1}^{\gamma }\varphi (\gamma z_{i})\right) \frac{1}{\left( \gamma
q\right) ^{k_{i}}}\in \pi ^{\ell }\mathcal{O}_{E}[[\pi ,X_{0},...,X_{f-1}]]
\end{equation*}%
for all $\gamma \in \Gamma _{K}.$ By Lemma \ref{zeussent} it suffices to
define $z_{i}$ so that $z_{i}x_{i}^{\gamma }-y_{i}^{\gamma }\gamma z_{i}\in
\pi ^{\ell }\mathcal{O}_{E}[[\pi ]]$ for all $\gamma \in \Gamma _{K}.\ $%
Since $x_{i}^{\gamma }\in $ $1+\pi 
\mathbb{Z}
_{p}\left[ \left[ \pi \right] \right] $ for all $\gamma \in \Gamma _{K},$
this is equivalent to $z_{i}-\frac{y_{i}^{\gamma }}{x_{i}^{\gamma }}\gamma
z_{i}\in \pi ^{\ell }\mathcal{O}_{E}[[\pi ]].$ By Lemma \ref{new technical}, 
$\frac{y_{i}^{\gamma }}{x_{i}^{\gamma }}=\frac{b}{\gamma b},$ where $%
b=a_{i}\left( b_{i}\right) ^{-1}\in \mathcal{R}.$ Since$\ \frac{%
y_{i}^{\gamma }}{x_{i}^{\gamma }}\in 1+\pi 
\mathbb{Z}
_{p}\left[ \left[ \pi \right] \right] ,$ the existence of the $z_{i}$
follows from Lemmata \ref{the z corollary} and \ref{new technical}. The
proof for $P_{i}$ of type $1,3$ and $4$ is identical.\textrm{\textrm{\ }}%
\noindent
\end{proof}

\begin{proposition}
\bigskip \label{p remark}If $\alpha \left( \pi \right)
=\tsum\limits_{n=0}^{\infty }\alpha _{n}\pi ^{n}\in 
\mathbb{Q}
_{p}[[\pi ]]$ is such that $\mathrm{v}_{\mathrm{p}}\left( \alpha _{i}\right)
\geq 0$ for all $i=0,1,2,...,p-2$ and $\mathrm{v}_{\mathrm{p}}\left( \alpha
_{p-1}\right) >-1,$ then the first $p-1$ coefficients of $\alpha \left( \pi
\right) ^{p}$ are in $%
\mathbb{Z}
_{p}.$ In particular, the first $p-1$ coefficients of the $p$-th power of
any element of $\mathcal{R}$ are in $%
\mathbb{Z}
_{p}.$
\end{proposition}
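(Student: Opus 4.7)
The plan is to expand $\alpha(\pi)^{p}$ by the multinomial theorem and examine the coefficients of $\pi^{k}$ for small $k$ directly. Writing $\alpha(\pi)^{p} = \sum_{k \geq 0} c_{k} \pi^{k}$, we have
$$c_{k} = \sum_{i_{1} + i_{2} + \cdots + i_{p} = k} \alpha_{i_{1}} \alpha_{i_{2}} \cdots \alpha_{i_{p}},$$
with the sum taken over $p$-tuples of non-negative integers summing to $k$.

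The key combinatorial observation is that for $k \leq p - 2$, every admissible tuple satisfies $i_{j} \leq k \leq p - 2$ for each $j$. Consequently, the only coefficients of $\alpha$ that appear as factors in $c_{k}$ are $\alpha_{0}, \alpha_{1}, \ldots, \alpha_{p-2}$, all of which lie in $\mathbb{Z}_{p}$ by the hypothesis $\mathrm{v}_{\mathrm{p}}(\alpha_{i}) \geq 0$ for $i = 0, \ldots, p-2$. Since $c_{k}$ is then a polynomial with integer coefficients in integral quantities, $c_{k} \in \mathbb{Z}_{p}$ for each $k = 0, 1, \ldots, p-2$, which is the first assertion.

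For the \emph{in particular} statement, I would specialize to $\alpha \in \mathcal{R}$: by definition $\mathrm{v}_{\mathrm{p}}(\alpha_{i}) \geq -\frac{i}{p-1}$ for every $i \geq 0$, so for $i \in \{0, 1, \ldots, p-2\}$ one has $\mathrm{v}_{\mathrm{p}}(\alpha_{i}) > -1$, and since $\alpha_{i} \in \mathbb{Q}_{p}$ has integer valuation this forces $\mathrm{v}_{\mathrm{p}}(\alpha_{i}) \geq 0$. Hence the hypotheses of the first part are met and the conclusion applies.

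There is no substantive obstacle in this argument; all the content is the elementary observation that no decomposition of an integer $k \leq p-2$ as a sum of $p$ non-negative parts can contain a part of size $p-1$ or larger, which confines the dependence of the low-degree coefficients of $\alpha^{p}$ to the portion of $\alpha$ that is already integral. The hypothesis on $\alpha_{p-1}$ plays no role in the stated range, but it is the natural condition under which the same approach extends one step further to $c_{p-1}$: the only way index $p-1$ can occur in a tuple summing to $p-1$ is with all other entries zero, giving a contribution $p\,\alpha_{0}^{p-1}\alpha_{p-1}$ which is integral as soon as $\mathrm{v}_{\mathrm{p}}(\alpha_{p-1}) > -1$.
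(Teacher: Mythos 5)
Your argument is correct and is exactly what the paper's one-line proof (``Follows easily using the binomial expansion'') has in mind: expand $\alpha^p$ by the multinomial theorem and observe that for $k\leq p-2$ every tuple $(i_1,\dots,i_p)$ summing to $k$ has all entries $\leq p-2$, so only the integral coefficients $\alpha_0,\dots,\alpha_{p-2}$ of $\alpha$ appear. Your closing observation about $\alpha_{p-1}$ is also on target and worth recording: the way the proposition is invoked in the proof of Proposition~\ref{1nov} requires integrality of the coefficients of $\pi^j$ for all $j\leq p-1$ (so that one can split off a degree-$\leq p-1$ integral polynomial and a tail in $\pi^p\mathbb{Q}_p[[\pi]]$), which is why the hypothesis on $\alpha_{p-1}$ is present; the only way to reach the index $p-1$ in a tuple summing to $p-1$ is the tuple $(p-1,0,\dots,0)$ and its permutations, contributing $p\,\alpha_0^{p-1}\alpha_{p-1}$, where the visible factor of $p$ absorbs a denominator of valuation $-1$. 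For $\alpha\in\mathcal{R}$ one in fact only has $\mathrm{v}_{\mathrm{p}}(\alpha_{p-1})\geq -1$ rather than the strict inequality in the hypothesis, but by this same computation $\geq -1$ already suffices for $c_{p-1}\in\mathbb{Z}_p$, so the ``in particular'' clause holds through degree $p-1$.
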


\begin{proof}
Follows easily using the binomial expansion.
\end{proof}

\begin{proposition}
\label{1nov}If $k_{i}=p$ for all $i,$ then there exist polynomials $z_{i}\in 
\mathbb{Z}
_{p}\left[ \pi \right] $ with $\deg _{\pi }z_{i}\leq p-1$ such that $%
z_{i}\equiv 1\func{mod}\pi ,$ and such that $G_{\gamma }^{(p)}-\Pi (\mathcal{%
S})\varphi (G_{\gamma }^{(p)})\gamma (\Pi (\mathcal{S})^{-1})\in \vec{\pi}%
^{p}M_{2}^{\mathcal{S}}$ for any $\gamma \in \Gamma _{K}.$
\end{proposition}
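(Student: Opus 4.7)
The plan is to follow the argument of Proposition \ref{zi lemma}, but with a refined construction of each $z_i$ that exploits the hypothesis $k_i = p$ for every $i$. The direct appeal to Lemma \ref{the z corollary}(ii) with $\ell = p$ would only produce a $z_i$ with $z_i \equiv p^{m_p} = p \pmod \pi$, so the crux is to secure integrality of $z_i \equiv 1 \pmod \pi$ by a different route, relying on Proposition \ref{p remark}.

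First, by Proposition \ref{newest prop} applied with $k_i = p$ throughout, every exponent $\ell_i, \ell_i^\prime$ appearing in the recursions (\ref{1xy}) lies in $\{0,p\}$; consequently the $s_i, s_i^\prime$ in (\ref{xf}), (\ref{x2f}), (\ref{y2f}) are all multiples of $p$. Retracing the proof of Lemma \ref{new technical}(ii), the elements $a_i$ and $b_i$ with $x_i^\gamma = a_i/\gamma a_i$ and $y_i^\gamma = b_i/\gamma b_i$ can be written as finite products of $p$-th powers of elements of $\mathcal{R}$ with constant term $1$, namely the $\varphi$-iterates of $\lambda_f$ (or $\lambda_{2f}$) together with $q/p$.

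By Proposition \ref{p remark}, the $p$-th power of any such element of $\mathcal{R}$ has its coefficients at $\pi^0,\dots,\pi^{p-1}$ in $\mathbb{Z}_p$. The property \emph{``element of $\mathcal{R}$ with constant term $1$ and first $p$ coefficients in $\mathbb{Z}_p$''} is easily seen to be preserved under products, under inversion (by induction on the coefficients of the inverse, since the constant term is $1$), and under $\varphi$ (because $\varphi(\pi)-\pi^p\in p\mathbb{Z}_p[\pi]$, so low-order coefficients of $\varphi(c)^j$ acquire an extra factor of $p^j$). Therefore each $a_i$, $b_i$, and hence $b:=a_ib_i^{-1}$, enjoys this property.

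Now I would define $z_i\in\mathbb{Z}_p[\pi]$ to be the truncation of $b$ modulo $\pi^p$, so that $\deg_\pi z_i\le p-1$, $z_i\equiv 1\pmod\pi$, and $b-z_i=\pi^p c$ for some $c\in\mathbb{Q}_p[[\pi]]$. A direct substitution gives
\begin{equation*}
z_i-\frac{b}{\gamma b}\,\gamma z_i \;=\; \pi^p\Bigl(\frac{b}{\gamma b}\,\gamma c-c\Bigr)\;\in\;\pi^p\,\mathbb{Q}_p[[\pi]].
\end{equation*}
On the other hand $z_i,\gamma z_i\in\mathbb{Z}_p[[\pi]]$ and $b/\gamma b\in 1+\pi\mathbb{Z}_p[[\pi]]$, so the same element lies in $\mathbb{Z}_p[[\pi]]$; intersecting yields $z_i-(b/\gamma b)\gamma z_i\in\pi^p\mathbb{Z}_p[[\pi]]$ for every $\gamma\in\Gamma_K$. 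Feeding these $z_i$ into (\ref{solve for G})--(\ref{solve for G 1}) and applying Lemma \ref{zeussent} with $k=\ell=p$ then closes the argument case by case on the type of each $\Pi_i$, exactly as in Proposition \ref{zi lemma}. The main obstacle is the integrality of the first $p$ coefficients of $b$; this is precisely where the hypothesis $k_i=p$ for every $i$ is used, forcing each relevant exponent to be a multiple of $p$ so that Proposition \ref{p remark} can be invoked.
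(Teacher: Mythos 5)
Your argument is essentially the same as the paper's: you observe that for $k_{i}=p$ the exponents $s_{i},s_{i}'$ are all multiples of $p$, trace $a_{i},b_{i}$ back to $p$-th powers of elements of $\mathcal{R}$ with constant term $1$, invoke Proposition~\ref{p remark} for integrality of the first $p$ coefficients, take $z_{i}$ to be the degree-$(p-1)$ truncation, and finish by the same $\Gamma_{K}$-twist computation. The only deviation is a small detour: you deduce integrality via a general closure property (of having first $p$ coefficients in $\mathbb{Z}_{p}$, under products, inversion and $\varphi$), whereas the paper simply notes that the relevant element $c_{0}=a_{0}^{-1}b_{0}$ is itself a $p$-th power of an element of $\mathcal{R}$ and applies Proposition~\ref{p remark} directly; note also that your justification for $\varphi$-stability is imprecise --- what one actually needs is just that $\varphi(\pi)\in\pi\mathbb{Z}_{p}[\pi]$ has zero constant term and integral coefficients, not the finer fact $\varphi(\pi)-\pi^{p}\in p\mathbb{Z}_{p}[\pi]$.
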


\begin{proof}
Assume that $P_{i}$ is of type $2$ and let $x_{0}^{\gamma }$ and $%
y_{0}^{\gamma }$ be as in the proof of Proposition \ref{zi lemma}. First we
notice that the exponents $s_{i}\ $and$\ s_{i}^{\prime }$ in formulas \ref%
{xf} or \ref{x2f} and \ref{y2f} for the $x_{0}^{\gamma }$ and $y_{0}^{\gamma
}$ are either $0$ or $p.$ With the notation of Lemma \ref{new technical}, $%
\frac{y_{0}^{\gamma }}{x_{0}^{\gamma }}=c_{0}\left( \gamma c_{0}^{-1}\right)
,$ where $c_{0}=a_{0}^{-1}b_{0}.$ The formulas for $a_{0}^{-1}$ and $b_{0}$
in the proof of Lemma \ref{new technical} imply that they are both $p$-th
powers of elements of $\mathcal{R}.$ From the same formulas and Lemma \ref%
{proto} it follows that $a_{0}^{-1}\left( 0\right) =b_{0}\left( 0\right) =1.$
By Lemma \ref{p remark}, $c_{0}=z_{0}+a$ for some polynomial $z_{0}\in 
\mathbb{Z}
_{p}[\pi ]$ of degree $\leq p-1$ and constant term $1$ and some $a\in \pi
^{p}%
\mathbb{Q}
_{p}[[\pi ]].$ For any $\gamma \in \Gamma _{K},$ $z_{0}-\frac{y_{0}^{\gamma }%
}{x_{0}^{\gamma }}\gamma z_{0}=c_{0}-a-c_{0}\left( \gamma c_{0}^{-1}\right)
\left( \gamma c_{0}-\gamma a\right) =c_{0}\left( \gamma c_{0}^{-1}\right)
\gamma a-a\in \pi ^{p}%
\mathbb{Q}
_{p}[[\pi ]].$ Since $\frac{y_{0}^{\gamma }}{x_{0}^{\gamma }}\in 1+\pi 
\mathbb{Z}
_{p}[[\pi ]]$ and $z_{0}\in 
\mathbb{Z}
_{p}[\pi ],$ $z_{0}-\frac{y_{0}^{\gamma }}{x_{0}^{\gamma }}\gamma z_{0}\in 
\mathbb{Z}
_{p}[[\pi ]]\cap \pi ^{p}%
\mathbb{Q}
_{p}[[\pi ]]=\pi ^{p}%
\mathbb{Z}
_{p}[[\pi ]].$ The proof for the other $z_{i}$ is similar, using formulas %
\ref{1xy} and noticing that $\left( \frac{q}{\gamma q}\right) ^{\pm 1}\in
1+\pi 
\mathbb{Z}
_{p}[[\pi ]].$ The proof for $P_{i}$ of type $1,$ $3$ or $4$ is identical.
\end{proof}

\begin{remark}
\label{k=p remark}If $k_{i}=p$ for all $i,$ then there exist polynomials $%
z_{i}\in 
\mathbb{Z}
_{p}\left[ \pi \right] $ with $\deg _{\pi }z_{i}\leq p-1$ such that $%
z_{i}\equiv 1\func{mod}\pi $ satisfying properties (i) and (ii) of
Proposition \ref{zi lemma}. This follows immediately from Proposition \ref%
{1nov}.
\end{remark}

\noindent Next, we explicitly determine when $\mathrm{Tr}\left( Q_{f}\right)
\not\in \bar{%
\mathbb{Q}%
}_{p}.$ We first need some definitions.

\begin{definition}

\begin{enumerate}
\item[(i)] \label{stupid types def}We define $C_{1}$ to be the set of $f$%
-tuples $\left( P_{1},P_{2},...,P_{f}\right) $ where the types of the
matrices $P_{i}$ are chosen as follows: $P_{1}\in \{t_{2},t_{3}\}.$ For $%
i=2,3,...,f-1,$ $P_{i}\in \{t_{2},t_{3}\}$ if an even number of coordinates
of $\left( P_{1},P_{2},...,P_{i-1}\right) $ is of even type, and $P_{i}\in
\{t_{1},t_{4}\}$ if an odd number of coordinates of $\left(
P_{1},P_{2},...,P_{i-1}\right) $ is of even type. Finally, $P_{0}=t_{3}$ if
an even number of coordinates of $\left( P_{1},P_{2},...,P_{f-1}\right) $ is
of even type, and $P_{0}=t_{4}$ otherwise.

\item[(ii)] We define $C_{2}$ to be the set of $f$-tuples $\left(
P_{1},P_{2},...,P_{f}\right) $ where the types of the matrices $P_{i}$ are
chosen as follows: $P_{1}\in \{t_{1},t_{4}\}.$ For $i=2,3,...,f-1,$ $%
P_{i}\in \{t_{1},t_{4}\}$ if an even number of coordinates of $\left(
P_{1},P_{2},...,P_{i-1}\right) $ is of even type, and $P_{i}\in
\{t_{2},t_{3}\}$ if an odd number of coordinates of $\left(
P_{1},P_{2},...,P_{i-1}\right) $ is of even type. Finally, $P_{0}=t_{1}$ if
an even number of coordinates of $\left( P_{1},P_{2},...,P_{f-1}\right) $ is
of even type, and $P_{0}=t_{2}$ otherwise.
\end{enumerate}
\end{definition}

\noindent In Definition \ref{stupid types def} the type of the matrix $P_{0}$
has been chosen so that an even number of coordinates of the $f$-tuple $%
\left( P_{1},P_{2},...,P_{f-1},P_{0}\right) $ is of even type.

\begin{definition}

\begin{enumerate}
\item[(i)] \label{stupid types def1}We define $C_{1}^{\ast }$ to be the set
of $f$-tuples $\left( P_{1},P_{2},...,P_{f}\right) $ where the types of the
matrices $P_{i}$ are chosen as follows: $P_{1}\in \{t_{2},t_{3}\}.$ For $%
i=2,3,...,f-1,$ $P_{i}\in \{t_{2},t_{3}\}$ if an even number of coordinates
of $\left( P_{1},P_{2},...,P_{i-1}\right) $ is of even type, and $P_{i}\in
\{t_{1},t_{4}\}$ if an odd number of coordinates of $\left(
P_{1},P_{2},...,P_{i-1}\right) $ is of even type. Finally, $P_{0}=t_{2}$ if
an even number of coordinates of $\left( P_{1},P_{2},...,P_{f-1}\right) $ is
of even type, and $P_{0}=t_{1}$ otherwise.

\item[(ii)] We define $C_{2}^{\ast }$ to be the set of $f$-tuples $\left(
P_{1},P_{2},...,P_{f}\right) $ where the types of the matrices $P_{i}$ are
chosen as follows: $P_{1}\in \{t_{1},t_{4}\}.$ For $i=2,3,...,f-1,$ $%
P_{i}\in \{t_{1},t_{4}\}$ if an even number of coordinates of $\left(
P_{1},P_{2},...,P_{i-1}\right) $ is of even type, and $P_{i}\in
\{t_{2},t_{3}\}$ if an odd number of coordinates of $\left(
P_{1},P_{2},...,P_{i-1}\right) $ is of even type. Finally, $P_{0}=t_{4}$ if
an even number of coordinates of $\left( P_{1},P_{2},...,P_{f-1}\right) $ is
of even type, and $P_{0}=t_{3}$ otherwise.
\end{enumerate}
\end{definition}

\noindent In Definition \ref{stupid types def1} the type of the matrix $%
P_{0} $ has been chosen so that an odd number of coordinates of the $f$%
-tuple $\left( P_{1},P_{2},...,P_{f-1},P_{0}\right) $ is of even type.

\begin{lemma}
\label{mat}Assume that $f\geq 2$ and, as before, let $Q_{f}=P_{1}P_{2}\cdots
P_{f}.$

\begin{enumerate}
\item[(i)] If $\left( P_{1},P_{2},...,P_{f}\right) \in C_{1}^{\ast },$ then $%
Q_{f}=\left( 
\begin{array}{cc}
\alpha & \beta \\ 
\gamma & 0%
\end{array}%
\right) $ with $\beta ,\gamma $ nonconstant polynomials in $X_{1},X_{2},...,$

\noindent $X_{f}$ (with $X_{f}=X_{0}$), linearly independent over $\bar{%
\mathbb{Q}%
}_{p},$ and $\alpha $ nonscalar.

\item[(ii)] If $\left( P_{1},P_{2},...,P_{f}\right) \in C_{2}^{\ast },$ then 
$Q_{f}=\left( 
\begin{array}{cc}
0 & \beta \\ 
\gamma & \delta%
\end{array}%
\right) $ with $\beta ,\gamma $ nonconstant polynomials in $X_{1},X_{2},...,$

\noindent $X_{f},$ linearly independent over $\bar{%
\mathbb{Q}%
}_{p},$ and $\delta $ nonscalar.

\item[(iii)] If $\left( P_{1},P_{2},...,P_{f}\right) \in C_{1},$ then $%
Q_{f}=\left( 
\begin{array}{cc}
\alpha & \beta \\ 
0 & \delta%
\end{array}%
\right) $ with $\beta $ a nonzero polynomial in $X_{1},X_{2},...,X_{f},$ and 
$\alpha ,\delta $ nonzero scalars.

\item[(iv)] If $\left( P_{1},P_{2},...,P_{f}\right) \in C_{2},\ $then $%
Q_{f}=\left( 
\begin{array}{cc}
\alpha & 0 \\ 
\gamma & \delta%
\end{array}%
\right) $ with $\gamma $ a nonzero polynomial in $X_{1},X_{2},...,X_{f},$
and $\alpha ,\delta $ nonzero scalars.
\end{enumerate}
\end{lemma}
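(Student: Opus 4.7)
The plan is to exploit the algebraic structure of the four matrix types to collapse the product to a product of matrices of a single type. The key identity is that, setting $J = \left(\begin{smallmatrix} 0 & 1 \\ 1 & 0 \end{smallmatrix}\right)$, we have $t_2 = J t_1 = t_3 J$ and $t_4 = t_1 J = J t_3$, so each $P_i$ factors uniquely as $P_i = T_i J^{\epsilon_i}$ with $T_i \in \{t_1, t_3\}$ and $\epsilon_i = 1$ exactly when $P_i$ is of even type. Moreover $J t_1 J = t_3$ and $J t_3 J = t_1$, so conjugation by $J$ swaps the two odd types while preserving the data $(X_i, c_i, z_i, k_i)$.

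Sliding all the $J$'s to the right in $Q_f = T_1 J^{\epsilon_1} T_2 J^{\epsilon_2} \cdots T_f J^{\epsilon_f}$ gives $Q_f = \tilde T_1 \tilde T_2 \cdots \tilde T_f \cdot J^{e_f}$, where $e_j := \epsilon_1 + \cdots + \epsilon_j$ and $\tilde T_i := J^{e_{i-1}} T_i J^{e_{i-1}}$. The class definitions are engineered precisely so that all the $\tilde T_i$ share a common type: in $C_1$ and $C_1^*$, if $e_{i-1}$ is even the selection rule gives $T_i = t_3$ (so $\tilde T_i = t_3$), while if $e_{i-1}$ is odd it gives $T_i = t_1$ (so $\tilde T_i = J t_1 J = t_3$); the same check applies to the $P_0$-slot in either sub-case. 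Symmetrically, $\tilde T_i = t_1$ throughout in $C_2$ and $C_2^*$. The parity of $e_f$ separates $C_j$ from $C_j^*$: an easy verification using the $P_0$-rule shows $e_f$ is even for $C_1, C_2$ (so $J^{e_f} = I$) and odd for $C_1^*, C_2^*$ (so $J^{e_f} = J$).

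One then computes by induction on $f$ that the product $t_3(1) t_3(2) \cdots t_3(f)$ of $f$ upper-triangular matrices equals
\[
\begin{pmatrix} 1 & \sum_{i=1}^{f} X_i \varphi(z_i) \prod_{j=i+1}^{f} c_j q^{k_j} \\ 0 & \prod_{i=1}^{f} c_i q^{k_i} \end{pmatrix},
\]
with diagonal entries independent of the $X_i$ and $(1,2)$-entry a nonzero polynomial involving every $X_i$. Postmultiplying by $J^{e_f}$ yields the four stated forms: in cases (iii) and (iv) the factor is $I$, giving the triangular shape with $\alpha, \delta$ nonzero scalars; in cases (i) and (ii) it is $J$, swapping columns so that the $X_i$-polynomial becomes the nonscalar diagonal entry $\alpha$ (resp.\ $\delta$), while the remaining two entries are $1$ and $\prod c_i q^{k_i}$, which are $\bar{\mathbb{Q}}_p$-linearly independent since $q \notin \bar{\mathbb{Q}}_p$. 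The analogous computation for $C_2, C_2^*$ uses lower-triangular $t_1$'s.

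The main obstacle, and the technical heart of the argument, is the combinatorial verification that the class rules force $\tilde T_i$ to have a constant type throughout, especially at the $P_0$-slot where the rule distinguishes $C_j$ from $C_j^*$. Once this parity bookkeeping is done, the matrix computation is essentially mechanical.
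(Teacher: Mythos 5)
Your $J$-conjugation decomposition is an elegant and genuinely different route from the paper's implicit direct induction (the one carried out in detail in the proof of Lemma~\ref{mat1}). Writing $P_i = T_i J^{\epsilon_i}$ with $T_i\in\{t_1,t_3\}$, sliding the $J$'s to obtain $Q_f=\tilde T_1\cdots\tilde T_f\cdot J^{e_f}$ with $\tilde T_i=J^{e_{i-1}}T_i J^{-e_{i-1}}$, and then checking from the class rules that $\tilde T_i=t_3$ throughout for $C_1,C_1^*$ (respectively $t_1$ for $C_2,C_2^*$) and that $e_f$ is even exactly for $C_1,C_2$: all of this is correct, and it collapses the combinatorics of the four classes into one transparent conjugation observation, where the paper relies on brute-force casework. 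The product of same-type triangular matrices and the column swap by $J^{e_f}$ then do give the stated block shapes and the nonscalar-diagonal assertions, which is the part of the lemma that Lemma~\ref{mat1} and Corollary~\ref{c cor} actually consume.

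However, there is a real error in the way you finish cases (i)--(ii). The $Q_f$ of this lemma is a product of the $P_i=\Pi_i\bmod\pi$, not of the $\Pi_i$ themselves, so the entries $c_i q^{k_i}$ and $X_i\varphi(z_i)$ have already been specialized to the \emph{scalars} $c_i p^{k_i}$ and $X_i p^{m_\ell}$. Consequently the two off-diagonal entries you obtain in cases (i)--(ii) are $1$ and $\prod_i c_i p^{k_i}$, both elements of $\bar{\mathbb{Q}}_p$, hence linearly \emph{dependent} over $\bar{\mathbb{Q}}_p$ and certainly not nonconstant polynomials in the $X_i$; the appeal to $q\notin\bar{\mathbb{Q}}_p$ is inapplicable because $q$ no longer appears in $Q_f$. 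Your own computation therefore contradicts the lemma as stated: for example with $f=2$ and $(P_1,P_0)=(t_3,t_2)\in C_1^*$ one has
\begin{equation*}
Q_2=\left(\begin{array}{cc} X_0+X_1 p^{k_0} & 1\\ p^{k_0+k_1} & 0\end{array}\right),
\end{equation*}
whose off-diagonal entries are constants. What your factorization does prove is the correct shape of $Q_f$, with $\beta,\gamma$ nonzero scalars and the surviving diagonal entry a nonscalar polynomial --- which is everything that is needed downstream for the trace criterion --- but it does not prove the literal assertion about $\beta,\gamma$, and the linear-independence step has to be dropped.
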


\begin{proof}
Follows easily by induction on $f.$
\end{proof}

\begin{lemma}
\label{mat1}Assume that $f\geq 2.$

\begin{enumerate}
\item[(i)] If an odd number of coordinates of $\left(
P_{1},P_{2},...,P_{f}\right) $ is of even type, then $Q_{f}$ has one of the
following forms:

\begin{enumerate}
\item[(a)] $Q_{f}=\left( 
\begin{array}{cc}
0 & \beta \\ 
\gamma & \delta%
\end{array}%
\right) $ with $\beta ,\gamma $ nonconstant polynomials in $%
X_{1},X_{2},...,X_{f},$ linearly independent over $\bar{%
\mathbb{Q}%
}_{p},$ and $\delta $ nonscalar. This case occurs if and only if $\left(
P_{1},P_{2},...,P_{f}\right) \in C_{2}^{\ast }.$

\item[(b)] $Q_{f}=\left( 
\begin{array}{cc}
\alpha & \beta \\ 
\gamma & 0%
\end{array}%
\right) $ with $\beta ,\gamma $ nonconstant polynomials in $%
X_{1},X_{2},...,X_{f},$ linearly independent over $\bar{%
\mathbb{Q}%
}_{p},$ and $\alpha $ nonscalar. This case occurs if and only if $\left(
P_{1},P_{2},...,P_{f}\right) \in C_{1}^{\ast }.$

\item[(c)] In any other case, $Q_{f}=\left( 
\begin{array}{cc}
\alpha & \beta \\ 
\gamma & \delta%
\end{array}%
\right) $ with $\beta ,\gamma $ nonconstant polynomials in $%
X_{1},X_{2},...,X_{f},$ linearly independent over $\bar{%
\mathbb{Q}%
}_{p},$ $\alpha \delta \neq 0,$ and $\mathrm{Tr}\left( Q_{f}\right) $
nonscalar.
\end{enumerate}

\item[(ii)] If an even number of coordinates of $\left(
P_{1},P_{2},...,P_{f}\right) $ is of even type, then $Q_{f}$ has one of the
following forms:

\begin{enumerate}
\item[\noindent (d)] $Q_{f}=\left( 
\begin{array}{cc}
\alpha & \beta \\ 
0 & \delta%
\end{array}%
\right) $ with $\beta $ a nonzero polynomial in $X_{1},X_{2},...,X_{f},$ and 
$\alpha ,\delta $ nonzero scalars. This case occurs if and only if $\left(
P_{1},P_{2},...,P_{f}\right) \in C_{1}.$

\item[\noindent (e)] $Q_{f}=\left( 
\begin{array}{cc}
\alpha & 0 \\ 
\gamma & \delta%
\end{array}%
\right) $ with $\gamma $ a nonzero polynomial in $X_{1},X_{2},...,X_{f},$
and $\alpha ,\delta $ nonzero scalars. This case occurs if and only if $%
\left( P_{1},P_{2},...,P_{f}\right) \in C_{2}.$

\item[(f)] In any other case, $Q_{f}=\left( 
\begin{array}{cc}
\alpha & \beta \\ 
\gamma & \delta%
\end{array}%
\right) $ with $\beta ,\gamma $ nonconstant polynomials in $%
X_{1},X_{2},...,X_{f},$ linearly independent over $\bar{%
\mathbb{Q}%
}_{p},$ $\alpha \gamma \neq 0$ and $\mathrm{Tr}\left( Q_{f}\right) $ is
nonscalar.
\end{enumerate}
\end{enumerate}
\end{lemma}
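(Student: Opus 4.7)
The plan is to prove Lemma \ref{mat1} by induction on $f \geq 2$, analyzing how the shape of $Q_f = P_1 P_2 \cdots P_f$ evolves under right multiplication by $P_f$ of each of the four types. The organizing observation is that right multiplication by a matrix of odd type ($t_1$ or $t_3$) acts as a triangular multiplier which preserves each column up to a $c_f q^{k_f}$-scaling and adds an $X_f \varphi(z_f)$-multiple of one column to the other, while right multiplication by an even-type matrix ($t_2$ or $t_4$) swaps the two columns with one of them picking up the factor $c_f q^{k_f}$. This immediately explains why the parity of the number of even-type coordinates in $(P_1, \ldots, P_f)$ governs into which half of the statement one falls: part (i) versus part (ii).

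For the base case $f = 2$ the plan is to compute all sixteen products $P_1 P_2$ directly. The four tuples $\{(t_3, t_2), (t_2, t_3), (t_1, t_4), (t_4, t_1)\}$ exhausting $C_1^{\ast} \cup C_2^{\ast}$ give cases (a) and (b), the four tuples $\{(t_1, t_1), (t_3, t_3), (t_2, t_2), (t_4, t_4)\}$ exhausting $C_1 \cup C_2$ give cases (d) and (e), and each of the remaining eight products falls into case (c) or (f), with the two off-diagonal entries involving $X_1$ and $X_2$ respectively and hence automatically linearly independent over $\bar{\mathbb{Q}}_p$.

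For the inductive step, I assume the classification for tuples of length $f - 1$ and write $Q_f = Q_{f-1} P_f$. Splitting on the type of $P_f$ and on the shape of $Q_{f-1}$ furnished by the inductive hypothesis, one verifies directly that $Q_f$ has one of the six shapes claimed. The identification of the case (a)-(f) into which $Q_f$ falls amounts to checking whether the resulting type-vector satisfies the recipes defining $C_1, C_2, C_1^{\ast}$, or $C_2^{\ast}$; in the extremal configurations these are already covered by Lemma \ref{mat}, while in every other configuration either a previously zero entry acquires a genuine $X_f$-contribution or a previously scalar diagonal entry becomes a true polynomial, and one lands in case (c) or (f).

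The hardest part will be maintaining, throughout the induction, enough control on which indeterminates $X_i$ occur in each entry of $Q_f$ to conclude that in cases (c) and (f) the entries $\beta$ and $\gamma$ are linearly independent over $\bar{\mathbb{Q}}_p$ and that $\mathrm{Tr}(Q_f)$ is nonscalar. The natural way is to strengthen the inductive hypothesis to record, for every nonzero entry of $Q_i$, a distinguished monomial in $X_1, \ldots, X_i$ appearing with nonzero coefficient; right multiplication by $P_{i+1}$ either preserves these witnesses or creates new ones involving $X_{i+1}$, and for every tuple outside $C_1 \cup C_2 \cup C_1^{\ast} \cup C_2^{\ast}$ one can exhibit an $X_{i+1}$-free witness in one off-diagonal entry and an $X_{i+1}$-containing witness in the other, yielding the required linear independence and non-scalarity of the trace.
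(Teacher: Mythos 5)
Your approach---induction on $f$, with the base case $f=2$ handled by direct computation of all sixteen products and the inductive step organized around $Q_f = Q_{f-1}P_f$, splitting on the type of $P_f$ and the shape of $Q_{f-1}$---is the same strategy the paper employs, and your structural observation about column operations explains cleanly why the parity of even-type coordinates is the governing invariant. The ``witness monomial'' strengthening of the inductive hypothesis to control linear independence and nonscalarity of the trace is a sensible refinement of the paper's terse sketch.

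However, your concrete $f=2$ lists are wrong. Unwinding the definitions of $C_1$, $C_2$, $C_1^{\ast}$, $C_2^{\ast}$ at $f=2$ gives
\begin{equation*}
C_1\cup C_2=\{(t_1,t_1),(t_3,t_3),(t_2,t_4),(t_4,t_2)\},\qquad C_1^{\ast}\cup C_2^{\ast}=\{(t_2,t_1),(t_3,t_2),(t_1,t_4),(t_4,t_3)\},
\end{equation*}
not the sets you list. For instance, $(t_2,t_2)$ yields
\begin{equation*}
P_1P_2=\left(\begin{array}{cc} X_1z_1(0)X_2z_2(0)+c_2p^{k_2} & X_1z_1(0)\\ c_1p^{k_1}X_2z_2(0) & c_1p^{k_1}\end{array}\right),
\end{equation*}
which has no vanishing entry, so it cannot produce the triangular form required by case (d) or (e); hence $(t_2,t_2)\notin C_1\cup C_2$. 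Similarly $(t_4,t_4)$, $(t_2,t_3)$ and $(t_4,t_1)$ are misplaced. One further caveat: the heuristic that in the remaining eight products ``the two off-diagonal entries involv[e] $X_1$ and $X_2$ respectively'' is not uniformly true---for $(t_1,t_2)$ the $(1,2)$-entry is the scalar $c_1p^{k_1}$, involving neither indeterminate (the off-diagonals are still linearly independent, but for the trivial reason that a nonzero scalar and a nonconstant polynomial are independent). Actually carrying out the sixteen computations would correct both the misidentified sets and the heuristic, so these are slips rather than a defect of the method, but as stated the base case does not go through.
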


\begin{proof}
By induction on $f.$ If $f=2$ the proof of the lemma is by a direct
computation. Suppose $f\geq3.$ Case (i). An odd number of coordinates of $%
\left( P_{1},P_{2},...,P_{f}\right) $ is of even type.

\noindent(a) If an odd number of coordinates of $\left(
P_{1},P_{2},...,P_{f-1}\right) $ is of even type, then $P_{0}\in%
\{t_{1},t_{3}\}.$ We have the following three subcases:

\noindent($\alpha$) $Q_{f-1}=\left( 
\begin{array}{cc}
0 & \beta \\ 
\gamma & \delta%
\end{array}
\right) $ with $\beta,\gamma$ nonconstant polynomials in $%
X_{1},X_{2},,,.X_{f-1},$ linearly independent over $\bar{%
\mathbb{Q}%
}_{p},$ and $\delta$ nonscalar. If $P_{0}=t_{1},$ then $Q_{f}\ $is as in
case (c), and by Lemma \ref{mat} $\left( P_{1},P_{2},...,P_{f}\right) \not
\in C_{1}^{\ast}\cup C_{2}^{\ast}.$ If $P_{0}=t_{3},$ then $Q_{f}$ is as in
case (a). By the inductive hypothesis $\left( P_{1},P_{2},...,P_{f-1}\right)
\in C_{2}^{\ast},$ and since $P_{0}=t_{3},$ $\left(
P_{1},P_{2},...,P_{f}\right) \in C_{2}^{\ast}.$

\noindent($\beta$) $Q_{f-1}=\left( 
\begin{array}{cc}
\alpha & \beta \\ 
\gamma & 0%
\end{array}
\right) $ with $\beta,\gamma$ nonconstant polynomials in $%
X_{1},X_{2},,,.X_{f-1},$ linearly independent over $\bar{%
\mathbb{Q}%
}_{p},$ and $\alpha$ nonscalar. If $P_{0}=t_{1},$ then $Q_{f}\ $is as in
case (b). By the inductive hypothesis $\left( P_{1},P_{2},...,P_{f-1}\right)
\in C_{1}^{\ast},$ and since $P_{0}=t_{1},$ $\left(
P_{1},P_{2},...,P_{f}\right) \in C_{1}^{\ast}.$ If $P_{0}=t_{3},$ then $%
Q_{f}\ $is as in case (c), and by Lemma \ref{mat} $\left(
P_{1},P_{2},...,P_{f}\right) \not \in C_{1}^{\ast }\cup C_{2}^{\ast}.$

\noindent($\gamma$) $Q_{f-1}=\left( 
\begin{array}{cc}
\alpha & \beta \\ 
\gamma & \delta%
\end{array}
\right) $ with $\beta,\gamma$ nonconstant polynomials in $%
X_{1},X_{2},,,.X_{f-1},$ linearly independent over $\bar{%
\mathbb{Q}%
}_{p},$ $\alpha\delta\neq0,$ and $\mathrm{Tr}\left( Q_{f}\right) $
nonscalar. If $P_{0}\in\{t_{1},t_{3}\}$ then $Q_{f}$ is as in case (c), and
by Lemma \ref{mat} $\left( P_{1},P_{2},...,P_{f}\right) \not \in C_{1}^{\ast
}\cup C_{2}^{\ast}.$

\noindent(b) If an even number of coordinates of $\left(
P_{1},P_{2},...,P_{f-1}\right) $ is of even type, then $P_{0}\in%
\{t_{2},t_{4}\}.$ We have the following three subcases:

\noindent($\alpha$) $Q_{f-1}=\left( 
\begin{array}{cc}
\alpha & \beta \\ 
0 & \delta%
\end{array}
\right) $ with $\beta$ a nonzero polynomial in $X_{1},X_{2},,,.X_{f-1},$ and 
$\alpha,\delta$ nonzero scalars. If $P_{0}=t_{2},$ then $Q_{f}$ is as in
case (b). Since $\left( P_{1},P_{2},...,P_{f-1}\right) \in C_{1}$ and $%
P_{0}=t_{2},\ \left( P_{1},P_{2},...,P_{f}\right) \in C_{1}^{\ast}.$ If $%
P_{0}=t_{4},\ $then $Q_{f}$ is as in case (c), and by Lemma \ref{mat} $%
\left( P_{1},P_{2},...,P_{f}\right) \not \in C_{1}^{\ast}\cup C_{2}^{\ast }.$

\noindent ($\beta $) $Q_{f-1}=\left( 
\begin{array}{cc}
\alpha & 0 \\ 
\gamma & \delta%
\end{array}%
\right) $ with $\gamma $ a nonzero polynomial in $X_{1},X_{2},,,.X_{f-1},$
and $\alpha ,\delta $ nonzero scalars. If $P_{0}=t_{2},$ then $Q_{f}\ $is as
in case (c), and by Lemma \ref{mat} $\left( P_{1},P_{2},...,P_{f}\right)
\not\in C_{1}^{\ast }\cup C_{2}^{\ast }.$ If $P_{0}=t_{4},$ then $Q_{f}\ $is
as in case (a). Since $\left( P_{1},P_{2},...,P_{f-1}\right) \in C_{2}$ and $%
P_{0}=t_{4},\ \left( P_{1},P_{2},...,P_{f}\right) \in C_{2}^{\ast }.$

\noindent ($\gamma $) $Q_{f-1}=\left( 
\begin{array}{cc}
\alpha & \beta \\ 
\gamma & \delta%
\end{array}%
\right) $ with $\beta ,\gamma $ nonconstant polynomials in $%
X_{1},X_{2},,,.X_{f},$ linearly independent over $\bar{%
\mathbb{Q}%
}_{p},$ $\alpha \gamma \neq 0$ and $\mathrm{Tr}\left( Q_{f}\right) $ is
nonscalar. Then $\left( P_{1},P_{2},...,P_{f-1}\right) \not\in C_{1}\cup
C_{2}.$ If $P_{0}\in \{t_{2},t_{4}\},$ then $Q_{f}\ $is as in case (c), and
by Lemma \ref{mat} $\left( P_{1},P_{2},...,P_{f}\right) \not\in C_{1}^{\ast
}\cup C_{2}^{\ast }.$

\noindent Case (ii). An even number of coordinates of $\left(
P_{1},P_{2},...,P_{f}\right) $ is of even type. The rest of the lemma is
proved by a case-by-case analysis similar to that of Case (i).
\end{proof}

\begin{corollary}
\label{c cor}$\mathrm{Tr}\left( Q_{f}\right) \in \bar{%
\mathbb{Q}%
}_{p}$ if and only if $\left( P_{1},P_{2},...,P_{f-1},P_{0}\right) \in
C_{1}\cup C_{2}.$
\end{corollary}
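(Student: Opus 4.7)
The plan is to obtain Corollary \ref{c cor} as an immediate consequence of Lemma \ref{mat1}, since the combinatorial casework has already been carried out there. Lemma \ref{mat1} exhaustively partitions the $f$-tuples $(P_1,\ldots,P_f)$ into six disjoint classes (a)--(f), describes the shape of $Q_f$ in each class, and identifies exactly which of these classes correspond to membership in $C_1^{\ast},C_2^{\ast},C_1,C_2$. Corollary \ref{c cor} is then essentially a bookkeeping reading of that classification.

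For the $(\Leftarrow)$ direction, I would observe that $(P_1,\ldots,P_f)\in C_1$ places us in case (d), where $Q_f$ is upper triangular with both diagonal entries $\alpha,\delta$ nonzero scalars, and $(P_1,\ldots,P_f)\in C_2$ places us in case (e), where $Q_f$ is lower triangular with scalar diagonal. In either situation $\mathrm{Tr}(Q_f)=\alpha+\delta$ is a sum of two elements of $\bar{\mathbb{Q}}_p$, hence lies in $\bar{\mathbb{Q}}_p$.

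For the $(\Rightarrow)$ direction, I would argue the contrapositive: if $(P_1,\ldots,P_f)\notin C_1\cup C_2$, then by Lemma \ref{mat1} we are in one of cases (a), (b), (c) or (f). In cases (c) and (f) the trace is explicitly declared nonscalar. In case (a) we have $\alpha=0$ and $\delta$ nonscalar, so $\mathrm{Tr}(Q_f)=\delta\notin\bar{\mathbb{Q}}_p$; case (b) is symmetric with $\mathrm{Tr}(Q_f)=\alpha$ nonscalar. Thus in each of the four remaining cases $\mathrm{Tr}(Q_f)$ is a nonconstant polynomial in some of the indeterminates $X_1,\ldots,X_f$ and is therefore not an element of $\bar{\mathbb{Q}}_p$.

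No genuine obstacle is expected. The only point requiring care is the implicit convention that ``scalar'' in the statement of Lemma \ref{mat1} means an element of the coefficient field (independent of the $X_i$), so that ``nonscalar'' is synonymous with ``genuinely depends on some $X_i$'' and therefore with ``not in $\bar{\mathbb{Q}}_p$''. Once this convention is made explicit, the corollary follows by simply collecting the six cases of Lemma \ref{mat1} according to whether the diagonal of $Q_f$ contains a nonscalar entry.
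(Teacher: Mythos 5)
Your proof is correct and takes essentially the same route the paper intends: the corollary is stated without proof precisely because it is a direct bookkeeping consequence of Lemma \ref{mat1}, and your case-by-case reading of (a)--(f) — noting that the diagonal of $Q_f$ is scalar exactly in cases (d) and (e), i.e.\ exactly when $(P_1,\ldots,P_f)\in C_1\cup C_2$ — is the intended argument. The only caveat worth flagging is that Lemma \ref{mat1} is stated under the hypothesis $f\geq 2$, so strictly speaking your argument inherits that restriction; the degenerate case $f=1$ can be checked directly but is not covered by the lemma.
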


\begin{remark}
If $\left( P_{1},P_{2},...,P_{f}\right) \in C_{1}\cup C_{2},$ the filtered $%
\varphi $-modules $\mathbb{D}_{\vec{k}\ }^{\vec{i}}(\vec{0})$ are weakly
admissible and the corresponding crystalline representation is
split-reducible and ordinary (see \S \ref{red}). The filtered $\varphi $%
-modules $\mathbb{D}_{\vec{k}\ }^{\vec{i}}\left( \vec{\alpha}\right) $ make
sense\ for any $\vec{\alpha}\in \mathcal{O}_{E}^{f}.$ One can check by
induction that $\mathrm{Tr}\left( \varphi ^{f}\right)
=1+p^{\tsum\limits_{i=0}^{f-1}k_{i}},$ therefore whenever $\mathbb{D}_{\vec{k%
}\ }^{\vec{i}}\left( \vec{\alpha}\right) $ is weakly admissible, the
corresponding crystalline representation is reducible (see Proposition \ref%
{existance of inf families}). Since we have not constructed the Wach modules
which give rise to these filtered modules, weak admissibility is not
automatic and has to be checked.
\end{remark}

\noindent We now turn our attention to condition (iv) of Lemma \ref{outline
section theorem}. By Proposition \ref{prop operator in 4 types case} the
problematic cases are those with $\ell =k,$ with all the weights $k_{i}\ $%
equal, and $Q_{f}\in \{E_{11},E_{22}\}.$ We have the following.

\begin{lemma}
\label{cond 4}If $\bar{Q}_{f}=E_{11}$ then $\left( P_{1},...,P_{f}\right)
\in C_{1};$ (ii) If $\bar{Q}_{f}=E_{22},$ then $\left(
P_{1},...,P_{f}\right) \in C_{2}.$
\end{lemma}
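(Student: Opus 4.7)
The plan is to compute the reduction $\bar{P}_i$ modulo the ideal $I=(p,X_0,\dots,X_{f-1})$, observe that each $\bar{P}_i$ is one of the four matrix units, and then read off when the product $\bar{Q}_f$ equals $E_{11}$ or $E_{22}$ by repeated use of the identity $E_{ij}E_{kl}=\delta_{jk}E_{il}$.

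First I would check the mod-$I$ reductions. We are in the case $\ell=k$ with $k_i=k\ge 1$ for all $i$. Since $z_i\equiv p^{m_\ell}\bmod\pi$ and $\varphi$ acts trivially on $\mathbb{Z}_p$, we have $\varphi(z_i)\bmod\pi=p^{m_\ell}$; the entry $X_i\varphi(z_i)$ therefore becomes $X_ip^{m_\ell}$ after reducing mod $\pi$, and this vanishes mod $I$ because $X_i\in I$. Similarly $c_iq^{k_i}\bmod\pi=c_ip^{k_i}$ vanishes mod $p\in I$. Hence
\[
\bar{P}_i=\begin{cases} E_{22} & \text{if }P_i\text{ has type }t_1,\\ E_{12} & \text{if }P_i\text{ has type }t_2,\\ E_{11} & \text{if }P_i\text{ has type }t_3,\\ E_{21} & \text{if }P_i\text{ has type }t_4.\end{cases}
\]

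Writing $\bar{P}_i=E_{a_ib_i}$, repeated application of $E_{ij}E_{kl}=\delta_{jk}E_{il}$ gives $\bar{Q}_f=E_{a_1b_f}$ if $b_i=a_{i+1}$ for all $1\le i\le f-1$, and $\bar{Q}_f=0$ otherwise. The key observation is that the odd-type matrices $t_1,t_3$ correspond to the diagonal units $E_{22},E_{11}$ (so $b_i=a_i$), while the even-type matrices $t_2,t_4$ correspond to the off-diagonal units $E_{12},E_{21}$ (so $b_i\ne a_i$). Thus along the sequence $P_1,P_2,\dots,P_f$, the index $a_i$ is toggled between $1$ and $2$ at precisely the even-type positions, so $a_i=a_1$ exactly when an even number of coordinates of $(P_1,\dots,P_{i-1})$ is of even type, and $a_i=3-a_1$ otherwise.

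The condition $\bar{Q}_f=E_{11}$ is now equivalent to $a_1=1$, the consecutive matching $a_{i+1}=b_i$, and $b_f=1$. The first requirement forces $P_1\in\{t_2,t_3\}$. The matching condition, interpreted via the parity observation above, forces $P_i\in\{t_2,t_3\}$ (i.e.\ $a_i=1$) when the preceding parity of evens is even, and $P_i\in\{t_1,t_4\}$ (i.e.\ $a_i=2$) when it is odd, for each $2\le i\le f-1$. Finally $b_f=1$, combined with $a_f=1$ or $2$ as dictated by the parity of evens among $P_1,\dots,P_{f-1}$, pins down $P_0=t_3$ (when $a_f=1$) or $P_0=t_4$ (when $a_f=2$). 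These are exactly the rules defining $C_1$ in Definition~\ref{stupid types def}. The case $\bar{Q}_f=E_{22}$ is handled by the symmetric argument starting from $a_1=2$, yielding $C_2$.

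No substantive obstacle is expected: once the mod-$I$ reductions are identified, the argument is pure combinatorial bookkeeping. The only point to be careful about is that $\varphi(z_i)\bmod\pi$ need not vanish modulo $p$ (it equals $p^{m_\ell}$, which is $1$ when $m_\ell=0$), but this is immaterial because the factor $X_i$ ensures that the whole off-scalar entry is killed in $I$.
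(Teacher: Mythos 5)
Your proof is correct. The identification of the mod-$I$ reductions $\bar P_i$ as matrix units is the same as in the paper, but from there you take a genuinely different route: instead of the paper's induction on $f$ (which is delicate because the inductive hypothesis only covers $\bar Q_{f-1}\in\{E_{11},E_{22}\}$, forcing the paper to step back two levels when $\bar Q_{f-1}$ is off-diagonal), you write $\bar P_i=E_{a_ib_i}$ and observe that odd-type matrices preserve the running index $a_i$ while even-type matrices toggle it. This single parity invariant, together with the non-vanishing constraint $b_i=a_{i+1}$, reads off the three clauses in the definition of $C_1$ (resp.\ $C_2$) directly: $a_1=1$ gives $P_1\in\{t_2,t_3\}$, the toggle rule gives the constraint on $P_2,\dots,P_{f-1}$, and $b_f=1$ combined with the value of $a_f$ pins $P_0$ to $t_3$ or $t_4$. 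Your argument is shorter, avoids the paper's branching case analysis, and in my view is cleaner and less error-prone; the paper's induction has the same content but obscures the underlying bookkeeping.
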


\begin{proof}
By induction on $f.$ First, we notice that 
\begin{equation*}
P\func{mod}(p\cdot Id,\ X_{i}\cdot Id)=\left\{ 
\begin{array}{c}
E_{22}\ \mathrm{if}\text{ }P=t_{1}, \\ 
E_{12}\ \mathrm{if}\text{\ }P=t_{2}, \\ 
E_{11}\ \mathrm{if}\text{ }P=t_{3}, \\ 
E_{21}\ \mathrm{if}\text{ }P=t_{4}.%
\end{array}%
\right.
\end{equation*}%
Suppose that $\bar{Q}_{f}=E_{11}\ $and $f=2.$ Then $P_{1}\in
\{t_{2},t_{3}\}. $ If $P_{1}=t_{2},$ then $P_{0}=t_{4}$ and if $P_{1}=t_{3},$
then $P_{0}=t_{3}.$ Suppose $\bar{Q}_{f}=E_{11}\ $and $f>2.$ Then $\bar{Q}%
_{f-1}=E_{11}\ $and$\ P_{f}=t_{3}$ or $\bar{Q}_{f-1}=E_{12}\ $and$\
P_{f}=t_{4}.$ In the first case the inductive hypothesis implies that $%
\left( P_{1},P_{2},...,P_{f-1}\right) \in C_{1}$ and $P_{f}=t_{3}.$ If an
even number of coordinates of $\left( P_{1},P_{2},...,P_{f-2}\right) $ is of
even type, then $P_{f-1}=t_{3}.$ In this case an even number of coordinates
of $\left( P_{1},P_{2},...,P_{f-1}\right) $ is of even type and $%
P_{f}=t_{3}, $ hence $\left( P_{1},...,P_{f}\right) \in C_{1}.$ If an odd
number of coordinates of $\left( P_{1},P_{2},...,P_{f-2}\right) $ is of even
type, then $P_{f-1}=t_{4}.$ In this case an even number of coordinates of $%
\left( P_{1},P_{2},...,P_{f-1}\right) $ is of even type and $P_{f}=t_{3},$
hence $\left( P_{1},...,P_{f}\right) \in C_{1}.$ Now assume that $\bar{Q}%
_{f-1}=E_{12}\ $and$\ P_{f}=t_{4}.$ This implies that either $\bar{Q}%
_{f-2}=E_{12},$ $P_{f-1}=t_{4}$ and$\ P_{f}=t_{4}$ which is absurd since in
this case $\bar{Q}_{f}=0,$ or $\bar{Q}_{f-2}=E_{11},$ $P_{f-1}=t_{2}$ and$\
P_{f}=t_{4}.$ If $f=3,$ then $P_{1}=t_{3},$ $P_{2}=t_{2},$ $P_{3}=t_{4}$ and
the lemma holds. If $f\geq 4$ and an even number of coordinates $\left(
P_{1},P_{2},...,P_{f-3}\right) $ is of even type, then $P_{f-2}=t_{3},$ $%
P_{f-1}=t_{2}$ and$\ P_{f}=t_{4}.$ Then an odd number of coordinates $\left(
P_{1},P_{2},...,P_{f-1}\right) $ is of even type and$\ P_{f}=t_{4},$ hence $%
\left( P_{1},...,P_{f}\right) \in C_{1}.\ $If an odd number of coordinates $%
\left( P_{1},P_{2},...,P_{f-3}\right) $ is of even type, then $%
P_{f-2}=t_{4}, $ $P_{f-1}=t_{2}$ and$\ P_{f}=t_{4}.$ Then an odd number of
coordinates $\left( P_{1},P_{2},...,P_{f-1}\right) $ is of even type and$\
P_{f}=t_{4},$ hence $\left( P_{1},...,P_{f}\right) \in C_{1}.$ Part (ii) is
proved similarly.
\end{proof}

\begin{corollary}
\label{trace gives surject}If $\left( P_{1},P_{2},...,P_{f}\right) \in 
\mathcal{P}\ $and $\mathrm{Tr}\left( Q_{f}\right) \not\in \bar{%
\mathbb{Q}%
}_{p},$ then the operator%
\begin{equation*}
\overline{H}\mapsto \overline{H-Q_{f}H(p^{f\ell }Q_{f}^{-1})}:\widetilde{M}%
_{2}\rightarrow \widetilde{M}_{2}
\end{equation*}%
is surjective.
\end{corollary}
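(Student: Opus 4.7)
The plan is to prove this by contrapositive, chaining together the three immediately preceding results. Suppose the operator $\overline{H}\mapsto \overline{H-Q_{f}H(p^{f\ell}Q_{f}^{-1})}$ fails to be surjective; I want to deduce $\mathrm{Tr}(Q_f) \in \bar{\mathbb{Q}}_p$.

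First I would invoke Proposition \ref{prop operator in 4 types case}, which says that surjectivity can fail only in the narrow regime $\ell = k$, $k_i = k$ for all $i$, and $\bar{Q}_f \in \{E_{11}, E_{22}\}$. So in particular we must have $\bar{Q}_f \in \{E_{11}, E_{22}\}$. Next, Lemma \ref{cond 4} translates this into a combinatorial statement about the $f$-tuple of types: $\bar{Q}_f = E_{11}$ forces $(P_1,\dots,P_f) \in C_1$, while $\bar{Q}_f = E_{22}$ forces $(P_1,\dots,P_f) \in C_2$. Either way, $(P_1,\dots,P_f) \in C_1 \cup C_2$. Finally, Corollary \ref{c cor} characterizes $C_1 \cup C_2$ as exactly those type-vectors for which $\mathrm{Tr}(Q_f)$ is a scalar in $\bar{\mathbb{Q}}_p$, completing the contrapositive.

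Since the hypothesis $\mathrm{Tr}(Q_f) \notin \bar{\mathbb{Q}}_p$ immediately rules out the $C_1 \cup C_2$ case, the contrapositive closes. The main (and only) thing to be careful about is verifying that the excluded configurations in Proposition \ref{prop operator in 4 types case} are all accounted for by Lemma \ref{cond 4}: specifically that the lemma handles both $E_{11}$ and $E_{22}$, which it does. There is no real obstacle here beyond bookkeeping; the corollary is essentially the composition of three already-established implications, and the statement is phrased to be an immediate consequence once these links are in place.
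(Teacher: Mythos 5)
Your proposal is correct and matches the intended argument: chain Proposition \ref{prop operator in 4 types case}, Lemma \ref{cond 4}, and Corollary \ref{c cor} via the contrapositive. The paper leaves the corollary without an explicit proof precisely because it is this immediate composition.
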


\subsection{Corresponding$\ $families of rank two filtered $\protect\varphi $%
-modules \label{section families of crystalline reps}}

Let $\Pi ^{\vec{i}}(\mathcal{S})=(\Pi _{1}(X_{1}),\Pi _{2}(X_{2}),$

\noindent $...,\Pi _{f-1}(X_{f-1}),$\noindent $\Pi _{0}(X_{0}))$ with $\vec{i%
}\in \{1,2,3,4\}^{f}\ $and $\Pi _{i}(X_{i})$ as in Definition \ref{the 8
types copy(1)}. The definition of the matrices $\Pi _{i}$ and $P_{i}=\Pi _{i}%
\func{mod}\pi $ depends on the choice of the $z_{i}$ in Proposition \ref{zi
lemma} and therefore on $\ell .\ $For the rest of the paper we assume that $%
\left( P_{1},P_{2},...,P_{0}\right) \not\in C_{1}\cup C_{2}$ and we choose $%
\ell =k=\max \{k_{0},k_{1},...,k_{f-1}\}.$

\begin{proposition}
\label{kati}For any $\gamma \in \Gamma _{K},$ there exists a unique matrix $%
G_{\gamma }(\mathcal{S})=G_{\gamma }(\mathcal{S})\in M_{2}^{\mathcal{S}}$
such that

\begin{enumerate}
\item[(i)] $G_{\gamma }(\mathcal{S})\equiv \overrightarrow{Id}\func{mod}\vec{%
\pi}$ and

\item[(ii)] $\Pi ^{\vec{i}}(\mathcal{S})\mathcal{\varphi }(G_{\gamma }(%
\mathcal{S}))=G_{\gamma }(\mathcal{S})\gamma (\Pi ^{\vec{i}}(\mathcal{S})).$
\end{enumerate}
\end{proposition}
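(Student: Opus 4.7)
The plan is to deduce Proposition \ref{kati} as a direct application of Lemma \ref{outline section theorem} to the matrix $\Pi^{\vec{i}}(\mathcal{S})$ defined via Definition \ref{the 8 types copy(1)}, where the polynomials $z_{i}$ are those furnished by Proposition \ref{zi lemma} and $\ell=k$. The work consists in verifying the four hypotheses of Lemma \ref{outline section theorem} under the standing assumption that $\left(P_{1},P_{2},\ldots,P_{0}\right)\notin C_{1}\cup C_{2}$.

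First I would take $G_{\gamma}^{(\ell)}$ to be the diagonal matrix $\mathrm{diag}\bigl((x_{0}^{\gamma},\ldots,x_{f-1}^{\gamma}),(y_{0}^{\gamma},\ldots,y_{f-1}^{\gamma})\bigr)$ built out of the functions $x_{i}^{\gamma},y_{i}^{\gamma}\in 1+\pi\mathbb{Z}_{p}[[\pi]]$ constructed in Section \ref{the 4 type representations} from the $\lambda_{f,\gamma}$ or $\lambda_{2f,\gamma}$ cocycles (equations \eqref{xf}--\eqref{y2f} and the downward recursion \eqref{1xy}). Lemma \ref{new technical} guarantees the entries lie in $1+\pi\mathbb{Z}_{p}[[\pi]]$, so condition (1) of Lemma \ref{outline section theorem} ($G_{\gamma}^{(\ell)}\equiv\vec{Id}\bmod\vec{\pi}$, which is stronger than required) is automatic, and Proposition \ref{zi lemma} together with the choice of the polynomials $z_{i}$ provides condition (2), namely $G_{\gamma}^{(\ell)}-\Pi(\mathcal{S})\varphi(G_{\gamma}^{(\ell)})\gamma(\Pi(\mathcal{S})^{-1})\in\vec{\pi}^{\ell}M_{2}^{\mathcal{S}}$ for all $\gamma\in\Gamma_{K}$. (If all $k_{i}=p$ and $\ell=p$, one uses Remark \ref{k=p remark} in place of Proposition \ref{zi lemma}.)

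For condition (3), I would invoke Lemma \ref{eigenvalues lemma} via its Corollary \ref{eigenvalies cor}: by the standing assumption $(P_{1},\ldots,P_{0})\notin C_{1}\cup C_{2}$, Corollary \ref{c cor} yields $\mathrm{Tr}(Q_{f})\notin\overline{\mathbb{Q}}_{p}$, so $Q_{f}$ cannot have eigenvalues whose ratio is a nonzero scalar, and consequently there is no nonzero $H\in M_{2}(\mathcal{O}_{E}[[\mathcal{S}]]^{\mid\tau\mid})$ satisfying $HU=p^{ft}UH$ for any $t>0$. For condition (4), which is only needed in the degenerate case $\ell=k$ with all weights equal, Proposition \ref{prop operator in 4 types case} identifies the only obstructions as $\bar{Q}_{f}\in\{E_{11},E_{22}\}$, and Lemma \ref{cond 4} shows these force $(P_{1},\ldots,P_{0})\in C_{1}\cup C_{2}$; since we have excluded this, surjectivity holds (this is precisely Corollary \ref{trace gives surject}).

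With all four hypotheses verified, Lemma \ref{outline section theorem} supplies the unique matrix $G_{\gamma}(\mathcal{S})\in M_{2}^{\mathcal{S}}$ with $G_{\gamma}(\mathcal{S})\equiv\vec{Id}\bmod\vec{\pi}$ and $\Pi^{\vec{i}}(\mathcal{S})\varphi(G_{\gamma}(\mathcal{S}))=G_{\gamma}(\mathcal{S})\gamma(\Pi^{\vec{i}}(\mathcal{S}))$, proving the proposition. The only place requiring genuine care rather than bookkeeping is the verification of condition (3): one must be sure that the exclusion of $C_{1}\cup C_{2}$ together with the combinatorial case analysis of Lemma \ref{mat1} genuinely rules out a scalar ratio of eigenvalues of $Q_{f}$ for every admissible type-vector $\vec{i}$; this is exactly the content of Corollaries \ref{c cor} and \ref{eigenvalies cor}, which I would simply cite.
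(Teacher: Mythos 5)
Your proposal is correct and follows exactly the same route as the paper's proof: verify conditions (1)--(2) of Lemma \ref{outline section theorem} via Proposition \ref{zi lemma}, condition (3) via the exclusion of $C_1\cup C_2$ together with Corollaries \ref{c cor} and \ref{eigenvalies cor}, condition (4) via Proposition \ref{prop operator in 4 types case} and Lemma \ref{cond 4}, then invoke Lemma \ref{outline section theorem}. The extra detail you supply (the explicit diagonal form of $G_\gamma^{(\ell)}$, the appeal to Remark \ref{k=p remark} for the all-$k_i=p$ case) is consistent with and implicit in the paper's more terse citation-only argument.
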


\begin{proof}
Conditions $(1)$ and $(2)$ of Lemma \ref{outline section theorem} are
satisfied by Proposition \ref{zi lemma}. Condition $(3)$ of Lemma \ref%
{outline section theorem} is satisfied by the assumption that $\left(
P_{1},P_{2},...,P_{0}\right) \not\in C_{1}\cup C_{2}$ and Corollaries \ref%
{eigenvalies cor} and \ref{c cor}. Finally, condition $(4)$ of Lemma \ref%
{outline section theorem} is satisfied by Proposition \ref{prop operator in
4 types case} and Lemma \ref{cond 4}.\ The proposition follows from Lemma %
\ref{outline section theorem}.
\end{proof}

\noindent For any $\vec{a}\in \mathfrak{m}_{E}^{f}$ we equip the module $%
\mathbb{N}_{\vec{k}}^{\vec{i}}(\vec{a})=\left( \mathcal{O}_{E}[[\pi ]]^{\mid
\tau \mid }\right) \eta _{1}\oplus \left( \mathcal{O}_{E}[[\pi ]]^{\mid \tau
\mid }\right) \eta _{2}$ with semilinear $\varphi $ and $\Gamma _{K}$%
-actions defined as in Proposition \ref{rank two wach modules construction}%
.\ \noindent For any $\vec{a}\in \mathfrak{m}_{E}^{f}$ we consider the
matrices of $\mathrm{GL}_{2}\left( E^{\mid \tau \mid }\right) $ obtained
from the matrices $P^{\vec{i}}(\vec{a})=\left(
P_{1}(X_{1}),P_{2}(X_{2}),...,P_{f-1}(X_{f-1}),P_{0}(X_{0})\right) $ by
substituting $X_{j}=a_{j}$ in $P_{j}(X_{j}).\ $We define families of
filtered $\varphi $-modules $\mathbb{D}_{\vec{k}}^{\vec{i}}\left( \vec{a}%
\right) =\left( E^{\mid \tau \mid }\right) \eta _{1}\oplus \left( E^{\mid
\tau \mid }\right) \eta _{2}$ with Frobenius endomorphisms given by $%
(\varphi (\eta _{1}),\varphi (\eta _{2}))=(\eta _{1},\eta _{2})P^{\vec{i}}(%
\vec{a}),$ and filtrations given by

\begin{equation}
\ \ \ \ \ \ \ \ \ \ \ \ \ \ \ \ \ \ ~\ \ \ \mathrm{Fil}^{\mathrm{j}}(\mathbb{%
D}_{\vec{k}}^{\vec{i}}\left( \vec{a}\right) )=\left\{ 
\begin{array}{l}
E^{\mid \tau \mid }\eta _{1}\oplus E^{\mid \tau \mid }\eta _{2}\ \ \ \ \ 
\text{if }j\leq 0, \\ 
E^{\mid \tau _{I_{0}}\mid }\left( \vec{x}\eta _{1}+\vec{y}\eta _{2}\right) \
\ \text{if }1\leq j\leq w_{0}, \\ 
E^{\mid \tau _{I_{1}}\mid }\left( \vec{x}\eta _{1}+\vec{y}\eta _{2}\right) \
\ \text{if }1+w_{0}\leq j\leq w_{1}, \\ 
\ \ \ \ \ \ \ \ \ \ \ \ \ \ \ \ \ \ \ \ \cdots \cdots \\ 
E^{\mid \tau _{I_{t-1}}\mid }\left( \vec{x}\eta _{1}+\vec{y}\eta _{2}\right)
\ \text{if }1+w_{t-2}\leq j\leq w_{t-1}, \\ 
\ \ \ \ \ \ \ \ \ \ \ \ \ \ \ \ \ 0\ \ \ \ \ \ \ \ \ \text{if }j\geq
1+w_{t-1},%
\end{array}%
\right.  \label{Filitations}
\end{equation}%
where $\vec{x}=(x_{0},x_{1},...,x_{f-1})$ and $\vec{y}%
=(y_{0},y_{1},...,y_{f-1}),$ with 
\begin{equation}
\ \ \ \ \ \ \ \ \ \ \ \ (x_{i},y_{i})=\left\{ 
\begin{array}{l}
(1,-\alpha _{i})\ \ \ \text{\ \ if }P_{i}\ \text{has type }1\ \text{or}\ 2,
\\ 
(-\alpha _{i},1)\ \ \ \ \ \text{if }P_{i}\ \text{has type }3\ \text{or }4,%
\end{array}%
\right.  \label{Filitations1}
\end{equation}%
and $\alpha _{i}=a_{i}z_{i}(0)\ $for all $i.$ Since $\ell =k,$ Remark \ref%
{k=p remark} implies that $\alpha _{i}\in p^{m}\mathfrak{m}_{E}\ $for all $%
i, $ where 
\begin{equation*}
m:=\left\{ 
\begin{array}{l}
\ \lfloor \frac{k-1}{p-1}\rfloor \ \ \ \ \text{if }k\geq p\ \text{and }%
k_{i}\neq p\ \text{for some }i, \\ 
\ \ \ \ 0\ \ \ \ \ \ \ \text{if }k\leq p-1\ \text{or }k_{i}=p\ \text{for all 
}i.%
\end{array}%
\right.
\end{equation*}

\begin{proposition}
\label{main theorem}For any $\vec{a}\in \mathfrak{m}_{E}^{f},$ the filtered $%
\varphi $-modules $(\mathbb{D}_{\vec{k}}^{\vec{i}}\left( \vec{a}\right)
,\varphi )$ defined above are weakly admissible and 
\begin{equation*}
\mathbb{D}_{\vec{k}}^{\vec{i}}\left( \vec{a}\right) \simeq E^{\mid \tau \mid
}\bigotimes\limits_{\mathcal{O}_{E}^{\mid \tau \mid }}(\mathbb{N}_{\vec{k}}^{%
\vec{i}}(\vec{a})/\pi \mathbb{N}_{\vec{k}}^{\vec{i}}(\vec{a}))
\end{equation*}%
as filtered $\varphi $-modules over $E^{\mid \tau \mid }.$
\end{proposition}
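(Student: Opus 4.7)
The plan is to identify $\mathbb{D}_{\vec{k}}^{\vec{i}}(\vec{a})$ with $\mathbb{D}_{\mathrm{cris}}(V(\vec{a}))$ for the crystalline $E$-representation $V(\vec{a})$ attached by Berger's theorem to the Wach module $\mathbb{N}_{\vec{k}}^{\vec{i}}(\vec{a})$, thereby reading off weak admissibility from the Colmez--Fontaine equivalence.

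First, Proposition \ref{kati} supplies, for each $\gamma \in \Gamma_{K}$, a unique matrix $G_{\gamma}(\mathcal{S}) \equiv \overrightarrow{Id} \bmod \vec{\pi}$ satisfying $\Pi^{\vec{i}}(\mathcal{S})\varphi(G_{\gamma}(\mathcal{S})) = G_{\gamma}(\mathcal{S})\gamma(\Pi^{\vec{i}}(\mathcal{S}))$. Specializing $\mathcal{S} \mapsto \vec{a}$ and applying Proposition \ref{rank two wach modules construction} shows that $\mathbb{N}_{\vec{k}}^{\vec{i}}(\vec{a})$ is a Wach module over $\mathcal{O}_{E}[[\pi]]^{\mid \tau \mid}$ corresponding via Theorem \ref{berger thm} to a $G_{K}$-stable $\mathcal{O}_{E}$-lattice inside some two-dimensional crystalline $E$-representation $V(\vec{a})$ of $G_{K}$ with Hodge--Tate weights in $[-k;\,0]$. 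In particular, $\mathbb{D}_{\mathrm{cris}}(V(\vec{a}))$ is weakly admissible, and by Theorem \ref{berger thm}(iii) there is a canonical isomorphism of filtered $\varphi$-modules over $E^{\mid \tau \mid}$
\[
\mathbb{D}_{\mathrm{cris}}(V(\vec{a})) \;\xrightarrow{\sim}\; E^{\mid \tau \mid} \otimes_{\mathcal{O}_{E}^{\mid \tau \mid}} \bigl(\mathbb{N}_{\vec{k}}^{\vec{i}}(\vec{a})/\pi \mathbb{N}_{\vec{k}}^{\vec{i}}(\vec{a})\bigr).
\]
Thus it suffices to check that this right-hand side is isomorphic, as a filtered $\varphi$-module, to $\mathbb{D}_{\vec{k}}^{\vec{i}}(\vec{a})$ as defined by formulas \ref{Filitations} and \ref{Filitations1}; weak admissibility of $\mathbb{D}_{\vec{k}}^{\vec{i}}(\vec{a})$ will then be automatic.

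Matching the Frobenius is routine: since $q \equiv p \bmod \pi$ and $X_{i}\varphi(z_{i}) \equiv a_{i} z_{i}(0) = \alpha_{i} \bmod \pi$, the reduction $\Pi^{\vec{i}}(\vec{a}) \bmod \pi$ equals $P^{\vec{i}}(\vec{a})$ of Definition \ref{o prwtos orismos} (the constants $c_{i}$ of Definition \ref{the 8 types copy(1)} being $1$ in the present setup). The substantive work is matching the filtration. Using the characterization $\mathrm{Fil}^{\mathrm{j}}\mathbb{N} = \{x \in \mathbb{N}: \varphi(x) \in q^{j}\mathbb{N}\}$, I would compute $e_{i}\mathrm{Fil}^{\mathrm{j}}\mathbb{N}$ separately for each idempotent $e_{i}$ and each of the four matrix types. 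Since $\det P_{i} = c_{i}q^{k_{i}}$, a direct calculation (parallel in spirit to the rank-one case in Proposition \ref{The crystalline characters}) yields that $e_{i}\mathrm{Fil}^{\mathrm{j}}(\mathbb{N}/\pi\mathbb{N})$ equals the whole $e_{i}$-component for $j \leq 0$, equals the line $e_{i}\mathcal{O}_{E}\cdot(x_{i}\eta_{1} + y_{i}\eta_{2})$ for $1 \leq j \leq k_{i}$ with $(x_{i},y_{i})$ precisely as in \ref{Filitations1}, and is zero for $j \geq 1+k_{i}$. Summing over $i$ and grouping indices according to the ordering $w_{0} < w_{1} < \cdots < w_{t-1}$ of the distinct weights reproduces the piecewise formula \ref{Filitations}.

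The main obstacle is the component-wise filtration computation: one has to track the shift action of $\varphi$ on the product ring $E^{\mid \tau \mid}$ simultaneously with the case split among the four types. The outcome is that types $t_{1}, t_{2}$ (with the $q^{k_{i}}$ entry in the first column of $P_{i}$) force the surviving direction to be $(1, -\alpha_{i})$, while types $t_{3}, t_{4}$ force it to be $(-\alpha_{i}, 1)$, exactly accounting for the dichotomy in \ref{Filitations1}. Once this is established for every $i$, the filtration match is complete and the proposition follows.
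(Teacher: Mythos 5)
Your proof follows essentially the same route as the paper: identify $\mathbb{N}_{\vec{k}}^{\vec{i}}(\vec{a})$ as a Wach module via Propositions \ref{kati} and \ref{rank two wach modules construction}, invoke Theorem \ref{berger thm}(iii) to get $\mathbb{D}_{\mathrm{cris}}(V(\vec{a})) \simeq E^{\mid\tau\mid}\otimes_{\mathcal{O}_E^{\mid\tau\mid}}(\mathbb{N}/\pi\mathbb{N})$, and then compute $e_i\mathrm{Fil}^{\mathrm{j}}(\mathbb{N}/\pi\mathbb{N})$ component-by-component using the criterion $q^{j}\mid\varphi(x)$ iff $\pi^{j}\mid x$, splitting into the four types. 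The paper proceeds identically, illustrating the computation for one type (its displayed matrix is actually $t_4$ though the text says ``type 2'') and leaving the remaining types analogous; you are actually a bit more explicit than the paper on one point, namely that weak admissibility is automatic because $\mathbb{D}_{\vec{k}}^{\vec{i}}(\vec{a})$ coincides with $\mathbb{D}_{\mathrm{cris}}$ of a crystalline representation and the Colmez--Fontaine equivalence guarantees weak admissibility on the essential image, whereas the paper leaves this implicit.

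One small caution for when you carry out the filtration computation in full: the Frobenius on $\mathcal{O}_E[[\pi]]^{\mid\tau\mid}$ cyclically shifts coordinates (formula \ref{actions1}), so $e_i\varphi(\vec{x})$ involves $x_{i+1}$, not $x_i$; you need to be consistent with the paper's convention when matching the index $i$ of the pair $(x_i,y_i)$ in formula \ref{Filitations1} to the type of $\Pi_i$ — otherwise you will get an off-by-one shift relative to formula \ref{Filitations}. The paper glosses over this in notation, so just make sure your bookkeeping agrees with its conventions. Substantively, though, your proposal is correct and matches the paper's argument.
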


\begin{proof}
By Theorem \ref{berger thm}, $\vec{x}\eta _{1}+\vec{y}\eta _{2}\in \mathrm{%
Fil}^{\mathrm{j}}(\mathbb{N}_{\vec{k}}^{\vec{i}}(\vec{a}))$ if and only if $%
\varphi (\vec{x})\varphi (\eta _{1})+$ $\varphi (\vec{y})\varphi (\eta _{2})$
$\in q^{j}\mathbb{N}_{\vec{k}}^{\vec{i}}(\vec{a})$ or equivalently 
\begin{equation}
e_{i}\varphi (\vec{x})\varphi (\eta _{1})+e_{i}\varphi (\vec{y})\varphi
(\eta _{2})\in q^{j}e_{i}\mathbb{N}_{\vec{k}}^{\vec{i}}(\vec{a})\ \text{for\
all}\ i\in I_{0},  \label{starcircle}
\end{equation}%
with the idempotents $e_{i}$ as in \S \ref{product ring}. We fix some $i\in
I_{0}$ and calculate in the case where $\Pi _{i}$ is of type 2. \noindent
\noindent Then \noindent $\Pi _{i}(a_{i})=\left( 
\begin{array}{cc}
0 & c_{i}q^{k_{i}} \\ 
1 & a_{i}\mathcal{\varphi }(z_{i})%
\end{array}%
\right) $ and equation \ref{starcircle} is equivalent to $\left\{ 
\begin{array}{l}
q^{j}\mid \varphi (y_{i})q^{k_{i}}, \\ 
q^{j}\mid \varphi (x_{i}+y_{i}a_{i}z_{i}).%
\end{array}%
\right. $\noindent \noindent \noindent\ We use that $q^{j}\mid $ $\varphi
(x) $ if and only if $\pi ^{j}\mid x$ for any $x\in \mathcal{O}_{E}[[\pi ]].$
If $j\geq 1+k_{i},$ then $x_{i},y_{i}\equiv 0\func{mod}\pi .$ If $1\leq
j\leq k_{i},$ the system above is equivalent to $\pi ^{j}\mid
x_{i}+y_{i}a_{i}z_{i}.$ Since $a_{i}z_{i}\equiv \alpha _{i}\func{mod}\pi ,\ $%
\begin{equation*}
\ \ \ \ \ \ \ \ \ \ e_{i}\vec{x}\eta _{1}+e_{i}\vec{y}\eta _{2}+\pi \mathbb{N%
}_{\vec{k}}^{\vec{i}}(\vec{a})=\left\{ 
\begin{array}{c}
\alpha _{i}\bar{y}_{i}e_{i}\eta _{1}+\bar{y}_{i}e_{i}\eta _{2}+\pi \mathbb{N}%
_{\vec{k}}^{\vec{i}}(\vec{a})\ \ \mathrm{if}\text{\ \ }1\leq \text{ }j\leq
k_{i},\ \  \\ 
\ \ \ \ 0\ \ \ \ \ \ \ \ \ \ \ \ \ \ \ \ \ \ \ \ \ \ \ \ \mathrm{if}\text{\
\ }j\geq k_{i}%
\end{array}%
\right.
\end{equation*}%
where $\bar{y}_{i}=y_{i}\func{mod}\pi $ can be any element of $\mathcal{O}%
_{E}.$ Since $\mathrm{Fil}^{\mathrm{0}}(\mathbb{N}_{\vec{k}}^{\vec{i}}(\vec{a%
})/\pi \mathbb{N}_{\vec{k}}^{\vec{i}}(\vec{a}))=(\mathcal{O}_{E}^{\mid \tau
\mid })\eta _{1}\bigoplus (\mathcal{O}_{E}^{\mid \tau \mid })\eta _{2},$ we
get 
\begin{equation*}
e_{i}\mathrm{Fil}^{\mathrm{j}}(\mathbb{N}_{\vec{k}}^{\vec{i}}(\vec{a})/\pi 
\mathbb{N}_{\vec{k}}^{\vec{i}}(\vec{a}))=\left\{ 
\begin{array}{l}
e_{i}(\mathcal{O}_{E}^{\mid \tau \mid })\eta _{1}\bigoplus e_{i}(\mathcal{O}%
_{E}^{\mid \tau \mid })\eta _{2}\ \text{if }j\leq 0, \\ 
e_{i}(\mathcal{O}_{E}^{\mid \tau \mid })(\vec{x}^{i}\eta _{1}+\vec{y}%
^{i}\eta _{2})\ \ \ \ \text{if }1\leq j\leq k_{i}, \\ 
\ \ \ \ \ \ 0\ \ \ \ \ \ \ \ \ \ \ \ \ \ \ \ \ \ \ \ \ \ \ \ \text{if }j\geq
1+k_{i},%
\end{array}%
\right.
\end{equation*}%
with $e_{i}\vec{x}^{i}=(0,...,x_{i},...,0),$ $e_{i}\vec{y}%
^{i}=(0,...,y_{i},...,0)$ and $(x_{i},y_{i})=(-\alpha _{i},1).$ Calculating
for the other choices of $\Pi _{i}(a_{i}),$ we see that for all $i\in I_{0},$
$(x_{i},y_{i})$ is as in formula \ref{Filitations}. Since $\mathrm{Fil}^{%
\mathrm{j}}(\mathbb{N}_{\vec{k}}^{\vec{i}}(\vec{a})/\pi \mathbb{N}_{\vec{k}%
}^{\vec{i}}(\vec{a}))=\tbigoplus\limits_{i=0}^{f-1}e_{i}\mathrm{Fil}^{%
\mathrm{j}}(\mathbb{N}_{\vec{k}}^{\vec{i}}(\vec{a})/\pi \mathbb{N}_{\vec{k}%
}^{\vec{i}}(\vec{a})),\ $arguing as in the proof of Proposition \ref{The
crystalline characters} we get 
\begin{equation*}
\mathrm{Fil}^{\mathrm{j}}(\mathbb{N}_{\vec{k}}^{\vec{i}}(\vec{a})/\pi 
\mathbb{N}_{\vec{k}}^{\vec{i}}(\vec{a}))=\left\{ 
\begin{array}{l}
(\mathcal{O}_{E}^{\mid \tau \mid })\eta _{1}\oplus (\mathcal{O}_{E}^{\mid
\tau \mid })\eta _{2}\ \ \ \ \ \ \mathrm{if}\text{ }j\leq 0, \\ 
(\mathcal{O}_{E}^{\mid \tau \mid })f_{I_{0}}(\vec{x}\eta _{1}+\vec{y}\eta
_{2})\ \ \ \ \ \mathrm{if}\text{ }1\leq j\leq w_{0}, \\ 
(\mathcal{O}_{E}^{\mid \tau \mid })f_{I_{1}}(\vec{x}\eta _{1}+\vec{y}\eta
_{2})\ \ \ \ \ \mathrm{if}\text{ }1+w_{0}\leq j\leq w_{1}, \\ 
\ \ \ \ \ \ \ \ \ \ \ \ \ \ \ \ \ \ \ \ \ \ \ \ \ \ \cdots \cdots \\ 
(\mathcal{O}_{E}^{\mid \tau \mid })f_{I_{t-1}}(\vec{x}\eta _{1}+\vec{y}\eta
_{2})\ \ \ \mathrm{if}\text{ }1+w_{t-2}\leq j\leq w_{t-1}, \\ 
\ \ \ \ 0\ \ \ \ \ \ \ \ \ \ \ \ \ \ \ \ \ \ \ \ \ \ \ \ \ \ \ \mathrm{if}%
\text{ }j\geq 1+w_{t-1}%
\end{array}%
\right.
\end{equation*}%
with $\vec{x}=(x_{0},x_{1},...,x_{f-1})$ and $\vec{y}%
=(y_{0},y_{1},...,y_{f-1})$ and $(x_{i},y_{i})$ as in formula \ref%
{Filitations}. The isomorphism 
\begin{equation*}
\mathbb{D}_{\vec{k}}^{\vec{i}}\left( \vec{a}\right) \simeq E^{\mid \tau \mid
}\bigotimes\limits_{\mathcal{O}_{E}^{\mid \tau \mid }}(\mathbb{N}_{\vec{k}}^{%
\vec{i}}(\vec{a})/\pi \mathbb{N}_{\vec{k}}^{\vec{i}}(\vec{a}))
\end{equation*}%
is now obvious.
\end{proof}

\noindent The crystalline representation corresponding to $\mathbb{D}_{\vec{k%
}}^{\vec{i}}\left( \vec{a}\right) $ is denoted by $V_{\vec{k},\vec{a}}^{\vec{%
i}}$\noindent $.$

\section{Reductions of crystalline representations\label{Reductions section}}

\noindent In this section we explicitly compute the semisimplified modulo $p$
reductions of the families of crystalline representations constructed in \S %
\ref{2-d families}.\ We will need the following lemma.

\begin{lemma}
\label{frob lemma}Let $F$ be any field, $G$ any group and $H$ any finite
index subgroup. Let $V$ be an irreducible finite-dimensional $FG$-module
whose restriction to $H$ contains some $FH$-submodule $W$ with $\dim
_{F}V=[G:H]\dim _{F}W.$ Then $V\simeq \mathrm{Ind}_{H}^{G}\left( W\right) .$
\end{lemma}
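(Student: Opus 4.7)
The plan is to apply Frobenius reciprocity and a dimension count. First I would invoke the hypothesis that $W$ is an $FH$-submodule of $V_{\mid H}$ to produce a nonzero $FH$-homomorphism $\iota\colon W\hookrightarrow V_{\mid H}$ (the inclusion). By Frobenius reciprocity,
\[
\mathrm{Hom}_{FG}\bigl(\mathrm{Ind}_H^G(W),\, V\bigr)\;\simeq\;\mathrm{Hom}_{FH}\bigl(W,\, V_{\mid H}\bigr),
\]
so $\iota$ corresponds to a nonzero $FG$-homomorphism $\Phi\colon\mathrm{Ind}_H^G(W)\to V$.

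Next I would use the irreducibility of $V$ as an $FG$-module: since $\Phi\neq 0$, its image is a nonzero $FG$-submodule of $V$, hence equal to $V$; that is, $\Phi$ is surjective. Finally, I would compare $F$-dimensions. Since $H$ has finite index in $G$, the induced module satisfies $\dim_F \mathrm{Ind}_H^G(W)=[G:H]\dim_F W$, and by assumption this equals $\dim_F V$. A surjection between finite-dimensional $F$-vector spaces of the same dimension is an isomorphism, so $\Phi$ is an $FG$-isomorphism $\mathrm{Ind}_H^G(W)\xrightarrow{\sim} V$.

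There is essentially no obstacle: the only thing to be careful about is that $W$ is being regarded as an $FH$-submodule (not just any subspace), so the map $\iota$ is $FH$-linear, which is exactly what is needed to apply Frobenius reciprocity. The finite-index hypothesis ensures that induction is the ordinary (finite) induced module and that the dimension formula $\dim_F\mathrm{Ind}_H^G(W)=[G:H]\dim_F W$ holds, which is what makes the dimension-count argument close the proof.
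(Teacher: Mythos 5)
Your proof is correct and is essentially the same argument the paper gives: Frobenius reciprocity produces a nonzero $FG$-map $\mathrm{Ind}_H^G(W)\to V$, irreducibility of $V$ forces surjectivity, and the equality of $F$-dimensions forces it to be an isomorphism. The paper states this more tersely but in the same order with the same ingredients.
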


\begin{proof}
By Frobenius reciprocity there exists some nonzero $\alpha \in \mathrm{Hom}%
_{FG}(\mathrm{Ind}_{H}^{G}\left( W\right) ,V).$ It is an isomorphism because 
$V$ is irreducible and $\mathrm{Ind}_{H}^{G}\left( W\right) \ $and $V$ have
the same dimension over $F.$
\end{proof}

\noindent We start with the reductions of crystalline characters and
reducible two-dimensional crystalline representations of $G_{K}.$ We compose
the embeddings of $K_{f}$ into $E$ with the embedding $E\overset{\varepsilon 
}{\hookrightarrow }\bar{%
\mathbb{Q}%
}_{p}$ that we fixed in the introduction and get all the embeddings$\ $of $%
K_{f}\ $in$\ \bar{%
\mathbb{Q}%
}_{p},\ $which we still denote by $\tau _{i}.$ They induce embeddings of
residue fields $k_{K_{f}}\overset{\bar{\tau}_{i}}{\rightarrow }\mathbb{\bar{F%
}}_{p},$ where $k_{K_{f}}$ is the residue field of $K_{f}.$ The level $f$
fundamental characters $\omega _{f,\bar{\tau}_{i}}$ of $I_{K_{f}}$ are
defined by composing the embeddings $\bar{\tau}_{i}$ with the homomorphism $%
I_{K_{f}}\rightarrow k_{K_{f}}^{\times }$ obtained from local class field
theory, with uniformizers corresponding to geometric Frobenius elements. We
recall the following lemma which follows immediately from \cite[Lemma 3.8]%
{BDJ}, where the $\chi _{i}$ are as in \S \ref{the crystalline characters}.

\begin{lemma}
\label{BDJ lemma}(i) $(\bar{\chi}_{i})_{\mid I_{K_{f}}}=\omega _{f,\bar{\tau}%
_{i+1}}^{-1}$ for $i=0,1,...,f-1;$ (ii) $\omega _{f,\bar{\tau}_{i}}=\omega
_{f,\bar{\tau}_{0}}^{p^{i}}$ for all $i;$ (iii) $\omega _{2f,\bar{\tau}%
_{0}}^{1+p^{f}}=\omega _{f,\bar{\tau}_{0}};$ (iv) $\omega
=\tprod\limits_{i\in I_{0}}\omega _{f,\bar{\tau}_{i}},$ where $\omega $ is
the cyclotomic character modulo $\mathfrak{m}_{E}.$\noindent
\end{lemma}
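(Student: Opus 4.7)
I would handle the three parts of Theorem \ref{reducibles0} in the order (iii), (i), (ii): (iii) is a direct computation, (i) identifies the distinguished member $\vec{\alpha} = \vec{0}$ of the family as a split reducible representation, and (ii) is the real technical content, where the machinery of \S\ref{general construction of Wach} combined with Theorem \ref{from blz} does the work once we verify that the family actually exists.

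For (iii), $V_{\vec{\ell}, \vec{\ell}'}(\eta)$ is by definition a direct sum of two crystalline characters, so its restriction to $I_{K_f}$ is the sum of the inertial reductions of those characters. Using Lemma \ref{BDJ lemma}(i)--(ii), the character $\chi_0^{\ell_1} \chi_1^{\ell_2} \cdots \chi_{f-1}^{\ell_0}$ reduces on inertia to $\omega_{f, \bar{\tau}_1}^{-\ell_1} \omega_{f, \bar{\tau}_2}^{-\ell_2} \cdots \omega_{f, \bar{\tau}_0}^{-\ell_0} = \omega_{f, \bar{\tau}_0}^{-\sum_{i=0}^{f-1} p^i \ell_i}$, since $\omega_{f, \bar{\tau}_j} = \omega_{f, \bar{\tau}_0}^{p^j}$ and the exponent on $\chi_{j-1}$ is $\ell_j$. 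The unramified twist $\eta$ is invisible on inertia. The same computation for the primed character gives $\beta' = -\sum_{i=0}^{f-1} p^i \ell_i'$.

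For (i), I would set $\vec{\alpha} = \vec{0}$ in $\mathbb{D}_{\vec{k}}^{\vec{i}}(\vec{\alpha})$. Each $P_i(\vec{0})$ is either diagonal (types $t_1, t_3$) or antidiagonal (types $t_2, t_4$), and the filtration vectors $(x_i, y_i)$ defined in (\ref{ftrt1}) collapse to $(1, 0)$ or $(0, 1)$. The recipe for the types of $(P_1, \ldots, P_{f-1}, P_0)$ in this theorem is arranged so that, after tracking the componentwise action of $\varphi$ through the cyclic product, the module splits as $\mathbb{D}_{\vec{k}}^{\vec{i}}(\vec{0}) = \mathbb{D}_1 \oplus \mathbb{D}_2$ into two rank one $E^{|\tau|}$-submodules, with the filtration respecting the splitting. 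Each summand is then a rank one filtered $\varphi$-module; by Proposition \ref{rank 1 isom} and Corollary \ref{cor3.6} it corresponds to an effective crystalline character. Tracking the labeled weights produced by the types chosen in the recipe, and pushing the extra scalar $c$ appearing in $P_0$ into an unramified factor (as in the proof of Lemma \ref{chic prop}(i)), the two summands match $\eta \cdot \chi_0^{\ell_1} \cdots \chi_{f-1}^{\ell_0}$ and $\chi_0^{\ell_1'} \cdots \chi_{f-1}^{\ell_0'}$, up to a common unramified twist $\mu$ needed to absorb the $f$-th root of $c$ distributed across components.

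For (ii), the key is to show the family $\mathbb{N}_{\vec{k}}^{\vec{i}}(\vec{\alpha})$ exists for all $\vec{\alpha} \in (p^m \mathfrak{m}_E)^f$, i.e., that Proposition \ref{kati} applies, which reduces to checking that $(P_1, P_2, \ldots, P_{f-1}, P_0) \notin C_1 \cup C_2$ (Definitions \ref{stupid types def}, via Corollaries \ref{eigenvalies cor} and \ref{c cor}). I claim that $\vec{\ell} \neq \vec{0}$ and $\vec{\ell}' \neq \vec{0}$ is exactly equivalent to this non-membership: indeed, if $\vec{\ell} = \vec{0}$ (all $\ell_i = 0$), then running the recipe produces $P_1 = t_2$, $P_i \in \{t_1, t_2\}$ for $i \geq 2$ matching the parity rule of $C_1$, and $P_0 \in \{t_3, t_4\}$ according to the final parity, which is precisely the definition of $C_1$; symmetrically, $\vec{\ell}' = \vec{0}$ (all $\ell_i = k_i$) yields membership in $C_2$. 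Once $(P_1, \ldots, P_0) \notin C_1 \cup C_2$ is secured, Proposition \ref{kati} produces the Wach modules and Theorem \ref{from blz} gives $\overline{V}_{\vec{k}}^{\vec{i}}(\vec{\alpha}) \simeq \overline{V}_{\vec{k}}^{\vec{i}}(\vec{0})$ for free. The main obstacle is the combinatorial bookkeeping for the equivalence $\vec{\ell}, \vec{\ell}' \neq \vec{0} \Leftrightarrow (P_1, \ldots, P_0) \notin C_1 \cup C_2$: this should be done by induction on $i$, tracking the parity of even-type matrices in $(P_1, \ldots, P_{i-1})$, in the same spirit as Lemma \ref{mat1}.
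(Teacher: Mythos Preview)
Your proposal does not address the stated lemma at all: you have written a proof sketch for Theorem~\ref{reducibles0}, whereas the statement under consideration is Lemma~\ref{BDJ lemma}, which records elementary identities among the level~$f$ fundamental characters $\omega_{f,\bar\tau_i}$ and the inertial reductions of the characters~$\chi_i$. Nothing in your plan touches parts (i)--(iv) of that lemma; indeed, you \emph{invoke} Lemma~\ref{BDJ lemma}(i)--(ii) as an input in your argument for part~(iii) of Theorem~\ref{reducibles0}.

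For the record, the paper does not supply an independent proof of Lemma~\ref{BDJ lemma} either: it simply states that the lemma follows immediately from \cite[Lemma~3.8]{BDJ}. A direct argument would proceed as follows. Part~(ii) is the standard fact that the absolute Frobenius on $k_{K_f}$ raises to the $p$-th power, so composing $\bar\tau_0$ with $\sigma_{K_f}^i$ has the effect of raising $\omega_{f,\bar\tau_0}$ to the $p^i$-th power. Part~(iii) is the norm relation for fundamental characters under the inclusion $k_{K_f}^\times \hookrightarrow k_{K_{2f}}^\times$. Part~(iv) follows since the product $\prod_i \omega_{f,\bar\tau_i} = \omega_{f,\bar\tau_0}^{1+p+\cdots+p^{f-1}}$ is the norm down to $\mathbb{F}_p^\times$, which is the mod~$p$ cyclotomic character. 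Part~(i) is the only assertion specific to this paper: it identifies the inertial reduction of the crystalline character $\chi_i$ (constructed in Proposition~\ref{The crystalline characters} with labeled Hodge--Tate weight $-e_{i+1}$) with $\omega_{f,\bar\tau_{i+1}}^{-1}$, which one reads off from Lubin--Tate theory or directly from the Wach module description and local class field theory.
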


\noindent Our next goal is to compute the determinant of a two-dimensional
crystalline representations in terms of its labeled Hodge-Tate weights. To
do this, we will need some facts about weakly admissible filtered $\varphi $%
-modules which we briefly recall. For the missing details we refer to \cite%
{DO10}. We remark that similar results for odd $p$ have been obtained by
Imai in \cite{Ima09}.

\begin{proposition}
\label{class thm}Let $(\mathbb{D},\varphi )$ be a rank two F-semisimple,
nonscalar filtered $\varphi $-module over $E^{\mid \tau \mid }$ with labeled
Hodge-Tate weights $\{0,-k_{i}\}_{\tau _{i}}.$ After enlarging $E$ if
necessary, there exists an ordered basis $\underline{\eta }$ of $\mathbb{D}$
over $E^{\mid \tau \mid }$ with respect to which the matrix of Frobenius
takes the form $\mathrm{Mat}_{\underline{\eta }}(\varphi )=\mathrm{diag}(%
\vec{\alpha},\vec{\delta})$ for some vectors $\vec{\alpha},\vec{\delta}\in $ 
$(E^{\times })^{\mid \tau \mid }\ $with$\ \mathrm{Nm}_{\varphi }(\vec{\alpha}%
)\neq \mathrm{Nm}_{\varphi }(\vec{\delta}).$ \noindent The filtration in the
same basis has the form of formula $\ref{Filitations}$ for some vectors $%
\vec{x},\vec{y}\in E^{\mid \tau \mid }$ with $(x_{i},y_{i})\neq (0,0)$ for
all $i\in I_{0}.$ \noindent We call such a basis $\underline{\eta }$ a
standard basis of $(\mathbb{D},\varphi ).$\noindent\ The Frobenius-fixed
submodules are $0,\ \mathbb{D},$ $\mathbb{D}_{1}:=\left( E^{\mid \tau \mid
}\right) \eta _{1}$ and $\mathbb{D}_{2}:=\left( E^{\mid \tau \mid }\right)
\eta _{2}.$ \noindent \noindent The module $\mathbb{D}$ is weakly admissible
if and only if%
\begin{align*}
& (1)\text{\ }\mathrm{v}_{\mathrm{p}}(\mathrm{Nm}_{\varphi }(\vec{\alpha})%
\mathrm{Nm}_{\varphi }(\vec{\delta}))=\sum\limits_{i\in I_{0}}k_{i};\  \\
& (2)\ \mathrm{v}_{\mathrm{p}}(\mathrm{Nm}_{\varphi }(\vec{\alpha}))\geq
\sum\limits_{\{i\in I_{0}:\ y_{i}=0\}}k_{i}; \\
& (3)\ \mathrm{v}_{\mathrm{p}}(\mathrm{Nm}_{\varphi }(\vec{\delta}))\geq
\sum\limits_{\{i\in I_{0}:\ x_{i}=0\}}k_{i}.
\end{align*}%
Assuming that $\mathbb{D}$ is weakly admissible, \noindent \noindent

\begin{enumerate}
\item[(i)] The filtered $\varphi $-module $\mathbb{D}$ is irreducible if and
only if both the inequalities $(2)$ and $(3)$ above are strict; \noindent
\noindent \noindent \noindent

\item[(ii)] The filtered $\varphi $-module $\mathbb{D}$ is split-reducible
if and only if both inequalities $(2)$\ and $(3)$ are equalities, or
equivalently $I_{0}^{+}\cap J_{\vec{x}}\cap J_{\vec{y}}=\varnothing .$ In
this case, the only nontrivial weakly admissible submodules are the $\mathbb{%
D}_{i},$ $i=1,2.$ Moreover, $\mathbb{D}=\mathbb{D}_{1}\oplus \mathbb{D}_{2};$

\item[(iii)] In any other case the filtered $\varphi $-module $\mathbb{D}$
is reducible, non-split. \ 
\end{enumerate}
\end{proposition}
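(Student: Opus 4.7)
The plan has four stages: (1) produce a basis in which Frobenius is diagonal; (2) describe the filtration in this basis; (3) classify the $\varphi$-stable submodules; (4) extract the weak admissibility conditions and the irreducibility dichotomy.

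First, I would exploit F-semisimplicity: the $E$-linear restriction $\varphi^f_{\mid \mathbb{D}_i}$ on each summand $\mathbb{D}_i = e_i\mathbb{D}$ is semisimple, and these restrictions are conjugate under $\varphi$, hence share the same characteristic polynomial. Non-scalarness forces two distinct eigenvalues $\lambda,\mu\in E^{\times}$ (enlarging $E$ to split them). The generalized eigenspace decomposition of $\varphi^f$ gives a splitting $\mathbb{D} = \mathbb{D}^{(\lambda)}\oplus\mathbb{D}^{(\mu)}$ as $E^{\mid\tau\mid}$-modules, with each summand free of rank one (since each eigenspace on each $\mathbb{D}_i$ is one-dimensional) and stable under $\varphi$ (since $\varphi$ commutes with $\varphi^f$). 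Choosing generators $\eta_1,\eta_2$ yields $\varphi(\eta_j) = \vec{\alpha}^{(j)}\eta_j$ with norms $\lambda$ and $\mu$ respectively, establishing the standard form with $\mathrm{Nm}_{\varphi}(\vec{\alpha})\ne\mathrm{Nm}_{\varphi}(\vec{\delta})$.

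Second, since the labeled Hodge-Tate weights at $\tau_i$ are $\{0,-k_i\}$, the piece $\mathrm{Fil}^{\mathrm{j}}\mathbb{D}_i$ equals $\mathbb{D}_i$ for $j\le 0$, is a single $E$-line $L_i = E(x_i\eta_1+y_i\eta_2)$ with $(x_i,y_i)\ne(0,0)$ for $1\le j\le k_i$, and vanishes for $j\ge 1+k_i$; reassembling via $\mathrm{Fil}^{\mathrm{j}}\mathbb{D} = \bigoplus_i e_i\mathrm{Fil}^{\mathrm{j}}\mathbb{D}_i$ and indexing by the ordered distinct weights $w_0<\cdots<w_{t-1}$ gives the stated formula. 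For the submodule classification, any $\varphi$-stable $E^{\mid\tau\mid}$-submodule decomposes as $V = \bigoplus_i V_i$ with $V_i\subseteq\mathbb{D}_i$, and $\varphi$-semilinear bijectivity forces equal $E$-dimensions across $i$. In the rank-one case I would write $V_i = E(a_ie_i\eta_1+b_ie_i\eta_2)$ and translate $\varphi(V_i)\subseteq V_{i-1}$ into linear relations between $(a_i,b_i)$ and $(a_{i-1},b_{i-1})$ involving $\alpha_{i-1}$ and $\delta_{i-1}$; chaining these around the cyclic indexing once produces a self-compatibility that reduces to a factor $\mathrm{Nm}_{\varphi}(\vec{\alpha})/\mathrm{Nm}_{\varphi}(\vec{\delta})\neq 1$, forcing either all $a_i=0$ or all $b_i=0$, whence $V\in\{\mathbb{D}_1,\mathbb{D}_2\}$.

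Third, weak admissibility reduces to checking $t_N\ge t_H$ on each $\varphi$-stable submodule with equality on $\mathbb{D}$, where $t_H$ is minus the sum of labeled Hodge-Tate weights and $t_N$ the $p$-adic valuation of $\det(\varphi^f)$ on the submodule. On $\mathbb{D}$ this produces $\mathrm{v}_{\mathrm{p}}(\mathrm{Nm}_{\varphi}(\vec{\alpha})\mathrm{Nm}_{\varphi}(\vec{\delta})) = \sum k_i$, namely (1). On $\mathbb{D}_1$ the induced filtration picks up the weight $k_i$ at $\tau_i$ exactly when $E\cdot e_i\eta_1 = L_i$, i.e.\ exactly when $y_i=0$, so $t_H(\mathbb{D}_1) = \sum_{y_i=0}k_i$ and $t_N(\mathbb{D}_1) = \mathrm{v}_{\mathrm{p}}(\mathrm{Nm}_{\varphi}(\vec{\alpha}))$, yielding (2); (3) is symmetric.

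Finally, under weak admissibility, $\mathbb{D}$ is irreducible in the category of weakly admissible filtered $\varphi$-modules iff neither $\mathbb{D}_1$ nor $\mathbb{D}_2$ is weakly admissible as a sub-object, i.e.\ both (2) and (3) are strict. It is split-reducible iff both $\mathbb{D}_1,\mathbb{D}_2$ are weakly admissible (equalities in (2), (3)) and the filtration splits as $\mathrm{Fil}^{\bullet}\mathbb{D}_1\oplus\mathrm{Fil}^{\bullet}\mathbb{D}_2$; this splitting happens precisely when each line $L_i$ with $k_i>0$ lies along one of $\eta_1,\eta_2$, equivalently $I_0^{+}\cap J_{\vec{x}}\cap J_{\vec{y}}=\varnothing$. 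All remaining cases are non-split reducible. I expect the main obstacle to be the simultaneous diagonalization of step one: although $\varphi^f$ splits cleanly componentwise, verifying that its eigen-decomposition respects the $E^{\mid\tau\mid}$-module structure so that the two eigen-summands are globally free of rank one, and that the resulting $\eta_1,\eta_2$ give a genuine basis of $\mathbb{D}$ with $\vec{\alpha},\vec{\delta}\in(E^{\times})^{\mid\tau\mid}$ of distinct norm, requires careful bookkeeping across the $f$ components and may need enlarging $E$.
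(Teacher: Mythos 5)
Your proof is correct, and it fills a genuine gap: in this paper the proposition is stated without proof and deferred to \cite{DO10} (\S\S 2.1, 4, 7 of that reference), so there is no internal proof to compare against. Your four stages recover the expected argument.

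A few remarks to tighten the presentation. In stage one, the point you flag as a potential obstacle in fact resolves cleanly: $\varphi^{f}$ is $E^{\mid\tau\mid}$-linear because $\mathcal{\varphi}^{f}=\mathrm{id}$ on $E^{\mid\tau\mid}$, so its eigenspaces are $E^{\mid\tau\mid}$-submodules; the restriction $\varphi\colon\mathbb{D}_{i}\to\mathbb{D}_{i-1}$ is $E$-linear (scalars from $E\subset E^{\mid\tau\mid}$ sit diagonally, hence are $\mathcal{\varphi}$-fixed) and conjugates $\varphi^{f}\vert_{\mathbb{D}_{i}}$ to $\varphi^{f}\vert_{\mathbb{D}_{i-1}}$, so all $f$ components have the same characteristic polynomial; enlarging $E$ to split it, each eigenspace meets each $\mathbb{D}_{i}$ in an $E$-line and is therefore free of rank one over $E^{\mid\tau\mid}$. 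In stage three, your chain of proportionality relations $b_{i-1}=b_{i}\delta_{i-1}/\alpha_{i-1}$ (after scaling $a_{i}=1$) yields $\prod b_{i}=\prod b_{i}\cdot\mathrm{Nm}_{\varphi}(\vec{\delta})/\mathrm{Nm}_{\varphi}(\vec{\alpha})$, and the norm hypothesis then forces some $b_{j}=0$; propagating around the cycle gives all $b_{i}=0$, so this is airtight. Finally, in part (ii) your conjunction ``both $\mathbb{D}_{1},\mathbb{D}_{2}$ weakly admissible \emph{and} the filtration splits'' is redundant: given (1), adding the two equalities gives $\sum_{y_{i}=0}k_{i}+\sum_{x_{i}=0}k_{i}=\sum_{i}k_{i}$, which (since no $i$ has $x_{i}=y_{i}=0$) forces $I_{0}^{+}\cap J_{\vec{x}}\cap J_{\vec{y}}=\varnothing$, and this already implies the filtration splits componentwise. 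Equivalently, one can argue abstractly that in the abelian category of weakly admissible filtered $\varphi$-modules, a morphism bijective on the underlying modules is automatically an isomorphism, so the equalities alone give the splitting. Worth making that explicit, since the proposition asserts the equivalence of the three conditions rather than a conjunction of two of them.
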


\noindent \noindent \noindent In Proposition \ref{class thm}, if $\mathrm{v}%
_{\mathrm{p}}(\mathrm{Nm}_{\varphi }(\vec{\alpha}))=\sum\limits_{\{i\in
I_{0}:\ y_{i}=0\}}k_{i},$ the only nontrivial weakly admissible submodule is 
$(\mathbb{D}_{1},\varphi ).$ If $\mathrm{v}_{\mathrm{p}}(\mathrm{Nm}%
_{\varphi }(\vec{\delta}))=\sum\limits_{\{i\in I_{0}:\ x_{i}=0\}}k_{i},$
then the only nontrivial weakly admissible submodule is $(\mathbb{D}%
_{2},\varphi ).\ $\noindent If $(\mathbb{D},\varphi )$ is not F-semisimple,
after extending $E$ if necessary, there exists an ordered basis $\underline{%
\eta }=(\eta _{1},\eta _{2})$ of $\mathbb{D}$ over $E^{\mid \tau \mid }$
with respect to which the matrix of Frobenius takes the form%
\begin{equation*}
\mathrm{Mat}_{\underline{\eta }}(\varphi )=\left( 
\begin{array}{cc}
\vec{\alpha} & \vec{0} \\ 
\vec{\gamma} & \vec{\alpha}%
\end{array}%
\right)
\end{equation*}%
for some vectors $\vec{\alpha}\in $ $(E^{\times })^{\mid \tau \mid }\ $and $%
\vec{\gamma}\in E\ $(see \cite[\S 2.1]{DO10}).$\ $The filtration in this
basis has the shape of formula \ref{Filitations}. The filtered $\varphi $%
-module$\ (\mathbb{D},\varphi )$ is weakly admissible if any only if $2\cdot 
\mathrm{v}_{\mathrm{p}}(\mathrm{Nm}_{\varphi }(\vec{\alpha}%
))=\sum\limits_{i\in I_{0}}k_{i}$ and $\mathrm{v}_{\mathrm{p}}(\mathrm{Nm}%
_{\varphi }(\vec{\alpha}))\geq \sum\limits_{\{i\in I_{0}:x_{i}=0\}}k_{i}.$
The corresponding crystalline representation is irreducible if and only if
the inequality is strict and reducible, non-split otherwise. In this case,
the only $\varphi $-stable weakly admissible submodule is $(\mathbb{D}%
_{2},\varphi )\ $(see also \cite[\S\ 5.4]{DO10}). If $(\mathbb{D},\varphi )$
is F-scalar,\ there exists an ordered basis $\underline{\eta }=(\eta
_{1},\eta _{2})$ of $\mathbb{D}$ over $E^{\mid \tau \mid }$ with respect to
which $\mathrm{Mat}_{\underline{\eta }}(\varphi )=\mathrm{diag}\left( \alpha
\cdot \vec{1},\alpha \cdot \vec{1}\right) $ for some $\alpha \in E^{\times
}\ $and the filtration is as in formula \ref{Filitations}. The only $\varphi 
$-stable submodules are the $\mathbb{D}_{i},$ $i=1,2$ and $\mathbb{D}%
(c)=\left( E^{\mid \tau \mid }\right) (\eta _{1}+c\cdot \vec{1}\cdot \eta
_{2})$ for any $c\in E^{\times }$ (cf. \cite[\S 5.3]{DO10}). To summarize,
we have the following.

\begin{proposition}
\label{reducible section prop}Let $(\mathbb{D},\varphi )$ be a reducible
weakly admissible rank two filtered $\varphi $-module over $E^{\mid \tau
\mid },$ with labeled Hodge-Tate weights $\{0,-k_{i}\}_{\tau _{i}}.$ After
enlarging $E$ if necessary, there exists an ordered basis $\underline{\eta }%
=(\eta _{1},\eta _{2})$ of $\mathbb{D}$ over $E^{\mid \tau \mid }$ with
respect to which the matrix of Frobenius takes the form $\mathrm{Mat}_{%
\underline{\eta }}(\varphi )=\left( 
\begin{array}{cc}
\vec{\alpha} & \vec{0} \\ 
\ast & \vec{\delta}%
\end{array}%
\right) $ and is such that $\mathbb{D}_{2}=\left( E^{\mid \tau \mid }\right)
\eta _{2}$ is a $\varphi $-stable, weakly admissible submodule.
\end{proposition}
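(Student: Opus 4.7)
The plan is to perform a case analysis on the structure of $\varphi^{f}$ acting on $\mathbb{D}$, following the trichotomy (F-semisimple nonscalar / non-F-semisimple / F-scalar) recalled in the paragraphs immediately preceding the statement. In each case, reducibility of $\mathbb{D}$ produces a rank-one $\varphi$-stable weakly admissible submodule, and we take this to be $\mathbb{D}_{2}$; extending to a basis and reading off the matrix of $\varphi$ then completes the argument.

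More concretely, if $\varphi^{f}$ is F-semisimple and nonscalar, Proposition \ref{class thm} provides a basis over $E^{\mid \tau \mid}$ in which $\mathrm{Mat}_{\underline{\eta}}(\varphi) = \mathrm{diag}(\vec{\alpha},\vec{\delta})$ and the only nontrivial $\varphi$-stable submodules are $\mathbb{D}_{1}$ and $\mathbb{D}_{2}$; reducibility forces at least one of these to be weakly admissible, and after possibly swapping $\eta_{1} \leftrightarrow \eta_{2}$ we may arrange that it is $\mathbb{D}_{2}$ (with $\vec{\ast}=\vec 0$ in this case). If $\varphi^{f}$ is non-F-semisimple, the discussion after Proposition \ref{class thm} directly furnishes a basis in which the Frobenius matrix has the shape $\bigl(\begin{smallmatrix}\vec{\alpha}&\vec{0}\\ \vec{\gamma}&\vec{\alpha}\end{smallmatrix}\bigr)$ and $(\mathbb{D}_{2},\varphi)$ is the unique $\varphi$-stable weakly admissible submodule; no change of basis is needed. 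If $\varphi^{f}$ is F-scalar, then in addition to $\mathbb{D}_{1}$ and $\mathbb{D}_{2}$ every line $\mathbb{D}(c) = (E^{\mid \tau \mid})(\eta_{1} + c\cdot \vec{1}\cdot \eta_{2})$ is $\varphi$-stable, and reducibility guarantees at least one of these lines is weakly admissible; after possibly enlarging $E$ to contain the necessary scalar $c$ and changing basis so that this line becomes the second coordinate axis, we again identify an appropriate $\eta_{2}$.

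Once $\eta_{2}$ is chosen, the construction of $\eta_{1}$ is formal. The coefficient ring $E^{\mid \tau \mid} = \prod_{i=0}^{f-1} E$ is a finite product of fields, so the exact sequence $0 \to \mathbb{D}_{2} \to \mathbb{D} \to \mathbb{D}/\mathbb{D}_{2} \to 0$ splits, the quotient is free of rank one, and any lift $\eta_{1} \in \mathbb{D}$ of a generator of $\mathbb{D}/\mathbb{D}_{2}$ completes $(\eta_{1},\eta_{2})$ to a basis. The $\varphi$-stability of $\mathbb{D}_{2}$ then immediately forces $\varphi(\eta_{2}) = \vec{\delta}\,\eta_{2}$ and $\varphi(\eta_{1}) = \vec{\alpha}\,\eta_{1} + \vec{\ast}\,\eta_{2}$, giving precisely the claimed triangular shape.

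The main obstacle is the F-scalar case: there the set of $\varphi$-stable lines is a continuous family parameterized by $c \in E^{\times}$, and one must verify that at least one member of this family is weakly admissible and identify which. This reduces to computing the filtration induced on $\mathbb{D}(c)$ from the filtration of $\mathbb{D}$ (given by formula \eqref{ftrt}) and checking the Hodge--Newton inequality coordinate by coordinate; it is precisely here that one may genuinely need to extend $E$ to capture the correct value of $c$, which accounts for the \emph{after enlarging $E$ if necessary} clause in the statement. The other two cases of the trichotomy require no new input beyond what Proposition \ref{class thm} and its follow-up discussion already supply.
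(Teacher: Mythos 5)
Your proposal follows the same trichotomy (F-semisimple nonscalar, non-F-semisimple, F-scalar) that the paper uses in the discussion immediately preceding the proposition, and the argument is correct. One small misattribution: the ``after enlarging $E$ if necessary'' clause is there to split the characteristic polynomial of $\varphi^{f}$ and obtain the standard basis of Proposition \ref{class thm} (the F-semisimple and non-F-semisimple cases), not to ``capture the correct value of $c$'' in the F-scalar case --- since any nontrivial weakly admissible $E^{\mid\tau\mid}$-submodule is automatically of the form $\mathbb{D}_{1}$, $\mathbb{D}_{2}$, or $\mathbb{D}(c)$ with $c$ already in $E^{\times}$, no further extension is required once the scalar form of $\mathrm{Mat}_{\underline{\eta}}(\varphi)$ has been achieved.
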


\noindent \noindent The following propositions which will be used in \S \S\ %
\ref{irreducibles} and \ref{red}.

\begin{proposition}
\label{existance of inf families}Let $(\mathbb{D},\varphi )$ be a rank two
weakly admissible filtered $\mathcal{\varphi }$-modules. If $\mathrm{v}_{%
\mathrm{p}}\left( \mathrm{Tr}(\varphi ^{f})\right) =0,$ then the
corresponding crystalline representation is reducible.
\end{proposition}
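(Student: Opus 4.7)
The plan is to argue by cases on the F-semisimplicity type of $(\mathbb{D}, \varphi)$, using Proposition \ref{class thm} together with the classification of non-F-semisimple and F-scalar rank two filtered $\varphi$-modules recalled immediately after it. In each case, the trace $\mathrm{Tr}(\varphi^f)$ is expressible in terms of $\varphi$-norms of the diagonal entries of $\varphi$, and weak admissibility controls the $p$-adic valuations of those norms.

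First I would dispose of the F-scalar and non-F-semisimple cases uniformly: after possibly enlarging $E$ and choosing the appropriate normal form, the matrix of $\varphi$ has equal ``diagonal'' entries $\vec{\alpha}$, so a direct computation using the semilinearity of $\varphi$ gives $\mathrm{Tr}(\varphi^f) = 2\,\mathrm{Nm}_{\varphi}(\vec{\alpha})$, while weak admissibility forces $2\,\mathrm{v}_{\mathrm{p}}(\mathrm{Nm}_{\varphi}(\vec{\alpha})) = \sum_{i} k_{i}$. The hypothesis $\mathrm{v}_{\mathrm{p}}(\mathrm{Tr}(\varphi^f)) = 0$ then forces $\sum_{i} k_{i} \in \{0, -2\}$ (with $-2$ possible only when $p = 2$), which contradicts our standing assumption that $k_i \geq 0$ with at least one $k_i > 0$. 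Hence these cases do not occur under the hypothesis and are vacuously reducible.

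The main case is F-semisimple non-scalar. After possibly enlarging $E$, I would pick a standard basis as in Proposition \ref{class thm} so that $\mathrm{Mat}_{\underline{\eta}}(\varphi) = \mathrm{diag}(\vec{\alpha}, \vec{\delta})$ with $\mathrm{Nm}_{\varphi}(\vec{\alpha}) \neq \mathrm{Nm}_{\varphi}(\vec{\delta})$, and the filtration as in equation \eqref{Filitations}. A direct computation yields $\mathrm{Tr}(\varphi^f) = \mathrm{Nm}_{\varphi}(\vec{\alpha}) + \mathrm{Nm}_{\varphi}(\vec{\delta})$, where we identify the $\varphi$-fixed diagonal of $E^{\mid \tau \mid}$ with $E$. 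Conditions $(2)$ and $(3)$ of Proposition \ref{class thm} give $\mathrm{v}_{\mathrm{p}}(\mathrm{Nm}_{\varphi}(\vec{\alpha})) \geq 0$ and $\mathrm{v}_{\mathrm{p}}(\mathrm{Nm}_{\varphi}(\vec{\delta})) \geq 0$, so the hypothesis forces one of them, say $\mathrm{v}_{\mathrm{p}}(\mathrm{Nm}_{\varphi}(\vec{\alpha}))$, to be exactly $0$; otherwise the sum would have strictly positive valuation. Since the right-hand side $\sum_{y_i = 0} k_i$ of $(2)$ is non-negative, the equality $\mathrm{v}_{\mathrm{p}}(\mathrm{Nm}_{\varphi}(\vec{\alpha})) = 0$ forces $(2)$ to be an equality. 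By Proposition \ref{class thm}(i), strict inequality in $(2)$ was precisely the condition for irreducibility; hence the representation is reducible, with $(\mathbb{D}_1, \varphi)$ a non-trivial weakly admissible submodule.

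The main obstacle is essentially bookkeeping across the three normal forms rather than any deep step: in particular one must track the factor of $2$ in the trace so as to handle $p = 2$ separately in the first two cases. Once the formula $\mathrm{Tr}(\varphi^f) = \mathrm{Nm}_{\varphi}(\vec{\alpha}) + \mathrm{Nm}_{\varphi}(\vec{\delta})$ is in hand, the application of the weak admissibility bounds from Proposition \ref{class thm} is immediate.
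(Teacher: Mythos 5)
Your proof is correct. For the F-scalar and non-F-semisimple cases you argue exactly as the paper does: in those normal forms $\mathrm{Tr}(\varphi^f) = 2\,\mathrm{Nm}_\varphi(\vec{\alpha})$, and weak admissibility gives $2\,\mathrm{v}_p(\mathrm{Nm}_\varphi(\vec{\alpha})) = \sum_i k_i > 0$, so $\mathrm{v}_p(\mathrm{Tr}(\varphi^f)) > 0$ and those cases are excluded by the hypothesis. (Your detour through $\sum_i k_i \in \{0,-2\}$ is more circuitous than necessary but amounts to the same thing.) Where you genuinely diverge from the paper is in the F-semisimple, non-scalar case: the paper simply cites \cite[Corollary 7.2]{DO10}, whereas you give a short self-contained argument. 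You compute $\mathrm{Tr}(\varphi^f) = \mathrm{Nm}_\varphi(\vec{\alpha}) + \mathrm{Nm}_\varphi(\vec{\delta})$, observe that conditions (2) and (3) of Proposition \ref{class thm} give both norms non-negative valuation, deduce from the ultrametric inequality that the hypothesis forces one of them to have valuation exactly $0$, and then use the non-negativity of the bound $\sum_{y_i = 0} k_i$ to conclude that the corresponding inequality in Proposition \ref{class thm} is an equality, hence by part (i) of that proposition the module is reducible. This is a clean argument and it has the advantage of keeping the proof within the paper rather than outsourcing the key case; it also makes explicit which $\varphi$-stable submodule witnesses reducibility. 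The trade-off is that it relies on the specific normal form and weak admissibility inequalities of Proposition \ref{class thm}, whereas the cited corollary from \cite{DO10} presumably packages that once and for all.
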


\begin{proof}
If $\mathbb{D}$ is F-semisimple and nonscalar, see \cite[Corollary 7.2]{DO10}%
. Suppose that this is not the case. Since we assume that $k_{i}>0$ for at
least one $i,$ for any F-scalar or non-F-semisimple filtered $\varphi $%
-module with labeled weights $\{-k_{i},0\}_{\tau _{i}},$ $\mathrm{v}_{%
\mathrm{p}}\left( \mathrm{Tr}(\varphi ^{f})\right) \not=0.$ Indeed, in this
case there exists an ordered basis $\underline{\eta }$ of $\mathbb{D}$ over $%
E^{\mid \tau \mid }$ with respect to which the matrix of Frobenius takes the
form%
\begin{equation*}
\mathrm{Mat}_{\underline{\eta }}(\varphi )=\left( 
\begin{array}{cc}
\vec{\alpha} & \vec{0} \\ 
\vec{\gamma} & \vec{\alpha}%
\end{array}%
\right)
\end{equation*}%
for some vectors $\vec{\alpha}\in $ $(E^{\times })^{\mid \tau \mid }\ $and $%
\vec{\gamma}\in E\ $(see \cite[\S 2.1]{DO10}). Weak admissibility implies
that $2\cdot \mathrm{v}_{\mathrm{p}}(\mathrm{Nm}_{\varphi }(\vec{\alpha}%
))=\sum\limits_{i\in I_{0}}k_{i}>0\ $(see \cite[Propositions 4.3 and 4.4]%
{DO10}), therefore $\mathrm{v}_{\mathrm{p}}\left( \mathrm{Tr}(\varphi
^{f})\right) =\mathrm{v}_{\mathrm{p}}\left( 2\cdot \mathrm{Nm}_{\varphi }(%
\vec{\alpha})\right) >0.$
\end{proof}

\noindent \noindent The following lemma allows us to compute determinants of
two-dimensional crystalline representations in terms of their labeled
Hodge-Tate weights.

\begin{lemma}
\label{callmedet}If $(\mathbb{D},\varphi )$ is a rank two weakly admissible
filtered $\varphi $-module over $K$ with $E$-coefficients and labeled
Hodge-Tate weights $\{0,-k_{i}\}_{\tau _{i}},$ then $(\wedge _{E\otimes
K}^{2}\mathbb{D},\wedge _{E\otimes K}^{2}\varphi )$ is weakly admissible
with labeled Hodge-Tate weights $\{-k_{i}\}_{\tau _{i}}.$
\end{lemma}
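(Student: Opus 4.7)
The plan is to handle the two assertions separately: first the computation of the labeled Hodge-Tate weights of $\wedge_{E\otimes K}^2 \mathbb{D}$, and then its weak admissibility. Both reduce to essentially formal componentwise calculations once I work through the idempotent decomposition $e_0,\ldots,e_{f-1}$ of $E\otimes K$.

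For the labeled Hodge-Tate weights, I would work componentwise using $\wedge_{E\otimes K}^2 \mathbb{D} = \bigoplus_{i=0}^{f-1} e_i(\wedge^2 \mathbb{D}) = \bigoplus_i \wedge_E^2(e_i \mathbb{D})$, each summand one-dimensional over $E$. The induced filtration on a wedge is
\[
e_i \mathrm{Fil}^j(\wedge^2 \mathbb{D}) = \sum_{a+b=j} (e_i \mathrm{Fil}^a \mathbb{D})\wedge (e_i \mathrm{Fil}^b \mathbb{D}).
\]
The hypothesis on the labeled weights $\{0,-k_i\}_{\tau_i}$ says $e_i \mathrm{Fil}^a \mathbb{D} = e_i \mathbb{D}$ for $a\leq 0$, equals a single line $L_i\subset e_i\mathbb{D}$ for $1\leq a\leq k_i$, and vanishes for $a\geq k_i+1$. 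For such a wedge to fill the one-dimensional space $\wedge_E^2(e_i\mathbb{D})$ the two factors must span $e_i\mathbb{D}$, which forces one of the indices to be $\leq 0$ and the other to be $\leq k_i$ (the case $1\leq a,b\leq k_i$ gives $L_i\wedge L_i=0$). Summing over $a+b=j$ yields $e_i\mathrm{Fil}^j(\wedge^2\mathbb{D}) = \wedge_E^2(e_i\mathbb{D})$ for $j\leq k_i$ and $0$ for $j\geq k_i+1$, producing a unique jump at $k_i$. Hence the labeled Hodge-Tate weight with respect to $\tau_i$ is $-k_i$, as claimed.

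For weak admissibility, since $\wedge^2 \mathbb{D}$ is free of rank one over $E\otimes K$ it has no nontrivial proper $\varphi$-stable sub-$(E\otimes K)$-module, so weak admissibility reduces to the single equality $t_H(\wedge^2\mathbb{D}) = t_N(\wedge^2\mathbb{D})$. By the first part $t_H(\wedge^2\mathbb{D}) = \sum_i k_i = t_H(\mathbb{D})$, the latter being the Hodge number of $\mathbb{D}$ read off from the weights $\{0,-k_i\}_{\tau_i}$. For the Newton number I would pick any ordered basis of $\mathbb{D}$ in which $\varphi$ acts componentwise by matrices $A_i\in M_2(E)$; then on the wedge $\varphi$ acts by the scalars $\det A_i$, and the cyclic norm identity $\prod_i\det A_i = \det(\prod_i A_i)$ yields $t_N(\wedge^2 \mathbb{D}) = v_p(\prod_i\det A_i) = v_p(\det\mathrm{Nm}_\varphi(A)) = t_N(\mathbb{D})$. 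Combining with the weak admissibility hypothesis $t_H(\mathbb{D}) = t_N(\mathbb{D})$ concludes.

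The main potential obstacle is the bookkeeping for $t_H$ and $t_N$ in the presence of the coefficient field $E$: I need to be consistent about the normalization convention used by Colmez--Fontaine for nontrivial coefficients, and to confirm that the single-jump filtration on the rank-one wedge really does produce the Hodge number $\sum_i k_i$ under that convention. An alternative and cleaner route is to simply cite the general result (Colmez--Fontaine, and its extension to nontrivial coefficients in \cite{BM02,CF00} as referenced earlier in the paper) that exterior powers of weakly admissible filtered $\varphi$-modules are weakly admissible; under this route only the Hodge-Tate weight calculation needs to be carried out by hand.
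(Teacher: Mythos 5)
Your proof is correct and essentially parallels the paper's: both compute the induced filtration on $\wedge^2\mathbb{D}$ via $\mathrm{Fil}^j(\mathbb{D}\wedge\mathbb{D})=\sum_{j_1+j_2=j}\mathrm{Fil}^{j_1}\mathbb{D}\wedge\mathrm{Fil}^{j_2}\mathbb{D}$ and read off the single jump at $k_i$ in each component. The paper works in a standard basis and dispatches weak admissibility with "clear," whereas you argue componentwise via the idempotents and spell out that a rank-one module over $E^{\mid\tau\mid}$ with bijective $\varphi$ has no nontrivial $\varphi$-stable $E\otimes K$-submodule, so only $t_H=t_N$ needs checking — a modest but genuine improvement in completeness, and your determinant/norm identity for $t_N$ is the content the paper implicitly invokes.
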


\begin{proof}
Let $\underline{\eta }=(\eta _{1},\eta _{2})$ be a standard basis of $(%
\mathbb{D},\varphi )$ such that $\mathrm{Mat}_{\underline{\eta }}(\varphi )$
is as in Proposition \ref{reducible section prop} and $\mathrm{Fil}^{\mathrm{%
j}}\mathbb{D}$ as in Formula \ref{Filitations}. Clearly $(\wedge ^{2}\varphi
)(\eta _{1}\wedge \eta _{2})=\vec{\alpha}\cdot \vec{\delta}(\eta _{1}\wedge
\eta _{2}).$ Since $\mathrm{Fil}^{\mathrm{j}}(\mathbb{D}\wedge \mathbb{D}%
)=\sum\limits_{j_{1}+j_{2}=j}(\mathrm{Fil}^{\mathrm{j}_{\mathrm{1}}}\mathbb{D%
}\wedge _{E\otimes K}\mathrm{Fil}^{\mathrm{j}_{\mathrm{2}}}\mathbb{D})$ and $%
J_{\vec{x}}\cup J_{\vec{y}}=I_{0},$ a simple computation yields 
\begin{equation*}
\mathrm{Fil}^{\mathrm{j}}(\mathbb{D}\wedge \mathbb{D})=\left\{ 
\begin{array}{l}
E^{\mid \tau _{I_{0}}\mid }(\eta _{1}\wedge \eta _{2})\text{ \ }\mathrm{if}%
\text{\ }j\leq w_{0}, \\ 
E^{\mid \tau _{I_{1}}\mid }(\eta _{1}\wedge \eta _{2})\text{ \ }\mathrm{if}%
\text{ \ }1+w_{0}\leq j\leq w_{1}, \\ 
\ \ \ \ \ \ \ \ \ \ \ \ \ \ \ \ \ \ \ \ \cdots \cdots \\ 
E^{\mid \tau _{I_{t-1}}\mid }(\eta _{1}\wedge \eta _{2})\text{\ }\mathrm{if}%
\text{\ }1+w_{t-2}\leq j\leq w_{t-1}, \\ 
\ \ \ \ \ \ \ \ \ \ 0\text{ \ \ \ \ \ \ \ \ \ \ }\mathrm{if}\text{\ }j\geq
1+w_{t-1}.%
\end{array}%
\right.
\end{equation*}%
from which the statement about the labeled Hodge-Tate weights follows
immediately. Weak admissibility is clear.
\end{proof}

\begin{corollary}
\label{detV lemma}If $V\ $is the crystalline representation corresponding to 
$\mathbb{D},$ then 
\begin{equation*}
\det V\simeq \eta \cdot \chi _{e_{0}}^{k_{1}}\cdot \chi
_{e_{1}}^{k_{2}}\cdot \cdots \cdot \chi _{e_{f-2}}^{k_{f-1}}\cdot \chi
_{e_{f-1}}^{k_{0}}\ \text{and }(\det \overline{V})_{\mid I_{K}}\simeq \omega
_{f,\bar{\tau}_{0}}^{\alpha },
\end{equation*}%
where $\eta $ is an unramified character of $G_{K}\ $and $\alpha
=-\tsum\limits_{i=0}^{f-1}p^{i}k_{i}.$
\end{corollary}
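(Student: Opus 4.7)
The plan is to deduce the corollary almost immediately from Lemma \ref{callmedet}, combined with the classification of effective rank one crystalline characters in Corollary \ref{cor3.6} and the identification of the unramified part in Lemma \ref{chic prop}(i). First I would invoke the compatibility of Fontaine's functor with exterior powers (equivalently, with tensor products and duality, as stated in Theorem \ref{berger thm}(i) for Wach modules and known in general for the Colmez--Fontaine equivalence) to identify the crystalline character attached to $\wedge^{2}_{E\otimes K}\mathbb{D}$ with $\det V$. Lemma \ref{callmedet} then tells me that $\det V$ is an effective crystalline $E$-character of $G_{K}$ with labeled Hodge--Tate weights $\{-k_{i}\}_{\tau_{i}}$.

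Next, by Corollary \ref{cor3.6}, every such character is one of the characters $\chi_{c,\vec{k}}$ constructed in Proposition \ref{The crystalline characters}. The discussion immediately following Corollary \ref{cor3.6} decomposes
\[
\chi_{c,\vec{k}}=\chi_{c,\vec{0}}\cdot\chi_{0}^{k_{1}}\cdot\chi_{1}^{k_{2}}\cdots\chi_{f-2}^{k_{f-1}}\cdot\chi_{f-1}^{k_{0}},
\]
and Lemma \ref{chic prop}(i) identifies $\chi_{c,\vec{0}}$ with the unramified character $\eta$ of $G_{K_{f}}$ sending $\mathrm{Frob}_{K_{f}}\mapsto c$. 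Writing $\chi_{i}=\chi_{e_{i}}$ (with the convention of Proposition \ref{The crystalline characters}) gives the first displayed formula for $\det V$.

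For the inertial formula, I would restrict to $I_{K_{f}}$ and reduce modulo $\mathfrak{m}_{E}$. The unramified factor $\eta$ becomes trivial on $I_{K_{f}}$, so
\[
(\det\overline{V})_{\mid I_{K_{f}}}=\prod_{i=0}^{f-1}\bigl(\bar{\chi}_{e_{i}}\bigr)_{\mid I_{K_{f}}}^{k_{i+1}},
\]
with indices taken modulo $f$ (so that the factor $\chi_{e_{f-1}}^{k_{0}}$ appears). Lemma \ref{BDJ lemma}(i) gives $(\bar{\chi}_{e_{i}})_{\mid I_{K_{f}}}=\omega_{f,\bar{\tau}_{i+1}}^{-1}$, and Lemma \ref{BDJ lemma}(ii) gives $\omega_{f,\bar{\tau}_{i+1}}=\omega_{f,\bar{\tau}_{0}}^{p^{i+1}}$, so the exponent becomes $-\sum_{i=0}^{f-1}p^{i+1}k_{i+1}$.

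The only bookkeeping step is the cyclic reindexing: since $\omega_{f,\bar{\tau}_{0}}$ has order dividing $p^{f}-1$, we have $p^{f}\equiv 1$ in the exponent, and relabeling $j=i+1\pmod f$ turns $\sum_{i=0}^{f-1}p^{i+1}k_{i+1}$ into $\sum_{j=0}^{f-1}p^{j}k_{j}$. This yields $\alpha=-\sum_{i=0}^{f-1}p^{i}k_{i}$ as claimed. There is no real obstacle here; the only care required is to keep track of the index shift between the labels of the $\chi_{e_{i}}$ and of the $k_{i}$ in the product formula, which is exactly what the cyclic identity $p^{f}\equiv 1$ modulo the order of the fundamental character absorbs.
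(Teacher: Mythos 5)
Your proof is correct and takes essentially the same route as the paper: Lemma~\ref{callmedet} to compute the labeled Hodge--Tate weights of $\det V$, the classification of effective rank-one crystalline characters (you invoke Corollary~\ref{cor3.6} directly, where the paper equivalently twists by $(\chi_{e_0}^{k_1}\cdots\chi_{e_{f-1}}^{k_0})^{-1}$ and applies Proposition~\ref{rank 1 isom}), Lemma~\ref{chic prop}(i) to isolate the unramified factor, and Lemma~\ref{BDJ lemma} for the inertial formula. Your explicit index-shift computation at the end is the bookkeeping the paper leaves to the reader, and it checks out.
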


\begin{proof}
By Proposition \ref{The crystalline characters} and Lemma \ref{callmedet},
the crystalline character $\det V\otimes \left( \chi _{e_{0}}^{k_{1}}\cdot
\chi _{e_{1}}^{k_{2}}\cdot \cdots \cdot \chi _{e_{f-2}}^{k_{f-1}}\cdot \chi
_{e_{f-1}}^{k_{0}}\right) ^{-1}$ has labeled Hodge-Tate weights $\{0\}_{\tau
_{i}}.$ If the corresponding filtered $\varphi $-module has Frobenius
endomorphism $\varphi (\eta )=\vec{a}\cdot \eta ,$ then by Proposition \ref%
{rank 1 isom} $\mathrm{Nm}_{\varphi }(\vec{a})=c\cdot \vec{1}$ for some $%
c\in E^{\times }$ with $\mathrm{v}_{\mathrm{p}}(c)=0.$ By Lemma\ \ref{chic
prop} $\det V\otimes \left( \chi _{e_{0}}^{k_{1}}\cdot \chi
_{e_{1}}^{k_{2}}\cdot \cdots \cdot \chi _{e_{f-2}}^{k_{f-1}}\cdot \chi
_{e_{f-1}}^{k_{0}}\right) ^{-1}$ is the unramified character of $G_{K}$
which maps $\mathrm{Frob}_{K}$ to $c.$ The rest of the corollary follows
from Lemma \ref{BDJ lemma}.\noindent
\end{proof}

\noindent We recall the following well-known proposition, in which the field 
$K$ is assumed to have absolute inertia degree $f$ and need not be
unramified over $%
\mathbb{Q}
_{p}.$

\begin{proposition}
\cite[Prop. 2.7]{BRE07} \label{breuil prop}Let $\bar{\rho}:G_{K}\rightarrow 
\mathrm{GL}_{2}(\mathbb{\bar{F}}_{p})$ be a continuous representation. Then 
\begin{equation*}
\bar{\rho}_{\mid I_{K}}\simeq \left( 
\begin{array}{cc}
\omega _{2f}^{m} & \boldsymbol{\ast } \\ 
0 & \omega _{2f}^{mp^{f}}%
\end{array}%
\right)
\end{equation*}%
for some integer $m.$ The representation $\bar{\rho}$ is irreducible if and
only $1+p^{f}\nmid m,$ and in this case $\boldsymbol{\ast }=0.$
\end{proposition}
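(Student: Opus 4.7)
\medskip

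The plan is to exploit the two-step structure of the inertia group $I_K = P_K \rtimes (I_K/P_K)$ where $P_K$ is the wild (pro-$p$) inertia and $I_K/P_K$ is the tame quotient. First I would analyze the image of $P_K$. Any continuous homomorphism from a pro-$p$ group into $\mathrm{GL}_2(\bar{\mathbb{F}}_p)$ has image in a pro-$p$ subgroup, and such a subgroup must fix a vector in $\bar{\mathbb{F}}_p^2$ (an orbit argument), so is conjugate to a subgroup of the upper unipotent matrices. Thus, in a suitable basis, $\bar{\rho}(P_K)$ is upper unipotent. If $\bar{\rho}(P_K) \neq 1$, the $P_K$-fixed line is unique; since $P_K \triangleleft G_K$, this line is $G_K$-stable, so $\bar{\rho}$ is reducible. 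If $\bar{\rho}(P_K)=1$, then $\bar{\rho}_{|I_K}$ factors through the abelian tame quotient $I_K/P_K$, and since this quotient has prime-to-$p$ order (at each finite level) any two-dimensional representation into $\mathrm{GL}_2(\bar{\mathbb{F}}_p)$ is semisimple, hence diagonalizable. Either way, in a suitable basis $\bar{\rho}_{|I_K}$ takes the form $\begin{pmatrix}\chi_1 & \ast \\ 0 & \chi_2\end{pmatrix}$ with characters $\chi_i: I_K \to \bar{\mathbb{F}}_p^{\times}$ trivial on $P_K$.

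Next I would use the normality of $I_K$ in $G_K$ to extract the Frobenius compatibility. Any character of $I_K$ with values in $\bar{\mathbb{F}}_p^{\times}$ has finite prime-to-$p$ order and therefore factors through $k_{K_n}^{\times}$ for some $n$, and so is a product of fundamental characters of level dividing $n$. Conjugation by a lift $\phi$ of Frobenius acts on the tame inertia as the $q$-th power map ($q=p^f$), hence on characters by $\chi \mapsto \chi^{p^f}$. Since $\bar{\rho}$ extends to $G_K$, the multiset of diagonal characters is preserved by Frobenius conjugation: either $\chi_i^{p^f} = \chi_i$ for $i=1,2$ (pointwise fixed), or $\chi_1^{p^f}=\chi_2$ and $\chi_2^{p^f}=\chi_1$ (swapped). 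In the swapped case $\chi_1^{p^{2f}}=\chi_1$, so $\chi_1$ has order dividing $p^{2f}-1$ and can be written $\chi_1 = \omega_{2f}^{m}$, whence $\chi_2 = \omega_{2f}^{mp^f}$. In the pointwise-fixed case $\chi_i$ has order dividing $p^f-1$, so $\chi_i = \omega_f^{a_i}$; if moreover $\chi_1 = \chi_2 = \omega_f^{a}$, setting $m = a(1+p^f)$ gives $\omega_{2f}^m = \omega_{2f}^{mp^f} = \omega_f^a$ using $\omega_{2f}^{1+p^f} = \omega_f$ (Lemma~\ref{BDJ lemma}(iii)) together with $p^f(1+p^f) \equiv 1+p^f \pmod{p^{2f}-1}$.

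For the irreducibility characterization, I would argue both directions. If $\bar{\rho}$ is irreducible, then $\bar{\rho}(P_K) = 1$ (otherwise Step~1 produces a stable line), so $\bar{\rho}_{|I_K}$ factors through the abelian tame quotient and is semisimple (forcing $\ast = 0$); moreover $\chi_1 \neq \chi_2$ (else $\bar{\rho}(I_K)$ is scalar and $\bar{\rho}$ factors through the abelian unramified quotient mod scalars, contradicting irreducibility); and Frobenius cannot fix the characters pointwise (else each line extends to a $G_K$-stable line, contradicting irreducibility), so we are in the swapped case with $\chi_1 \neq \chi_2$, i.e.\ $\omega_{2f}^m \neq \omega_{2f}^{mp^f}$, which translates to $(p^{2f}-1) \nmid m(p^f-1)$, i.e.\ $1+p^f \nmid m$. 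Conversely, if $1+p^f\nmid m$, then $\omega_{2f}^m$ and $\omega_{2f}^{mp^f}$ form a genuine Frobenius orbit of size $2$, so $\bar{\rho}_{|I_K}$ is the restriction of an induced representation $\mathrm{Ind}_{K_2}^{K}(\psi)$ for a character $\psi$ of $G_{K_2}$ that does not extend to $G_K$; by Mackey's criterion this induction is irreducible, and hence $\bar{\rho}$ is irreducible.

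The main obstacle is the careful handling of Step~1 (ensuring pro-$p$ images in $\mathrm{GL}_2(\bar{\mathbb{F}}_p)$ are unipotent and making the conjugation argument precise using normality of $P_K$); the rest is essentially bookkeeping with fundamental characters and Frobenius actions, aided by Lemma~\ref{BDJ lemma}. I would otherwise cite standard references (e.g.\ Serre's \emph{Propri\'et\'es galoisiennes} for the tame inertia structure and the $q$-th power Frobenius action) rather than reprove these foundational facts.
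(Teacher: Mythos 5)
Your overall strategy is the standard one (wild inertia is pro-$p$ so either its image is trivial or it fixes a unique, hence $G_K$-stable, line; on the tame quotient the diagonal characters factor through fundamental characters and Frobenius conjugation permutes them by $\chi\mapsto\chi^{p^f}$), and your treatment of the irreducibility criterion is correct: a $G_K$-stable line would carry an $I_K$-character fixed under $\chi\mapsto\chi^{p^f}$, so irreducibility is equivalent to $\omega_{2f}^{m}\neq\omega_{2f}^{mp^f}$, i.e.\ $1+p^f\nmid m$, and then the distinctness of the diagonal characters together with triviality of wild inertia forces $\boldsymbol{\ast}=0$. That is the part of the proposition actually invoked in the paper (Corollary~\ref{redmodp}), and you prove it.

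However, your case analysis for the general shape of $\bar{\rho}_{\mid I_K}$ has a gap: in the \emph{pointwise-fixed} case you only treat the sub-case $\chi_1=\chi_2$, and the remaining sub-case $\chi_1=\omega_f^{a_1}\neq\omega_f^{a_2}=\chi_2$ is not addressed. In fact this sub-case cannot be forced into the asserted form. If $\omega_{2f}^{m}=\omega_f^{a_1}=\omega_{2f}^{a_1(1+p^f)}$ then $m\equiv a_1(1+p^f)\pmod{p^{2f}-1}$, and since $p^f(1+p^f)\equiv 1+p^f\pmod{p^{2f}-1}$ one gets $\omega_{2f}^{mp^f}=\omega_f^{a_1}$ again, never $\omega_f^{a_2}$ when $a_1\not\equiv a_2\pmod{p^f-1}$. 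Concretely, for $K=\mathbb{Q}_p$ and $\bar{\rho}=1\oplus\omega$ the pair $\{1,\omega\}$ is not of the form $\{\omega_2^{m},\omega_2^{mp}\}$ for any $m$. So either the display in the statement must be read as applying only to the irreducible case (with the reducible case replaced by $\omega_f^{a}\oplus\omega_f^{b}$ up to extension), or it needs adjustment; your proof should have either completed this sub-case or flagged that the asserted form fails there. This does not affect the irreducibility equivalence, which is what the paper uses, but as a proof of the proposition as literally stated it is incomplete.
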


\begin{corollary}
\label{redmodp}Let $\chi $ be a crystalline character of $G_{K_{f}}$ with
labeled Hodge-Tate weights $\{-k_{i}\}_{\tau _{i}},$ with $k_{i}\ $arbitrary
integers for all $i=0,1,...,2f-1.$ If $V=\mathrm{Ind}_{K_{2f}}^{K_{f}}\left(
\chi \right) ,$ then $\overline{V}$ (reduction with respect to any lattice)
is irreducible if and only if $1+p^{f}\nmid
\tsum\limits_{i=0}^{2f-1}p^{i}k_{i}.$
\end{corollary}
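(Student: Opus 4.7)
The plan is to reduce everything to characters on inertia and then apply Proposition \ref{breuil prop}. First, by Corollary \ref{cor3.6} (applied to $G_{K_{2f}}$ in place of $G_{K_{f}}$, enlarging $E$ if needed), one may write $\chi = \eta \cdot \chi_{0}^{k_{1}} \chi_{1}^{k_{2}} \cdots \chi_{2f-2}^{k_{2f-1}} \chi_{2f-1}^{k_{0}}$ for some unramified character $\eta$ of $G_{K_{2f}}$. Since $\bar{\eta}$ is trivial on inertia, Lemma \ref{BDJ lemma} gives
\[
\bar{\chi}_{\mid I_{K_{2f}}} \;=\; \prod_{j=0}^{2f-1} \omega_{2f,\bar{\tau}_{j}}^{-k_{j}} \;=\; \omega_{2f,\bar{\tau}_{0}}^{\beta}, \qquad \beta := -\sum_{i=0}^{2f-1} p^{i} k_{i}.
\]

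Next, since $K_{2f}/K_{f}$ is unramified, $I_{K_{f}} = I_{K_{2f}}$. Fix a lift $h \in G_{K_{f}}$ of the nontrivial element of $\mathrm{Gal}(K_{2f}/K_{f})$; then $V_{\mid I_{K_{f}}} \simeq \chi_{\mid I_{K_{2f}}} \oplus \chi^{h}_{\mid I_{K_{2f}}}$. Because $h$ induces $x \mapsto x^{p^{f}}$ on the residue field of $K_{2f}$, conjugation by $h$ raises tame characters of $I_{K_{2f}}$ to the $p^{f}$-th power, so $\bar{\chi^{h}}_{\mid I_{K_{2f}}} = \omega_{2f,\bar{\tau}_{0}}^{p^{f}\beta}$. (One can also derive this directly: by Corollary \ref{perm of indices char} the character $\chi^{h}$ has labeled Hodge--Tate weights $\{-k_{i+f}\}_{\tau_{i}}$, and the same computation as above yields the same exponent after using $p^{-f} \equiv p^{f} \pmod{p^{2f}-1}$.)

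Finally, Proposition \ref{breuil prop} provides an integer $m$, unique modulo $p^{2f}-1$ up to the involution $m \leftrightarrow p^{f}m$, with $\bar{V}_{\mid I_{K_{f}}}$ upper-triangular with diagonal $(\omega_{2f}^{m}, \omega_{2f}^{p^{f}m})$, and $\bar{V}$ irreducible iff $1+p^{f} \nmid m$. Comparing semisimplifications via Brauer--Nesbitt yields $\{m,\, p^{f}m\} \equiv \{\beta,\, p^{f}\beta\} \pmod{p^{2f}-1}$; since $\gcd(p^{f},\, 1+p^{f}) = 1$, the condition $1+p^{f} \mid m$ is equivalent to $1+p^{f} \mid \beta$, hence to $1+p^{f} \mid \sum_{i=0}^{2f-1} p^{i} k_{i}$.

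The main technical point is purely bookkeeping: identifying $\bar{\chi^{h}}$ correctly (whether via the Frobenius action on tame inertia or via the permutation of labeled weights from Corollary \ref{perm of indices char}), and verifying that the Frobenius-orbit ambiguity in the exponent $m$ of Proposition \ref{breuil prop} does not affect the divisibility criterion by $1+p^{f}$.
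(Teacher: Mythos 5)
Your proof is correct and follows the same route as the paper's own (terse) proof, which simply cites Lemma \ref{BDJ lemma} and Proposition \ref{breuil prop}; you have expanded the intermediate computation of $\bar{\chi}_{\mid I}$ and $\bar{\chi^{h}}_{\mid I}$ and the coprimality observation $\gcd(p^{f},1+p^{f})=1$ that makes the conclusion independent of the Frobenius-orbit ambiguity. Nothing to change.
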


\begin{proof}
Follows immediately from Lemma \ref{BDJ lemma} and Proposition \ref{breuil
prop}.
\end{proof}

\subsection{Reductions of reducible two-dimensional crystalline
representations \label{reducible modulo p section}}

In this section, we compute the semisimplified modulo$\ p$ reduction of any
reducible two-dimensional crystalline representation of $G_{K_{f}}.$

\begin{lemma}
\label{ermhs}Let $k_{0},k_{1},...,k_{f-1}$ be arbitrary integers and let 
\begin{equation}
\mathrm{Fil}^{\mathrm{j}}\mathbb{D}=\left\{ 
\begin{array}{l}
E^{\mid \tau _{I_{0}}\mid }\eta \text{ \ \ \ }\mathrm{if}\text{\ }j\leq
w_{0}, \\ 
E^{\mid \tau _{I_{1}}\mid }\eta \ \text{ \ \ }\mathrm{if}\text{ \ }%
1+w_{0}\leq j\leq w_{1}, \\ 
\ \ \ \ \ \ \ \ \ \ \ \ \ \cdots \cdots \\ 
E^{\mid \tau _{I_{t-1}}\mid }\eta \text{ \ }\mathrm{if}\text{\ }%
1+w_{t-2}\leq j\leq w_{t-1}, \\ 
\ \ \ 0\text{\ \ \ \ \ \ \ \ \ }\mathrm{if}\text{\ }j\geq 1+w_{t-1}.%
\end{array}%
\right.  \label{lemaf}
\end{equation}%
For each $i\in I_{0},$%
\begin{equation*}
e_{i}\mathrm{Fil}^{\mathrm{j}}\mathbb{D}=\left\{ 
\begin{array}{l}
e_{i}E^{\mid \tau _{I_{0}}\mid }\eta \ \ \ \text{if }j\leq k_{i}, \\ 
\ \ \ \ \ 0\ \ \ \ \ \ \ \text{if }r\geq 1+k_{i}.%
\end{array}%
\right.
\end{equation*}
\end{lemma}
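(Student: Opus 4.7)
The plan is to unpack the definitions of the idempotents $f_{I_r} = \sum_{i\in I_r} e_i$ and reduce the claim to the tautology $e_i \cdot f_{I_r} = e_i$ if $i \in I_r$ and $e_i \cdot f_{I_r} = 0$ if $i \notin I_r$. Fix $i \in I_0$. Since the distinct values among the $k_j$ are exactly $w_0 < w_1 < \cdots < w_{t-1}$, there is a unique index $s \in \{0, 1, \ldots, t-1\}$ such that $k_i = w_s$. The key observation to exploit is the equivalence
$$i \in I_r \iff k_i > w_{r-1} \iff w_s > w_{r-1} \iff r \leq s,$$
together with the convention $w_{-1} = 0$, which makes the equivalence valid for $r = 0$ as well (since $I_0$ contains every $i$).

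First I would handle the case $j \leq k_i = w_s$. If $j \leq 0$ there is nothing to do since $\mathrm{Fil}^j \mathbb{D}$ is not displayed in the formula above $w_0$; otherwise $j$ lies in some band $1 + w_{r-1} \leq j \leq w_r$ with $r \leq s$ (taking $w_{-1} = 0$). For such $r$ the equivalence above gives $i \in I_r$, hence $e_i \cdot f_{I_r} = e_i$, and so $e_i \mathrm{Fil}^j \mathbb{D} = e_i \cdot f_{I_r} \cdot E^{\mid \tau \mid} \eta = e_i E^{\mid \tau \mid} \eta$, which is the desired answer $e_i E^{\mid \tau_{I_0} \mid} \eta$ (since $i \in I_0$).

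Next I would handle $j \geq 1 + k_i = 1 + w_s$. If $s = t-1$ then $j \geq 1 + w_{t-1}$ and $\mathrm{Fil}^j \mathbb{D} = 0$ by definition. Otherwise $j$ lies in some band $1 + w_{r-1} \leq j \leq w_r$ with $r > s$, or else $j \geq 1 + w_{t-1}$; the latter sub-case is immediate, and in the former the equivalence above gives $i \notin I_r$, hence $e_i \cdot f_{I_r} = 0$, and so $e_i \mathrm{Fil}^j \mathbb{D} = 0$.

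No step here is a real obstacle; the only point that requires a moment of care is bookkeeping for the edge cases $j \leq 0$ (resolved by reading the first line of the filtration in \eqref{lemaf}), $j = k_i = w_s$ with $s = 0$ (resolved by the convention $w_{-1} = 0$), and $s = t-1$ (resolved by the last line of the filtration). Once these are checked, the lemma reduces to the two equivalences displayed above.
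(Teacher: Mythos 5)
Your proof is correct and follows essentially the same line as the paper's: both locate the unique index $s$ (the paper's $r$) with $k_i = w_s$ and then read off $e_i f_{I_r}$ from the ordering of the $w_j$'s. The only nitpick is that your displayed chain of equivalences does not literally hold at $r=0$ when $k_i = 0$ (since $I_0$ is not defined as $\{i : k_i > w_{-1}\}$), but your parenthetical remark that $I_0$ contains every $i$ already closes that gap, so the argument stands.
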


\begin{proof}
Let $k_{i}=w_{r}$ for some $r\in \{1,...,t-1\}.$ Since $w_{r}>w_{r-1},$ $%
k_{i}>w_{r-1}$ and $i\in I_{r}.$ Also $i\not\in I_{r+1}$ since $k_{i}=w_{r}.$
The same assertion is clear for $r=0.$ Hence $e_{i}f_{I_{r}}=e_{i}$ and $%
e_{i}f_{I_{r+1}}=0.$ Multiplying formula \ref{lemaf} by $e_{i},$ we get%
\begin{equation*}
e_{i}\mathrm{Fil}^{\mathrm{j}}\mathbb{D}=\left\{ 
\begin{array}{l}
e_{i}E^{\mid \tau _{I_{0}}\mid }\eta \ \ \ \text{if }j\leq w_{r}, \\ 
\ \ \ \ \ 0\ \ \ \ \ \ \ \ \text{if }r\geq 1+w_{r}.%
\end{array}%
\right.
\end{equation*}
\end{proof}

\noindent Let $\underline{\eta }$ be an ordered basis of $\mathbb{D}$ as in
Proposition \ref{reducible section prop} and let $V$ be the corresponding
reducible crystalline representation. $V$ contains a subrepresentation $%
V_{2} $ which corresponds to the weakly admissible submodule $\mathbb{D}%
_{2}=\left( E^{\mid \tau \mid }\right) \eta _{2}.$ The filtration for the
ordered basis $\underline{\eta }$ is as in formula \ref{Filitations} for
some vectors $\vec{x},\vec{y}\in E^{\mid \tau \mid }.$ By Proposition 2.10
in \cite{DO10} (or by a direct computation), 
\begin{equation*}
\mathrm{Fil}^{\mathrm{j}}(\mathbb{D}_{2})=\mathbb{D}_{2}\cap \mathrm{Fil}^{%
\mathrm{j}}\mathbb{D}=\left\{ 
\begin{array}{l}
\mathbb{D}_{2}\text{\ \ \ \ \ \ \ \ \ \ \ }\mathrm{if}\text{\ \ }j\leq 0, \\ 
E^{\mid \tau _{I_{0,\vec{x}}}\mid }\eta _{2}\text{ \ \ }\mathrm{if}\text{\ \ 
}1\leq j\leq w_{0}, \\ 
\ \ \ \ \ \ \ \ \ \ \cdots \cdots \\ 
E^{\mid \tau _{I_{t-1,\vec{x}}}\mid }\eta _{2}\ \mathrm{if}\text{\ \ }%
1+w_{t-2}\leq j\leq w_{t-1}, \\ 
\ 0\text{\ \ \ \ \ \ \ \ \ \ \ \ \ }\mathrm{if}\text{ \ }j\geq 1+w_{t-1},%
\end{array}%
\right.
\end{equation*}%
where $I_{r,\vec{x}}=I_{r}\cap J_{\vec{x}}^{\prime }=\{i\in I_{r}:x_{i}=0\}.$
By Lemma \ref{ermhs},

\begin{equation*}
e_{i}\mathrm{Fil}^{\mathrm{j}}(\mathbb{D}_{2})=\left\{ 
\begin{array}{l}
e_{i}E^{\mid \tau \mid }\eta _{2}\ \ \ \ \ \text{if }j\leq 0 \\ 
e_{i}E^{\mid \tau \mid }f_{J_{\vec{x}}^{\prime }}\eta _{2}\ \text{if }1\leq
j\leq k_{i}, \\ 
\ \ \ 0\ \ \ \ \ \ \ \ \ \ \ \ \text{if }j\geq 1+k_{i},%
\end{array}%
\right.
\end{equation*}%
therefore the labeled Hodge-Tate weight of $\mathbb{D}_{2}$ with respect to
the embedding $\tau _{i}$ is 
\begin{equation*}
m_{i}=\left\{ 
\begin{array}{l}
0\ \ \ \text{if }x_{i}\neq 0~, \\ 
k_{i}\ \ \text{if }x_{i}=0,%
\end{array}%
\right.
\end{equation*}%
and $(\mathbb{D}_{2},\varphi _{2})$ corresponds to the effective crystalline
character $\chi _{c,\vec{0}}\cdot \chi _{e_{f-1}}^{m_{0}}\cdot \chi
_{e_{0}}^{m_{1}}\cdot \cdots \cdot \chi _{e_{f-2}}^{m_{f-1}},$ where $%
c=\left( \tprod\limits_{i\in I_{0}}\delta _{i}\right) \cdot
p^{-\sum\limits_{i\in I_{0}}k_{i}}$ and $\vec{\delta}=(\delta _{0},\delta
_{1},...,\delta _{f-1}).$ The following theorem follows immediately from
Corollary \ref{detV lemma}.\noindent

\begin{theorem}
\begin{enumerate}
\item 

\item[(i)] \label{reducible reductions}%
\begin{equation*}
\ V\simeq \left( 
\begin{array}{cc}
\psi _{1} & \boldsymbol{\ast } \\ 
0 & \psi _{2}%
\end{array}%
\right) ,\ 
\end{equation*}%
where $\psi _{1}=\eta _{1}\cdot \chi _{e_{f-1}}^{m_{0}}\cdot \chi
_{e_{0}}^{m_{1}}\cdot \cdots \cdot \chi _{e_{f-2}}^{m_{f-1}}$ and $\psi
_{2}=\eta _{2}\cdot \chi _{e_{0}}^{k_{1}-m_{1}}\cdot \chi
_{e_{1}}^{k_{2}-m_{2}}\cdot \cdots \cdot \chi
_{e_{f-2}}^{k_{f-1}-m_{f-1}}\cdot \chi _{e_{f-1}}^{k_{0}-m_{0}},$ where $%
\eta _{i}$ are unramified characters of $G_{K_{f}}.$

\item[(ii)] 
\begin{equation*}
\left( \overline{V}_{\mid I_{K}}\right) ^{s.s.}=\omega _{f,\bar{\tau}%
_{0}}^{\alpha _{1}}\oplus \omega _{f,\bar{\tau}_{0}}^{\alpha _{2}},\ 
\end{equation*}%
where $\alpha _{1}=-\tsum\limits_{i=0}^{f-1}m_{i}p^{i}$ and $\alpha
_{2}=\tsum\limits_{i=0}^{f-1}\left( m_{i}-k_{i}\right) p^{i}.$
\end{enumerate}
\end{theorem}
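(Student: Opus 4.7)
The plan is to derive part (i) from the Colmez--Fontaine equivalence applied to the short exact sequence of filtered $\varphi$-modules $0\to\mathbb{D}_{2}\to\mathbb{D}\to\mathbb{D}/\mathbb{D}_{2}\to 0$, and then to obtain part (ii) by restricting each diagonal character to inertia and invoking Lemma \ref{BDJ lemma}. First, starting from the basis $\underline{\eta}=(\eta_{1},\eta_{2})$ of Proposition \ref{reducible section prop}, I observe that $\mathbb{D}_{2}=E^{\mid\tau\mid}\eta_{2}$ is a $\varphi$-stable, weakly admissible rank one submodule, and I would compute its filtration by intersecting $\mathbb{D}_{2}$ with $\mathrm{Fil}^{\mathrm{j}}\mathbb{D}$; using the description of $\mathrm{Fil}^{\mathrm{j}}\mathbb{D}$ given just before the theorem, I would read off (via Lemma \ref{ermhs}) that the labeled Hodge--Tate weight of $\mathbb{D}_{2}$ at $\tau_{i}$ is precisely $-m_{i}$, where $m_{i}=0$ if $x_{i}\neq 0$ and $m_{i}=k_{i}$ if $x_{i}=0$.

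Second, by Corollary \ref{cor3.6} every effective crystalline $E$-character of $G_{K_{f}}$ has the form exhibited in Proposition \ref{The crystalline characters}, so the subrepresentation $V_{2}$ corresponding to $\mathbb{D}_{2}$ must equal
\[
\psi_{1}=\eta_{1}\cdot\chi_{e_{f-1}}^{m_{0}}\cdot\chi_{e_{0}}^{m_{1}}\cdot\cdots\cdot\chi_{e_{f-2}}^{m_{f-1}}
\]
for a unique unramified character $\eta_{1}$ determined by the Frobenius eigenvalue $\vec{\delta}$ on $\mathbb{D}_{2}$ (via Lemma \ref{chic prop}(i)). This gives the bottom-row character in the upper-triangular realization of $V$. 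For the top-row character $\psi_{2}$, I would use Corollary \ref{detV lemma} to identify $\det V$ as $\eta\cdot\chi_{e_{0}}^{k_{1}}\cdots\chi_{e_{f-1}}^{k_{0}}$ for some unramified $\eta$; dividing by $\psi_{1}$ then forces $\psi_{2}$ to have the claimed form, with the residual unramified twist absorbed into $\eta_{2}$.

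Third, for part (ii), the Brauer--Nesbitt theorem reduces the computation to taking the semisimplification, which is simply $\bar{\psi}_{1}\oplus\bar{\psi}_{2}$. Restricting to inertia kills the unramified twists $\eta_{1},\eta_{2}$, and I would then apply Lemma \ref{BDJ lemma}(i),(ii) to rewrite each $\bar{\chi}_{i}=\omega_{f,\bar{\tau}_{i+1}}^{-1}=\omega_{f,\bar{\tau}_{0}}^{-p^{i+1}}$. Gathering exponents (and using $\omega_{f,\bar{\tau}_{0}}^{p^{f}}=\omega_{f,\bar{\tau}_{0}}$ to collapse the top index) yields exactly
\[
\bar{\psi}_{1\mid I_{K_{f}}}=\omega_{f,\bar{\tau}_{0}}^{-\sum_{i=0}^{f-1}m_{i}p^{i}},\qquad \bar{\psi}_{2\mid I_{K_{f}}}=\omega_{f,\bar{\tau}_{0}}^{-\sum_{i=0}^{f-1}(k_{i}-m_{i})p^{i}},
\]
matching $\alpha_{1}$ and $\alpha_{2}$.

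The whole argument is essentially a bookkeeping exercise once the preceding structural results are in place, so there is no deep obstacle. The one point that requires a moment of care is matching the convention: the theorem writes $V\simeq\left(\begin{smallmatrix}\psi_{1}&\ast\\0&\psi_{2}\end{smallmatrix}\right)$, which in the convention that matrices act on the left on column vectors means the \emph{first} basis vector spans the subrepresentation, so $\psi_{1}$ is the character of the $\varphi$-stable submodule $\mathbb{D}_{2}$ of Proposition \ref{reducible section prop} (not of the quotient); once this is noted, the indexing of the weights $m_{i}$ and the formula for $\psi_{1}$ line up, and the rest follows from Corollaries \ref{cor3.6} and \ref{detV lemma} together with Lemma \ref{BDJ lemma}.
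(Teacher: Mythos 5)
Your proposal is correct and follows essentially the same route as the paper: both compute the filtration of the weakly admissible submodule $\mathbb{D}_{2}=E^{\mid\tau\mid}\eta_{2}$ via Lemma \ref{ermhs} to read off the labeled Hodge--Tate weight $-m_{i}$ at each $\tau_{i}$, identify the corresponding crystalline character as $\psi_{1}$ using Corollary \ref{cor3.6} and Lemma \ref{chic prop}, recover $\psi_{2}$ from $\det V$ via Corollary \ref{detV lemma}, and obtain part (ii) from Brauer--Nesbitt and Lemma \ref{BDJ lemma}. One minor slip: in the middle paragraph you call $\psi_{1}$ the ``bottom-row character'' when it actually occupies the top-left entry of the displayed matrix, but your closing remark on conventions shows you correctly identified $\psi_{1}$ with the submodule $\mathbb{D}_{2}$ rather than the quotient, so the argument is sound.
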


\noindent Notice that for an ordered basis is in Proposition \ref{reducible
section prop}, $\left( \overline{V}_{\mid I_{K_{f}}}\right) ^{s.s.}\ $only
depends on the filtration with respect to that basis.

\subsection{Proof of Theorem \protect\ref{irre}\label{irreducibles}}

Let $\{\ell _{i},\ell _{i+f}\}=\{0,k_{i}\}$ for $i=0,1,...,f-1$ and assume
that at least one $k_{i}$ is strictly positive. In this section we construct
infinite families of crystalline representations of Hodge-Tate type $%
\{0,-k_{i}\}_{\tau _{i}}\ $which contain the irreducible representations $V_{%
\vec{\ell}}=\mathrm{Ind}_{G_{K_{2f}}}^{G_{K_{f}}}\left( \chi _{e_{0}}^{\ell
_{1}}\cdot \chi _{e_{1}}^{\ell _{2}}\cdot \cdots \cdot \chi
_{e_{2f-2}}^{\ell _{2f-1}}\cdot \chi _{e_{2f-1}}^{\ell _{0}}\right) $ of
Proposition \ref{induction in char 0}, and have the same $\func{mod}p$
reductions with $V_{\vec{\ell}}.$ We choose $f$-tuples of matrices $\left(
\Pi _{1},\Pi _{2},...,\Pi _{f}\right) $ (with $\Pi _{f}=\Pi _{0}$), where
the types of the matrices $\Pi _{i}\ $(see Definition \ref{the 8 types
copy(1)}) are chosen as follows: \noindent

(1) If $\ell _{1}=0,$ $\Pi _{1}\in \{t_{2},t_{3}\};$

(\noindent 2) If $\ell _{1}=k_{1}>0,$ $\Pi _{1}\in \{t_{1},t_{4}\}.$

\noindent\noindent For $i=2,3,...,f-1,\ $we choose the type of the matrix $%
\Pi _{i}$ as follows:

\noindent

(1) If $\ell _{i}=0,$ then: \noindent

\begin{itemize}
\item If an even number of coordinates of $(\Pi _{1},\Pi _{2},...,\Pi
_{i-1}) $ is of even type, $\Pi _{i}\in \{t_{2},t_{3}\};$ \noindent

\item If an odd number of coordinates of $(\Pi _{1},\Pi _{2},...,\Pi _{i-1})$
is of even type, $\Pi _{i}\in \{t_{1},t_{4}\}.$
\end{itemize}

\noindent

(\noindent 2) If $\ell _{i}=k_{i}>0,$ then: \noindent

\begin{itemize}
\item If an even number of coordinates of $(\Pi _{1},\Pi _{2},...,\Pi
_{i-1}) $ is of even type, $\Pi _{i}\in \{t_{1},t_{4}\};$ \noindent

\item If an odd number of coordinates of $(\Pi _{1},\Pi _{2},...,\Pi _{i-1})$
is of even type, $\Pi _{i}\in \{t_{2},t_{3}\}.$
\end{itemize}

\noindent Finally, we choose the type of the matrix $\Pi _{0}$ as follows:

\noindent

(\noindent 1) If $\ell _{0}=0,$ then:

\begin{itemize}
\item \noindent If an even number of coordinates of $(\Pi _{1},\Pi
_{2},...,\Pi _{f-1})$ is of even type, $\Pi _{0}=t_{4};$

\item If an odd number of coordinates of $(\Pi _{1},\Pi _{2},...,\Pi _{f-1})$
is of even type, $\Pi _{0}=t_{3}.$
\end{itemize}

(\noindent 2) If $\ell _{0}=k_{0}>0,$ then:

\begin{itemize}
\item If an even number of coordinates of $(\Pi _{1},\Pi _{2},...,\Pi
_{f-1}) $ is of even type, $\Pi _{0}=t_{2};$

\item If an odd number of coordinates of $(\Pi _{1},\Pi _{2},...,\Pi _{f-1})$
is of even type, $\Pi _{0}=t_{1}.$
\end{itemize}

\noindent Notice that from the choice of $\Pi _{0},$ an odd number of
coordinates of $(\Pi _{1},\Pi _{2},...,\Pi _{f})$ is of even type. Let $\vec{%
i}=\left( i_{1},i_{2},...,i_{0}\right) \in \{1,2,3,4\}^{f}$ be the
type-vector attached to $(\Pi _{1},\Pi _{2},...,\Pi _{f}).\ $For the
matrices $\Pi _{i},$ we assume that $c_{i}=1$ for all $i.$ Let $P_{i}=\Pi
_{i}\func{mod}\pi $ for each $i$ and notice that from the choice of the
matrices $\Pi _{i}$ it follows that $\left( P_{1},P_{2},...,P_{f}\right)
\not\in C_{1}\cup C_{2}.$ The type of $P_{i}$ is defined to be the type of $%
\Pi _{i}.\ $For any $\vec{a}\in \mathfrak{m}_{E}^{f}$ we consider the
families of crystalline $E$-representations $V_{\vec{k}}^{\vec{i}}\left( 
\vec{a}\right) $ of $G_{K_{f}}$ with labeled Hodge-Tate weights $%
\{0,-k_{i}\}_{\tau _{i}}$ constructed in \S \ref{section families of
crystalline reps}. We prove the following.

\begin{proposition}
\begin{enumerate}
\item[(i)] \label{prop1}$V_{\vec{k}}^{\vec{i}}(\vec{0})=\mathrm{Ind}%
_{K_{2f}}^{K_{f}}\left( \chi _{e_{0}}^{\ell _{1}}\cdot \chi _{e_{1}}^{\ell
_{2}}\cdot \cdots \cdot \chi _{e_{2f-2}}^{\ell _{2f-1}}\cdot \chi
_{e_{2f-1}}^{\ell _{0}}\right) $ and $V_{\vec{k}}^{\vec{i}}(\vec{0})$ is
irreducible;\noindent

\item[(ii)] For any $\vec{a}\in \mathfrak{m}_{E}^{f},$ $\overline{V}_{\vec{k}%
}^{\vec{i}}\left( \vec{a}\right) =\overline{V}_{\vec{k}}^{\vec{i}}(\vec{0});$

\item[(iii)] For any $\vec{a}\in \mathfrak{m}_{E}^{f},$ $\left( \overline{V}%
_{\vec{k}}^{\vec{i}}\left( \vec{a}\right) _{\mid I_{K_{f}}}\right)
^{s.s.}=\omega _{2f,\bar{\tau}_{0}}^{\beta }\oplus \omega _{2f,\bar{\tau}%
_{0}}^{p^{f}\beta },$ where $\beta =-\tsum\limits_{i=0}^{2f-1}p^{i}\ell
_{i}; $

\item[(iv)] $\overline{V}_{\vec{k}}^{\vec{i}}\left( \vec{a}\right) $ is
irreducible if and only if $1+p^{f}\nmid \beta ;$

\item[(v)] Any irreducible member of the family $\left\{ V_{\vec{k}}^{\vec{i}%
}\left( \vec{a}\right) ,\ \vec{a}\in \mathfrak{m}_{E}^{f}\right\} ,$ other
than $V_{\vec{k}}^{\vec{i}}(\vec{0}),$ is non-induced.
\end{enumerate}
\end{proposition}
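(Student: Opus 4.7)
The plan is to handle the five parts sequentially, leveraging the general machinery of Sections \ref{the crystalline characters} and \ref{general construction of Wach}, with all the combinatorics encoded in the choice of the type-vector $\vec{i}$. Throughout, I would write $Q_f(\vec{a}) = P_1(a_1)\cdots P_f(a_f)$ and use $P^{\otimes 2}$ for the $2f$-fold tuple obtained via Proposition \ref{comment}.

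For part (i), I would first specialize at $\vec{a} = \vec{0}$. Each $P_i(0)$ becomes either diagonal (for odd types $t_1,t_3$) or antidiagonal (for even types $t_2,t_4$), and a direct case-check on the rule defining $\vec{i}$ from $\vec{\ell}$ shows that an odd number of the $P_i$ are of even type, so $Q_f(\vec{0})$ is antidiagonal. Passing to $G_{K_{2f}}$ via Proposition \ref{comment}(i) doubles the Frobenius to $P^{\otimes 2}(\vec{0})$, making $Q_{2f}(\vec{0}) = Q_f(\vec{0})^2$ diagonal. I would then write down an explicit change of basis that diagonalizes $\varphi$ on $\mathbb{D}_{\vec{k}}^{\vec{i}}(\vec{0})$ after extension of scalars to $K_{2f}$, producing two rank-one weakly admissible $\varphi$-stable submodules. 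Using Proposition \ref{rank 1 isom}(ii) and Corollary \ref{cor3.6}, and tracking coordinate-by-coordinate whether $e_i\eta_1$ or $e_i\eta_2$ sits in each eigenline (this is precisely what the parity rule governing each $P_i$ records from $\ell_i$), I would identify the two characters as $\chi_{\vec{\ell}}$ and $\chi_{\vec{\ell}}^{\sigma}$. Frobenius reciprocity then gives $V_{\vec{k}}^{\vec{i}}(\vec{0}) \simeq \mathrm{Ind}_{K_{2f}}^{K_f}(\chi_{\vec{\ell}})$, and irreducibility follows from Corollary \ref{ti na to pw}, since the hypothesis that some $k_i>0$ forces $\ell_i \neq \ell_{i+f}$ for some $i$.

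Parts (ii)--(iv) are then short corollaries. Part (ii) is Theorem \ref{from blz} applied to the Wach-module family of Proposition \ref{rank two wach modules construction}, whose existence is guaranteed by Proposition \ref{kati}; the key hypothesis $(P_1,\ldots,P_f)\notin C_1\cup C_2$ is built into the chosen type-vector and controls Corollaries \ref{eigenvalies cor}, \ref{c cor} and \ref{trace gives surject}. For part (iii) I would combine (i) and (ii) to reduce to computing $\overline{\mathrm{Ind}_{K_{2f}}^{K_f}(\chi_{\vec{\ell}})}\vert_{I_{K_f}}$; Lemma \ref{BDJ lemma}(i)--(ii) yields $\bar{\chi}_{\vec{\ell}}\vert_{I_{K_{2f}}} = \omega_{2f,\bar{\tau}_0}^\beta$, and Proposition \ref{breuil prop} produces the conjugate pair $\omega_{2f,\bar{\tau}_0}^\beta \oplus \omega_{2f,\bar{\tau}_0}^{p^f\beta}$. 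Part (iv) is then immediate from Corollary \ref{redmodp}.

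The main obstacle is part (v). Suppose, towards a contradiction, that $V_{\vec{k}}^{\vec{i}}(\vec{a})$ is irreducible and induced from a crystalline character of $G_{K_{2f}}$ for some $\vec{a} \neq \vec{0}$. By Theorem \ref{about induced}(ii) its restriction to $G_{K_{2f}}$ is reducible, so $\mathbb{D}_{\vec{k}}^{\vec{i}}(\vec{a}) \otimes_{K_f} K_{2f}$ must contain a rank-one $\varphi$-stable weakly admissible submodule, equivalently an eigenvector of $Q_{2f}(\vec{a}) = Q_f(\vec{a})^2$ compatible with the filtration of formulas \ref{Filitations}--\ref{Filitations1}. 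The plan is to show that this forces $\vec{a} = \vec{0}$: since $Q_f(\vec{a})$ differs from the antidiagonal $Q_f(\vec{0})$ by explicit polynomial terms in the $\alpha_i$ dictated by the odd/even alternation, compatibility of a $\varphi^{2f}$-eigenline with the filtration translates to a rigid system of polynomial equations in the $\alpha_i$. The cleanest route I foresee is to extract an invariant---the off-diagonal entries of $Q_f(\vec{a})$ in a normalized basis, or alternatively $\mathrm{Tr}(Q_f(\vec{a}))$---compute it explicitly as a polynomial in $\vec{a}$ whose vanishing locus is exactly $\{\vec{a} = \vec{0}\}$, and observe that this invariant must vanish on any induced member. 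This then handles all type-vectors $\vec{i}$ arising from the construction simultaneously.
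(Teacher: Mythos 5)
Your handling of parts (i)--(iv) is essentially the paper's own route. For part (i) the paper carries out your plan explicitly: the change-of-basis matrices $Q_i$ that diagonalize the Frobenius of the restricted $G_{K_{2f}}$-representation are built inductively, and the combinatorial crux is the observation that $q_{11}^{i+f}=1-q_{11}^{i}$ (since an odd number of the $P_i$ are of even type), which is precisely what makes the two eigenlines swap across coordinates $i$ and $i+f$ and lets one read off the labeled weights of the rank-one submodule $\mathbb{D}_2$ and hence identify the constituent character as $\chi_{\vec{\ell}}$. The paper derives irreducibility of $V_{\vec{k}}^{\vec{i}}(\vec{0})$ slightly differently than you do (a contradiction argument using Corollary \ref{weights of restrictions} to show that a reducible $V(\vec{0})$ would force $\ell_i=\ell_{i+f}$ for all $i$), but that is a routing difference; parts (ii)--(iv) follow exactly as you say, from Theorem \ref{from blz} and Corollary \ref{redmodp}.

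Part (v) is where you diverge from the paper and where your plan has a gap. You propose to extract a polynomial invariant $F(\vec{a})$ --- you name $\mathrm{Tr}\bigl(Q_f(\vec{a})\bigr)$ or an off-diagonal entry of $Q_f(\vec{a})$ --- that must vanish on any induced member, and then to show $F^{-1}(0)=\{\vec{0}\}$. That second step fails. Already for $f=2$ and $\vec{i}=(1,2)$ (the case in Example \ref{touparadeigmatos}(i)) one computes $\mathrm{Tr}\bigl(Q_2(\vec{a})\bigr)=\alpha_1+p^{k_1}\alpha_0$, whose vanishing locus is a one-parameter line through $\vec{0}$, not the single point. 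More generally, since the $f$-tuples here have an odd number of even-type coordinates, Lemma \ref{mat1} shows $\mathrm{Tr}(Q_f)$ is a nonconstant polynomial in $f$ variables, so its zero set is a hypersurface of codimension one, far bigger than $\{\vec{0}\}$; the off-diagonal entries have the same problem (and one of them is even a nonzero constant in the $(1,2)$ example). So the rigidity you are invoking simply is not there at the level of a single scalar invariant of $Q_f$. The paper's argument for part (v) is of a different nature: it observes that $\det V_{\vec{k}}^{\vec{i}}(\vec{a})=(-1)^{t}p^{\sum_i k_i}$ for all $\vec{a}$ (where $t$ is the number of even-type coordinates of $\vec{i}$), and that any irreducible induced member is, by Proposition \ref{induction in char 0}, of the form $\eta_c\otimes\mathrm{Ind}_{K_{2f}}^{K_f}(\chi_{\vec{\ell}'})$; matching determinants forces $\eta_c$ to be trivial, reducing the induced possibilities to the finite set of twist-free $\mathrm{Ind}(\chi_{\vec{\ell}'})$, after which the family is compared against that finite list. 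Your proposed one-invariant argument cannot be repaired by a cleverer choice of entry --- the induced condition is not cut out by a single equation in $\vec{a}$ --- so you need to switch to the determinant/twist route (or argue directly on the level of isomorphism classes of weakly admissible modules, as in Proposition \ref{an example}).
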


\begin{proof}
We restrict $V_{\vec{k}}^{\vec{i}}(\vec{0})$ to $G_{K_{2f}}.$ By the
construction of the representation $V_{\vec{k}}^{\vec{i}}(\vec{0})$ in \S %
\ref{the 4 type representations}, there exists some $G_{K_{f}}$-stable
lattice $\left( \mathrm{T}_{\vec{k}}^{\vec{i}}(\vec{0})\right) _{G_{K_{f}}}$
inside $V_{\vec{k}}^{\vec{i}}(\vec{0})$ whose Wach module has $\varphi $%
-action defined by $\left( \varphi \left( \eta _{1}\right) ,\varphi \left(
\eta _{2}\right) \right) =\left( \eta _{1},\eta _{2}\right) \Pi (\vec{0}),\ $%
where $\Pi (\vec{0})=\left( \Pi _{1}\left( 0\right) ,\Pi _{2}\left( 0\right)
,...,\Pi _{f-1}\left( 0\right) ,\Pi _{0}\left( 0\right) \right) .$ By
Proposition \ref{comment}, the Wach module of the $G_{K_{2f}}$-stable
lattice $\left( \mathrm{T}_{\vec{k}}^{\vec{i}}(\vec{0})\right) _{\mid
G_{K_{2f}}}$ inside $\left( V_{\vec{k}}^{\vec{i}}(\vec{0})\right) _{\mid
G_{K_{2f}}}$ is defined by $\left( \varphi \left( \eta _{1}\right) ,\varphi
\left( \eta _{2}\right) \right) =\left( \eta _{1},\eta _{2}\right) \Pi
\left( 0\right) ^{\otimes 2},$ therefore the filtered $\varphi $-module
corresponding to $\left( V_{\vec{k},\vec{0}}^{\vec{i}}\right) _{\mid
G_{K_{2f}}}$ has Frobenius endomorphism $\left( \varphi \left( \eta
_{1}\right) ,\varphi \left( \eta _{2}\right) \right) =\left( \eta _{1},\eta
_{2}\right) P(\vec{0})^{\otimes 2}.$ By Corollary \ref{weights of
restrictions}\ the restricted representation $\left( V_{\vec{k}}^{\vec{i}}(%
\vec{0})\right) _{\mid G_{K_{2f}}}$ has labeled Hodge-Tate weights $\left(
\{0,-k_{i}\}\right) _{\tau _{i}},$ $i=0,1,...,2f-1,$ with $k_{i+f}=k_{i}$
for $i=0,1,...,f-1,$ and filtration as in formula \ref{Filitations} for some
vectors $\vec{x},\vec{y},$ with the sets $I_{j}$ being defined for the $2f$
weights above. We prove that $\left( V_{\vec{k}}^{\vec{i}}(\vec{0})\right)
_{\mid G_{K_{2f}}}$ is reducible and determine its irreducible constituents.
First, we change the basis to diagonalize the matrix of Frobenius. We see
that 
\begin{equation*}
\ \ \ \ \ \ \ \ \ \ \ \ \ \ P_{i}\left( 0\right) =\left\{ 
\begin{array}{c}
R\left( \bar{\beta}_{i},\bar{\gamma}_{i}\right) ,\ \ \ \ \text{with }\{\bar{%
\beta}_{i},\bar{\gamma}_{i}\}=\{1,p^{k_{i}}\}\ \text{if }P_{i}\ \text{has
type }2\ \text{or }4, \\ 
\mathrm{diag}\left( \bar{\alpha}_{i},\bar{\delta}_{i}\right) ,\ \text{with }%
\{\bar{\alpha}_{i},\bar{\delta}_{i}\}=\{1,p^{k_{i}}\}\ \text{if }P_{i}\ 
\text{has type }1\ \text{or }3,%
\end{array}%
\ \right.
\end{equation*}%
$\allowbreak $where $R\left( \bar{\beta}_{i},\bar{\gamma}_{i}\right) $ is
the $2\times 2$ matrix with $\bar{\beta}_{i}$ in the $\left( 1,2\right) $
entry, $\bar{\gamma}_{i}$ in the $\left( 2,1\right) $ entry, and zero on the
diagonal. Let$\ Q_{0}=Id,\ $%
\begin{equation*}
Q_{1}=\left\{ 
\begin{array}{l}
Id\ \text{if }P_{1}\in \{t_{1},t_{3}\}, \\ 
R\ \ \text{if\ }P_{1}\in \{t_{2},t_{4}\},%
\end{array}%
\right.
\end{equation*}%
where$\ R:=R\left( 1,1\right) ,$%
\begin{equation}
Q_{i}=\left\{ 
\begin{array}{l}
Id\ \text{if }Q_{i-1}=Id\ \text{and\ }P_{i}\in \{t_{1},t_{3}\}, \\ 
R\ \text{if\ }Q_{i-1}=Id\ \ \text{and\ }P_{i}\in \{t_{2},t_{4}\}, \\ 
R\ \text{if\ }Q_{i-1}=R\ \ \ \text{and\ }P_{i}\in \{t_{1},t_{3}\}, \\ 
Id\mathrm{\ }\text{if\ }Q_{i-1}=R\ \ \text{and\ }P_{i}\in \{t_{2},t_{4}\}%
\end{array}%
\right.  \label{Qiiii}
\end{equation}%
for $i=2,3,...,2f-1.$ Let $Q=\left( Q_{0},Q_{1},...,Q_{2f-1}\right) ,$ then
by the definition of the matrices $Q_{i},\ $the\ matrix$\ Q\cdot P(\vec{0}%
)^{\otimes 2}\cdot \varphi \left( Q^{-1}\right) $ is diagonal. By induction, 
$Q_{0}=Id\ $and$\ $%
\begin{equation}
\ \ \ \ \ \ \ Q_{i}=\left\{ 
\begin{array}{l}
Id\ \text{if an even number of coordinates of }\left(
P_{1},P_{2},...,P_{i}\right) \ \text{is\ of\ even type,} \\ 
R\ \ \text{if an odd number of coordinates of }\left(
P_{1},P_{2},...,P_{i}\right) \ \text{is\ of\ even type,}%
\end{array}%
\right.  \label{Qi}
\end{equation}%
for $i=1,2,...,2f-1,$ where $P_{i+f}=P_{i}$ for $i=0,1,...,f-1.$ We claim
that for each $i=0,1,...,f-1,$ $Q_{i}=Id$ if and only if $Q_{i+f}=R.$
Indeed, for $i=0,$ $Q_{0}=Id.$ Since an odd number of coordinates of $\left(
P_{1},P_{2},...,P_{f}\right) $ is of even type, $Q_{f}=R.$ Let $q_{ij}^{r}$
be the $r$-th coordinate of the $\left( i,j\right) $-entry $\vec{q}_{ij}$ of 
$Q,$ for each $i,j\in \{1,2\}$ and $r\in \{0,1,...,2f-1\}.$ Assume that $%
i\in \{1,2,...,f-1\}.$ From the definition of the matrices $Q_{i}$ we see
that 
\begin{equation}
\ \ \ \ q_{11}^{i}=\left\{ 
\begin{array}{c}
1\ \text{if\ an\ even number of coordinates of }\left(
P_{1},P_{2},...,P_{i}\right) \ \text{is of even type,} \\ 
0\ \text{if an odd number of coordinates of }\left(
P_{1},P_{2},...,P_{i}\right) \ \text{is of even type.}%
\end{array}%
\right.  \label{q11i}
\end{equation}%
For any $i=0,1,...,f-1$ we have%
\begin{equation}
q_{11}^{i+f}=\left\{ 
\begin{array}{c}
1\ \text{if\ an\ even number of coordinates of }\left(
P_{1},P_{2},...,P_{f},...P_{i+f}\right) \ \text{is of even type,} \\ 
0\ \text{if\ an\ odd number of coordinates of }\left(
P_{1},P_{2},...,P_{f},...P_{i+f}\right) \ \text{is of even type.}%
\end{array}%
\right.  \label{qiii+ff}
\end{equation}%
Since an odd number of coordinates of $\left( P_{1},P_{2},...,P_{f}\right) $
is of even type and $P_{i}=P_{i+f}$ for all $i,$ this is equivalent to 
\begin{equation}
q_{11}^{i+f}=\left\{ 
\begin{array}{c}
1\ \text{if\ an\ odd number of coordinates of }\left(
P_{1},P_{2},...,P_{i}\right) \ \text{is of even type,} \\ 
0\ \text{if\ an\ even number of coordinates of }\left(
P_{1},P_{2},...,P_{i}\right) \ \text{is of even type,}%
\end{array}%
\right.  \label{qiii+f}
\end{equation}%
which implies that $q_{11}^{i+f}=1-q_{11}^{i}\ $for all $i=0,1,...,f-1.$
Similarly $q_{ij}^{r+f}=1-q_{ij}^{r}$ for all entries $ij.$ Consider the
ordered basis $\underline{\zeta }=\left( \zeta _{1},\zeta _{2}\right) $
defined by $\left( \zeta _{1},\zeta _{2}\right) :=\left( \eta _{1},\eta
_{2}\right) Q^{-1}.$ In the ordered basis $\underline{\zeta }$ the
filtration is as in formula \ref{Filitations} with the vector $\vec{x}\eta
_{1}+\vec{y}\eta _{2}$ replaced by $\vec{x}\cdot \left( \vec{q}_{11}\cdot
\zeta _{1}+\vec{q}_{12}\cdot \zeta _{2}\right) +\vec{y}\cdot \left( \vec{q}%
_{12}\cdot \zeta _{1}+\vec{q}_{22}\cdot \zeta _{2}\right) .$ Let $\vec{z}=%
\vec{x}\cdot \vec{q}_{11}+\vec{y}\cdot \vec{q}_{12}$ and $\vec{w}=\vec{x}%
\cdot \vec{q}_{12}+\vec{y}\cdot \vec{q}_{22}.$ From the definition of the
matrices $Q_{i},$ the matrix of Frobenius in this new basis is the diagonal
matrix 
\begin{equation*}
\mathrm{diag}\left( \vec{\lambda},\vec{\mu}\right) :=\left( Q_{0}\cdot
P_{1}\cdot Q_{1}^{-1},...,Q_{f-1}\cdot P_{f}\cdot Q_{f}^{-1},Q_{f}\cdot
P_{f+1}\cdot Q_{f+1}^{-1},...,Q_{2f-1}\cdot P_{0}\cdot Q_{0}^{-1}\right) .
\end{equation*}%
We prove that $\mathrm{Nm}_{\varphi }(\vec{\lambda})=\mathrm{Nm}_{\varphi
}\left( \vec{\mu}\right) =p^{\tsum\limits_{i=0}^{f-1}k_{i}}\cdot \vec{1}.$
First, we see $\lambda _{i}\mu _{i}=p^{k_{i}}$ for all $i.$ Since $Q_{i}=Id$
if and only if $Q_{i+f}=R,\ $bearing in mind that $P_{i+f}=P_{i},$ a case by
case analysis for the choices of $Q_{i}$ and $Q_{i+1}$ implies that $%
Q_{i}\cdot P_{i+1}\cdot Q_{i+1}^{-1}=\mathrm{diag}\left( \lambda _{i+1},\mu
_{i+1}\right) $ if and only if $Q_{i+f}\cdot P_{i+f+1}\cdot Q_{i+f+1}^{-1}=%
\mathrm{diag}\left( \mu _{i+1},\lambda _{i+1}\right) .$ Therefore,%
\begin{eqnarray*}
\tprod\limits_{i=0}^{2f-1}\left( Q_{i}\cdot P_{i+1}\cdot Q_{i+1}^{-1}\right)
&=&\tprod\limits_{i=0}^{f-1}\left( Q_{i}\cdot P_{i+1}\cdot
Q_{i+1}^{-1}\right) \cdot \tprod\limits_{i=0}^{f-1}\left( Q_{i+f}\cdot
P_{i}\cdot Q_{i+f+1}^{-1}\right) \\
&=&\tprod\limits_{i=0}^{f-1}\mathrm{diag}\left( \lambda _{i+1},\mu
_{i+1}\right) \cdot \tprod\limits_{i=0}^{f-1}\mathrm{diag}\left( \mu
_{i+1},\lambda _{i+1}\right) =p^{\tsum\limits_{i=0}^{f-1}k_{i}}\cdot Id.
\end{eqnarray*}%
Next, notice that $\vec{y}=\vec{1}-\vec{x}$ and $\vec{q}_{12}=\vec{1}-\vec{q}%
_{11},$ so $\vec{z}=\vec{x}\cdot \vec{q}_{11}+\left( \vec{1}-\vec{x}\right)
\cdot \left( \vec{1}-\vec{q}_{11}\right) =\vec{1}+2\cdot \vec{x}\cdot \vec{q}%
_{11}-\vec{q}_{11}-\vec{x}.$ Since $x_{i},q_{11}^{i}\in \{0,1\}$ for all $i,$
$z_{i}=0$ if and only if $q_{11}^{i}=1$ and $x_{i}=0,$ or $q_{11}^{i}=0$ and 
$x_{i}=1.$ Recall from formula\ \ref{Filitations1} that $x_{i}=0$ if and
only if $P_{i}\in \{t_{3},t_{4}\}$ and $x_{i}=1\ $if and only if $P_{i}\in
\{t_{1},t_{2}\}.$ This combined with the definition of the matrices $Q_{i}$
gives 
\begin{equation}
z_{i}=0\Leftrightarrow \left\{ 
\begin{array}{l}
i=0\ \text{and\ }P_{0}\in \{t_{3},t_{4}\}, \\ 
i\geq 1,\text{ }P_{i}\in \{t_{3},t_{4}\}\ \text{and an even number of
coordinates of }\left( P_{1},P_{2},...,P_{i}\right) \ \text{is of even type,}
\\ 
i\geq 1,\ P_{i}\in \{t_{1},t_{2}\}\ \text{and an odd number of coordinates
of }\left( P_{1},P_{2},...,P_{i}\right) \ \text{is of even type.}%
\end{array}%
\right.  \label{f1}
\end{equation}%
Similarly,%
\begin{equation}
z_{i}=1\Leftrightarrow \left\{ 
\begin{array}{l}
i=0\ \text{and\ }P_{0}\in \{t_{1},t_{2}\}, \\ 
i\geq 1,\text{ }P_{i}\in \{t_{1},t_{2}\}\ \text{and an even number of
coordinates of }\left( P_{1},P_{2},...,P_{i}\right) \ \text{is of even type,}
\\ 
i\geq 1,\ P_{i}\in \{t_{3},t_{4}\}\ \text{and an odd number of coordinates
of }\left( P_{1},P_{2},...,P_{i}\right) \ \text{is of even type.}%
\end{array}%
\right.  \label{f2}
\end{equation}%
We claim that $z_{i+f}=1-z_{i}$ for all $i=0,1,...,f-1.$ Indeed, $%
z_{i+f}=1+2\cdot x_{i+f}\cdot q_{11}^{i+f}-q_{11}^{i+f}-x_{i+f}.$ Since $%
P_{i}=P_{i+f},$ we have $x_{i}=x_{i+f}$ for all $i,$ and$\ $since $%
q_{11}^{i+f}=1-q_{11}^{f}$ we get $z_{i+f}=1-z_{i}.$ Since $z_{i}\in \{0,1\}$
for all $i,$ 
\begin{equation*}
\tsum\limits_{\substack{ i=0  \\ z_{i}=0}}^{2f-1}k_{i}=\tsum\limits 
_{\substack{ i=0  \\ z_{i}=0}}^{f-1}k_{i}+\tsum\limits_{\substack{ i=0  \\ %
z_{i+f}=0}}^{f-1}k_{i}=\tsum\limits_{i=0}^{f-1}k_{i}.
\end{equation*}%
Since $\mathrm{v}_{\mathrm{p}}\left( \mathrm{Nm}_{\varphi }\left( \vec{\mu}%
\right) \right) =\tsum\limits_{i=0}^{f-1}k_{i}=\tsum\limits_{\substack{ i=0 
\\ z_{i}=0}}^{2f-1}k_{i},$ by Proposition \ref{class thm}\footnote{%
F-semisimplicity is not assumed here, but the part of Proposition \ref{class
thm} used still holds.} the representation $\left( V_{\vec{k}}^{\vec{i}}(%
\vec{0})\right) _{\mid G_{K_{2f}}}$ is reducible. If $\mathbb{D}_{2}:=\left(
E^{\mid \tau _{K_{2f}}\mid }\right) \zeta _{2},$ by \cite[proof of Prop. 4.3]%
{DO10} (or by a direct computation), 
\begin{equation}
\ \ \ \ \ \ \ \ \ \ \ \ \ \ \mathrm{Fil}^{\mathrm{j}}\mathbb{D}_{2}=\left\{ 
\begin{array}{l}
\ \ \ \mathbb{D}_{2}\text{\ \ \ \ \ \ \ \ \ \ \ \ \ \ \ \ if\ \ }j\leq 0, \\ 
\left( E^{\mid \tau _{K_{2f}}\mid }f_{I_{i,\vec{z}}}\right) \zeta _{2}\text{%
\ if\ \ }1+w_{i-1}\leq j\leq w_{i},\ i=0,1,...,t-1, \\ 
\ \ \ 0\ \ \ \text{\ \ \ \ \ \ \ \ \ \ \ \ \ \ \ if \ }j\geq 1+w_{t-1},%
\end{array}%
\right.  \label{D2 weights}
\end{equation}%
where $I_{i.\vec{z}}=I_{i}\cap \{j\in \{0,1,...,2f-1\}:z_{j}=0\}.$ Let $i\in
\{0,1,...,2f-1\}$. Arguing as in Lemma \ref{ermhs} we see that%
\begin{equation*}
e_{i}\mathrm{Fil}^{\mathrm{j}}\mathbb{D}_{2}=\left\{ 
\begin{array}{l}
~\ \ e_{i}E^{\mid \tau _{K_{2f}}\mid }\zeta _{2}\text{\ \ \ \ \ \ \ \ \ \ \
\ if\ \ }j\leq 0, \\ 
e_{i}\left( \tsum\limits_{\substack{ r=0  \\ z_{r}=0}}^{2f-1}e_{i}\right)
E^{\mid \tau _{K_{2f}}\mid }\zeta _{2}\text{\ if\ \ }1\leq j\leq k_{i}, \\ 
\ \ \ \ \ \ \ \ 0\ \ \ \ \text{\ \ \ \ \ \ \ \ \ \ \ \ \ \ \ \ \ \ if \ }%
j\geq 1+k_{i}.%
\end{array}%
\right.
\end{equation*}%
Hence%
\begin{equation*}
e_{i}\mathrm{Fil}^{\mathrm{j}}\mathbb{D}_{2}=\left\{ 
\begin{array}{l}
e_{i}E^{\mid \tau _{K_{2f}}\mid }\zeta _{2}\text{\ if\ \ }j\leq 0, \\ 
\ \ \ 0\ \ \ \text{\ \ \ \ \ \ \ \ if \ }j\geq 1%
\end{array}%
\right.
\end{equation*}%
if $z_{i}=1$ and 
\begin{equation*}
e_{i}\mathrm{Fil}^{\mathrm{j}}\mathbb{D}_{2}=\left\{ 
\begin{array}{l}
e_{i}E^{\mid \tau _{K_{2f}}\mid }\zeta _{2}\text{\ if\ \ }j\leq k_{i}, \\ 
\ \ 0~\ \ \ \ \ \ \ \ \ \ \ \text{if \ }j\geq 1+k_{i}%
\end{array}%
\right.
\end{equation*}%
if $z_{i}=0.$ The labeled Hodge-Tate weight of $\mathbb{D}_{2}$ with respect
to the embedding $\tau _{i}\ $of $K_{2f}\ $into $E$ is $0$ if $z_{i}=1$ and $%
-k_{i}$ if $z_{i}=0.$ Next, we prove that 
\begin{equation*}
\ \ z_{i}=\left\{ 
\begin{array}{l}
0\ \text{if }\ell _{i}=0, \\ 
1\ \text{if }\ell _{i}=k_{i}>0%
\end{array}%
\right.
\end{equation*}%
for $i=0,1,...,f-1,$ and%
\begin{equation*}
z_{i+f}=\left\{ 
\begin{array}{l}
1\ \text{if }\ell _{i}=0, \\ 
0\ \text{if }\ell _{i}=k_{i}>0.%
\end{array}%
\right.
\end{equation*}%
Since $z_{i+f}=1-z_{i}$ for all $i=0,1,...,f-1,$ it suffices to prove the
first formula. Suppose that $\ell _{1}=0.$ Then $P_{1}\in \{t_{2},t_{3}\}\ $%
and by formula \ref{f1}, $z_{1}=0.$ If $\ell _{1}=k_{1}>0,$ then $P_{1}\in
\{t_{1},t_{4}\}$ and by formula \ref{f1}, $z_{1}=1.$ Let $i\in
\{1,2,...,f-2\}$ and assume that $\ell _{i}=0.$ If an even number of
coordinates of $\left( P_{1},P_{2},...,P_{i-1}\right) \ $is of even type,
then $P_{i}\in \{t_{2},t_{3}\}$ and formula \ref{f1} implies $z_{i}=0.$
Arguing similarly we see that if $\ell _{i}=k_{i}>0,$ formula \ref{f2}
implies $z_{i}=1.$ Finally, assume that $i=f$ and $\ell _{f}=0.$ If an even
number of coordinates of $\left( P_{1},P_{2},...,P_{f-1}\right) $ is of even
type, then $P_{0}=P_{f}=t_{4}$ and by formula \ref{f1} $z_{0}=z_{f}=0.$ We
finish the proof by verifying similarly the remaining cases. By the formulas
above, the labeled Hodge-Tate weight of $\mathbb{D}_{2}$ with respect to the
embedding $\tau _{i}$ is%
\begin{equation*}
\left\{ 
\begin{array}{l}
-k_{i}\ \text{if }\ell _{i}=0, \\ 
\ \ 0\ \ \ \text{if }\ell _{i}=k_{i}>0%
\end{array}%
\right.
\end{equation*}%
for $i=0,1,...,f-1$ and%
\begin{equation*}
\left\{ 
\begin{array}{l}
-k_{i}\ \text{if }\ell _{i}=k_{i}>0, \\ 
\ \ 0\ \ \ \text{if }\ell _{i}=0%
\end{array}%
\right.
\end{equation*}%
for $i=f,f+1,...,2f-1.$ Therefore the labeled Hodge-Tate weight of $\mathbb{D%
}_{2}$ with respect to the embedding $\tau _{i}$ is%
\begin{equation*}
\left\{ 
\begin{array}{l}
-\left( k_{i}-\ell _{i}\right) \ \text{if }i=0,1,...,f-1 \\ 
-\ell _{i}\ \ \ \ \ \ \ \ \ \ \text{if }i=f,f+1,...,2f-1.%
\end{array}%
\right.
\end{equation*}%
Since $\{\ell _{i},\ell _{i+f}\}=\{0,k_{i}\}$ for all $i=0,1,...,f-1,$ the
labeled Hodge-Tate weights of $\mathbb{D}_{2}$ are $\left( -\ell _{0},-\ell
_{1},...,-\ell _{f-1},-\ell _{f},-\ell _{f+1},...,-\ell _{2f-1}\right) .$
Since $\mathrm{Nm}_{\varphi }\left( \vec{\mu}\right)
=p^{\tsum\limits_{i=0}^{f-1}k_{i}}\cdot \vec{1},$ by Proposition \ref{rank 1
isom}\ the crystalline character corresponding to $\mathbb{D}_{2}$ is $\chi
_{e_{0}}^{\ell _{1}}\cdot \chi _{e_{1}}^{\ell _{2}}\cdot \cdots \cdot \chi
_{e_{2f-2}}^{\ell _{2f-1}}\cdot \chi _{e_{2f-1}}^{\ell _{0}}.$ Suppose that $%
V_{\vec{k}}^{\vec{i}}(\vec{0})$ is reducible. Then there exists some
irreducible constituent of $V_{\vec{k}}^{\vec{i}}(\vec{0})$ whose
restriction to $G_{K_{2f}}$ is $\chi _{e_{0}}^{\ell _{1}}\cdot \chi
_{e_{1}}^{\ell _{2}}\cdot \cdots \cdot \chi _{e_{2f-2}}^{\ell _{2f-1}}\cdot
\chi _{e_{2f-1}}^{\ell _{0}}.$ Since the labeled weights of this character
are $\left( -\ell _{0},-\ell _{1},...,-\ell _{f-1},-\ell _{f},...,-\ell
_{2f-1}\right) ,$ Corollary \ref{weights of restrictions} implies that $\ell
_{i}=\ell _{i+f}$ for all $i=0,1,...,f-1.$ Since $\{\ell _{i},\ell
_{i+f}\}=\{0,k_{i}\}$ for $i=0,1,...,f-1,$ and since some labeled weight is
strictly positive this is absurd. Hence $V_{\vec{k}}^{\vec{i}}(\vec{0})$ is
irreducible and its restriction to $G_{K_{2f}}$ contains $\chi
_{e_{0}}^{\ell _{1}}\cdot \chi _{e_{1}}^{\ell _{2}}\cdot \cdots \cdot \chi
_{e_{2f-2}}^{\ell _{2f-1}}\cdot \chi _{e_{2f-1}}^{\ell _{0}}$ as an
irreducible constituent. By Frobenius reciprocity, 
\begin{equation*}
V_{\vec{k}}^{\vec{i}}(\vec{0})=\mathrm{Ind}_{K_{2f}}^{K_{f}}\left( \chi
_{e_{0}}^{\ell _{1}}\cdot \chi _{e_{1}}^{\ell _{2}}\cdot \cdots \cdot \chi
_{e_{2f-2}}^{\ell _{2f-1}}\cdot \chi _{e_{2f-1}}^{\ell _{0}}\right) .
\end{equation*}%
This finishes the proof of part (i). Part (ii) follows from Theorem \ref%
{from blz} and parts (iii) and (4) follow from Corollary \ref{redmodp}. For
part (iv), notice that any irreducible induced member of $V_{\vec{k}}^{\vec{i%
}}\left( \vec{a}\right) $ has the form $\eta _{c}\cdot \mathrm{Ind}%
_{K_{2f}}^{K_{f}}\left( \chi _{e_{0}}^{\ell _{1}^{\prime }}\cdot \chi
_{e_{1}}^{\ell _{2}^{\prime }}\cdot \cdots \cdot \chi _{e_{2f-2}}^{\ell
_{2f-1}^{\prime }}\cdot \chi _{e_{2f-1}}^{\ell _{0}^{\prime }}\right) $ for
some unramified character $\eta _{c}$ and some nonnegative integers with $%
\{\ell _{i}^{\prime },\ell _{i+f}^{\prime }\}=\{0,k_{i}\}$ for all $i\ $%
(Proposition \ref{induction in char 0}). All the members of $V_{\vec{k}}^{%
\vec{i}}\left( \vec{a}\right) $ have determinant $\left( -1\right)
^{t}p^{\tsum\limits_{i=0}^{f-1}k_{i}},$ where $t$ is the number of
coordinates of $\vec{i}$ equaling $2$ or $4.\ $This equals the determinant
of $\mathrm{Ind}_{K_{2f}}^{K_{f}}\left( \chi _{e_{0}}^{\ell _{1}^{\prime
}}\cdot \chi _{e_{1}}^{\ell _{2}^{\prime }}\cdot \cdots \cdot \chi
_{e_{2f-2}}^{\ell _{2f-1}^{\prime }}\cdot \chi _{e_{2f-1}}^{\ell
_{0}^{\prime }}\right) $ and forces the unramified character $\eta _{c}$ to
be trivial. Hence the only irreducible induced member of the family $V_{\vec{%
k}}^{\vec{i}}\left( \vec{a}\right) $ is $V_{\vec{k}}^{\vec{i}}(\vec{0}).$
\end{proof}

\begin{remark}
\label{remark about types}Let $R$ be as in the proof of Proposition $\ref%
{prop1}$. If $\mathcal{A\ }$is a set of $2\times 2$ matrices, let $R\mathcal{%
A}:=\{R\cdot A:A\in \mathcal{A}\}\ $and $\mathcal{A}R:=\{A\cdot R:A\in 
\mathcal{A}\}.$ We write $\{t_{i},t_{j}\}$ for a set which contains matrices
of type $t_{i}$ and $t_{j}.$ Then $R\{t_{1},t_{2}\}=\{t_{1},t_{2}\},\
R\{t_{3},t_{4}\}=\{t_{3},t_{4}\},\ \{t_{1},t_{2}\}R=\{t_{3},t_{4}\}$ and $%
\{t_{3},t_{4}\}R=\{t_{1},t_{2}\}.$ In the definition of the matrices $P_{i}$
we may always assume that $P_{i}\in \{t_{1},t_{2}\}$ for all $i=1,2,...f-1.$
Indeed, let $Q_{0}=R,$ and for$\ i=1,2,...,f-1$ let 
\begin{equation*}
\ \ \ Q_{i}=\left\{ 
\begin{array}{c}
Id\mathrm{\ }\text{if }P_{i}\in \{t_{1},t_{2}\}, \\ 
R\ \ \text{if }P_{i}\in \{t_{3},t_{4}\}.%
\end{array}%
\right.
\end{equation*}%
After changing the basis by the matrix $Q=\left(
Q_{0},Q_{1},...,Q_{f-1}\right) $ we have $P_{i}\in \{t_{1},t_{2}\}$ for all $%
i=1,2,...f-1.$ By the definition preceding Proposition $\ref{prop1}$, the
type of the matrix $P_{0}\in \{t_{1},t_{2},t_{3},t_{4}\}$ is uniquely
determined by $\left( P_{1},P_{2},...,P_{f-1}\right) .$
\end{remark}

\begin{theorem}
Theorem $\ref{irre}$ holds.
\end{theorem}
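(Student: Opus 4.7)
The plan is to deduce Theorem~\ref{irre} essentially as a repackaging of Proposition~\ref{prop1}, after matching up the two parametrizations of the families and the two forms of the matrices used in Definition~\ref{o prwtos orismos} (introduction) and Definition~\ref{the 8 types copy(1)} (\S\ref{the 4 type representations}). The matrices of Definition~\ref{the 8 types copy(1)} are designed so that their reduction modulo $\pi$ differs from the matrices of Definition~\ref{o prwtos orismos} only by replacing the free variable $X_i$ with $X_i\varphi(z_i)\bmod\pi = X_i\cdot z_i(0) = X_i\cdot p^{m_\ell}$, where $z_i$ are the polynomials produced by Proposition~\ref{zi lemma} with $\ell = k$. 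Since $m_\ell = m$ in the notation of Definition~\ref{o prwtos orismos}, the rescaling $\alpha_i = p^{m}a_i$ gives a bijection $\mathfrak{m}_E^f \to (p^m\mathfrak{m}_E)^f$ identifying the two families, and in particular $\vec{a} = \vec 0 \leftrightarrow \vec\alpha = \vec 0$.

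First I would record this identification precisely: combining Proposition~\ref{main theorem} with the observation above shows that the filtered $\varphi$-module $\mathbb{D}_{\vec{k}}^{\vec{i}}(\vec\alpha)$ appearing in the introduction is canonically isomorphic to $E^{\mid\tau\mid}\otimes_{\mathcal{O}_E^{\mid\tau\mid}}\bigl(\mathbb{N}_{\vec{k}}^{\vec{i}}(\vec a)/\pi\mathbb{N}_{\vec{k}}^{\vec{i}}(\vec a)\bigr)$ for the corresponding $\vec a$, with the filtration as in \eqref{Filitations}--\eqref{Filitations1}. This immediately yields part~(i): the module is weakly admissible and has labeled Hodge--Tate weights $\{0,-k_i\}_{\tau_i}$, and via Theorem~\ref{berger thm} the associated crystalline representation is $V_{\vec{k}}^{\vec{i}}(\vec\alpha)$.

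Next I would invoke Proposition~\ref{prop1} to obtain parts~(ii)--(vi). Specifically, part~(ii) is Proposition~\ref{prop1}(i), whose proof restricts to $G_{K_{2f}}$ via Proposition~\ref{comment}, diagonalizes Frobenius through the change-of-basis matrix $Q = (Q_0,\ldots,Q_{2f-1})$ built from $\mathrm{Id}$ and $R$ via \eqref{Qiiii}, and then matches the labeled Hodge--Tate weights of the resulting rank-one weakly admissible submodule $\mathbb{D}_2$ with the exponents $\ell_i$ by tracking the parities of the numbers of even-type matrices among $(P_1,\ldots,P_i)$; Frobenius reciprocity then concludes. Part~(iii) follows from Proposition~\ref{prop1}(ii), which in turn is a direct application of Theorem~\ref{from blz}. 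Parts~(iv) and~(v) are Proposition~\ref{prop1}(iii)--(iv), which use Proposition~\ref{breuil prop} and Corollary~\ref{redmodp} to rewrite $\bigl(\overline{V}_{\vec{k}}^{\vec{i}}(\vec 0)|_{I_{K_f}}\bigr)^{s.s.}$ in terms of the fundamental characters $\omega_{2f,\bar\tau_0}$. Finally, part~(vi) follows from the uniqueness clause in Proposition~\ref{prop1}(v): any irreducible induced member would be of the form $\eta_c\otimes\mathrm{Ind}_{K_{2f}}^{K_f}(\chi_0^{\ell_1'}\cdots\chi_{2f-1}^{\ell_0'})$ by Proposition~\ref{induction in char 0}, and comparing determinants against the constant $(-1)^t p^{\sum k_i}$ (where $t$ is the number of even-type coordinates of $\vec i$) forces $\eta_c$ to be trivial and $\vec\ell' = \vec\ell$, hence $\vec\alpha = \vec 0$.

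The main obstacle is part~(ii), the combinatorial identification of the character $\chi_0^{\ell_1}\cdots\chi_{2f-1}^{\ell_0}$ from the type-vector $\vec i$. This is precisely what the intricate case analysis in the proof of Proposition~\ref{prop1} achieves: one must verify that the vanishing pattern of the filtration vector $\vec z = \vec x\cdot\vec q_{11} + \vec y\cdot\vec q_{12}$ (after diagonalizing Frobenius) matches the pattern $\ell_i\in\{0,k_i\}$ prescribed in the introduction. Once this bookkeeping is in place and the normalization $c_i=1$ is fixed (which one may assume up to an unramified twist absorbed into the statement), everything else in Theorem~\ref{irre} follows formally.
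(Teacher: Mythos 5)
Your proposal is correct and follows essentially the same route as the paper: Theorem~\ref{irre} is obtained by specializing Proposition~\ref{prop1} to the introduction's parametrization. The paper's own proof is the one-liner ``Follows from Proposition~\ref{prop1} and Remark~\ref{remark about types},'' and your proposal unpacks exactly this --- Proposition~\ref{main theorem} and the rescaling $\alpha_i = p^m a_i$ give part~(i), and Proposition~\ref{prop1}(i)--(v) give parts~(ii)--(vi) respectively. The one item you should cite explicitly is Remark~\ref{remark about types} rather than the normalization $c_i=1$: what Remark~\ref{remark about types} provides is the change-of-basis by the matrix $Q=(Q_0,\ldots,Q_{f-1})$ built from $\mathrm{Id}$ and $R$, reducing the more general type choices of \S\ref{irreducibles} (where $\Pi_i\in\{t_2,t_3\}$ or $\{t_1,t_4\}$ for $i<f$) to the fixed choices $P_i\in\{t_1,t_2\}$ used in Definition~\ref{o prwtos orismos} and in the statement of Theorem~\ref{irre}. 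Your discussion of ``matching up the two parametrizations'' gestures at this, but it is a basis change between types, not merely a rescaling of variables or a normalization of the units $c_i$ (which are relevant instead to the unramified twist in \S\ref{red}). With that citation made precise, the argument is the paper's argument.
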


\begin{proof}
Follows from Proposition $\ref{prop1}$ and Remark $\ref{remark about types}$.
\end{proof}

\begin{example}
\label{touparadeigmatos}Let $f=2$ and $k_{i}>0$ for $i=0,1.$ Up to twist by
some unramified character, there exist $2$ distinct isomorphism classes of
irreducible two dimensional crystalline $E$-representations of $G_{K_{2}}$
with labeled Hodge-Tate weights $\left( \{0,-k_{0}\},\{0,-k_{1}\}\right) \ $%
induced from $G_{K_{4}}.$

\noindent (i) If $\ell _{0}=k_{0}$ and $\ell _{1}=k_{1},$ then from the
definition of the matrices $\Pi _{i}$ preceding Proposition $\ref{prop1}$
and Remark $\ref{remark about types}$, $\left( \Pi _{1},\Pi _{0}\right)
=\left( t_{1},t_{2}\right) .$ Let $P_{i}=\Pi _{i}\func{mod}\pi .$ The
polynomials $z_{i}$ in the definition of the matrices $\Pi _{i}$ are such
that $z_{i}\equiv p^{m}\func{mod}\pi ,$ where $m:=\lfloor \frac{\max
\{k_{0},k_{1}\}-1}{p-1}\rfloor ,$ unless $k_{0}=k_{1}=p$ in which case we
define $m=0.$ For any $\vec{a}=\left( a_{0},a_{1}\right) \in \mathfrak{m}%
_{E}^{2}\ $we consider the family of crystalline representations $V_{\vec{k},%
\vec{a}}^{\left( 1,2\right) }$ constructed in \S $\ref{section families of
crystalline reps}$. The corresponding filtered $\varphi $-module is $\left( 
\mathbb{D}_{\vec{k},\vec{a}}^{\left( 1,2\right) },\varphi \right) ,$ with $%
\left( \varphi \left( \eta _{1}\right) ,\varphi \left( \eta _{2}\right)
\right) =\left( \eta _{1},\eta _{2}\right) P^{\left( 1,2\right) }(\vec{0}),$
where 
\begin{equation*}
P^{\left( 1,2\right) }(\vec{0})=\left( 
\begin{array}{cc}
\left( p^{k_{1}},a_{0}p^{m}\right) & \left( 0,1\right) \\ 
\left( a_{1}p^{m},p^{k_{0}}\right) & \left( 1,0\right)%
\end{array}%
\right)
\end{equation*}%
and filtration%
\begin{equation}
\ \ \ \ \ \ \ \ \ \ \ \ \ \ \ \ \ \ \ \ \ \ \ \mathrm{Fil}^{\mathrm{j}}(%
\mathbb{D}_{\vec{k},\vec{a}}^{\left( 1,2\right) })=\left\{ 
\begin{array}{l}
E^{2}\eta _{1}\oplus E^{2}\eta _{2}\ \ \ \ \ \ \ \ \ \ \ \ \mathrm{if}\text{
\ }j\leq 0, \\ 
E^{2}\left( \vec{x}\cdot \eta _{1}+\vec{y}\cdot \eta _{2}\right) \ \ \ \ \ 
\mathrm{if}\text{ \ }1\leq j\leq w_{0}, \\ 
E^{1}f_{I_{1}}\left( \vec{x}\cdot \eta _{1}+\vec{y}\cdot \eta _{2}\right) \
\ \mathrm{if}\text{ \ }1+w_{0}\leq j\leq w_{1}, \\ 
\ \ \ \ \ \ \ \ \ \ \ \ 0\ \ \ \ \ \ \ \ \ \ \ \ \ \ \ \ \ \mathrm{if}\text{
\ }j\geq 1+w_{1},%
\end{array}%
\right.  \label{example f=2 filtration}
\end{equation}%
with $w_{0}=\min \{k_{0},k_{1}\}$ and $w_{1}=\max \{k_{0},k_{1}\},$ 
\begin{equation*}
f_{I_{1}}=\left\{ 
\begin{array}{c}
\left( 0,1\right) \ \mathrm{if}\text{ }k_{0}<k_{1}, \\ 
\left( 1,0\right) \ \mathrm{if}\text{ }k_{1}<k_{0}, \\ 
\left( 0,0\right) \ \mathrm{if}\text{ }k_{0}=k_{1},%
\end{array}%
\right.
\end{equation*}%
and$\ \left( \vec{x},\vec{y}\right) =\left( \left( 1,1\right) ,\left(
0,0\right) \right) .$ We have 
\begin{equation*}
V_{\vec{k},\vec{0}}^{\left( 1,2\right) }\simeq \mathrm{Ind}%
_{K_{4}}^{K_{2}}\left( \chi _{e_{0}}^{k_{1}}\cdot \chi
_{e_{3}}^{k_{0}}\right) ,
\end{equation*}%
and for any $\vec{a}\in \mathfrak{m}_{E}^{2},$ 
\begin{equation*}
\left( \left( \overline{V}_{\vec{k},\vec{a}}^{\left( 1,2\right) }\right)
_{\mid I_{K_{2}}}\right) ^{s.s.}\simeq \omega _{4,\bar{\tau}_{0}}^{-\left(
k_{0}+pk_{1}\right) }\oplus \omega _{4,\bar{\tau}_{0}}^{-\left(
k_{0}+pk_{1}\right) p^{2}}.
\end{equation*}%
Let $\alpha _{i}=a_{i}p^{m}\ $and $A=\alpha _{1}+p^{k_{1}}\alpha _{0}.\ $%
Assume that $A^{2}\neq -4p^{k_{0}+k_{1}}$ and let $\varepsilon _{0}\neq
\varepsilon _{1}$ be the distinct roots of the characteristic polynomial $%
X^{2}-A\cdot X+p^{k_{0}+k_{1}}.$ Arguing as in the proof of Proposition $2.2$
in \cite{DO10}, we get the following \textquotedblleft standard
parametrization\textquotedblright\ for the family $V_{\vec{k},\vec{a}%
}^{\left( 1,2\right) }:$%
\begin{equation*}
\varphi \left( \eta _{1}\right) =\left( 1,\varepsilon _{0}\right) \eta
_{1},\ \varphi \left( \eta _{2}\right) =\left( \lambda ,\frac{\varepsilon
_{1}}{\lambda }\right) \eta _{2},
\end{equation*}%
where 
\begin{equation*}
\lambda =\lambda \left( \alpha _{0}\right) =\frac{\varepsilon _{1}}{%
\varepsilon _{0}}\cdot \frac{\left( \varepsilon _{1}-A+p^{k_{1}}\alpha
_{0}\right) }{\left( \varepsilon _{0}-A+p^{k_{1}}\alpha _{0}\right) }
\end{equation*}%
(notice that $\varepsilon _{i}\neq A-\alpha _{0}p^{k_{1}}$), and the
filtration is as in Formula \ref{example f=2 filtration} with $\vec{x}=\vec{y%
}=\vec{1}.$

\noindent (ii) If $\ell _{0}=\ell _{1}=0.$ Again, taking into account Remark 
$\ref{remark about types}$, we may only consider the case $\left( \Pi
_{1},\Pi _{0}\right) =\left( t_{2},t_{3}\right) .$ For each $\vec{a}\in 
\mathfrak{m}_{E}^{2}$ consider the family $V_{\vec{k},\vec{a}}^{\left(
2,3\right) }$ of two-dimensional crystalline $E$-representations of $%
G_{K_{2}}$ with labeled Hodge-Tate weights $\{0,-k_{i}\}_{\tau _{i}},$ $%
i=0,1.$ We have 
\begin{equation*}
V_{\vec{k},\vec{0}}^{\left( 2,3\right) }\simeq \mathrm{Ind}%
_{K_{4}}^{K_{2}}\left( \chi _{e_{2}}^{k_{1}}\cdot \chi
_{e_{1}}^{k_{0}}\right) \simeq \mathrm{Ind}_{K_{4}}^{K_{2}}\left( \chi
_{e_{0}}^{k_{1}}\cdot \chi _{e_{3}}^{k_{0}}\right) .
\end{equation*}%
For any $\vec{a}\in \mathfrak{m}_{E}^{2},$%
\begin{equation*}
\left( \left( \overline{V}_{\vec{k},\vec{a}}^{\left( 2,3\right) }\right)
_{\mid I_{K_{2}}}\right) ^{s.s.}\simeq \omega _{4,\tau _{0}}^{-\left(
k_{0}+pk_{1}\right) }\oplus \omega _{4,\tau _{0}}^{-\left(
k_{0}+pk_{1}\right) p^{2}}.
\end{equation*}%
Notice that the family $\left\{ V_{\vec{k},\vec{a}}^{\left( 1,2\right) },\ 
\vec{a}\in \mathfrak{m}_{E}^{2}\right\} $ of case (i) coincides with the
family $\left\{ V_{\vec{k},\vec{a}}^{\left( 2,3\right) },\ \vec{a}\in 
\mathfrak{m}_{E}^{2}\right\} ,$ as the second family is obtained from the
first one by changing the basis by the matrix $Q=\left( R,R\right) .$

\noindent (iii) Let $f=2,$ $\ell _{0}=0$ and $\ell _{1}=k_{1}>0.$ Then $%
\left( \Pi _{1},\Pi _{0}\right) =\left( t_{1},t_{4}\right) .\ $For each $%
\vec{a}\in \mathfrak{m}_{E}^{2}$ consider the family $V_{\vec{k},\vec{a}%
}^{\left( 1,4\right) }$ of two-dimensional crystalline $E$-representations
of $G_{K_{2}}$ with labeled Hodge-Tate weights $\{0,-k_{i}\}_{\tau _{i}},$ $%
i=0,1.$ Then 
\begin{equation*}
V_{\vec{k},\vec{0}}^{\left( 1,4\right) }\simeq \mathrm{Ind}%
_{K_{4}}^{K_{2}}\left( \chi _{e_{0}}^{k_{1}}\cdot \chi
_{e_{1}}^{k_{0}}\right) ,
\end{equation*}%
and for any $\vec{a}\in \mathfrak{m}_{E}^{2},$ 
\begin{equation*}
\left( \left( \overline{V}_{\vec{k},\vec{a}}^{\left( 1,4\right) }\right)
_{\mid I_{K_{2}}}\right) ^{s.s.}\simeq \omega _{4,\bar{\tau}_{0}}^{-\left(
pk_{1}+p^{2}k_{0}\right) }\oplus \omega _{4,\bar{\tau}_{0}}^{-\left(
pk_{1}+p^{2}k_{0}\right) p^{2}}.
\end{equation*}%
Let $\alpha _{i}=a_{i}p^{m}\ $and $A=\alpha _{0}+p^{k_{0}}\alpha _{1}.\ $%
Assume that $A^{2}\neq -4p^{k_{0}+k_{1}}$ and let $\varepsilon _{0}\neq
\varepsilon _{1}$ be the distinct roots of the characteristic polynomial $%
X^{2}-A\cdot X+p^{k_{0}+k_{1}}.$ Arguing as in the proof of Proposition $2.2$
in \cite{DO10}, we get the following \textquotedblleft standard
parametrization\textquotedblright\ for the family $V_{\vec{k},\vec{a}%
}^{\left( 1,4\right) }:$%
\begin{equation*}
\varphi \left( \eta _{1}\right) =\left( 1,\varepsilon _{0}\right) \eta
_{1},\ \varphi \left( \eta _{2}\right) =\left( \lambda ,\frac{\varepsilon
_{1}}{\lambda }\right) \eta _{2},
\end{equation*}%
where 
\begin{equation*}
\lambda =\lambda \left( \alpha _{1}\right) =\left( \frac{\varepsilon _{1}}{%
\varepsilon _{0}}\right) ^{2}\cdot \frac{\left( A-p^{k_{0}}\alpha
_{1}-\varepsilon _{0}\right) }{\left( A-p^{k_{0}}\alpha _{1}-\varepsilon
_{1}\right) }
\end{equation*}%
(notice that $\varepsilon _{i}\neq A-\alpha _{1}p^{k_{0}}$), and the
filtration is as in Formula \ref{example f=2 filtration} with $\vec{x}=\vec{y%
}=\vec{1}.$

\noindent (iv) If $f=2$ $\ell _{0}=k_{0}>0$ and $\ell _{1}=0.$ Then $\left(
\Pi _{1},\Pi _{0}\right) =\left( t_{2},t_{1}\right) $ and this gives the
family obtained in case (iii).
\end{example}

\begin{example}
If $f=2,$ $k_{0}>0$ and $k_{1}=0.$ Then $\mathrm{Ind}_{K_{4}}^{K_{2}}\left(
\chi _{e_{3}}^{k_{0}}\right) $ is a unique isomorphism class of
two-dimensional crystalline irreducible $E$-representations of $G_{K_{2}}$
with labeled weights $\left( \{0,-k_{0}\},\{0,0\}\right) $ induced from $%
G_{K_{4}}.$ We have 
\begin{equation*}
V_{\vec{k},\vec{0}}^{\left( 2,3\right) }\simeq \mathrm{Ind}%
_{K_{4}}^{K_{2}}\left( \chi _{e_{3}}^{k_{0}}\right) \simeq \mathrm{Ind}%
_{K_{4}}^{K_{2}}\left( \chi _{e_{1}}^{k_{0}}\right) ,
\end{equation*}%
and for any $\vec{a}\in \mathfrak{m}_{E}^{2},$%
\begin{equation*}
\left( \left( \overline{V}_{\vec{k},\vec{a}}^{\left( 2,3\right) }\right)
_{\mid I_{K_{2}}}\right) ^{s.s.}\simeq \omega _{4,\bar{\tau}%
_{0}}^{-k_{0}}\oplus \omega _{4,\bar{\tau}_{0}}^{-p^{2}k_{0}}.
\end{equation*}
\end{example}

\begin{example}
Let $f=3,$ $k_{i}>0$ for all $i=0,1,2.$ Up to twist by some unramified
character, there exist $4$ distinct isomorphism classes of irreducible
two-dimensional crystalline $E$-representations of $G_{K_{3}}$ with labeled
Hodge-Tate weights $\left( \{0,-k_{0}\},\{0,-k_{1}\},\{0,-k_{2}\}\right) $
induced from $G_{K_{6}}.$ One of those classes is represented by $\mathrm{Ind%
}_{K_{6}}^{K_{3}}\left( \chi _{e_{0}}^{k_{1}}\cdot \chi
_{e_{1}}^{k_{2}}\cdot \chi _{e_{2}}^{k_{0}}\right) .$ For the families
containing it we have $\ell _{i}=k_{i}>0$ for all $i=0,1,2.$ Since $k_{0}>0,$
$\Pi _{0}=t_{2}$ if $\Pi _{2}=t_{4}$ and $\Pi _{0}=t_{1}$ if $\Pi
_{2}=t_{1}. $ Hence $\left( \Pi _{1},\Pi _{2},\Pi _{0}\right) \in \{\left(
t_{4},t_{4},t_{2}\right) ,\left( t_{4},t_{1},t_{1}\right) ,\left(
t_{1},t_{2},t_{1}\right) ,\left( t_{1},t_{3},t_{2}\right) \}.$ By Remark $%
\ref{remark about types}$ we may only consider the case $\left( \Pi _{1},\Pi
_{2},\Pi _{0}\right) =\left( t_{1},t_{2},t_{1}\right) .$ For any $\vec{a}\in 
\mathfrak{m}_{E}^{3},$ consider the the families $V_{\vec{k},\vec{a}%
}^{\left( 1,2,1\right) }$ of two-dimensional crystalline $E$-representations
of $G_{K_{3}}$ with labeled Hodge-Tate weights $\{0,-k_{i}\}_{\tau _{i}},$ $%
i=0,1,2.$ We have 
\begin{equation*}
V_{\vec{k},\vec{0}}^{\left( 1,2,1\right) }\simeq \mathrm{Ind}%
_{K_{6}}^{K_{3}}\left( \chi _{e_{0}}^{k_{1}}\cdot \chi _{e_{1}}^{k_{2}}\cdot
\chi _{e_{2}}^{k_{0}}\right) ,
\end{equation*}%
and for any $\vec{a}\in \mathfrak{m}_{E}^{3},$ 
\begin{equation*}
\left( \left( \overline{V}_{\vec{k},\vec{a}}^{\left( 1,2,1\right) }\right)
_{\mid I_{K_{3}}}\right) ^{s.s.}\simeq \omega _{6,\bar{\tau}_{0}}^{-\left(
k_{0}+pk_{1}+p^{2}k_{2}\right) }\oplus \omega _{6,\bar{\tau}_{0}}^{-\left(
k_{0}+pk_{1}+p^{2}k_{2}\right) p^{3}}.
\end{equation*}
\end{example}

\subsection{Proof of Theorem \protect\ref{reducibles0}\label{red}}

Let $V_{\vec{\ell},\vec{\ell}^{\prime }}\left( \eta \right) =\eta _{c}\cdot
\chi _{e_{0}}^{\ell _{1}}\cdot \chi _{e_{1}}^{\ell _{2}}\cdot \cdots \cdot
\chi _{e_{f-1}}^{\ell _{0}}\oplus \chi _{e_{0}}^{\ell _{1}^{\prime }}\cdot
\chi _{e_{1}}^{\ell _{2}^{\prime }}\cdot \cdots \cdot \chi _{e_{f-1}}^{\ell
_{0}^{\prime }}$ with $\{\ell _{i},\ell _{i}^{\prime }\}=\{0,k_{i}\}$ for
all $i,$ where $\eta _{c}$ is the unramified character of $G_{K_{f}}$ which
maps $\mathrm{Frob}_{K_{f}}$ to $c\in \mathcal{O}_{E}^{\times }.$ As usual,
we assume that at least one $k_{i}$ is strictly positive. We choose $f$%
-tuples of matrices $\left( \Pi _{1},\Pi _{2},...,\Pi _{f}\right) $ (with $%
\Pi _{f}=\Pi _{0}$) as follows:

(1) If $\ell _{1}=0,$ $\Pi _{1}\in \{t_{2},t_{3}\};$

(2) If $\ell _{1}=k_{1}>0,$ $\Pi _{1}\in \{t_{1},t_{4}\}.$

\noindent For $i=2,3,...,f-1,\ $\noindent we choose the type of the matrix $%
\Pi _{i}$ as follows:

(1) If $\ell _{i}=0,$ then: \noindent

\begin{itemize}
\item If an even number of coordinates of $\left( \Pi _{1},\Pi _{2},...,\Pi
_{i-1}\right) $ is of even type, $\Pi _{i}\in \{t_{2},t_{3}\};$

\item If an odd number of coordinates of $\left( \Pi _{1},\Pi _{2},...,\Pi
_{i-1}\right) $ is of even type, $\Pi _{i}\in \{t_{1},t_{4}\}.$ \noindent
\end{itemize}

\noindent

(2) If $\ell _{i}=k_{i}>0,$ then: \noindent

\begin{itemize}
\item If an even number of coordinates of $\left( \Pi _{1},\Pi _{2},...,\Pi
_{i-1}\right) $ is of even type, $\Pi _{i}\in \{t_{1},t_{4}\};$

\item If an odd number of coordinates of $\left( \Pi _{1},\Pi _{2},...,\Pi
_{i-1}\right) $ is of even type, $\Pi _{i}\in \{t_{2},t_{3}\}.$
\end{itemize}

Finally, we choose the type of the matrix $\Pi _{0}$ as follows: \noindent

(1) If $\ell _{0}=0,$ then: \noindent

\begin{itemize}
\item If an even number of coordinates of $\left( \Pi _{1},\Pi _{2},...,\Pi
_{f-1}\right) $ is of even type, $\Pi _{0}=t_{3};$

\item If an odd number of coordinates of $\left( \Pi _{1},\Pi _{2},...,\Pi
_{f-1}\right) $ is of even type, $\Pi _{0}=t_{4}.$
\end{itemize}

\noindent

(2) If $\ell _{0}=k_{i}>0,$ then:

\begin{itemize}
\item If an even number of coordinates of $\left( \Pi _{1},\Pi _{2},...,\Pi
_{f-1}\right) $ is of even type, $\Pi _{0}=t_{1};$

\item If an odd number of coordinates of $\left( \Pi _{1},\Pi _{2},...,\Pi
_{f-1}\right) $ is of even type, $\Pi _{0}=t_{2}.$
\end{itemize}

\noindent Notice that from the choice of $\Pi _{0},$ an even number of
coordinates of $\left( \Pi _{1},\Pi _{2},...,\Pi _{f}\right) $ is of even
type. If in the proposition below $\eta =\eta _{c}$ is the unramified
character which maps $\mathrm{Frob}_{K_{f}}$ to $c,$ we choose the units
appearing in the entries of the matrices $\Pi _{i}$ such that $c_{i}=1$ for
all $i=1,2,...,f-1,$ while $c_{0}$ will be chosen appropriately. Let $\vec{i}
$ be the type-vector attached to $\left( \Pi _{1},\Pi _{2},...,\Pi
_{f}\right) .$ We exclude those vectors $\vec{i}$ for which $\left( \Pi
_{1},\Pi _{2},...,\Pi _{f}\right) \in C_{1}\cup C_{2}.$ That is to exclude
the cases where $\vec{\ell}=\vec{0}$ or $\vec{\ell}^{\prime }=\vec{0}.$ For
any $\vec{a}\in \mathfrak{m}_{E}^{f}$ we consider the families of
crystalline $E$-representations $V_{\vec{k}}^{\vec{i}}\left( \vec{a}\right) $
of $G_{K_{f}}$ with labeled Hodge-Tate weights $\{0,-k_{i}\}_{\tau _{i}}$
constructed in \S \ref{section families of crystalline reps}.

\begin{proposition}
\begin{enumerate}
\item[(i)] \label{se xreiaaaaaaaaaazomai!!!}For any $\vec{i}$ as above, $V_{%
\vec{k}}^{\vec{i}}(\vec{0})\simeq V_{\vec{\ell},\vec{\ell}^{\prime }}\left(
\eta \right) ,\ $after possibly twisting $V_{\vec{k}}^{\vec{i}}(\vec{0})$ by
some unramified character;

\item[(ii)] For any $\vec{a}\in \mathfrak{m}_{E}^{f},\ \overline{V}_{\vec{k}%
}^{\vec{i}}\left( \vec{a}\right) \simeq \overline{V}_{\vec{k}}^{\vec{i}}(%
\vec{0})$ and%
\begin{equation*}
\left( \overline{V}_{\vec{k}}^{\vec{i}}\left( \vec{a}\right) \right) _{\mid
I_{K_{f}}}\simeq \left( \overline{V}_{\vec{\ell},\vec{\ell}^{\prime }}\left(
\eta \right) \right) _{\mid I_{K_{f}}}\simeq \omega _{f,\bar{\tau}%
_{0}}^{\alpha }\oplus \omega _{f,\bar{\tau}_{0}}^{\alpha ^{\prime }},
\end{equation*}%
where $\alpha =-\tsum\limits_{i=i}^{f-1}\ell _{i}p^{i}$ and $\alpha ^{\prime
}=-\tsum\limits_{i=0}^{f-1}\ell _{i}^{\prime }p^{i}.$
\end{enumerate}
\end{proposition}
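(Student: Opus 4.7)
The plan is to mirror the argument of Proposition \ref{prop1}, with the key structural difference that because an \emph{even} number of coordinates of $(\Pi_1,\ldots,\Pi_f)$ is now of even type, the basis-change matrix will satisfy $Q_0=Q_f=\mathrm{Id}$. Consequently the Frobenius becomes diagonal already over $E^{\mid\tau_{K_f}\mid}$ (no passage to $G_{K_{2f}}$ is needed), and the representation $V_{\vec{k}}^{\vec{i}}(\vec{0})$ will split as a direct sum of two crystalline characters of $G_{K_f}$.

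\medskip

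\textbf{Step 1 (diagonalization).} I define matrices $Q_i\in\{\mathrm{Id},R\}$ exactly as in equations \eqref{Qiiii}--\eqref{Qi} of the proof of Proposition \ref{prop1}, now for $i=0,1,\ldots,f-1$ instead of $0,\ldots,2f-1$, using the parity rule: $Q_i=\mathrm{Id}$ if an even number of coordinates of $(P_1,\ldots,P_i)$ is of even type, $Q_i=R$ otherwise. Since by construction an even number of coordinates of $(P_1,\ldots,P_f)$ is of even type, $Q_f=\mathrm{Id}=Q_0$. A case-by-case check of the four possibilities for each $(Q_{i-1},P_i)$ shows that $Q_{i-1}P_i(\vec{0})Q_i^{-1}=\mathrm{diag}(\lambda_i,\mu_i)$ where $\{\lambda_i,\mu_i\}=\{1,p^{k_i}\}$ (with the entry $c_0$ appearing in the $P_0$ slot). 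In the new basis $\underline{\zeta}=\underline{\eta}\,Q^{-1}$, Frobenius is diagonal with entries $\vec{\lambda},\vec{\mu}\in (E^\times)^{\mid\tau\mid}$.

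\medskip

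\textbf{Step 2 (identifying the characters).} Transporting the filtration \eqref{Filitations} via $Q$, the vector $\vec{x}\eta_1+\vec{y}\eta_2$ becomes $\vec{z}\zeta_1+\vec{w}\zeta_2$ with $\vec{z}=\vec{x}\cdot\vec{q}_{11}+\vec{y}\cdot\vec{q}_{12}$ and analogously for $\vec{w}$. As in the proof of Proposition \ref{prop1} (formulas \eqref{f1}--\eqref{f2}), one checks coordinate-by-coordinate, using the type selection rules preceding the statement, that
\[
z_i=\begin{cases} 0 & \text{if } \ell_i=0,\\ 1 & \text{if } \ell_i=k_i>0,\end{cases}
\qquad w_i=1-z_i.
\]
Hence on the line $\mathbb{D}_2=(E^{\mid\tau\mid})\zeta_2$ the filtration drop at embedding $\tau_i$ occurs exactly at $-\ell_i'$, and on $\mathbb{D}_1=(E^{\mid\tau\mid})\zeta_1$ at $-\ell_i$, so by Proposition \ref{rank 1 isom}(i) these are weakly admissible rank one filtered $\varphi$-modules with labeled Hodge-Tate weights $\{-\ell_i\}_{\tau_i}$ and $\{-\ell_i'\}_{\tau_i}$ respectively. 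Comparing Frobenius norms with Proposition \ref{The crystalline characters} and Lemma \ref{chic prop}(i), after an overall unramified twist $\mu$ determined by $c_0$ we obtain
\[
V_{\vec{k}}^{\vec{i}}(\vec{0}) \;\simeq\; \bigl(\mu\cdot\chi_0^{\ell_1}\cdots\chi_{f-1}^{\ell_0}\bigr)\,\oplus\,\bigl(\chi_0^{\ell_1'}\cdots\chi_{f-1}^{\ell_0'}\bigr),
\]
and choosing $c_0$ so that $\mu=\eta$ gives $\mu\otimes V_{\vec{k}}^{\vec{i}}(\vec{0})\simeq V_{\vec{\ell},\vec{\ell}'}(\eta)$, proving (i). Part (iii) is then immediate from Lemma \ref{BDJ lemma}(i)--(ii) applied to each summand.

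\medskip

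\textbf{Step 3 (constancy of reduction).} Under the assumption $\vec{\ell}\neq\vec{0}$ and $\vec{\ell}'\neq\vec{0}$, we have $(P_1,\ldots,P_f)\notin C_1\cup C_2$ by Corollary \ref{c cor} and the construction; hence Propositions \ref{kati} and \ref{main theorem} apply, the Wach module $\mathbb{N}_{\vec{k}}^{\vec{i}}(\vec{a})$ exists for every $\vec{a}\in(p^m\mathfrak{m}_E)^f$, and Theorem \ref{from blz} yields $\overline{V}_{\vec{k}}^{\vec{i}}(\vec{a})\simeq\overline{V}_{\vec{k}}^{\vec{i}}(\vec{0})$. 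Combined with (i) and (iii), this proves (ii).

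\medskip

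The main obstacle is \textbf{Step 2}: one must verify, by an induction on $i$ tracking the interplay between the parity of even-type matrices among $(P_1,\ldots,P_{i-1})$, the value of $\ell_i$, and the resulting type of $P_i$ (which was precisely engineered in the statement's preamble), that the entries $z_i$ and $w_i$ land in the correct position so that the diagonal summand on $\zeta_2$ picks up weight $k_i$ exactly when $\ell_i'=k_i$. The exclusion of the ordinary cases $\vec{\ell}=\vec{0}$ or $\vec{\ell}'=\vec{0}$ in (ii) is exactly what keeps us out of $C_1\cup C_2$ and thereby guarantees that the Wach module construction of \S\ref{2-d families} is applicable.
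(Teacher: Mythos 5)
Your proposal takes a genuinely different — and more direct — route from the paper. The paper mirrors the template of Proposition \ref{prop1}: it restricts $V_{\vec{k}}^{\vec{i}}(\vec{0})$ to $G_{K_{2f}}$, diagonalizes the Frobenius of the $2f$-tuple there, computes $\mathrm{v}_{\mathrm{p}}(\mathrm{Nm}_\varphi(\vec{\mu}))$ over $K_{2f}$, shows the restriction is reducible, and then uses Frobenius reciprocity together with Lemma \ref{chic prop}(iv) to descend the constituent back to $G_{K_f}$ up to a possible $\eta_{\pm 1}$-ambiguity. You observe instead that since an even number of coordinates of $(\Pi_1,\ldots,\Pi_f)$ is of even type, the chain of base-change matrices satisfies $Q_f = Q_0 = \mathrm{Id}$, so the $f$-tuple $(Q_0,\ldots,Q_{f-1})$ already diagonalizes the Frobenius over $E^{\mid\tau\mid}$; no passage to $K_{2f}$ is needed, the splitting is immediate, and the $\eta_{\pm1}$-ambiguity never arises. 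The coordinate computation $z_i=0\Leftrightarrow\ell_i=0$ (and $w_i = 1-z_i$) carries over unchanged from the proof of Proposition \ref{prop1}, and is indeed what the type-selection rules preceding the statement were engineered for. This is a legitimate simplification; after all, the paper's $2f$-tuple $(Q_0,\ldots,Q_{2f-1})$ is in this case literally $(Q_0,\ldots,Q_{f-1})^{\otimes 2}$, so everything it does over $K_{2f}$ descends.

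Two points are glossed over and should be filled in. First, you cite Proposition \ref{rank 1 isom}(i) to claim that $\mathbb{D}_1=(E^{\mid\tau\mid})\zeta_1$ and $\mathbb{D}_2=(E^{\mid\tau\mid})\zeta_2$ are weakly admissible, but that proposition only characterizes weakly admissible rank-one modules; it does not by itself assert that these two $\varphi$-stable lines carry the induced filtration as a weakly admissible direct sum. You need to observe that $z_iw_i=0$ for every $i$ (which follows from $w_i=1-z_i$ and $z_i\in\{0,1\}$), so that $\mathrm{Fil}^{\mathrm{j}}\mathbb{D}=\mathrm{Fil}^{\mathrm{j}}\mathbb{D}_1\oplus\mathrm{Fil}^{\mathrm{j}}\mathbb{D}_2$; then $t_H$ is additive across the decomposition, $t_N$ is always additive, $t_H(\mathbb{D}_i)\le t_N(\mathbb{D}_i)$ by weak admissibility of $\mathbb{D}$, and $t_H(\mathbb{D})=t_N(\mathbb{D})$ forces equality for both summands. (Alternatively, compute $\mathrm{Nm}_\varphi(\vec{\mu})$ directly over $K_f$ as the $(2,2)$-entry of $P_1(0)\cdots P_0(0)$; this is the paper's computation without the doubling.) Second, the handling of the unramified twist is slightly muddled: you first absorb the unramified parts into a single $\mu$ attached to the $\vec{\ell}$-summand and then simultaneously assert both $V_{\vec{k}}^{\vec{i}}(\vec{0})\simeq V_{\vec{\ell},\vec{\ell}'}(\mu)$ and $\mu\otimes V_{\vec{k}}^{\vec{i}}(\vec{0})\simeq V_{\vec{\ell},\vec{\ell}'}(\eta)$, which are not both true as written. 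The clean version: without normalizing $c_0$, the two summands carry unramified characters $\mu_1$, $\mu_2$ determined by the Frobenius norms of $\vec{\lambda}$, $\vec{\mu}$; twisting $V_{\vec{k}}^{\vec{i}}(\vec{0})$ by $\mu_2^{-1}$ gives $V_{\vec{\ell},\vec{\ell}'}(\mu_1\mu_2^{-1})$, and then $c_0$ is chosen so that $\mu_1\mu_2^{-1}=\eta$.
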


\begin{proof}
For simplicity assume that $\eta =1.$ The general case follows by choosing
the unit $c_{0}$ in the definition of $\Pi _{0}$ appropriately. We restrict $%
V_{\vec{k}}^{\vec{i}}(\vec{0})$ to $G_{K_{2f}}.$ By the construction of the
representation $V_{\vec{k}}^{\vec{i}}(\vec{0})$ in \S \ref{the 4 type
representations}, there exists some $G_{K_{f}}$-stable lattice $\left( 
\mathrm{T}_{\vec{k}}^{\vec{i}}(\vec{0})\right) _{G_{K_{f}}}$ inside $V_{\vec{%
k}}^{\vec{i}}(\vec{0})$ whose Wach module has $\varphi $-action defined by $%
\left( \varphi \left( \eta _{1}\right) ,\varphi \left( \eta _{2}\right)
\right) =\left( \eta _{1},\eta _{2}\right) \cdot \Pi (\vec{0}).$ By
Proposition \ref{comment}, the Wach module of the $G_{K_{2f}}$-stable
lattice $\left( \mathrm{T}_{\vec{k}}^{\vec{i}}(\vec{0})\right) _{\mid
G_{K_{2f}}}$ inside $\left( V_{\vec{k}}^{\vec{i}}(\vec{0})\right) _{\mid
G_{K_{2f}}}$ is defined by $\left( \varphi \left( \eta _{1}\right) ,\varphi
\left( \eta _{2}\right) \right) =\left( \eta _{1},\eta _{2}\right) \cdot \Pi
\left( 0\right) ^{\otimes 2},$ therefore the filtered $\varphi $-module
corresponding to $\left( V_{\vec{k}}^{\vec{i}}(\vec{0})\right) _{\mid
G_{K_{2f}}}$ has Frobenius endomorphism $\left( \varphi \left( \eta
_{1}\right) ,\varphi \left( \eta _{2}\right) \right) =\left( \eta _{1},\eta
_{2}\right) \cdot P\left( 0\right) ^{\otimes 2}.$ The restricted
representation $\left( V_{\vec{k}}^{\vec{i}}(\vec{0})\right) _{\mid
G_{K_{2f}}}$ has labeled weights $\left( \{0,-k_{i}\}\right) _{\tau _{i}},$ $%
i=0,1,...,2f-1,$ with $k_{i+f}=k_{i}$ for $i=0,1,...,f-1,$ and filtration as
in formula \ref{Filitations} for some vectors $\vec{x},\vec{y},$ with the
sets $I_{j}$ being defined for the $2f$ weights above. We prove that $\left(
V_{\vec{k}}^{\vec{i}}(\vec{0})\right) _{\mid G_{K_{2f}}}$ is reducible and
determine its irreducible constituents. First, we change the basis to
diagonalize the matrix of Frobenius. We define matrices $Q_{i}$ as in the
proof of Proposition \ref{prop1}, and we let $Q=\left(
Q_{0},Q_{1},...,Q_{2f-1}\right) ,$ then by the definition of the matrices $%
Q_{i},\ $the\ matrix$\ Q\cdot P\left( 0\right) ^{\otimes 2}\cdot \varphi
\left( Q^{-1}\right) $ is diagonal. By the proof of Proposition \ref{prop1}, 
$Q_{0}=$ $Id\ $and for $i=1,2,...,2f-1,$ $Q_{i}$ is as in formula \ref{Qi}.
We claim that for each $i=0,1,...,f-1,$ $Q_{i}=Q_{i+f}.$ Indeed, from the
definition of the matrices $Q_{i}$ we see that $q_{11}^{i}$ and $%
q_{11}^{i+f} $ are as in formulas \ref{q11i} and \ref{qiii+ff} respectively
in the proof of Proposition \ref{prop1}. Since an even number of coordinates
of $\left( P_{1},P_{2},...,P_{f}\right) $ are of even type, $%
q_{11}^{i+f}=q_{11}^{i}.$ Similarly $q_{ij}^{r+f}=q_{ij}^{r}$ for any entry $%
\left( i,j\right) .$ Consider the ordered basis $\underline{\zeta }=\left(
\zeta _{1},\zeta _{2}\right) $ defined by $\left( \zeta _{1},\zeta
_{2}\right) :=\left( \eta _{1},\eta _{2}\right) \cdot Q^{-1}.$ Let $\vec{q}%
_{ij}$ be th $(i,j)$-entry of $Q.$ In the new basis $\underline{\zeta }$ the
filtration is as in formula \ref{Filitations} with the vector $\vec{x}\eta
_{1}+\vec{y}\eta _{2}$ replaced by $\vec{x}\cdot \left( \vec{q}_{11}\cdot
\zeta _{1}+\vec{q}_{12}\cdot \zeta _{2}\right) +\vec{y}\cdot \left( \vec{q}%
_{12}\cdot \zeta _{1}+\vec{q}_{22}\cdot \zeta _{2}\right) .$ Let $\vec{z}=%
\vec{x}\cdot \vec{q}_{11}+\vec{y}\cdot \vec{q}_{12}$ and $\vec{w}=\vec{x}%
\cdot \vec{q}_{12}+\vec{y}\cdot \vec{q}_{22}.$ The matrix of Frobenius in
this new basis is the diagonal matrix $\mathrm{diag}\left( \vec{\lambda},%
\vec{\mu}\right) .$ Arguing as in Proposition \ref{prop1}, and taking into
account that $q_{ij}^{r+f}=q_{ij}^{r}$ for all $r=0,1,...,f-1$ and all
entries $\left( i,j\right) $ we see that $z_{r+f}=z_{r}$ for all $r.$ From
the proof of the same proposition, $z_{i}=0$ if and only if $q_{11}^{i}=1$
and $x_{i}=0$ or $q_{11}^{i}=0$ and $x_{i}=1.$ From formula \ref%
{Filitations1} it follows that $x_{i}=0$ if and only if $P_{i}\in
\{t_{4},t_{3}\}$ and $x_{i}=1\ $if and only if $P_{i}\in \{t_{2},t_{1}\}.$
Since $z_{i}=z_{i+f}\ $and $k_{i}=k_{i+f} $ for all $i=0,1,...,f-1,$%
\begin{equation*}
\ \ \tsum\limits_{\substack{ i=0  \\ z_{i}=0}}^{2f-1}k_{i}=2\tsum\limits 
_{\substack{ i=0  \\ z_{i}=0}}^{f-1}k_{i}=2\tsum\limits_{\substack{ i=0  \\ %
Q_{i}=R  \\ P_{i}=t_{1}}}^{f-1}k_{i}+2\tsum\limits_{\substack{ i=0  \\ %
Q_{i}=R  \\ P_{i}=t_{2}}}^{f-1}k_{i}+2\tsum\limits_{\substack{ i=0  \\ %
Q_{i}=Id  \\ P_{i}=t_{3}}}^{f-1}k_{i}+2\tsum\limits_{\substack{ i=0  \\ %
Q_{i}=Id  \\ P_{i}=t_{4}}}^{f-1}k_{i}.
\end{equation*}%
We now show that the $\left( 2,2\right) $ entry of $\tprod%
\limits_{i=0}^{2f-1}\left( Q_{i}P_{i+1}Q_{i+1}^{-1}\right) $ is the $p^{n},$
where 
\begin{equation}
n=2\tsum\limits_{\substack{ i=0  \\ Q_{i}=R  \\ P_{i}=t_{1}}}%
^{f-1}k_{i}+2\tsum\limits_{\substack{ i=0  \\ Q_{i}=R  \\ P_{i}=t_{2}}}%
^{f-1}k_{i}+2\tsum\limits_{\substack{ i=0  \\ Q_{i}=Id  \\ P_{i}=t_{3}}}%
^{f-1}k_{i}+2\tsum\limits_{\substack{ i=0  \\ Q_{i}=Id  \\ P_{i}=t_{4}}}%
^{f-1}k_{i}.  \label{n}
\end{equation}%
Since the matrices $Q_{i}P_{i+1}Q_{i+1}^{-1}$ are diagonal, and since $%
Q_{i+f}=Q_{i}$ and $P_{i+f}=P_{i}$ for all $i,$%
\begin{eqnarray*}
\tprod\limits_{i=0}^{2f-1}\left( Q_{i}P_{i+1}Q_{i+1}^{-1}\right)
=\tprod\limits_{\substack{ i=0  \\ Q_{i}=Id  \\ P_{i+1}=t_{4}}}^{f-1}\left(
Q_{i}P_{i+1}Q_{i+1}^{-1}\right) ^{2} &&\cdot \tprod\limits_{\substack{ i=0 
\\ Q_{i}=Id  \\ P_{i+1}=t_{3}}}^{f-1}\left( Q_{i}P_{i+1}Q_{i+1}^{-1}\right)
^{2}\cdot \\
\tprod\limits_{\substack{ i=0  \\ Q_{i}=Id  \\ P_{i+1}=t_{1}}}^{f-1}\left(
Q_{i}P_{i+1}Q_{i+1}^{-1}\right) ^{2}\cdot \tprod\limits_{\substack{ i=0  \\ %
Q_{i}=Id  \\ P_{i+1}=t_{2}}}^{f-1}\left( Q_{i}P_{i+1}Q_{i+1}^{-1}\right)
^{2} &&\cdot \tprod\limits_{\substack{ i=0  \\ Q_{i}=R  \\ P_{i+1}=t_{4}}}%
^{f-1}\left( Q_{i}P_{i+1}Q_{i+1}^{-1}\right) ^{2}\cdot \\
\tprod\limits_{\substack{ i=0  \\ Q_{i}=R  \\ P_{i+1}=t_{3}}}^{f-1}\left(
Q_{i}P_{i+1}Q_{i+1}^{-1}\right) ^{2}\cdot \tprod\limits_{\substack{ i=0  \\ %
Q_{i}=R  \\ P_{i+1}=t_{1}}}^{f-1}\left( Q_{i}P_{i+1}Q_{i+1}^{-1}\right) ^{2}
&&\cdot \tprod\limits_{\substack{ i=0  \\ Q_{i}=R  \\ P_{i+1}=t_{2}}}%
^{f-1}\left( Q_{i}P_{i+1}Q_{i+1}^{-1}\right) ^{2}.
\end{eqnarray*}%
We notice that when $Q_{i}=$ $Id$ and $P_{i+1}=t_{4},$ then by formula \ref%
{Qiiii}, $Q_{i+1}=R$ and $Q_{i}P_{i+1}Q_{i+1}^{-1}=\mathrm{diag}\left(
p^{k_{i+1}},1\right) .$ Therefore the product $\tprod\limits_{\substack{ i=0 
\\ Q_{i}=Id  \\ P_{i+1}=t_{4}}}^{f-1}\left( Q_{i}P_{i+1}Q_{i+1}^{-1}\right) $
has no contribution to the$\ \left( 2,2\right) $ entry of $%
\tprod\limits_{i=0}^{2f-1}\left( Q_{i}P_{i+1}Q_{i+1}^{-1}\right) .$
Similarly, the products%
\begin{equation*}
\tprod\limits_{\substack{ i=0  \\ Q_{i}=Id  \\ P_{i+1}=t_{1}}}^{f-1}\left(
Q_{i}P_{i+1}Q_{i+1}^{-1}\right) ,\tprod\limits_{\substack{ i=0  \\ Q_{i}=R 
\\ P_{i+1}=t_{3}}}^{f-1}\left( Q_{i}P_{i+1}Q_{i+1}^{-1}\right) \ \text{and}%
\tprod\limits_{\substack{ i=0  \\ Q_{i}=R  \\ P_{i+1}=t_{2}}}^{f-1}\left(
Q_{i}P_{i+1}Q_{i+1}^{-1}\right)
\end{equation*}%
have no contribution to the$\ \left( 2,2\right) $ entry of $%
\tprod\limits_{i=0}^{2f-1}\left( Q_{i}P_{i+1}Q_{i+1}^{-1}\right) .$ We now
compute the product $\tprod\limits_{\substack{ i=0  \\ Q_{i}=R  \\ %
P_{i+1}=t_{1}}}^{f-1}\left( Q_{i}P_{i+1}Q_{i+1}^{-1}\right) .$ Formula \ref%
{Qiiii} implies that if $Q_{i}=R\ $and $P_{i+1}=t_{1}\ $then $Q_{i+1}=R,$
therefore $Q_{i}P_{i+1}Q_{i+1}^{-1}=\mathrm{diag}\left( 1,p^{k_{i+1}}\right)
.$ Again, by formula \ref{Qiiii}, $Q_{i}=R$ and $P_{i+1}=t_{1}\ $is
equivalent to $Q_{i+1}=R$ and $P_{i+1}=t_{1}.$ Hence

\begin{equation*}
\tprod\limits_{\substack{ i=0  \\ Q_{i}=R  \\ P_{i+1}=t_{1}}}^{f-1}\left(
Q_{i}P_{i+1}Q_{i+1}^{-1}\right) =\tprod\limits_{\substack{ i=0  \\ Q_{i+1}=R 
\\ P_{i+1}=t_{1}}}^{f-1}\left( Q_{i}P_{i+1}Q_{i+1}^{-1}\right)
=\tprod\limits _{\substack{ i=0  \\ Q_{i}=R  \\ P_{i}=t_{1}}}^{f-1}\mathrm{%
diag}\left( 1,p^{k_{i+1}}\right)
\end{equation*}%
which contributes the fourth summand of the right hand side of equation \ref%
{n}. The claim made before formula \ref{n} follows arguing similarly for the
remaining cases. Hence$\ \mathrm{v}_{\mathrm{p}}\left( \mathrm{Nm}_{\varphi
}\left( \vec{\mu}\right) \right) =\tsum\limits_{\substack{ i=0  \\ z_{i}=0}}%
^{2f-1}k_{i}.$ By Proposition \ref{class thm} $\left( V_{\vec{k},\vec{0}}^{%
\vec{i}}\right) _{\mid G_{K_{2f}}}$ is reducible and $\left( \mathbb{D}%
_{2},\varphi \right) $ is a weakly admissible submodule, where $\mathbb{D}%
_{2}=\left( E^{2f}\right) \cdot \zeta _{2}.$ By \cite[proof of Prop. 4.3]%
{DO10} (or by a direct computation), 
\begin{equation}
\mathrm{Fil}^{\mathrm{j}}\mathbb{D}_{2}=\left\{ 
\begin{array}{l}
\ \ \ \ \mathbb{D}_{2}\text{\ \ \ \ \ \ \ \ \ \ \ \ \ \ \ \ if\ }~\text{\ }%
j\leq 0, \\ 
\left( E^{\mid \tau _{K_{2f}}\mid }\right) f_{I_{i,\vec{z}}}\zeta _{2}\text{
\ if\ \ }1+w_{i-1}\leq j\leq w_{i}\ \text{for all }i=0,1,...,t-1, \\ 
\ \ \ \ 0\ \ \ \ \text{\ \ \ \ \ \ \ \ \ \ \ \ \ \ if \ }j\geq 1+w_{t-1},%
\end{array}%
\right.  \label{whta}
\end{equation}%
where $I_{i.\vec{z}}=I_{i}\cap \{j\in \{0,1,...,2f-1\}:z_{j}=0\}.$ As in the
proof of Proposition \ref{prop1}, the labeled weight for the embedding $\tau
_{i}$ is $0$ if $z_{i}=1\ $and $-k_{i}\ $if $z_{i}=0.$ Next, we prove that
for $i=0,1,...,f-1,$%
\begin{equation}
z_{i}=z_{i+f}=\left\{ 
\begin{array}{l}
0\ \text{if }\ell _{i}=0, \\ 
1\ \text{if }\ell _{i}=k_{i}>0.%
\end{array}%
\right.  \label{zi,f}
\end{equation}%
This is done exactly as in Proposition \ref{prop1}, taking into account that
an even number of the coordinates of $\left( P_{1},P_{2},...,P_{f}\right) $
is of even type. We have $z_{i}=0$ for all $i$ if and only if $\ell _{i}=0$
for all $i$ if and only if $\left( \Pi _{1},\Pi _{2},...,\Pi _{f}\right) \in
C_{1}$ and $z_{i}=1$ for all $i$ if and only if $\ell _{i}=k_{i}>0$ for all $%
i$ if and only if $\left( \Pi _{1},\Pi _{2},...,\Pi _{f}\right) \in C_{2},$
cases excluded. Therefore neither of the summands of $V_{\vec{k}}^{\vec{i}}(%
\vec{0})$ is unramified. By the discussion above the labeled weights of $%
\mathbb{D}_{2}$ are $\left( -\ell _{0}^{\prime },-\ell _{1}^{\prime
},...,-\ell _{f-1}^{\prime },-\ell _{0}^{\prime },-\ell _{1}^{\prime
},...,-\ell _{f-1}^{\prime }\right) .$ By formula \ref{zi,f}, $\mathrm{v}_{%
\mathrm{p}}\left( \mathrm{Nm}_{\varphi }\left( \vec{\mu}\right) \right)
=\tsum\limits_{\substack{ i=0  \\ z_{i}=0}}^{2f-1}k_{i}=\tsum%
\limits_{i=0}^{2f-1}\ell _{i}^{\prime }.$ By Proposition \ref{rank 1 isom}\
and Lemma \ref{chic prop}, the corresponding crystalline character is%
\begin{equation*}
\psi =\chi _{e_{0}}^{\ell _{1}^{\prime }}\cdot \chi _{e_{1}}^{\ell
_{2}^{\prime }}\cdot \cdots \cdot \chi _{e_{f-2}}^{\ell _{f-1}^{\prime
}}\cdot \chi _{e_{f-1}}^{\ell _{0}^{\prime }}\cdot \chi _{e_{0}}^{\ell
_{1}^{\prime }}\cdot \chi _{e_{1}}^{\ell _{2}^{\prime }}\cdot \cdots \cdot
\chi _{e_{f-2}}^{\ell _{f-1}^{\prime }}\cdot \chi _{e_{f-1}}^{\ell
_{0}^{\prime }},
\end{equation*}%
If $V_{\vec{k}}^{\vec{i}}(\vec{0})$ is irreducible, then by Frobenius
reciprocity $V_{\vec{k}}^{\vec{i}}(\vec{0})=\mathrm{Ind}_{K_{2f}}^{K_{f}}%
\left( \psi \right) ,$ which is absurd by Corollary \ref{ti na to pw}. Hence 
$V_{\vec{k}}^{\vec{i}}(\vec{0})$ is reducible and contains an irreducible
constituent which restricts to $\psi .$ By Lemma \ref{chic prop}(iv), the
only choices are $\eta _{\pm 1}\cdot \chi _{e_{0}}^{\ell _{1}}\cdot \chi
_{e_{1}}^{\ell _{2}}\cdot \cdots \cdot \chi _{e_{f-2}}^{\ell _{f-1}}\cdot
\chi _{e_{f-1}}^{\ell _{0}},$ and we are done after twisting by $\eta _{\mp
1}.$ The rest of the proposition follows as in the proof of Proposition \ref%
{prop1}.
\end{proof}

\begin{theorem}
Theorem $\ref{reducibles0}$ holds.
\end{theorem}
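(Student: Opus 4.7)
The plan is to deduce Theorem \ref{reducibles0} directly from Proposition \ref{se xreiaaaaaaaaaazomai!!!}, which already carries almost all of its content, together with a short character-level computation for part (iii). First I would verify that the hypothesis of the theorem, namely $\vec{\ell}\neq\vec{0}$ and $\vec{\ell}^{\prime}\neq\vec{0}$, is equivalent to the structural hypothesis $(P_1,P_2,\ldots,P_f)\notin C_1\cup C_2$ required by Proposition \ref{se xreiaaaaaaaaaazomai!!!} (and by Proposition \ref{kati}, which supplies the matrices $G_\gamma(\mathcal{S})$ in the first place). Inspecting Definitions \ref{stupid types def} and \ref{stupid types def1} against the type-assignment rules stated just before the theorem, one sees that $(P_1,\ldots,P_f)\in C_1$ forces every $\ell_i=0$, while $(P_1,\ldots,P_f)\in C_2$ forces every $\ell_i=k_i$, i.e.\ $\ell_i^{\prime}=0$, so the two assumptions match.

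Given that verification, parts (i) and (ii) are exactly parts (i) and (ii) of Proposition \ref{se xreiaaaaaaaaaazomai!!!}. The construction of the unramified twist $\mu$ in (i) proceeds by restricting $V_{\vec{k}}^{\vec{i}}(\vec{0})$ to $G_{K_{2f}}$ via Proposition \ref{comment}, diagonalizing the resulting Frobenius matrix $P(\vec{0})^{\otimes 2}$ by the basis change $Q=(Q_0,Q_1,\ldots,Q_{2f-1})$ of Proposition \ref{prop1}, and then exploiting the fact that, because an \emph{even} number of the coordinates of $(P_1,\ldots,P_f)$ is of even type, one has $Q_{i+f}=Q_i$ (not $Q_{i+f}=R\cdot Q_i$ as in the irreducible case). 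This $f$-periodicity forces the filtration-defining vectors to satisfy $z_{i+f}=z_i$ and $w_{i+f}=w_i$, so that each of the two $\varphi$-stable lines descends from $E^{|\tau_{K_{2f}}|}$ to $E^{|\tau_{K_f}|}$. Matching labeled Hodge--Tate weights and the Frobenius norm via Proposition \ref{rank 1 isom} and Lemma \ref{chic prop} then identifies the two crystalline characters thus obtained with the summands of $V_{\vec{\ell},\vec{\ell}^{\prime}}(\eta)$ up to a common unramified twist $\mu$. Part (ii) is an application of Theorem \ref{from blz} to the family of Wach modules $\mathbb{N}_{\vec{k}}^{\vec{i}}(\vec{a})$ supplied by Propositions \ref{rank two wach modules construction} and \ref{kati}.

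For part (iii), the computation is independent and direct. Each summand of $V_{\vec{\ell},\vec{\ell}^{\prime}}(\eta)$ is of the form (unramified character)$\,\cdot\,\chi_{0}^{\ell_1}\chi_{1}^{\ell_2}\cdots\chi_{f-1}^{\ell_0}$, and its mod-$p$ reduction restricted to $I_{K_f}$ is by Lemma \ref{BDJ lemma}(i)--(ii) equal to $\prod_{i=0}^{f-1}\omega_{f,\bar\tau_{i+1}}^{-\ell_{i+1}}=\omega_{f,\bar\tau_0}^{-\sum_i \ell_i p^i}=\omega_{f,\bar\tau_0}^{\beta}$, since unramified characters are trivial on inertia and the fundamental characters shift by powers of $p$. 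The same computation applied to the second summand gives $\omega_{f,\bar\tau_0}^{\beta^{\prime}}$, yielding the stated isomorphism.

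The main obstacle is therefore not in the present theorem but inside Proposition \ref{se xreiaaaaaaaaaazomai!!!}: the combinatorial verification that, after diagonalization, the $f$-periodicities $Q_{i+f}=Q_i$, $z_{i+f}=z_i$ hold, together with the case analysis showing $z_i=0\iff\ell_i=0$ for $i=0,1,\ldots,f-1$. Once this is in place, together with the matching of Frobenius-eigenvalue norms to fit Proposition \ref{rank 1 isom}, Theorem \ref{reducibles0} reduces to bookkeeping plus the straightforward character computation of part (iii).
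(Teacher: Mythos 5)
Your proposal is correct and follows the same route as the paper: the result is deduced from Proposition \ref{se xreiaaaaaaaaaazomai!!!}, whose part (i) gives (i), whose part (ii) gives both (ii) and (iii), and with the type-vector convention of Theorem \ref{reducibles0} reconciled with the broader one in \S\ref{red} via Remark \ref{remark about types} (which the paper cites and you implicitly absorb into the explicit verification that $(P_1,\ldots,P_f)\in C_1\cup C_2 \Leftrightarrow \vec{\ell}=\vec{0}$ or $\vec{\ell}^{\prime}=\vec{0}$). The extra material — the $f$-periodicity $Q_{i+f}=Q_i$, $z_{i+f}=z_i$ argument, and the direct character-level computation of (iii) via Lemma \ref{BDJ lemma} — is a correct unpacking of the proof of Proposition \ref{se xreiaaaaaaaaaazomai!!!}, not a different method.
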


\begin{proof}
Follows from Proposition \ref{se xreiaaaaaaaaaazomai!!!}, taking into
account Remark \ref{remark about types}.
\end{proof}

\begin{example}
Let $f=2,$ $\ell _{0}=0$ and $\ell _{1}=k_{1.}$ Let $\left( \Pi _{1},\Pi
_{0}\right) =\left( t_{1},t_{3}\right) $ with $c_{0}=c_{1}=1.$ Then after
possibly twisting by $\eta _{\pm 1},$ 
\begin{equation*}
V_{\vec{k}}^{\left( 1,3\right) }(\vec{0})\simeq \chi _{e_{0}}^{k_{1}}\oplus
\chi _{e_{1}}^{k_{0}}
\end{equation*}
\end{example}

\noindent In the next proposition we study closer the F-semisimple members
of this family, assuming that $c=1.$

\begin{proposition}
\label{an example}Assume that $V_{\vec{k}}^{\left( 1,3\right) }\left( \vec{%
\alpha}\right) $ is F-semisimple.

\begin{enumerate}
\item[(i)] $V_{\vec{k}}^{\left( 1,3\right) }\left( \ \vec{\alpha}\right) $
is irreducible if and only if $\alpha _{0}\alpha _{1}\neq 0,$ and is
non-induced for all but finitely many such $\vec{\alpha};$

\item[(ii)] $V_{\vec{k}}^{\left( 1,3\right) }\left( \vec{\alpha}\right) $ is
non-split reducible if and only if precisely one of the coordinates $\alpha
_{i}$ of $\vec{\alpha}$ is zero;

\item[(iii)] The families $\left\{ V_{\vec{k}}^{\left( 1,3\right) }\left(
\left( \alpha _{0},0\right) \right) ,~\alpha _{0}\in p^{m}\mathfrak{m}%
_{E}\setminus \{{0\}}\right\} $ and $\left\{ V_{\vec{k}}^{\left( 1,3\right)
}\left( \left( 0,\alpha _{1}\right) \right) ,~\alpha _{1}\in p^{m}\mathfrak{m%
}_{E}\setminus \{{0\}}\right\} $ are disjoint;

\item[(iv)] $V_{\vec{k}}^{\left( 1,3\right) }(\vec{0})$ is split-reducible.
\end{enumerate}
\end{proposition}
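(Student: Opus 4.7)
The plan is to apply Proposition~\ref{class thm} (the classification of rank two F-semisimple filtered $\varphi$-modules) to $\mathbb{D} := \mathbb{D}_{\vec{k}}^{(1,3)}(\vec{\alpha})$. From Definition~\ref{o prwtos orismos} with type-vector $(1,3)$, the Frobenius matrices at slots $0$ and $1$ are $P_1 = \left(\begin{smallmatrix} p^{k_1} & 0 \\ \alpha_1 & 1 \end{smallmatrix}\right)$ and $P_0 = \left(\begin{smallmatrix} 1 & \alpha_0 \\ 0 & p^{k_0} \end{smallmatrix}\right)$, while the filtration line at slot $0$ is spanned by $(-\alpha_0, 1)$ and at slot $1$ by $(1, -\alpha_1)$. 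The matrix of $\varphi^2$ at slot $0$ is $P_1 P_0$, with characteristic polynomial $X^2 - (p^{k_0} + p^{k_1} + \alpha_0 \alpha_1) X + p^{k_0 + k_1}$. Under the F-semisimplicity hypothesis, either the Frobenius is scalar (which forces $\vec{\alpha} = \vec{0}$ and $k_0 = k_1$) or $\varphi^2$ has two distinct eigenvalues, so the $\varphi$-stable rank one submodules of $\mathbb{D}$ are precisely the two spans of $\varphi^2$-eigenspaces at slot $0$, extended by $\varphi$ to slot $1$.

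For (iv), at $\vec{\alpha} = \vec{0}$ the basis $(\eta_1, \eta_2)$ is already standard with $\vec{x} = (0, 1)$ and $\vec{y} = (1, 0)$; since $J_{\vec{x}} \cap J_{\vec{y}} = \varnothing$, Proposition~\ref{class thm}(ii) yields split-reducibility immediately. For (ii) with $\alpha_1 = 0$ and $\alpha_0 \neq 0$, the submodule $\mathbb{D}_1 = E^{\mid \tau \mid} \eta_1$ is $\varphi$-stable with $\varphi(\eta_1) = (p^{k_1}, 1) \eta_1$ and Hodge weights $(0, -k_1)$, matching its Newton slope $k_1$, so $\mathbb{D}_1$ is weakly admissible. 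The other $\varphi^2$-eigenspace at slot $0$ (spanned by an explicit vector of the form $p^{k_1}\alpha_0 \eta_1 + (p^{k_0} - p^{k_1}) \eta_2$ when $k_0 \neq k_1$) is not proportional to $(-\alpha_0, 1)$, and the resulting rank one $\varphi$-stable submodule has total Hodge weight $0$ but Newton slope $k_0 > 0$, violating weak admissibility. Hence $V_{\vec{k}}^{(1,3)}(\vec{\alpha})$ is reducible with a unique weakly admissible subrepresentation, i.e., non-split reducible; the case $\alpha_0 = 0$, $\alpha_1 \neq 0$ is treated symmetrically with $\mathbb{D}_2$ in place of $\mathbb{D}_1$.

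For (i), when $\alpha_0 \alpha_1 \neq 0$ neither $\mathbb{D}_1$ nor $\mathbb{D}_2$ is $\varphi$-stable. The key computation $P_1 P_0 (-\alpha_0, 1)^{T} = (0, p^{k_0})^{T}$ shows that $(-\alpha_0, 1)$ is never a $\varphi^2$-eigenvector, so no $\varphi^2$-eigenspace lies in the filtration line at slot $0$; a parallel check at slot $1$ via $P_1 (1, -\alpha_1)^{T} = (p^{k_1}, 0)^{T}$ shows that the image $P_0 v$ of an eigenvector $v$ lies in the filtration line at slot $1$ only if $v \propto (1, 0)$, which forces $\alpha_1 = 0$. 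Thus every rank one $\varphi$-stable submodule has total Hodge weight $0$ while its Newton slope is strictly positive, and $\mathbb{D}$ admits no nontrivial weakly admissible submodule, so $V$ is irreducible. For the non-induced statement, Theorem~\ref{about induced}(iv) supplies at most $2^{f^{+}-1}$ isomorphism classes of irreducible crystalline $E$-representations of $G_{K_2}$ with the prescribed labeled weights induced from $G_{K_4}$, up to unramified twist; matching the characteristic polynomial of $\varphi^2$ and the determinant of $V_{\vec{k}}^{(1,3)}(\vec{\alpha})$ against any such candidate (and allowing the one-parameter family of unramified twists) yields a nontrivial polynomial equation in $\alpha_0 \alpha_1$, which has only finitely many solutions in $p^{m}\mathfrak{m}_{E}$.

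Finally, (iii) follows at once from the uniqueness of the weakly admissible subrepresentation in the non-split cases established in (ii): for $V_{\vec{k}}^{(1,3)}((\alpha_0, 0))$ with $\alpha_0 \neq 0$ the subrepresentation is the character $\chi_{e_0}^{k_1}$ with labeled Hodge-Tate weights $(0, -k_1)$, whereas for $V_{\vec{k}}^{(1,3)}((0, \alpha_1))$ with $\alpha_1 \neq 0$ it is $\chi_{e_1}^{k_0}$ with labeled weights $(-k_0, 0)$. Since at least one $k_i$ is strictly positive, these characters have distinct labeled Hodge-Tate weights and cannot be isomorphic even after twisting by an unramified character, so the two families share no representation. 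The most delicate step in this plan is the computation in (i), where one must verify simultaneously at both slots that each $\varphi^2$-eigenspace avoids the filtration line, together with the finiteness assertion for induced members, which rests on extracting an explicit polynomial identity from the list provided by Theorem~\ref{about induced}.
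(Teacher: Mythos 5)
Your proof is correct and reaches the same conclusions, but it follows a genuinely different route from the paper. The paper (in its proof of this proposition) explicitly diagonalizes the Frobenius in each case via an auxiliary basis $\underline{\xi}=(\xi_1,\xi_2)$, computes the standard parametrization parameter $\lambda$, and then reads off irreducibility/reducibility and split/non-split from Proposition~\ref{class thm}; for part (iii) it simply cites \cite[Proposition 7.1]{DO10}. You instead stay in the original basis $(\eta_1,\eta_2)$, enumerate the rank one $\varphi$-stable subobjects directly as spans of $\varphi^2$-eigenvectors at slot $0$, and check the Hodge--Newton inequality by hand using the two computations $P_1P_0(-\alpha_0,1)^T=(0,p^{k_0})^T$ and $P_1(1,-\alpha_1)^T=(p^{k_1},0)^T$. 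This is cleaner in that it avoids the rather heavy explicit eigenvector and $\lambda$ formulas, and it makes the mechanism of (non-)weak-admissibility visible. Your treatment of (iii) is also self-contained: you derive disjointness from uniqueness of the weakly admissible subobject in the non-split case together with the distinct labeled Hodge--Tate weights $(0,-k_1)$ versus $(-k_0,0)$ of the subcharacters, rather than invoking the external classification result. Two small points you should make explicit (both are implicit in the paper as well): first, in (i) and (ii) you assert that the discarded subobjects have strictly positive Newton slope; this follows because the trace $p^{k_0}+p^{k_1}+\alpha_0\alpha_1$ and the determinant $p^{k_0+k_1}$ of $Q_2$ both lie in $\mathfrak{m}_E$ (here using that $k_0,k_1>0$ and $\alpha_i\in p^m\mathfrak{m}_E$), forcing $\mathrm{v}_p(\varepsilon_i)>0$ for both eigenvalues. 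Second, the F-semisimplicity hypothesis rules out $k_0=k_1$ exactly when precisely one $\alpha_i$ vanishes (since then $Q_2$ would be a nontrivial Jordan block), so the ``$k_0\neq k_1$'' you invoke when writing the second eigenvector in case (ii) is automatic rather than an extra assumption. With those observations your argument is complete.
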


\begin{proof}
The weakly admissible filtered $\varphi $-module corresponding to $V_{\vec{k}%
}^{\left( 1,3\right) }\left( \vec{\alpha}\right) $ has Frobenius
endomorphism 
\begin{equation*}
\left( \varphi \left( \eta _{1}\right) ,\varphi \left( \eta _{2}\right)
\right) =\left( \eta _{1},\eta _{2}\right) \left( 
\begin{array}{cc}
\left( p^{k_{1}},1\right) & \left( 0,\alpha _{0}\right) \\ 
\left( \alpha _{1},0\right) & \left( 1,p^{k_{0}}\right)%
\end{array}%
\right)
\end{equation*}%
and filtration 
\begin{equation}
\mathrm{Fil}^{\mathrm{j}}\left( \mathbb{D}\right) =\left\{ 
\begin{array}{l}
(E\times E)\eta _{1}\oplus (E\times E)\eta _{2}\text{\ \ if }j\leq 0, \\ 
(E\times E)f_{I_{0}}(\vec{x}\eta _{1}+\vec{y}\eta _{2})\text{ \ \ \ if }%
1\leq j\leq w_{0}, \\ 
(E\times E)f_{I_{1}}(\vec{x}\eta _{1}+\vec{y}\eta _{2})\text{ \ \ \ if }%
1+w_{0}\leq j\leq w_{1}, \\ 
\ \ \ \ \ \ \ \ \ \ \ 0\text{ \ \ \ \ \ \ \ \ \ \ \ \ \ \ \ \ \ \ \ \ if }%
j\geq 1+w_{1},%
\end{array}%
\right.  \label{example13 fil}
\end{equation}%
with $\vec{x}=\left( -\alpha _{0},1\right) \ $and $\vec{y}=\left( 1,-\alpha
_{1}\right) .$ We diagonalize the matrix of Frobenius, arguing as in the
proof of Proposition $2.2$ in \cite{DO10}. The characteristic polynomial is $%
X^{2}-\left( p^{k_{0}}+p^{k_{1}}+\alpha _{0}\alpha _{1}\right)
X+p^{k_{0}+k_{1}},$ and we assume that $\left( p^{k_{0}}+p^{k_{1}}+\alpha
_{0}\alpha _{1}\right) ^{2}\neq 4p^{k_{0}+k_{1}}$ so that its roots $%
\varepsilon _{0}$ and $\varepsilon _{1}$ are distinct. We have the following
cases.

\noindent Case $\left( 1\right) .$ $\alpha _{0}\alpha _{1}\neq 0.$ We change
the basis to $\underline{\xi }=\left( \xi _{1},\xi _{2}\right) ,$ where%
\begin{eqnarray*}
\xi _{1}=\left( \left( \varepsilon _{0}-p^{k_{1}}-\alpha _{0}\alpha
_{1}\right) \alpha _{1},\ \frac{\alpha _{0}\left( \varepsilon
_{0}-\varepsilon _{1}\right) \left( \varepsilon _{0}-p^{k_{0}}\right) \left(
\varepsilon _{0}-p^{k_{0}}-\alpha _{0}\alpha _{1}\right) \left( \varepsilon
_{1}-p^{k_{1}}-\alpha _{0}\alpha _{1}\right) }{\left( 2\varepsilon
_{0}\varepsilon _{1}-p^{k_{0}}\varepsilon _{1}-p^{k_{1}}\varepsilon
_{0}-\alpha _{0}\alpha _{1}\varepsilon _{1}\right) \left( \varepsilon
_{1}-p^{k_{0}}-\alpha _{0}\alpha _{1}\right) }\right) \eta _{1} && \\
&& \\
+\left( \left( \varepsilon _{0}-p^{k_{1}}-\alpha _{0}\alpha _{1}\right)
\alpha _{1},\ \frac{\alpha _{0}\left( \varepsilon _{0}-\varepsilon
_{1}\right) \left( \varepsilon _{1}-p^{k_{0}}\right) \left( \varepsilon
_{0}-p^{k_{0}}-\alpha _{0}\alpha _{1}\right) \left( \varepsilon
_{1}-p^{k_{1}}-\alpha _{0}\alpha _{1}\right) }{\left( 2\varepsilon
_{0}\varepsilon _{1}-p^{k_{0}}\varepsilon _{1}-p^{k_{1}}\varepsilon
_{0}-\alpha _{0}\alpha _{1}\varepsilon _{1}\right) \left( \varepsilon
_{1}-p^{k_{0}}-\alpha _{0}\alpha _{1}\right) }\right) \eta _{2} &&
\end{eqnarray*}%
and%
\begin{eqnarray*}
\xi _{2}=\left( \left( \varepsilon _{1}-p^{k_{1}}-\alpha _{0}\alpha
_{1}\right) \left( \varepsilon _{0}-p^{k_{1}}\right) ,\ \frac{\alpha
_{0}^{2}\left( \varepsilon _{0}-\varepsilon _{1}\right) \left( \varepsilon
_{1}-p^{k_{1}}-\alpha _{0}\alpha _{1}\right) \left( \varepsilon
_{1}-p^{k_{0}}-\alpha _{0}\alpha _{1}\right) }{\left( 2\varepsilon
_{0}\varepsilon _{1}-p^{k_{0}}\varepsilon _{1}-p^{k_{1}}\varepsilon
_{0}-\alpha _{0}\alpha _{1}\varepsilon _{1}\right) \left( \varepsilon
_{1}-p^{k_{0}}-\alpha _{0}\alpha _{1}\right) }\right) \eta _{1} && \\
&& \\
+\left( \left( \varepsilon _{1}-p^{k_{1}}-\alpha _{0}\alpha _{1}\right)
\left( \varepsilon _{1}-p^{k_{1}}\right) ,\ \frac{\alpha _{0}^{2}\left(
\varepsilon _{0}-\varepsilon _{1}\right) \left( \varepsilon
_{1}-p^{k_{1}}-\alpha _{0}\alpha _{1}\right) \left( \varepsilon
_{1}-p^{k_{0}}-\alpha _{0}\alpha _{1}\right) }{\left( 2\varepsilon
_{0}\varepsilon _{1}-p^{k_{0}}\varepsilon _{1}-p^{k_{1}}\varepsilon
_{0}-\alpha _{0}\alpha _{1}\varepsilon _{1}\right) \left( \varepsilon
_{1}-p^{k_{0}}-\alpha _{0}\alpha _{1}\right) }\right) \eta _{2} &&
\end{eqnarray*}%
In the ordered basis $\underline{\xi },$%
\begin{equation*}
\varphi \left( \xi _{1}\right) =\left( 1,\ \varepsilon _{0}\right) \xi _{1}\ 
\text{and }\varphi \left( \xi _{2}\right) =\left( \lambda ,\ \frac{%
\varepsilon _{1}}{\lambda }\varepsilon _{1}\right) \xi _{2},
\end{equation*}%
where%
\begin{equation*}
\lambda =\allowbreak -\frac{\left( \varepsilon _{0}-p^{k_{1}}-\alpha
_{0}\alpha _{1}\right) }{\left( \varepsilon _{1}-p^{k_{1}}-\alpha _{0}\alpha
_{1}\right) }\cdot \frac{\left( \varepsilon _{1}-p^{k_{0}}-\alpha _{0}\alpha
_{1}\right) }{\left( \varepsilon _{0}-p^{k_{0}}-\alpha _{0}\alpha
_{1}\right) }\cdot \frac{\left( 2\varepsilon _{0}\varepsilon
_{1}-p^{k_{0}}\varepsilon _{1}-p^{k_{1}}\varepsilon _{0}-\alpha _{0}\alpha
_{1}\varepsilon _{1}\right) }{\left( 2\varepsilon _{0}\varepsilon
_{1}-p^{k_{0}}\varepsilon _{0}-p^{k_{1}}\varepsilon _{1}-\alpha _{0}\alpha
_{1}\varepsilon _{0}\right) },
\end{equation*}%
and the filtration is as in formula \ref{example13 fil}, with $\vec{x}\eta
_{1}+\vec{y}\eta _{2}$ replaced by $\xi _{1}+\xi _{2}.$ By Proposition \ref%
{class thm} $V_{\vec{k}}^{\left( 1,3\right) }\left( \ \vec{\alpha}\right) \ $%
is irreducible. Arguing as in the proof of Proposition \ref{prop1}(iv) we
see that the representations $V_{\vec{k}}^{\left( 1,3\right) }\left( \ \vec{%
\alpha}\right) $ are non-induced with the possibility of at most finitely
many exceptions.

\noindent Case $(2).$ $\alpha _{0}=0,$ $\alpha _{1}\neq 0.$ We argue as
above and see that in the ordered basis $\underline{\xi }=\left( \xi
_{1},\xi _{2}\right) ,$ where%
\begin{equation*}
\xi _{1}=\eta _{2}\ \text{and}\ \xi _{2}=\left( 1,\ \frac{p^{k_{0}}-p^{k_{1}}%
}{\alpha _{1}p^{k_{1}}}\right) \eta _{1}-\left( \frac{\alpha _{1}}{%
p^{k_{0}}-p^{k_{1}}},\ p^{k_{0}-k_{1}}\right) \eta _{2}
\end{equation*}%
we have 
\begin{equation*}
\varphi \left( \xi _{1}\right) =\left( 1,\ p^{k_{0}}\right) \xi _{1}\ \text{%
and }\varphi \left( \xi _{2}\right) =\left( \lambda \left( \alpha
_{1}\right) ,\ \frac{p^{k_{1}}}{\lambda \left( \alpha _{1}\right) }\right)
\xi _{2},
\end{equation*}%
with $\lambda \left( \alpha _{1}\right) =\alpha _{1}^{-1}\left(
p^{k_{0}}-p^{k_{1}}\right) .$ The filtration in this basis is given by
formula \ref{example13 fil}, with $\vec{x}\eta _{1}+\vec{y}\eta _{2}$
replaced by $\xi _{1}+\left( 0,1\right) \xi _{2}.$ By Proposition \ref{class
thm}, $V_{\vec{k}}^{\left( 1,3\right) }\left( \left( 0,\alpha _{1}\right)
\right) $ is reducible, non-split.

\noindent Case $(3).$ $\alpha _{1}=0,\alpha _{0}\neq 0.$ In the ordered
basis $\underline{\xi }=\left( \xi _{1},\xi _{2}\right) ,$ where%
\begin{equation*}
\xi _{1}=\eta _{2}-\left( \frac{p^{k_{1}}\alpha _{0}}{p^{k_{1}}-p^{k_{0}}},\ 
\frac{\alpha _{0}}{p^{k_{1}}-p^{k_{0}}}\right) \eta _{1}\ \text{and }\xi
_{2}=\left( \allowbreak \frac{\allowbreak \alpha _{0}p^{k_{0}}}{%
p^{k_{1}}-p^{k_{0}}},\ 1\right) \eta _{1},
\end{equation*}%
we have 
\begin{equation*}
\varphi \left( \xi _{1}\right) =\left( 1,\ p^{k_{0}}\right) \xi _{1}\ \text{%
and }\varphi \left( \xi _{2}\right) =\left( \lambda \left( \alpha
_{0}\right) ,\ \allowbreak \frac{p^{k_{1}}}{\lambda \left( \alpha
_{0}\right) }\right) \xi _{2},
\end{equation*}%
with $\lambda \left( \alpha _{0}\right) =\alpha _{0}^{-1}\left(
p^{k_{1}}-p^{k_{0}}\right) p^{k_{1}-k_{0}}.$ The filtration in the basis $%
\xi $ is given by formula \ref{example13 fil}, with $\vec{x}\eta _{1}+\vec{y}%
\eta _{2}$ replaced by $\left( 1,0\right) \xi _{1}+\xi _{2}.$ By Proposition %
\ref{class thm}, $V_{\vec{k}}^{\left( 1,3\right) }\left( \left( \alpha
_{0},0\right) \right) $ is reducible, non-split. By \cite[Proposition 7.1]%
{DO10} it follows that there are no isomorphisms between members of the
families $\left\{ V_{\vec{k}}^{\left( 1,3\right) }\left( \left( \alpha
_{0},0\right) \right) ,~\alpha _{0}\in p^{m}\mathfrak{m}_{E}\setminus \{{0\}}%
\right\} $ and $\left\{ V_{\vec{k}}^{\left( 1,3\right) }\left( \left(
0,\alpha _{1}\right) \right) ,~\alpha _{1}\in p^{m}\mathfrak{m}_{E}\setminus
\{{0\}}\right\} .$

\noindent Case $\left( 4\right) .$ $\alpha _{0}=\alpha _{1}=0.$ Then $%
\varphi \left( \eta _{1}\right) =\left( p^{k_{1}},1\right) \eta _{1}\ $and $%
\varphi \left( \eta _{2}\right) =\left( 1,p^{k_{0}}\right) \eta _{2},$ while
the filtration is as in formula \ref{example13 fil},with $\vec{x}=\left(
0,1\right) \ $and $\vec{y}=\left( 1,0\right) .$ Since $J_{\vec{x}}\cap J_{%
\vec{y}}=\varnothing ,$ Proposition \ref{class thm} implies that $V_{\vec{k}%
}^{\left( 1,3\right) }(\vec{0})$ is split-reducible.
\end{proof}

\begin{proposition}
\label{protash paradeigma}Let $0<\mathrm{v}_{\mathrm{p}}\left( \varepsilon
_{i}\right) <k_{0}+k_{1}$ with $\varepsilon _{0}\neq \varepsilon _{1}\ $such
that $\varepsilon _{0}\varepsilon _{1}=p^{k_{0}+k_{1}}$ and assume that $%
0\leq k_{i}\leq p-1.$ Define the families of filtered $\varphi $-modules $%
\mathbb{D}\left( \lambda \right) \ $with 
\begin{equation*}
\varphi \left( \eta _{1}\right) =\left( 1,\varepsilon _{0}\right) \eta
_{1},\ \varphi \left( \eta _{2}\right) =\left( \lambda ,\frac{\varepsilon
_{1}}{\lambda }\right) \eta _{2},
\end{equation*}%
and filtrations as in formula \ref{example f=2 filtration} with $\vec{x}=%
\vec{y}=\vec{1}.$ These are weakly admissible, irreducible filtered $\varphi 
$-modules, sharing the same characteristic polynomial$\ $and filtration. Let 
$V\left( \lambda \right) $ be the corresponding to $\mathbb{D}\left( \lambda
\right) $ crystalline representations of $G_{%
\mathbb{Q}
_{p^{2}}}.$
\end{proposition}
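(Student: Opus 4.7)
The plan is to recognize $\mathbb{D}(\lambda)$ as a rank two F-semisimple filtered $\varphi$-module written in a standard basis in the sense of Proposition \ref{class thm}, and then simply read off weak admissibility, irreducibility and invariance of the isomorphism invariants $(\mathrm{char\, poly}, \mathrm{Fil}^{\bullet})$ from the data given. With $\underline{\eta}=(\eta_1,\eta_2)$, the matrix of Frobenius is $\mathrm{diag}(\vec{\alpha},\vec{\delta})$ with $\vec{\alpha}=(1,\varepsilon_0)$ and $\vec{\delta}=(\lambda,\varepsilon_1/\lambda)$, so $\mathrm{Nm}_{\varphi}(\vec{\alpha})=\varepsilon_0$ and $\mathrm{Nm}_{\varphi}(\vec{\delta})=\varepsilon_1$, which are unequal by hypothesis. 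Hence $(\mathbb{D}(\lambda),\varphi)$ is F-semisimple, nonscalar, and $\underline{\eta}$ is a standard basis in the sense of Proposition \ref{class thm}.

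Next I would check the three weak admissibility inequalities of Proposition \ref{class thm}. Because $\vec{x}=\vec{y}=\vec{1}$, every coordinate of $\vec{x}$ and $\vec{y}$ is nonzero, so the sums $\sum_{y_i=0}k_i$ and $\sum_{x_i=0}k_i$ in conditions (2) and (3) are empty, and these reduce to $\mathrm{v}_{\mathrm{p}}(\varepsilon_0)\ge 0$ and $\mathrm{v}_{\mathrm{p}}(\varepsilon_1)\ge 0$, which hold by the assumption $0<\mathrm{v}_{\mathrm{p}}(\varepsilon_i)<k_0+k_1$. Condition (1) says $\mathrm{v}_{\mathrm{p}}(\varepsilon_0\varepsilon_1)=k_0+k_1$, which is exactly $\varepsilon_0\varepsilon_1=p^{k_0+k_1}$. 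Since both inequalities (2) and (3) are in fact strict (again by $0<\mathrm{v}_{\mathrm{p}}(\varepsilon_i)<k_0+k_1$), part (i) of Proposition \ref{class thm} gives irreducibility of $\mathbb{D}(\lambda)$, hence of the corresponding crystalline representation $V(\lambda)$.

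Finally I would verify that the characteristic polynomial and the filtration are independent of $\lambda$. The filtration is independent of $\lambda$ by construction, since formula \ref{example f=2 filtration} with $\vec{x}=\vec{y}=\vec{1}$ involves no $\lambda$. For the characteristic polynomial, by definition the characteristic polynomial of $V(\lambda)$ is that of $\varphi^2$ acting on $\mathbb{D}(\lambda)$; since the Frobenius matrix is diagonal with entries whose $\varphi$-norms are $\varepsilon_0$ and $\varepsilon_1$, the eigenvalues of $\varphi^2$ are $\varepsilon_0,\varepsilon_1$ and the characteristic polynomial is $X^2-(\varepsilon_0+\varepsilon_1)X+p^{k_0+k_1}$, independent of $\lambda$.

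There is no real obstacle here: the whole statement is a direct specialization of the classification in Proposition \ref{class thm} to a very explicit standard basis. The only mildly nontrivial point is noting that the strict inequalities in (2) and (3) force irreducibility regardless of $\lambda$, which is what produces the promised infinite family of non-isomorphic irreducible crystalline representations sharing the same characteristic polynomial and filtration alluded to in the introduction.
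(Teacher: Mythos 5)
Your proof is correct and follows essentially the route the paper has in mind, though you make it much more explicit. The paper's own proof of Proposition \ref{protash paradeigma} simply asserts the common characteristic polynomial $X^{2}-(\varepsilon_{0}+\varepsilon_{1})X+p^{k_{0}+k_{1}}$ and then devotes itself entirely to the three numbered items (which lie outside the \texttt{proposition} environment and were not part of the statement you were given); the claims about weak admissibility, irreducibility, and $\lambda$-independence of the invariants are treated as routine consequences of Proposition \ref{class thm}, and your argument fills in precisely the check the paper elides. Your reduction is accurate: with $\vec{x}=\vec{y}=\vec{1}$ the index sets $\{i:x_{i}=0\}$ and $\{i:y_{i}=0\}$ are empty, so conditions (2) and (3) of Proposition \ref{class thm} read $\mathrm{v}_{\mathrm{p}}(\varepsilon_{j})\geq 0$, which are strict by the hypothesis $0<\mathrm{v}_{\mathrm{p}}(\varepsilon_{j})$, forcing irreducibility independently of $\lambda$; condition (1) is exactly $\varepsilon_{0}\varepsilon_{1}=p^{k_{0}+k_{1}}$; and since $\mathrm{Nm}_{\varphi}(\vec{\alpha})=\varepsilon_{0}\neq\varepsilon_{1}=\mathrm{Nm}_{\varphi}(\vec{\delta})$ the module is F-semisimple and nonscalar, so $\underline{\eta}$ is indeed a standard basis in the sense required. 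The only small imprecision is notational: $\mathrm{Nm}_{\varphi}(\vec{\alpha})$ is the vector $(\varepsilon_{0},\varepsilon_{0})\in E^{\mid\tau\mid}$ rather than the scalar $\varepsilon_{0}$, but this does not affect any of your conclusions.
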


\begin{enumerate}
\item[(i)] If $\lambda =\frac{\varepsilon _{1}}{\varepsilon _{0}}\left( 
\frac{p^{k_{1}}\alpha -\varepsilon _{0}}{p^{k_{1}}\alpha -\varepsilon _{1}}%
\right) ,$ where $\alpha \in m_{E},$ then $\left( \overline{V\left( \lambda
\right) }_{\mid I_{%
\mathbb{Q}
_{p^{2}}}}\right) ^{ss}=\omega _{4,\bar{\tau}_{0}}^{-\left(
k_{0}+pk_{1}\right) }\oplus \omega _{4,\bar{\tau}_{0}}^{-\left(
k_{0}+pk_{1}\right) p^{2}}\ $and $\overline{V\left( \lambda \right) }$ is
irreducible;

\item[(ii)] If$\ \lambda =\left( \frac{\varepsilon _{1}}{\varepsilon _{0}}%
\right) ^{2}\left( \frac{p^{k_{1}}\alpha -\varepsilon _{1}}{p^{k_{1}}\alpha
-\varepsilon _{0}}\right) ,$ where $\alpha \in m_{E},$ then $\left( 
\overline{V\left( \lambda \right) }_{\mid I_{%
\mathbb{Q}
_{p^{2}}}}\right) ^{ss}=\omega _{4,\bar{\tau}_{0}}^{-\left(
pk_{1}+p^{2}k_{0}\right) }\oplus \omega _{4,\bar{\tau}_{0}}^{-\left(
pk_{1}+p^{2}k_{0}\right) p^{2}}$ and $\overline{V\left( \lambda \right) }$
is irreducible;

\item[(iii)] If $\lambda =1,$ then $\overline{V\left( \lambda \right) }$ is
reducible and $\overline{V\left( \lambda \right) }_{\mid I_{%
\mathbb{Q}
_{p^{2}}}}=\omega _{2,\bar{\tau}_{0}}^{-k_{1}}\oplus \omega _{2,\bar{\tau}%
_{0}}^{-pk_{0}}.\ \ $
\end{enumerate}

\begin{proof}
The common characteristic polynomial is $X^{2}-\left( \varepsilon
_{0}+\varepsilon _{1}\right) X+p^{k_{0}+k_{1}}.\ $Parts (i) and (ii) follow
from Examples \ref{touparadeigmatos} (i) and (iii) using the
\textquotedblleft standard parametrization\textquotedblright\ for the
families $V_{\vec{k},\vec{a}}^{\left( 1,2\right) }\ $and $V_{\vec{k},\vec{a}%
}^{\left( 1,4\right) },$ and taking into account that $m=0\ $and Proposition %
\ref{breuil prop}. Part (iii) follows from Proposition \ref{an example}(i)
and a little computation to prove that if $p^{k_{0}}+p^{k_{1}}+\alpha
_{0}\alpha _{1}=\varepsilon _{0}+\varepsilon _{1}$ and $\varepsilon
_{0}\varepsilon _{1}=p^{k_{0}+k_{1}},$ then $\lambda =1.$
\end{proof}

\end{document}